\begin{document}
\newcommand{\cn}[1]{\overline{#1}}
\newcommand{\e}[0]{\epsilon}
\newcommand{\EE}{\ensuremath{\mathbb{E}}}
\newcommand{\qq}[1]{(q;q)_{#1}}
\newcommand{\A}{\ensuremath{\mathcal{A}}}
\newcommand{\GT}{\ensuremath{\mathbb{GT}}}
\newcommand{\link}{\ensuremath{Q}}
\newcommand{\PP}{\ensuremath{\mathbb{P}}}
\newcommand{\frakP}{\ensuremath{\mathfrak{P}}}
\newcommand{\frakQ}{\ensuremath{\mathfrak{Q}}}
\newcommand{\frakq}{\ensuremath{\mathfrak{q}}}
\newcommand{\R}{\ensuremath{\mathbb{R}}}
\newcommand{\Rplus}{\ensuremath{\mathbb{R}_{+}}}
\newcommand{\C}{\ensuremath{\mathbb{C}}}
\newcommand{\Z}{\ensuremath{\mathbb{Z}}}
\newcommand{\Weyl}[1]{\ensuremath{\mathbb{W}}^{#1}}
\newcommand{\Zgzero}{\ensuremath{\mathbb{Z}_{>0}}}
\newcommand{\Zgeqzero}{\ensuremath{\mathbb{Z}_{\geq 0}}}
\newcommand{\Zleqzero}{\ensuremath{\mathbb{Z}_{\leq 0}}}
\newcommand{\Q}{\ensuremath{\mathbb{Q}}}
\newcommand{\T}{\ensuremath{\mathbb{T}}}
\newcommand{\Y}{\ensuremath{\mathbb{Y}}}
\newcommand{\M}{\ensuremath{\mathbf{M}}}
\newcommand{\MM}{\ensuremath{\mathbf{MM}}}
\newcommand{\W}[1]{\ensuremath{\mathbf{W}}_{(#1)}}
\newcommand{\WM}[1]{\ensuremath{\mathbf{WM}}_{(#1)}}
\newcommand{\Zsd}{\ensuremath{\mathbf{Z}}}
\newcommand{\Fsd}{\ensuremath{\mathbf{F}}}
\newcommand{\symBM}{\ensuremath{\mathbf{W}}}
\newcommand{\symFE}{\ensuremath{\mathbf{S}}}

\newcommand{\Real}{\ensuremath{\mathrm{Re}}}
\newcommand{\Imag}{\ensuremath{\mathrm{Im}}}
\newcommand{\re}{\ensuremath{\mathrm{Re}}}

\newcommand{\Sym}{\ensuremath{\mathrm{Sym}}}

\newcommand{\bfone}{\ensuremath{\mathbf{1}}}

\newcommand{\whitenoise}{\ensuremath{\mathscr{\dot{W}}}}
\newcommand{\alphaW}[1]{\ensuremath{\mathbf{\alpha W}}_{(#1)}}
\newcommand{\alphaWM}[1]{\ensuremath{\mathbf{\alpha WM}}_{(#1)}}
\newcommand{\malpha}{\ensuremath{\hat{\alpha}}}
\newcommand{\walpha}{\ensuremath{\alpha}}
\newcommand{\edge}{\textrm{edge}}
\newcommand{\dist}{\textrm{dist}}

\newcommand{\OO}[0]{\Omega}
\newcommand{\F}[0]{\mathfrak{F}}
\newcommand{\poly}[0]{R}
\def \Ai {{\rm Ai}}
\def \sgn {{\rm sgn}}
\def \SS {\mathcal{S}}
\newcommand{\poles}{\mathbb{A}}
\def \ss {\mathcal{X}}
\newcommand{\var}{{\rm var}}

\newcommand{\Res}[1]{\underset{{#1}}{\mathrm{Res}}}
\newcommand{\Resfrac}[1]{\mathrm{Res}_{{#1}}}

\newcommand{\ul}[2]{\underline{#1}_{#2}}
\newcommand{\qhat}[1]{\widehat{#1}^{q}}
\newcommand{\La}[0]{\Lambda}
\newcommand{\la}[0]{\lambda}
\newcommand{\ta}[0]{\theta}
\newcommand{\w}[0]{\omega}
\newcommand{\ra}[0]{\rightarrow}
\newcommand{\vectoro}{\overline}
\newtheorem{theorem}{Theorem}[section]
\newtheorem{partialtheorem}{Partial Theorem}[section]
\newtheorem{conj}[theorem]{Conjecture}
\newtheorem{lemma}[theorem]{Lemma}
\newtheorem{proposition}[theorem]{Proposition}
\newtheorem{corollary}[theorem]{Corollary}
\newtheorem{claim}[theorem]{Claim}
\newtheorem{formal}[theorem]{Critical point derivation}
\newtheorem{experiment}[theorem]{Experimental Result}
\newtheorem{question}{Question}

\def\todo#1{\marginpar{\raggedright\footnotesize #1}}
\def\change#1{{\color{green}\todo{change}#1}}
\def\note#1{\textup{\textsf{\color{blue}(#1)}}}

\theoremstyle{definition}
\newtheorem{remark}[theorem]{Remark}

\theoremstyle{definition}
\newtheorem{example}[theorem]{Example}

\theoremstyle{definition}
\newtheorem{definition}[theorem]{Definition}

\theoremstyle{definition}
\newtheorem{definitions}[theorem]{Definitions}

\title{Random-walk in Beta-distributed random environment}

\begin{abstract}
We introduce an exactly-solvable model of random walk in random environment that we call the Beta RWRE. This is a random walk in $\Z$ which performs nearest neighbour jumps with transition probabilities drawn according to the Beta distribution. We also describe a related directed polymer model, which is a limit of the $q$-Hahn interacting particle system.
Using a Fredholm determinant representation for the quenched probability distribution function of the walker's position, we are able to prove second order cube-root scale corrections to the large deviation principle satisfied by the walker's position, with convergence to the Tracy-Widom distribution. We also show that this limit theorem can be interpreted in terms of the maximum of strongly correlated random variables: the positions of independent walkers in the same environment. The zero-temperature counterpart of the Beta RWRE can be studied in a parallel way. We also prove a Tracy-Widom limit theorem for this model. 
\end{abstract}

%\author[G. Barraquand]{Guillaume Barraquand}
%\address{G. Barraquand,
%Universit\'e Paris Diderot, LPMA, 5 Rue Thomas Mann, 75013, Paris, France}
%\email{barraquand@math.univ-paris-diderot.fr}

\author[G. Barraquand]{Guillaume Barraquand}
\address{G. Barraquand,
Columbia University,
Department of Mathematics,
2990 Broadway,
New York, NY 10027, USA}
\email{barraquand@math.columbia.edu}

\author[I. Corwin]{Ivan Corwin}
\address{I. Corwin, Columbia University,
Department of Mathematics,
2990 Broadway,
New York, NY 10027, USA,
and Clay Mathematics Institute, 10 Memorial Blvd. Suite 902, Providence, RI 02903, USA,
and Massachusetts Institute of Technology,
Department of Mathematics,
77 Massachusetts Avenue, Cambridge, MA 02139-4307, USA}
\email{ivan.corwin@gmail.com}

\maketitle

\begin{center}This is an updated version from May 2021  correcting minor mistakes from the published version of this paper \cite{barraquand2017random}. \end{center} 
\setcounter{tocdepth}{1}
\tableofcontents
\hypersetup{linktocpage}

We study an exactly solvable one-dimensional random walk in space-time i.i.d. random environment. It is a random walk on $\Z$ which performs nearest neighbour steps, according to transition probabilities following the Beta distribution and drawn independently at each time and each location. We call this model the Beta RWRE. Using methods of integrable probability, we find an exact Fredholm determinantal formula for the Laplace transform of the quenched probability distribution of the walker's position. An asymptotic analysis of this formula allows to prove  a very precise limit theorem. It was already known that such a random walk satisfies a quenched large deviation principle  \cite{rassoul2013quenched}. We show that for the Beta RWRE, the second order correction to the large deviation principle fluctuates on the cube-root scale with Tracy-Widom statistics. This brings the scope of KPZ universality to random walks in dynamic random environment, and the Beta RWRE is the first RWRE for which such a limit theorem has been  proved. Moreover, our result translates in terms of the maximum of the locations of independent walkers in the same environment. Hence, the Beta RWRE can also be considered as a toy model for studying maxima of strongly correlated random variables.%, and the Tracy-Widom distribution appear once more as a extreme value distribution 

Our route to discover the exact solvability of the Beta RWRE was through an equivalent directed polymer model with Beta weights, which is itself a limit of the $q$-Hahn TASEP (introduced in \cite{povolotsky2013integrability} and further studied in \cite{corwin2014q}). However, we show that the RWRE/polymer model can be analysed independently of its interacting particle system origin, via a rigorous variant of the replica method. 

Our work generalizes a study of similar spirit, where a limit of the discrete-time geometric $q$-TASEP \cite{borodin2013discrete} was related to the strict weak lattice polymer \cite{corwin2014strict} (see also \cite{o2014tracy}). It should be emphasized that this procedure of translating the algebraic structure of interacting particle systems to directed polymer models was already fruitful in \cite{borodin2014macdonald}, where formulas for the $q$-TASEP allowed to study the law of continuous directed polymers related to the KPZ equation.
  
\section{Definitions and main results} 
\label{sec:intro}

\subsection{Random walk in space-time i.i.d. Beta environment}
\begin{definition}
\label{def:BetaRWRE}
Let $(B_{x, t})_{x\in \Z, t\in \Z_{\geqslant 0}}$ be a collection of independent random variables following the Beta distribution, with parameters $\alpha$ and $\beta$. We call this collection of random variables the environment of the walk.  Recall that if a random variable $B$ is drawn according to the $Beta(\alpha, \beta)$ distribution,  then for $0\leqslant r \leqslant 1$,
% $$ \mathbb{P}\left( B\in\mathrm{d}r\right) = r^{\alpha-1} (1-r)^{\beta-1} \frac{\Gamma(\alpha+\beta)}{\Gamma(\alpha)\Gamma(\beta)}.$$
$$ \mathbb{P}\left( B\leqslant r\right) = \int_0^r x^{\alpha-1} (1-x)^{\beta-1} \frac{\Gamma(\alpha+\beta)}{\Gamma(\alpha)\Gamma(\beta)}\ \mathrm{d}x.$$
In this environment, we define the random walk in space-time Beta environment (abbreviated Beta-RWRE) as a random walk $(X_t)_{t\in \Z_{\geqslant 0}}$ in $\Z$, starting from $0$ and such that 
\begin{itemize}
\item $X_{t+1}=X_t +1$ with probability $B_{X_t, t}$ and
\item $X_{t+1}=X_t -1$ with probability $1- B_{X_t, t}$.
\end{itemize}
A sample path is depicted in Figure \ref{fig:betaRWRE}. We denote by $\mathsf{P}$ and $\mathsf{E}$ (resp. $\mathbb{P}$ and $\mathbb{E}$) the measure and expectation associated to the random walk (resp. to the environment).
\end{definition}

Let $P(t, x)=\mathsf{P}(X_t\geqslant x)$. This is a random variable with respect to $\mathbb{P}$. Our first aim is to show that the Beta RWRE model is exactly solvable, in the sense that we are able to find the distribution of $P(t,x)$, by exploiting an exact formula for the Laplace transform of $P(t,x)$. 

\begin{remark}
The random walk $(\mathbf{X}_t)_t$ in $\Z^2$, where $\mathbf{X}_t:=(t, X_t)$ is a random walk in random environment in the classical sense, i.e. the environment is not dynamic (see Figure \ref{fig:betaRWRE}). It is a very particular case of random walk in Dirichlet random environment \cite{enriquez2006random}. Dirichlet RWREs have generated some interest  because it can be shown using connections between Dirichlet law and P\'olya urn scheme that the annealed law of such random walks is the same as that of oriented-edge-reinforced random walks \cite{enriquez2002edge}. However, since the random walk $(\mathbf{X}_t)$ can go through a given edge of $\Z^2$ at most once, the connection to self-reinforced random walks is irrelevant for the Beta RWRE.  
\label{rem:RWinZ2}
\end{remark}

\begin{figure}
\begin{tikzpicture}[scale=0.7]
\clip (-0.8, -5.5) rectangle (15.5, 5.5);
\draw[gray, dotted] (-1, -10) grid (16, 10);
%\draw (1.5, -4.5) node{$\mathbb{Z}^2 $};
\fill (0,0) circle(0.1);
\draw (-0.3, -0.3) node{$0$};
\begin{scope}[rotate=45, scale= sqrt(2)]
\draw[gray] (0, -11) grid (11, 0);
\draw[ultra thick] (0,0) -- (1, 0) -- (1, -1) -- (1, -2) -- (2, -2) -- (3, -2) -- (4, -2) -- (4, -3) -- (4, -4) -- (5, -4) -- (5, -5) -- (5, -6)-- (5, -7) -- (5, -8) -- (6, -8)--(6, -9)-- (7, -9);
\draw[->, ultra thick, gray] (2, -4) --(3, -4);
\draw[->, ultra thick, gray] (2, -4) --(2, -5);
\fill[gray] (2, -4) circle(0.05);
\draw[gray] (2, -4) node[anchor=east] {\footnotesize{$(x, t)$}};
\draw[gray] (2.5, -3.7) node{\footnotesize{$B_{x, t}$}};
\draw[gray] (1.5, -4.5) node{\footnotesize{$1-B_{x, t}$}};
\end{scope}
\draw[->] (0,-5.5) -- (0,5.5);
\draw (0,5.2) node[anchor=west] {$X_t$};
\draw[->] (-1,0) -- (15.5,0);
\draw (15.5,0) node[anchor=north east] {$t$};
\end{tikzpicture}
\caption{The graph of $t\mapsto X_t$ for the Beta RWRE. 
One sees that that the random walk  $\mathbf{X}_t:=(t, X_t)$ is also a (directed) random walk in a random environment in $\Z^2$.
}
\label{fig:betaRWRE}
\end{figure}
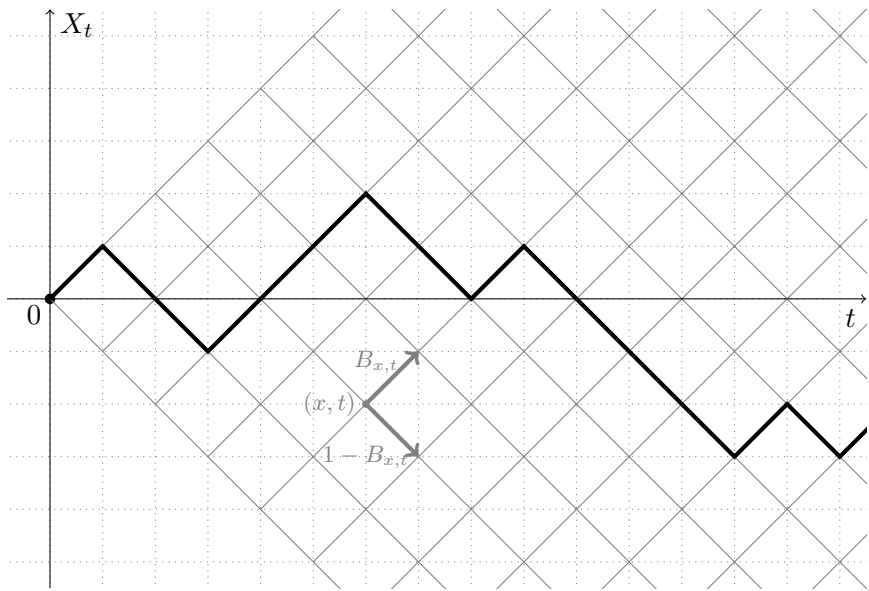

\begin{remark}\mbox{}
\begin{itemize}
\item The Beta distribution with parameters $(1,1)$ is the uniform distribution on $(0,1)$.
\item For $B$ a random variable with $Beta(\alpha, \beta)$ distribution, $1-B$ is distributed according to a Beta distribution with parameters $(\beta, \alpha)$. Consequently, exchanging the parameters $\alpha$ and $\beta$ of the Beta RWRE corresponds to applying a symmetry with respect to the time axis. 
\end{itemize}
\end{remark}

\subsection{Definition of the Beta polymer}

\subsubsection{Point to point Beta polymer}
\begin{definition}
\label{def:beta}
A point-to-point Beta polymer is a measure $Q_{t,n}$ on lattice paths $\pi$ between $(0,1)$ and $(t,n)$. At each site $(s, k)$ the path is allowed to 
\begin{itemize}
\item jump horizontally to the right from $(s,k) $ to $(s+1,k)$, 
\item or jump diagonally to the upright from $(s,k) $ to $(s+1,k+1)$.
\end{itemize}
An admissible path is shown in Figure \ref{fig:betapolymer}. Let $B_{i,j}$ be independent random variables distributed according to the Beta distribution with parameters $\mu$ and $\nu-\mu$ where $0<\mu <\nu$.
The measure $Q_{t,n}$ is defined by 
$$ Q_{t,n}\left( \pi \right) = \frac{\prod_{e\in \pi} w_e}{Z(t,n)} $$
where the products is taken over edges of $\pi$ and the weights $w_e$ are defined by 
$$ w_e = \begin{cases} B_{ij} &\mbox{if } e=(i-1,j)\to(i,j) \\
1 &\mbox{if }  e=(i-1, i)\to(i,i+1)\\
1-B_{i,j}&\mbox{if } e=(i-1,j-1)\to(i,j) \mbox{ with }i\geqslant j,
\end{cases} $$
and $Z(t,n)$ is a normalisation constant called the partition function,
$$ Z(t,n) = \sum_{\pi : (0,1)\to(t,n)} \prod{w_e}. $$
The free energy of the beta polymer is $\log Z(t,n)$. The partition function of the beta polymer satisfies the recurrence 
\begin{equation}
\begin{cases}
Z(t, n)  = Z(t-1,n) B_{t,n} + Z(t-1,n-1)(1- B_{t,n}) & \text{for } t \geqslant n > 1 ,\\
Z(t,t+1) = Z(t-1, t) &\text{for }t>0, \\
Z(t,1) = Z(t-1,1) B_{t,1} &\text{for }t>0.
\end{cases}
\label{eq:recurrencerelation}
\end{equation}
with the initial data 
\begin{equation}
 Z(0,1) = 1.
 \label{eq:initialconditionpoint2point}
\end{equation}
\end{definition}
\begin{figure}
\begin{tikzpicture}[scale=0.6]
\draw[->, thick, gray] (-2, 8) node[anchor= east]{$\tilde{Z}(s,k)=1$}   to[bend left] (1.9, 6.1);
\fill[gray] (2, 6) circle(0.07);
\draw[thick, gray] (2, 6) node[anchor=south west]{\footnotesize{$(s,k)$}};
\draw[thick, gray] (0, 6) --(2, 6);
\draw[thick, gray] (0,5) -- (1,5);
\draw[thick, gray] (0,4) -- (2,6);
\draw[thick, gray] (0,5) -- (1,6);
\draw[->, thick] (0, -1) -- (0, 10.3);
\draw[->, thick] (-1,0)--( 16.3, 0);
\draw[thick] (0,1) -- (9.1,10.1);
\draw[thick] (0,1) -- (16.1,1);
\foreach \k in {1,2, ..., 16}
	{\draw[gray, dotted] (\k, 0) -- (\k, 10.1);}
\foreach \k in { 2, 3, ..., 10}
	{\draw[gray] (\k-1,\k) -- (16.1, \k);}
\foreach \k in {2,3, ..., 10}
	{\draw[gray, dotted] (0,\k) -- (\k - 1, \k);}
	\clip (-2, -1) rectangle (16.1, 10.3);
\foreach \k in{ 1, 2, ..., 16}
	{\draw[gray] (\k, 1) -- (\k+9.1,10.1);}
\fill (0, 3) circle(0.1);
\draw[ultra thick] (3,4) -- (6, 4) -- (7, 5) -- (9,5) -- (12, 8) -- (14,8) --(15,9);
\draw[ultra thick, dotted] (0, 3) -- (1, 3)-- (2, 4) -- (3,4) ;
\draw[ultra thick] (0,1) -- (3,4);
\fill (15,9) circle(0.1);
\fill (0,1) circle(0.1);
\fill[gray, opacity=0.5] ( -0.1, 11) -- ( -0.1, 1) -- (0.1, 1) -- (0.1, 11) ;

\draw (16,0) node[anchor=north]{$t$};
\draw (0, 10) node[anchor=east]{$n$};
\draw(15,9) node[anchor=south]{$Z(t, n)$};
\draw (0,1) node[anchor=east]{$(0,1)$};

%\draw (3,4) node{\large{ $\bigstar$}};
\end{tikzpicture}
\caption{The thick line represents a possible polymer path in the point-to-point Beta polymer model. The dotted thick part represents a modification of the polymer path that is admissible if one considers the half-line to point polymer (see the paragraph  \ref{subsubsec:halflinepolymer}). 
The partition function for the half-line to point model $\tilde{Z}(s,k)$ at the point $(s,k)$ shown in gray equals $1$.}
\label{fig:betapolymer}
\end{figure}
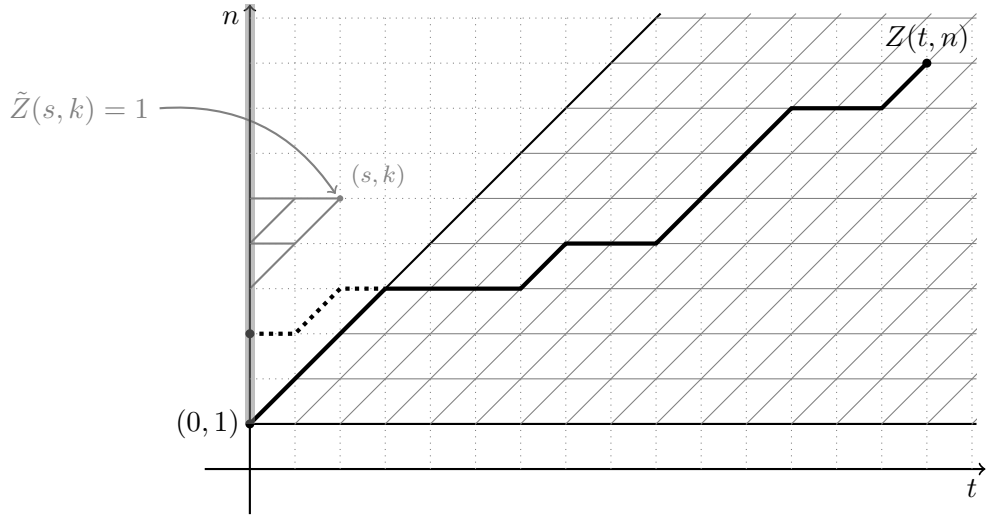
\begin{remark}
One recovers  at the  $\nu\to \infty$ limit the strict-weak lattice polymer described in \cite{o2014tracy, corwin2014strict}. As $\nu$ goes to infinity,  
$$\nu \cdot Beta(\mu, \nu-\mu) \Rightarrow Gamma(\mu),$$ 
and $1-Beta(\mu, \nu-\mu)\Rightarrow 1$. There are $t-n+1$ horizontal edges in any admissible lattice path from $(0,1)$ to $(t,n)$, and thus  
\begin{equation*}
\bar{Z}(t,n) :=\lim_{\nu\to\infty} \nu^{t-n+1} Z(t,n)
\end{equation*}
is the partition function of the strict-weak polymer. Indeed, in the strict-weak polymer, the horizontal edges have weights $Gamma(\mu)$ whereas upright paths have weight $1$.
\end{remark}
\subsubsection{Half-line to point Beta polymer}
\label{subsubsec:halflinepolymer}
Another (equivalent) possible interpretation of the same quantity $Z(t,n)$ is the partition function of an ensemble of polymer paths starting from the ``half-line'' $\big\lbrace (0,n) : n>0\big\rbrace $.
Fix $t\geqslant 0$ and $n>0$. One considers paths starting from any point $(0,m)$ for $0<m\leqslant n$ and ending at $(t,n)$. As for the point-to-point Beta polymer, paths are allowed to make right and diagonal steps. 
The weight of any path is the product of the weights of each edge along the path, and the weight $\tilde{w}_e$ of the edge $e$ is now defined by 
$$ \tilde{w}_e = \begin{cases} B_{ij} &\mbox{if } e\text{ is the horizontal edge }(i-1,j)\to(i,j), \\
1-B_{i,j}&\mbox{if } e\text{ is the diagonal edge }(i-1,j-1)\to(i,j).
\end{cases}$$
Let us denote by $\tilde{Z}(t,n)$ the partition function in the half-line to point model. It is characterized by the recurrence 
$$ \tilde{Z}(t, n)  = \tilde{Z}(t-1,n) B_{t,n} + \tilde{Z}(t-1,n-1)(1- B_{t,n}) $$
for all $t,n>0$ and the initial condition $Z(0,n)=1$ for $n>0$. 
With the above definition of weights, we can see by induction that for any $t\geqslant 0$ and $n>t$, $\tilde{Z}(t, n)=1$. For example, in Figure \ref{fig:betapolymer}, the possible paths leading to $(s,k)$ are shown in gray. On the figure, one has
$$ \tilde{Z}(s,k)= \tilde{Z}(2,6)  = B_{1,6}B_{2,6} + (1-B_{1,6})B_{2,6} + B_{1,5}(1-B_{2,6}) + (1-B_{1,5})(1-B_{2,6}) =1.$$
Consequently, the partition functions of the half-line-to-point and the point-to-point model coincide for $t+1\geqslant n$. In the following, we drop the tilde above $Z$, even when considering the half-line-to point model, since the models are equivalent.

By deforming the lattice so that admissible paths are up/right, and reverting the orientation of the path, one sees that the Beta polymer and the Beta-RWRE are closely related models, in the sense of Proposition \ref{prop:equivRWREpolymer}. This proposition is proved in Section \ref{subsec:equivRWREpolymer}.
\begin{proposition}
Consider the Beta-RWRE with parameters $\alpha, \beta >0$ and the Beta polymer with parameters $\mu=\alpha$ and $\nu=\alpha+\beta$. 
For any fixed $t, n\in \Z_{\geqslant 0}$ such that $t+1\geqslant n$, then we have the equality in law
$$ Z(t, n)  = P\big(t, t-2n+2\big) .$$
Moreover, conditioning on the environment of the Beta polymer corresponds to conditioning on the environment of the Beta RWRE. 
\label{prop:equivRWREpolymer}
\end{proposition}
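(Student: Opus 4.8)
The plan is to identify both quantities as outputs of the \emph{same} linear recurrence driven by the \emph{same} i.i.d.\ family of $Beta$ weights, after matching the two index sets by an explicit affine change of variables; performing this identification on a single probability space then yields at once the equality in law and the statement about conditioning on the environment. I will use the half-line-to-point description of $Z$, so that $Z(t,n)=Z(t-1,n)B_{t,n}+Z(t-1,n-1)(1-B_{t,n})$ for all $t,n>0$, with $Z(0,n)=1$ for $n\geqslant1$, with $Z(t,n)=0$ for $n\leqslant0$ (consistent with $Z(t,1)=Z(t-1,1)B_{t,1}$), and with $Z(t,n)=1$ whenever $n>t$; by the discussion preceding the proposition this agrees with the point-to-point partition function in the regime $t+1\geqslant n$, in which we work (equivalently $x:=t-2n+2\geqslant-t$).

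On the walk side, fix $t$ and $x$, and for $0\leqslant u\leqslant t$, $y\in\Z$ let $G(u,y):=\mathsf{P}_{(u,y)}(X_t\geqslant x)$ be the quenched probability that a walk sitting at site $y$ at time $u$, run in the same environment, lies at least at $x$ at time $t$; thus $G(0,0)=P(t,x)$. Conditioning on the first step (the Markov property) gives the terminal data $G(t,y)=\mathbf{1}_{y\geqslant x}$ and, for $0\leqslant u\leqslant t-1$,
\begin{equation*}
G(u,y)=B_{y,u}\,G(u+1,y+1)+(1-B_{y,u})\,G(u+1,y-1).
\end{equation*}
Only sites with $y\equiv u\pmod2$ enter the computation of $G(0,0)$, and $G(u,y)=1$ once $y-(t-u)\geqslant x$ while $G(u,y)=0$ once $y+(t-u)<x$, since the walk stays in its light cone. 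Now introduce $i=t-u$ and $j=n+\tfrac{y-u}{2}\in\Z$, where $n$ is fixed by $x=t-2n+2$. One checks that this map sends $(u,y)=(0,0)$ to $(i,j)=(t,n)$; sends the step $(u,y)\to(u+1,y+1)$ to the horizontal step $(i,j)\to(i-1,j)$ and $(u,y)\to(u+1,y-1)$ to the diagonal step $(i,j)\to(i-1,j-1)$; identifies the weight $B_{y,u}$ of the ``$+1$'' branch with the weight $B_{i,j}=B_{t-u,\,n+(y-u)/2}$ of the horizontal edge into $(i,j)$; matches terminal data, since $y\geqslant x\Leftrightarrow j\geqslant1$, so $G(t,y)=\mathbf{1}_{y\geqslant x}=Z(0,j)$; and matches the two degenerate regions, since $\{y+(t-u)<x\}=\{j\leqslant0\}$ and $\{y-(t-u)\geqslant x\}=\{j>i\}$, compatible with $Z\equiv0$ on $j\leqslant0$ and $Z\equiv1$ on $j>i$.

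Thus the walk recurrence for $G$, pushed through this change of variables, is \emph{verbatim} the half-line polymer recurrence for $Z$, with $B^{\mathrm{RWRE}}_{y,u}$ playing the role of $B^{\mathrm{polymer}}_{t-u,\,n+(y-u)/2}$. Since the change of variables is a bijection between the finitely many indices entering the computation of $G(0,0)$ and those entering the computation of $Z(t,n)$, and all the $B$'s are i.i.d.\ $Beta(\mu,\nu-\mu)=Beta(\alpha,\beta)$, relabelling one environment into the other is a measure-preserving identification; carrying it out yields $Z(t,n)=G(0,0)=P(t,t-2n+2)$ surely, hence in law, and makes the $\sigma$-algebras generated by the two environments (restricted to the cells entering the recurrences) coincide, which is the asserted correspondence of conditionings. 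The conclusion then follows by induction on $i$ (equivalently on $t$) from the two matching recurrences together with their matching boundary and initial data. The only real work is bookkeeping: verifying the parity constraint and checking that the change of variables indeed carries each branch of the recurrence, each piece of boundary data, and each of the two degenerate regions to its counterpart; there is no analytic difficulty beyond this.
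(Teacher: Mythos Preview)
Your proof is correct and follows essentially the same approach as the paper: both arguments identify $Z(t,n)$ with $P(t,t-2n+2)$ by matching the half-line polymer recurrence to the backward (Markov-property) recurrence for the quenched tail probability via an affine bijection of index sets, checking that boundary data and degenerate regions coincide. The paper phrases this more pictorially (``deforming the lattice'' and reversing path orientation, with reference to Figures \ref{fig:betapolymer}, \ref{fig:betaRWRE2}, \ref{figdeformationlattice}), whereas you carry out the same identification explicitly through the change of variables $i=t-u$, $j=n+(y-u)/2$; your version is the more detailed of the two, but the underlying idea is identical.
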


\subsection{Bernoulli-Exponential directed first passage percolation}
Let us introduce the ``zero-temperature'' counterpart of the Beta RWRE. 
\begin{definition}
\label{def:fpp}
Let $(E_e)$ be a family of independent exponential random variables indexed by the horizontal and vertical edges $e$ in the lattice $\mathbb{Z}^2$, such that $E_e$ is distributed according to the exponential law with  parameter $a$ (i.e. with mean $1/a$) if $e$ is a vertical edge and $E_e$ is distributed according to the exponential law with parameter $b$ if $e$ is a horizontal edge. Let $(\xi_{i,j})$ be a family of independent Bernoulli random variables with parameter $ b/(a+b)$. For an edge $e$ of the lattice $\Z^2$, we define the the passage time $t_e$ by
\begin{equation}
t_e= \begin{cases}  
\xi_{i,j}E_e \text{ if }e\text{ is the vertical edge }(i,j) \to (i, j+1),\\
(1-\xi_{i,j})E_e\text{ if }e\text{ is the horizontal edge }(i,j)\to(i+1, j).
\end{cases}
\label{eq:defte}
\end{equation}
%\begin{itemize}
%\item $t_e = \xi_{i,j}E_e$ if $e$ is the vertical edge going from $(i,j)$ to $(i, j+1)$,
%\item $t_e =(1-\xi_{i,j})E_e$ if $e$ is the horizontal edge going from $(i,j)$ to $(i+1, j)$.
%\end{itemize}
The first passage-time $T(n,m)$ in the Bernoulli-Exponential first passage percolation model is given by 
$$ T(n, m)  =  \min_{\pi :(0, 0)\to D_{n, m}}\  \sum_{e\in\pi} \  t_e ,$$
where the minimum is taken over all up/right paths $\pi$ from $(0,0)$ to $D_{n, m}$, which is the set of points
$$ D_{n, m} = \Big\lbrace (i, n+m-i) : 0\leqslant i\leqslant n\Big\rbrace .$$ 
\end{definition}
\begin{figure}
\begin{tikzpicture}[scale=0.6]
\fill (0,0) circle (0.1);
\draw[->, thick] (0,0) -- (15.5, 0);
\draw[->, thick] (0,0) -- (0,15.5);
\foreach \k in {1, 2, ..., 14}
	{\draw[dashed] (\k, 0) -- (\k, 15-\k);} 
\foreach \k in {1, 2, ..., 14}
	{\draw[dashed] (0,\k) -- ( 15-\k, \k);} 
\draw[ultra thick, gray] (0, 15) -- (10, 5);
\draw[dotted, thick, gray] (10,5) -- (15,0);
\draw[ultra thick] (0,0) -- (0,1) -- (2,1) -- (2, 4) -- (4,4) -- (4,5) -- (8,5) -- (8,6) -- (9,6);
\fill (9,6) circle (0.1);
\fill[gray] (10,5) circle (0.1);
\fill[gray, opacity=0.2] (0,5) -- (10,5) -- (0,15);
\draw[gray, dashed, ultra thick] (0,5) -- (10,5);
\draw (10,0.1) -- (10, -0.1) node[anchor=north]{$n$};
\draw (0.1, 5) -- ( -0.1, 5) node[anchor=east]{$m$};
\draw[gray] (6, 10) node{$D_{n,m}$};
\draw[gray] (-2, 7) node{$\tilde{D}_{n,m}$};
\draw[gray, ->, thick] (-1, 7)  to[bend left] (2.5, 5.1);
\draw (-0.5, -0.5) node{$(0,0)$};
\end{tikzpicture}
\caption{An admissible path for the Bernoulli-Exponential FPP model is shown on the figure. $T(n,m)$ is the passage time between $(0,0)$ and $D_{n,m}$ (thick gray line). Note that the first passage time to $D_{n,m}$ is also the first passage time to $\tilde{D}_{n,m}$ depicted in dotted gray on the figure (cf Remark \ref{rem:equivalentline}).}
\label{fig:FPP}
\end{figure}
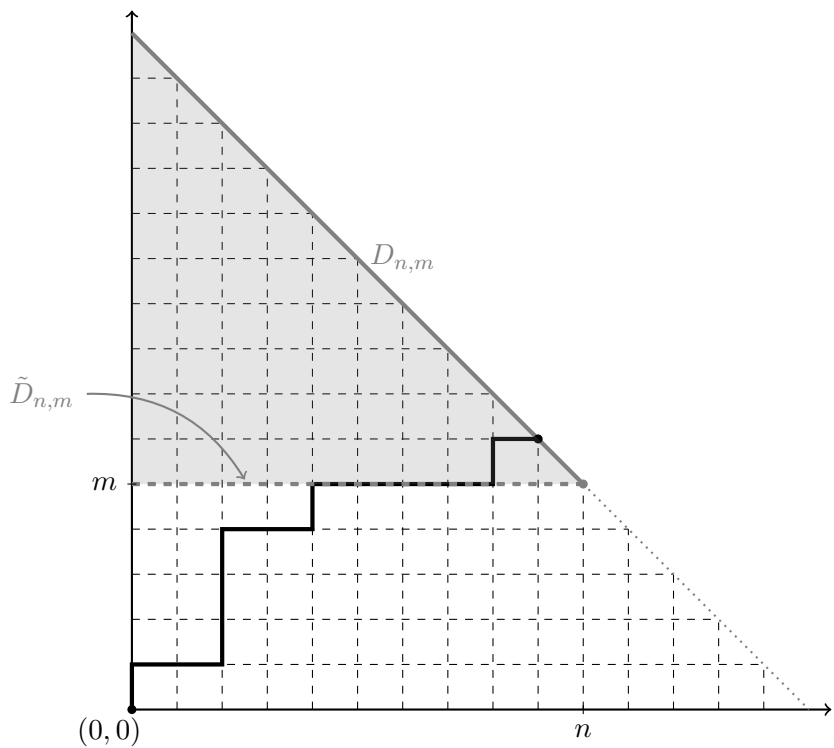
Although the quantity that we are fully able to study is $T(n,m)$, that is a point to half-line passage time, is is also natural to introduce the point-to-point passage time $T^{pp}(n,m)$ defined by 
$$ T^{pp}(n, m)  =  \min_{\pi :(0, 0)\to (n,m)}\  \sum_{e\in\pi} \  t_e, $$
where the maximum is taken over paths between the points $(0,0)$ and $(n,m)$. 
We define the percolation cluster $C(t)$ by
$$ C(t) = \Big\lbrace (n,m) : T^{pp}(n,m) \leqslant t\Big\rbrace.$$
It can be constructed in a dynamic way (see Figure \ref{fig:FPP4shades}). At each time $t$, $C(t)$  is the union of points visited by (portions of) several directed up/right random walks in the quarter plane $\Z_{\geqslant 0}^2$. The evolution is as follows:
\begin{itemize}
\item At time $0$, the percolation cluster contains the points of the path of a directed random walk starting from $(0,0)$.

 Indeed, since for any $i,j$, $\xi_{i,j}$ is a Bernoulli random variable in $\lbrace 0,1\rbrace$, either the passage time from $(i,j)$ to $(i+1,j)$ is zero, or the passage time from $(i,j)$ to $(i,j+1)$ is zero. This implies that there exists a unique infinite up-right path starting from $(0,0)$ with zero passage-time. This path is distributed as a directed random walk. 
\item At time $t$, from each point on the boundary of the percolation cluster where a random walk can branch, we add to the percolation cluster after an exponentially distributed waiting time, the path of that random walk. Paths starting with a vertical (resp. horizontal) edge are added at rate $a$ (resp. $b$). This random walk almost surely crosses the percolation cluster somewhere, and we add to the percolation cluster only the points of the walk path up to the first hitting point. 

Indeed, any edge $e=(x,y)$ from a point $x$ inside $C(t)$ to a point $y$ outside $C(t)$, has a positive passage time. Hence, one adds the point $y$ to the percolation cluster after an exponentially distributed waiting time $t_e$. Once the point $y$ is added, one immediately adds to $C(t)$ all the points that one can reach from $y$ with zero passage time. These points form a portion of random walk that will almost surely coalesce with the initial random walk path $C(0)$. 
\end{itemize}

\begin{remark}
Denote by $\tilde{D}_{n,m}$ the set of points $\big\lbrace (i, m)\  : 0\leqslant i\leqslant n\big\rbrace$ (see Figure \ref{fig:FPP}). Any path going from $(0,0)$ to  $D_{n,m}$ has to go through a point of  $\tilde{D}_{n,m}$. Moreover, the first passage time from any point of $\tilde{D}_{n,m}$  to the set $D_{n,m}$ is zero.  Hence the first passage time from $(0,0)$ to $\tilde{D}_{n,m}$ is also $T(n,m)$.
\label{rem:equivalentline}
\end{remark}
\begin{remark}
When $b$ tends to infinity, $E_e$ tends to $0$ for all horizontal edges, % 
%%% edit here
 and one recovers the first passage percolation model introduced in \cite{o1999directed}, which is the zero temperature limit of the strict-weak lattice polymer as explained in \cite{o2014tracy, corwin2014strict}.
\end{remark}

Let us show how the Bernoulli-Exponential first passage percolation model is a limit of the Beta RWRE. 
\begin{proposition}
Let $\alpha_{\epsilon}=\epsilon a$ and $\beta_{\epsilon} = \epsilon b$. Let $P_{\epsilon}(t,x)$ be the probability distribution function of the Beta-RWRE with parameters $\alpha_{\epsilon}$ and $\beta_{\epsilon}$ and $T(n, m)$ the first-passage time in the Bernoulli-Exponential FPP model with parameters $a,b$. Then, for all $ n, m\geqslant 0$, $-\epsilon \log(P_{\epsilon}(n+m,m-n))$ weakly converges as $\epsilon$ goes to zero to $T(n,m)$, the first passage time from $(0,0)$ to $D_{n,m}$ in the Bernoulli-Exponential FPP model.
\label{prop:RWREtoFPP}
\end{proposition}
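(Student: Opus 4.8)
The plan is to degenerate, as $\epsilon\to0$, the recursion satisfied by the quenched distribution function $P_\epsilon(t,x)$ into the min-plus recursion defining the Bernoulli--Exponential passage times, and to feed into it the weak limit of the $\mathrm{Beta}(\epsilon a,\epsilon b)$ law. Concretely, write $t=n+m$, $x=m-n$. For $0\leqslant s\leqslant t$ and $y$ of the same parity as $s$, let $G_\epsilon(s,y)=\mathsf P(X_t\geqslant x\mid X_s=y)$, so $P_\epsilon(t,x)=G_\epsilon(0,0)$; conditioning on the first step out of $(y,s)$ and using the Markov property of the walk in the frozen environment gives
\[
G_\epsilon(s,y)=B_{y,s}\,G_\epsilon(s+1,y+1)+(1-B_{y,s})\,G_\epsilon(s+1,y-1),\qquad G_\epsilon(t,y)=\mathbf 1_{y\geqslant x}.
\]
Set $H_\epsilon(s,y)=-\epsilon\log G_\epsilon(s,y)$, $\mathsf v_\epsilon(y,s)=-\epsilon\log B_{y,s}$, $\mathsf h_\epsilon(y,s)=-\epsilon\log(1-B_{y,s})$. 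From the elementary sandwich $\min(u_1,u_2)-\epsilon\log2\leqslant-\epsilon\log(e^{-u_1/\epsilon}+e^{-u_2/\epsilon})\leqslant\min(u_1,u_2)$ (with the convention $e^{-\infty}=0$), a backward induction on $s$ yields $H_\epsilon(0,0)=\widehat H_\epsilon(0,0)+O(\epsilon)$, where $\widehat H_\epsilon(0,0)=\min_\pi\sum_{s=1}^t w_\epsilon(\pi,s)$ is the exact min-plus solution, the minimum over nearest-neighbour paths $\pi=(y_0=0,\dots,y_t)$ with $y_t\geqslant x$ and $w_\epsilon(\pi,s)$ equal to $\mathsf v_\epsilon(y_{s-1},s-1)$ if the $s$-th step is $+1$ and to $\mathsf h_\epsilon(y_{s-1},s-1)$ if it is $-1$. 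Here the $O(\epsilon)$ is deterministic (since $t$ is fixed), $\widehat H_\epsilon(0,0)<\infty$ a.s.\ (as $B_{y,s}\in(0,1)$ a.s.), and $\widehat H_\epsilon(0,0)$ is an $\epsilon$-independent continuous function of the finitely many variables $(\mathsf v_\epsilon(y,s),\mathsf h_\epsilon(y,s))_{0\leqslant s\leqslant t-1,\ |y|\leqslant s}$.

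Next I would prove the single-site limit: for $B\sim\mathrm{Beta}(\epsilon a,\epsilon b)$,
\[
\bigl(-\epsilon\log B,\,-\epsilon\log(1-B)\bigr)\ \Longrightarrow\ \bigl(\xi\,\mathcal E_a,\,(1-\xi)\,\mathcal E_b\bigr)\qquad(\epsilon\to0),
\]
where $\xi\sim\mathrm{Bernoulli}(b/(a+b))$, $\mathcal E_a\sim\mathrm{Exp}(a)$, $\mathcal E_b\sim\mathrm{Exp}(b)$ are independent. This is a short computation with the Beta density: using $\Gamma(z)\sim1/z$ as $z\to0$, one gets $\mathbb P(B\leqslant\delta)\to b/(a+b)$ and $\mathbb P(B\geqslant1-\delta)\to a/(a+b)$ for any fixed $\delta\in(0,1)$, while $\mathbb P(\delta<B<1-\delta)=O(\epsilon)$; conditionally on $\{B\leqslant\delta\}$ the density of $B$ is proportional to $r^{\epsilon a-1}$, so $-\epsilon\log B\Rightarrow\mathrm{Exp}(a)$ there, while $-\epsilon\log(1-B)\leqslant-\epsilon\log(1-\delta)\to0$; the situation on $\{B\geqslant1-\delta\}$ is symmetric, and letting $\delta\to0$ gives the claim. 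The point is that the right-hand side is exactly the joint law of the pair $\bigl(t_{(i,j)\to(i,j+1)},\,t_{(i,j)\to(i+1,j)}\bigr)$ of passage times at a site of the Bernoulli--Exponential FPP model.

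Finally I would assemble the two steps. By independence of the $B_{y,s}$ across sites, the single-site convergence upgrades to joint convergence of the whole weight vector $(\mathsf v_\epsilon(y,s),\mathsf h_\epsilon(y,s))$ to the corresponding vector of FPP passage times, and the continuous mapping theorem applied to the functional of the first step gives $H_\epsilon(0,0)\Rightarrow$ (the min-plus functional of the FPP weights). To identify the limit with $T(n,m)$, deform the lattice by $(y,s)\mapsto\bigl((s-y)/2,(s+y)/2\bigr)$ (as in the paragraph preceding Proposition~\ref{prop:equivRWREpolymer}): this is a bijection from the space--time lattice of the walk onto $\Z_{\geqslant0}^2$ that turns $+1$-steps into vertical unit edges and $-1$-steps into horizontal unit edges, and, by the weight matching just established, carries $\mathsf v_\epsilon(y,s)$ and $\mathsf h_\epsilon(y,s)$ to the weights of the vertical and horizontal edges issued from the image site. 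A walk path of length $t=n+m$ from $(0,0)$ with $y_t\geqslant m-n$ then becomes an up/right path from $(0,0)$ whose endpoint $\bigl((t-y_t)/2,(t+y_t)/2\bigr)$ ranges precisely over $D_{n,m}$ as $y_t$ runs over $\{m-n,m-n+2,\dots,m+n\}$; hence the min-plus functional equals the first-passage time from $(0,0)$ to $D_{n,m}$, i.e.\ $T(n,m)$. Combining with $H_\epsilon(0,0)=-\epsilon\log P_\epsilon(n+m,m-n)$ and the $O(\epsilon)$ bound completes the proof. (Alternatively, one may run the same argument on the Beta polymer via Proposition~\ref{prop:equivRWREpolymer}, tropicalizing the recursion \eqref{eq:recurrencerelation}.)

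I expect the main obstacle to be twofold. First, the Beta-density asymptotics: one must check that the ``bulk'' event $\{\delta<B<1-\delta\}$ has probability $O(\epsilon)$ so it does not contribute in the limit, and that the conditional laws genuinely converge to the stated exponentials (this is the only genuinely computational step). Second, making the tropical degeneration uniform: the accumulated $-\epsilon\log2$ corrections must be kept $O(\epsilon)$ through the recursion (which works because $t$ is fixed), and the $+\infty$ terminal values must be carried along consistently. The geometric identification with $T(n,m)$ is routine bookkeeping, but the orientation chosen in the lattice deformation must be the one compatible with the weight matching of the single-site limit.
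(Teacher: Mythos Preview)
Your proof is correct and follows essentially the same approach as the paper: both establish the single-site convergence $(-\epsilon\log B_\epsilon,\,-\epsilon\log(1-B_\epsilon))\Rightarrow(\xi E_a,(1-\xi)E_b)$ (the paper's Lemma~\ref{lem:betaconvergence}), then pass from the log-partition function to the min-plus functional and apply the continuous mapping theorem to the finite collection of weights, identifying the limit with $T(n,m)$ via the lattice deformation. Your deterministic sandwich bound $\min(u_1,u_2)-\epsilon\log 2\leqslant-\epsilon\log(e^{-u_1/\epsilon}+e^{-u_2/\epsilon})\leqslant\min(u_1,u_2)$ is a slightly cleaner replacement for the paper's ``principle of the largest term'' (which invokes a.s.\ uniqueness of the minimizing path), but otherwise the arguments coincide.
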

Proposition \ref{prop:RWREtoFPP} is proved in Section \ref{sec:limitBetatoFPP}.

\begin{figure}
\includegraphics[scale=0.8]{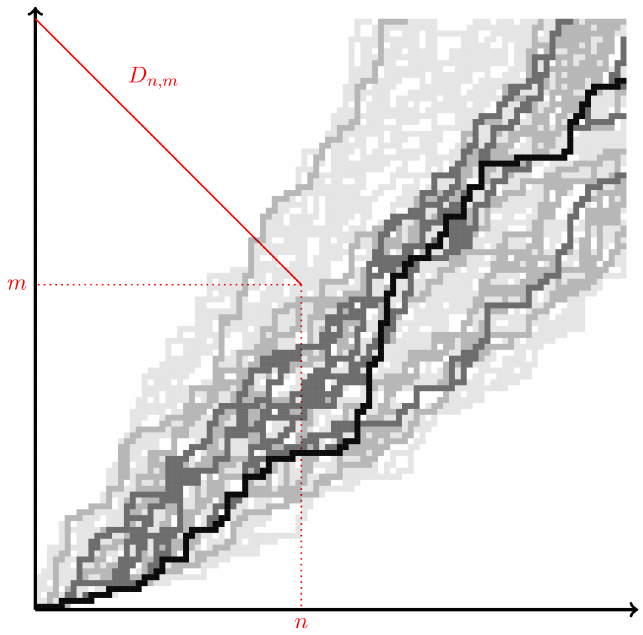}
\caption{Percolation cluster for the Bernoulli-Exponential model with parameters $a=b=1$ in a grid of size $100\times 100$. The different shades of gray correspond to different times: the black line corresponds to the percolation cluster at time $0$ and the other shades of gray corresponds to times $0.2$, $0.5$ and $1.2$. This implies that for $n$ and $m$ chosen as on the figure, 
$ 0.2\leqslant T(n,m)\leqslant 0.5$. }
\label{fig:FPP4shades}
\end{figure}

\subsection{Exact formulas}

Our first result is an exact formula for the mixed moments of the polymer partition function $\mathbb{E}\big[Z(t,n_1) \cdots Z(t,n_k) \big]$. In light of Proposition \ref{proprecursionlimit}, this result can be seen as a limit when $q$ goes to $1$ of the formula from Theorem 1.8 in \cite{corwin2014q}. Even so, we prove this in an independent way in Section \ref{sec:directproof} via a rigorous polymer replica trick methods (See Proposition \ref{prop:momentformula}). 
\begin{proposition}
\label{prop:momentformulaintro}
For $n_1 \geqslant n_2\geqslant \dots \geqslant n_k\geqslant 1$, one has the following moment formula,
\begin{multline}
\mathbb{E}\Big[Z(t,n_1) \cdots Z(t,n_k) \Big] =
 \frac{1}{(2i\pi)^k} \int\dots\int \prod_{1\leqslant A<B\leqslant k} \frac{z_A-z_B}{z_A-z_B-1}\prod_{j=1}^k \left( \frac{\nu+z_j}{z_j}\right)^{n_j} \left(\frac{\mu + z_j}{\nu+z_j}\right)^t\frac{\mathrm{d}z_j}{\nu+z_j},
\label{eq:momentformulapolymer}
\end{multline}
where the contour for $z_k$ is a small circle around the origin, and the contour for $z_j$ contains the contour for $z_{j+1} + 1$ for all $j=1, \dots , k-1$, as well as the origin, but all contours exclude $-\nu$. 
\end{proposition}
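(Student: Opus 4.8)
The plan is to establish the moment formula \eqref{eq:momentformulapolymer} by a polymer replica argument: I will show that both sides of the identity, viewed as functions of $(n_1,\dots,n_k)$ and $t$, satisfy the same linear recurrence system together with the same initial/boundary data, and then invoke uniqueness of the solution. The starting point is the recurrence \eqref{eq:recurrencerelation} for $Z(t,n)$ together with the fact that the Beta weights $B_{t,n}$ satisfy the exact mixed-moment identities of the form $\mathbb{E}[B^j] = \prod_{i=0}^{j-1}\frac{\mu+i}{\nu+i}$ and, more to the point, that for several replicas sitting at the same site $(t,n)$ one has a closed formula for $\mathbb{E}[B^a(1-B)^b]$ in terms of ratios of Gamma functions. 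Because the environment is i.i.d.\ across sites, conditioning on the weights at time $t$ and using independence lets me express $\mathbb{E}[Z(t,n_1)\cdots Z(t,n_k)]$ as a linear combination of the quantities $\mathbb{E}[Z(t-1,m_1)\cdots Z(t-1,m_k)]$, where the $m_i$ are obtained from the $n_i$ by decrementing a subset of those indices that coincide. This is the ``true evolution'' of the replica system.

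The key algebraic step is to recognize this true evolution as a free evolution (each replica independently does the one-particle step $n_j \mapsto n_j$ with ``weight'' $\frac{\mu+z_j}{\nu+z_j}$ in transform variables, or $n_j \mapsto n_j-1$) plus a two-body boundary condition that is exactly diagonalized by the factor $\prod_{A<B}\frac{z_A-z_B}{z_A-z_B-1}$ in the integrand; this is the standard mechanism behind nested-contour ansätze (the ``Bethe ansatz solvability'' of such replica systems, as in \cite{borodin2014macdonald, corwin2014q}). Concretely, I will define the right-hand side of \eqref{eq:momentformulapolymer} to be $u(t;n_1,\dots,n_k)$ and check three things: (i) $u$ satisfies the free recurrence in each variable $n_j$ away from the diagonals --- this is immediate since $\bigl(\frac{\nu+z_j}{z_j}\bigr)^{n_j}\bigl(\frac{\mu+z_j}{\nu+z_j}\bigr)^t$ is an eigenfunction; (ii) the two-body boundary condition relating the coefficients when $n_A = n_{A+1}$ is satisfied, which follows from a residue computation exploiting the pole of $\frac{z_A-z_B}{z_A-z_B-1}$ at $z_A = z_B+1$ and the symmetrization built into the nested contours; and (iii) the boundary cases $n_k = 1$ and the would-be case $n = t+1$ match the special lines of \eqref{eq:recurrencerelation}, together with the initial condition $Z(0,1)=1$, which corresponds to all contours shrinking so that $u(0;1,\dots,1)$ picks up only the residue at the origin and evaluates to $1$. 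For the bookkeeping in (ii) and (iii) I would closely follow the contour-deformation argument used for the $q$-Hahn moments in \cite{corwin2014q}, taking the $q\to 1$ degeneration with $\mu,\nu$ as above.

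I expect the main obstacle to be step (ii): carefully matching the combinatorial coefficients produced by the ``collision'' of replicas in the true recurrence with those produced by the residue structure of the contour integral. In the true evolution, when $j$ replicas occupy the same site $(t,n)$ the relevant moment $\mathbb{E}[B^a(1-B)^{j-a}]$ is a product of Gamma ratios, and one must verify that summing the contour integral identity over the appropriate partial residues reproduces precisely this weight --- essentially a $_2F_1$-type summation identity masquerading as a contour deformation. A clean way to organize this is to verify that $u$ satisfies the \emph{full} recurrence directly by deforming all contours to a common large circle: the cross terms $\frac{z_A-z_B}{z_A-z_B-1}$ generate exactly the residues needed to convert the free recurrence into the interacting one, and the nested contour prescription guarantees no spurious poles are crossed except those accounted for. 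Once the recurrence and the data are matched, uniqueness (the system is triangular: $\mathbb{E}[Z(t,\vec n)]$ is determined from strictly smaller $t$, with the base case $t=0$) finishes the proof. An alternative, perhaps cleaner route is to obtain \eqref{eq:momentformulapolymer} directly as the stated $q\to1$ limit of Theorem 1.8 of \cite{corwin2014q} via Proposition \ref{proprecursionlimit}, controlling the limit of the nested contour integral; but since the paper announces an independent replica proof, I would present the recurrence-matching argument as the main one and relegate the $q\to1$ check to a remark.
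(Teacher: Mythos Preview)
Your proposal is correct and follows essentially the same route as the paper: verify that the nested contour integral satisfies the free evolution equation in each variable, a two-body boundary condition on the diagonals $n_i=n_{i+1}$, and the $t=0$ initial data, and then appeal to uniqueness. The one point worth flagging is your anticipated ``main obstacle'' (ii). The paper does \emph{not} match the Beta--binomial collision coefficients to residues of the contour integral directly; instead it proves an abstract reduction (Lemma~\ref{lemma:binomial}, the $q\to1$ degeneration of Povolotsky's non-commutative binomial identity) showing that any function satisfying the free evolution plus the specific two-body boundary operator
\[
\frac{1}{1+\nu}\,\tau^{(i)}\tau^{(i+1)} + \frac{\nu-1}{1+\nu}\,\tau^{(i+1)} + \frac{1}{1+\nu} - \tau^{(i)}
\]
automatically satisfies the true (interacting) evolution with the correct $\binom{c}{j}\frac{(\nu-\mu)_j(\mu)_{c-j}}{(\nu)_c}$ weights. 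After that, checking the boundary condition on the integral is a one-line antisymmetrization: applying the operator above to the integrand produces a factor proportional to $(z_i-z_{i+1}-1)$, which kills the pole and leaves an antisymmetric integrand on identical contours. So the ${}_2F_1$-type matching you worry about is packaged entirely into Lemma~\ref{lemma:binomial} and never touches the contour integral. Your alternative $q\to1$ route via Proposition~\ref{proprecursionlimit} is also mentioned in the paper as a valid (but not pursued) derivation.
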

The previous proposition provides a formula for the moments of the partition function $Z(t,n)$. Using tools developed in the study of Macdonald processes \cite{borodin2014macdonald} (See also \cite{dotsenko2010replica, calabrese2010free}), one is able to take the moment generating series, which yields a Fredholm determinant representation for the Laplace transform of $Z(t,n)$. We refer to \cite[Section 3.2.2]{borodin2014macdonald} for background about Fredholm determinants.  
\begin{theorem}
For $u\in\mathbb{C} \setminus \mathbb{R}_{>0}$, fix $n,t\geqslant 0$ with $n\leqslant t+1$ and $\nu >\mu>0 $. Then one has
\begin{equation*}
\mathbb{E}\left[ e^{uZ(t,n)} \right] = \det(I-K^{\mathrm{BP}}_u)_{\mathbb{L}^2(C_0)}
\end{equation*}
where $C_0$ is a small positively oriented circle containing $0$ but not $-\nu$ nor $-1$, and $K^{\mathrm{BP}}_u : \mathbb{L}^2(C_0)\rightarrow \mathbb{L}^2(C_0)$ is defined by its integral kernel
\begin{equation*}
K^{\mathrm{BP}}_u(v,v') = \frac{1}{2i\pi} \int_{1/2-i\infty}^{1/2+i\infty} \frac{\pi}{\sin(\pi s)} (-u)^s\frac{g^{\mathrm{BP}}(v)}{g^{\mathrm{BP}}(v+s)} \frac{\mathrm{d}s}{s+v - v'}
\end{equation*} 
where 
\begin{equation}
g^{\mathrm{BP}}(v) = \left(\frac{\Gamma(v)}{\Gamma(\nu+v)} \right)^n \left( \frac{\Gamma(\nu+v)}{\Gamma(\mu+v)}\right)^t \Gamma(\nu+ v).
\label{eq:defgbp}
\end{equation}
\label{thmLaplacepolymer}
\end{theorem}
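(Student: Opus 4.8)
The plan is to start from the moment formula of Proposition \ref{prop:momentformulaintro} and convert it into a Fredholm determinant via the Macdonald-process machinery of \cite{borodin2014macdonald}, exactly as was done for the $q$-Hahn TASEP and the strict-weak polymer. First I would check that the moments $\mathbb{E}[Z(t,n)^k]$ do not grow too fast in $k$ — since $Z(t,n)$ is a sum over the finitely many (at most $\binom{t}{n-1}$) lattice paths of products of weights in $[0,1]$, we have $0 \leqslant Z(t,n) \leqslant \binom{t}{n-1}$ almost surely, so $Z(t,n)$ is a bounded random variable and the moment problem is determinate; the series $\sum_k \frac{u^k}{k!}\mathbb{E}[Z(t,n)^k]$ converges for all $u$ and equals $\mathbb{E}[e^{uZ(t,n)}]$. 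Thus it suffices to manipulate the generating series of the right-hand side of \eqref{eq:momentformulapolymer} into the claimed determinant.

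The core computation is the standard one: take $k$ copies of \eqref{eq:momentformulapolymer} with $n_1 = \dots = n_k = n$, sum $\tfrac{u^k}{k!}$ times it over $k$, and recognize the result. Concretely, one writes $\prod_{A<B}\frac{z_A-z_B}{z_A-z_B-1}$ in the nested contours, uses the residue expansion (as in \cite[Prop.~3.2.1 and the proof of Thm.~3.2.11]{borodin2014macdonald}) to collapse all contours onto the small circle $C_0$ around $0$ while picking up the poles at $z_A = z_B + 1$, and then applies the Mellin–Barnes summation identity
$$\sum_{k\geqslant 0} \frac{u^k}{k!} (\text{residue sum}) = \det(I+K_u)$$
where the kernel is read off as $K_u(v,v') = \frac{1}{2i\pi}\int_{1/2-i\infty}^{1/2+i\infty}\frac{\pi}{\sin(\pi s)}(-u)^s \frac{g(v)}{g(v+s)}\frac{ds}{s+v-v'}$, with $g$ the function such that $\left(\frac{\nu+z}{z}\right)^n\left(\frac{\mu+z}{\nu+z}\right)^t \frac{1}{\nu+z} = \frac{g^{\mathrm{BP}}(v)}{g^{\mathrm{BP}}(v+1)}$ up to the bookkeeping factors — this forces $g^{\mathrm{BP}}(v) = \left(\frac{\Gamma(v)}{\Gamma(\nu+v)}\right)^n\left(\frac{\Gamma(\nu+v)}{\Gamma(\mu+v)}\right)^t\Gamma(\nu+v)$ by telescoping the Gamma functions, which is exactly \eqref{eq:defgbp}. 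The vertical line $\re(s) = 1/2$ is chosen so that it separates the pole of $\pi/\sin(\pi s)$ at $s=0$ from those at $s=1,2,\dots$, and so that $v+s$ stays away from the poles of $g^{\mathrm{BP}}$ when $v \in C_0$ is small; the condition $n \leqslant t+1$ and $\nu > \mu > 0$ guarantees the requisite decay of $g^{\mathrm{BP}}(v)/g^{\mathrm{BP}}(v+s)$ along the line so that the kernel is trace class on $\mathbb{L}^2(C_0)$.

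I would organize this so that most of the work is a citation: the algebraic identity transforming a nested-contour moment formula of the shape \eqref{eq:momentformulapolymer} into a Fredholm determinant is proved in full generality in \cite[Section 3.2]{borodin2014macdonald} (and was applied verbatim in \cite{corwin2014q, corwin2014strict}), and our integrand is of precisely that shape with the substitution dictionary $z \mapsto \frac{\nu+z}{z}$ for the "spatial" variable and $z\mapsto\frac{\mu+z}{\nu+z}$ for the "time" variable. The main obstacle — and the step that deserves genuine care rather than a reference — is verifying the analytic hypotheses needed to justify the contour deformations and the interchange of summation and integration: namely that the small circle $C_0$ can be taken to contain $0$ but exclude $-\nu$ and $-1$, that deforming the $z_j$ contours inward crosses only the poles at $z_A - z_B = 1$ (not the pole at $-\nu$, which is why all contours must exclude $-\nu$, nor spurious poles of $g^{\mathrm{BP}}$), and that the resulting kernel decays fast enough in $s$ and in $v,v'$ for the Fredholm expansion to converge and to equal the summed generating function. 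These estimates are where the hypotheses $n \leqslant t+1$ and $\nu > \mu > 0$ are consumed, and I would present them explicitly, deferring only the purely combinatorial residue bookkeeping to \cite{borodin2014macdonald}.
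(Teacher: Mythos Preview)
Your proposal is correct and follows essentially the same route as the paper's second proof in Section~\ref{sec:directproof}: moment formula $\to$ contour shrinking (Proposition~\ref{prop:shrinkingcontours}, citing \cite[Prop.~3.2.1 and 6.2.7]{borodin2014macdonald}) $\to$ generating series as a Fredholm determinant on $\mathbb{L}^2(\Z_{>0}\times C_0)$ via \cite[Prop.~3.2.8]{borodin2014macdonald} $\to$ Mellin--Barnes summation (Lemma~\ref{lem:mellinbarnes}) to collapse to $\mathbb{L}^2(C_0)$.

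A few small corrections. First, the paper uses the sharper bound $0\leqslant Z(t,n)\leqslant 1$, which follows from the half-line-to-point interpretation (Section~\ref{subsubsec:halflinepolymer}) or equivalently from $Z(t,n)=P(t,t-2n+2)$ being a probability; your $\binom{t}{n-1}$ bound is correct but unnecessarily weak. Second, the Mellin--Barnes step (Lemma~\ref{lem:mellinbarnes}) is proved only for $|u|<1$, so the identity \eqref{eq:identityfredholm} is first established on that disk and then extended to all of $\C\setminus\R_{>0}$ by analytic continuation---a step your outline omits. Third, the condition $n\leqslant t+1$ is not where the decay estimates are consumed; it is simply the condition for admissible paths to exist. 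The decay of $1/g^{\mathrm{BP}}(v+s)$ along $\Real(s)=1/2$ comes from the single $\Gamma(\nu+v+s)$ factor via the Stirling-type estimate \eqref{eq:estimategammageneral}, independently of $n$ and $t$. Finally, note that the paper also gives an independent first proof (Section~\ref{sec:qhahntobeta}) by taking the $q\to 1$ limit of the $q$-Hahn TASEP Fredholm determinant, which requires more delicate uniform estimates on $q$-Gamma functions (Lemmas~\ref{lem:estimateqGamma1} and~\ref{lem:estimateqGamma2}).
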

In light of the relation between the Beta RWRE and the Beta polymer given in Proposition \ref{prop:equivRWREpolymer}, we have a similar Fredholm determinant representation for the Laplace transform of $P(t,x)$. 
\begin{theorem}\label{thmLaplaceRWRE}
For $u\in\mathbb{C} \setminus \mathbb{R}_{>0}$, fix $t\in \Z_{\geqslant 0}$, $x\in \lbrace -t, \dots, t\rbrace$ with the same parity, and $\alpha, \beta>0 $. Then one has
\begin{equation}
\mathbb{E}\left[ e^{u P(t,x)} \right] = \det(I-K^{\mathrm{RW}}_u)_{\mathbb{L}^2(C_0)}
\label{eq:fredholmRWRE}
\end{equation}
where $C_0$ is a small positively oriented circle containing $0$ but not $-\alpha-\beta$ nor $-1$, and $K^{\mathrm{RW}}_u : \mathbb{L}^2(C_0)\rightarrow \mathbb{L}^2(C_0)$ is defined by its integral kernel 
\begin{equation*}
K^{\mathrm{RW}}_u(v,v') = \frac{1}{2i\pi} \int_{1/2-i\infty}^{1/2+i\infty} \frac{\pi}{\sin(\pi s)} (-u)^s\frac{g^{\mathrm{RW}}(v)}{g^{\mathrm{RW}}(v+s)} \frac{\mathrm{d}s}{s+v - v'}
\end{equation*} 
where 
\begin{equation*}
%g^{\mathrm{RW}}(v) = \left(\frac{\Gamma(v)}{\Gamma(\alpha+\beta+v)} \right)^{(t-x)/2} \left( \frac{\Gamma(\alpha+\beta+v)}{\Gamma(\alpha+v)}\right)^t \Gamma( v).
g^{\mathrm{RW}}(v) = \left(\frac{\Gamma(v)}{\Gamma(\alpha+v)} \right)^{(t-x)/2} \left( \frac{\Gamma(\alpha+\beta+v)}{\Gamma(\alpha+v)}\right)^{(t+x)/2} \Gamma( v).
\end{equation*}
\end{theorem}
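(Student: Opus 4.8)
The plan is to deduce Theorem~\ref{thmLaplaceRWRE} directly from Theorem~\ref{thmLaplacepolymer} together with the equivalence established in Proposition~\ref{prop:equivRWREpolymer}. The key observation is that the two statements describe the same object after a relabeling of parameters and coordinates, so no new analysis is required --- only careful bookkeeping.

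\smallskip

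\noindent\textbf{Step 1: Reduce to the polymer.} Fix $t\in\Z_{\geqslant 0}$ and $x\in\{-t,\dots,t\}$ with $t\equiv x \pmod 2$, and set $n := (t-x)/2 + 1$, so that $x = t-2n+2$ and the parity constraint guarantees $n\in\Z_{\geqslant 1}$; moreover $x\geqslant -t$ gives $n\leqslant t+1$, exactly the range where Proposition~\ref{prop:equivRWREpolymer} applies. Choose the polymer parameters $\mu=\alpha$ and $\nu=\alpha+\beta$, so that $\nu>\mu>0$ is equivalent to $\alpha,\beta>0$. By Proposition~\ref{prop:equivRWREpolymer} we have the equality in law $Z(t,n) = P(t,x)$, hence $\EE\big[e^{uP(t,x)}\big] = \EE\big[e^{uZ(t,n)}\big]$ for every $u\in\C\setminus\R_{>0}$.

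\smallskip

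\noindent\textbf{Step 2: Apply Theorem~\ref{thmLaplacepolymer} and match the kernels.} Theorem~\ref{thmLaplacepolymer} gives $\EE\big[e^{uZ(t,n)}\big] = \det(I+K^{\mathrm{BP}}_u)_{\mathbb{L}^2(C_0)}$, where $C_0$ is a small positively oriented circle around $0$ excluding $-\nu=-\alpha-\beta$ and $-1$ --- precisely the contour in Theorem~\ref{thmLaplaceRWRE}. It remains to check $K^{\mathrm{BP}}_u = K^{\mathrm{RW}}_u$ under the substitution. Since the kernels have identical prefactors $\tfrac{1}{2i\pi}\tfrac{\pi}{\sin(\pi s)}(-u)^s$ and identical $\tfrac{\mathrm{d}s}{s+v-v'}$ factors, it suffices to verify $g^{\mathrm{BP}}(v)/g^{\mathrm{BP}}(v+s) = g^{\mathrm{RW}}(v)/g^{\mathrm{RW}}(v+s)$. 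Substituting $\nu=\alpha+\beta$, $\mu=\alpha$, and the exponents $n=(t-x)/2+1$, $t=t$ into \eqref{eq:defgbp}, one gets
\begin{equation*}
g^{\mathrm{BP}}(v) = \left(\frac{\Gamma(v)}{\Gamma(\alpha+\beta+v)}\right)^{(t-x)/2+1}\left(\frac{\Gamma(\alpha+\beta+v)}{\Gamma(\alpha+v)}\right)^t \Gamma(\alpha+\beta+v).
\end{equation*}
Comparing with $g^{\mathrm{RW}}(v) = \left(\tfrac{\Gamma(v)}{\Gamma(\alpha+v)}\right)^{(t-x)/2}\left(\tfrac{\Gamma(\alpha+\beta+v)}{\Gamma(\alpha+v)}\right)^{(t+x)/2}\Gamma(v)$, the two functions are not literally equal, but they differ only by a factor that is multiplicatively periodic in the relevant sense; the point is that $g^{\mathrm{BP}}(v)/g^{\mathrm{BP}}(v+s)$ and $g^{\mathrm{RW}}(v)/g^{\mathrm{RW}}(v+s)$ agree. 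Indeed, writing $h(v) := g^{\mathrm{RW}}(v)/g^{\mathrm{BP}}(v)$, a direct computation using $(t+x)/2 = t - (t-x)/2$ shows $h(v) = \big(\Gamma(v)\Gamma(\alpha+\beta+v)/\Gamma(\alpha+v)^{?}\cdots\big)$ collapses so that $h(v)=h(v+s)$ would need to hold; if instead $h$ is genuinely $v$-dependent, one rewrites the ratio and uses the functional equation of $\Gamma$ to absorb the discrepancy. I will carry out this elementary ratio check explicitly, tracking each exponent, to confirm $K^{\mathrm{BP}}_u=K^{\mathrm{RW}}_u$ as operators on $\mathbb{L}^2(C_0)$.

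\smallskip

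\noindent\textbf{Main obstacle.} There is no deep difficulty: the only thing to get right is the parameter/coordinate dictionary, in particular that the shift $n\mapsto n-1$ hidden in $x=t-2n+2$ versus $(t-x)/2$ is consistent, and that the extra $\Gamma(\alpha+\beta+v)$ versus $\Gamma(v)$ in the definitions of $g^{\mathrm{BP}}$ and $g^{\mathrm{RW}}$ is reconciled by the corresponding change in the $\Gamma(v)/\Gamma(\alpha+\beta+v)$ exponent (one unit difference). The risk is purely clerical --- an off-by-one in an exponent would break the identity --- so the proof consists of presenting the substitution, invoking Proposition~\ref{prop:equivRWREpolymer} and Theorem~\ref{thmLaplacepolymer}, and displaying the one-line verification that the two $g$-ratios coincide. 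This establishes \eqref{eq:fredholmRWRE} and completes the proof. $\qed$
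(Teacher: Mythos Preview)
Your approach is exactly the paper's: deduce Theorem~\ref{thmLaplaceRWRE} from Theorem~\ref{thmLaplacepolymer} via the dictionary $\mu=\alpha$, $\nu=\alpha+\beta$, $n=(t-x)/2+1$ and the equality in law from Proposition~\ref{prop:equivRWREpolymer}.

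One correction to Step~2: you claim $g^{\mathrm{BP}}$ and $g^{\mathrm{RW}}$ are ``not literally equal'' and then hedge about ratios and functional equations, but in fact they \emph{are} literally equal under the substitution. Writing $A=\Gamma(v)$, $B=\Gamma(\alpha+v)$, $C=\Gamma(\alpha+\beta+v)$, one has
\[
g^{\mathrm{BP}}(v)=(A/C)^{(t-x)/2+1}(C/B)^{t}\,C = A^{(t-x)/2+1}B^{-t}C^{(t+x)/2},
\]
while
\[
g^{\mathrm{RW}}(v)=(A/B)^{(t-x)/2}(C/B)^{(t+x)/2}\,A = A^{(t-x)/2+1}B^{-t}C^{(t+x)/2}.
\]
So $g^{\mathrm{BP}}=g^{\mathrm{RW}}$ identically, the kernels coincide, and your ``main obstacle'' paragraph (and the talk of $h(v)$, periodicity, and the $\Gamma$ functional equation) can be deleted.
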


\subsection{Limit theorem for the random walk}

A quenched large deviation principle is proved in \cite[Section 4]{rassoul2013quenched} for a wide class of random walks in random environment that includes the Beta-RWRE model. More precisely, the setting of \cite{rassoul2013quenched} applies to the random walk $\mathbf{X}_t=(t, X_t)$ (see Remark \ref{rem:RWinZ2}). The condition that one has to check is that the logarithm of the probability of each possible step has nice properties with respect to the environment (The random variables must belong to the class $\mathcal{L}$ defined in \cite[Definition 2.1]{rassoul2013quenched}). Using the fact that if $B$ is a $Beta(\alpha, \beta)$ random variable, $\log(B)$ and $\log(1-B)$  have integer moments of any order, \cite[Lemma A.4]{rassoul2013quenched} ensures that the condition is satisfied. 
The limit
$$\lambda(z) := \lim_{t\to\infty} \frac{1}{t} \log\left(\mathsf{E}\left[ e^{zX_t}\right]\right)$$
exists $\mathbb{P}$-almost surely. Let $I$ be the Legendre transform of $\lambda$. Then, we have \cite[Section 4]{rassoul2013quenched} that for $x>(\alpha-\beta)/(\alpha+\beta)$,
\begin{equation}
 \lim_{t\to\infty} \frac{1}{t} \log\Big(\mathsf{P}(X_t>x t)\Big) = -I(x) \ \ \mathbb{P} \text{ a.s.}
 \label{eq:existenceLDP}
\end{equation}
\begin{remark}
 In the language of polymers, the limit (\ref{eq:existenceLDP}) states the existence of the quenched free energy. Theorem 4.3 in \cite{rassoul2014quenched} states that for such random walks in random environment, we have that 
$$\lim_{t\to\infty} \frac{1}{t} \log\Big(\mathsf{P}(X_t=\lfloor x t\rfloor)\Big)  =  \lim_{t\to\infty} \frac{1}{t} \log\Big(\mathsf{P}(X_t>x t)\Big)= -I(x).$$
In other terms, the point-to-point free energy and the point-to-half-line free energies are equal. 
\end{remark}
In \cite[Theorem 3.1]{rassoul2013quenched}, a formula is given for $I$ in terms of a variational problem over a space of measures. We provide a closed formula in the present case. It would be interesting to see how the variational problem recovers the formulas that we now present.

For the Beta-RWRE, critical point Fredholm determinant asymptotics shows that the function $I$ is implicitly defined by 
\begin{equation}
x(\theta) = \frac{\Psi_1(\theta+\alpha+\beta) +\Psi_1(\theta )- 2 \Psi_1(\theta + \alpha)}{\Psi_1(\theta) - \Psi_1(\theta + \alpha+\beta)}
\label{eq:defxintro}
\end{equation}
and 
\begin{equation}
I(x(\theta)) = \frac{\Psi_1(\theta+\alpha+\beta) - \Psi_1(\theta + \alpha)}{\Psi_1(\theta) - \Psi_1(\theta + \alpha+\beta)} \Big(\Psi(\theta + \alpha+\beta)- \Psi(\theta)   \Big)+ \Psi(\theta + \alpha+\beta)- \Psi(\theta+\alpha),  
\label{eq:defIintro}
\end{equation}
where $\Psi$ is the digamma function ($\Psi(z)= \Gamma'(z)/\Gamma(z)$) and $\Psi_1$ is the trigamma function ($\Psi_1(z)=\Psi'(z)$). The parameter $\theta$ does not seem natural at a first sight. It is convenient to use it as it will turn out to be the position of the critical point in the asymptotic analysis. When $\theta$ ranges from $0$ to $+\infty$, $x(\theta)$ ranges from $1$ to $(\alpha-\beta)/(\alpha+\beta)$. This covers all the interesting range of large deviation events since $(\alpha-\beta)/(\alpha+\beta)$ is the expected drift of the random walk, and we know that $\mathsf{P}(X_t>x t)=0$ for $x>1$. 

 Moreover, we define $\sigma(\theta)>0$ such that 
\begin{equation}
2\sigma(\theta)^3 = \Psi_2(\theta+\alpha) - \Psi_2(\alpha+\beta+\theta) + \frac{\Psi_1(\alpha+\theta) - \Psi_1(\alpha+\beta+\theta)}{\Psi_1(\theta) - \Psi_1(\alpha+\beta+\theta)}\left( \Psi_2(\alpha+\beta+\theta) - \Psi_2(\theta)\right).
\label{eq:defsigmaintro}
\end{equation}
In the case $\alpha=\beta=1$, that is when the $B_{x,t}$ variables are distributed uniformly on $(0,1)$, the expressions for $x(\theta)$ and $I(x(\theta))$ simplify. We find that 
$$ x(\theta) = \frac{1+2\theta}{\theta^2+(\theta+1)^2}$$
and
$$ I(x(\theta)) = \frac{1}{\theta^2+(\theta+1)^2} ,$$
so that the rate function $I$ is simply the function $I:x \mapsto 1-\sqrt{1-x^2}$. 

The following theorem gives a second order correction to the large deviation principle satisfied by the position of the walker at time $t$. 
\begin{theorem}
\label{thm:RWasymptoticsintro}
\label{thmTWBetaintro}
For $0<\theta<1/2$ and $\alpha=\beta=1$, we have that 
\begin{equation}
\lim_{t\to\infty} \PP\left( \frac{\log\Big(P\big(t, x(\theta)t\big)\Big) + I\big(x(\theta)\big)t}{t^{1/3}\sigma\big(x(\theta)\big)} \leqslant y \right)  = F_{\rm GUE}(y).
\end{equation}
\end{theorem}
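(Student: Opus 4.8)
The plan is to deduce the theorem from the Fredholm determinant formula of Theorem~\ref{thmLaplaceRWRE} by a steepest-descent (Laplace) asymptotic analysis, followed by converting Laplace-transform asymptotics into a distributional limit — the strategy of \cite{borodin2014macdonald}, carried out for the closely related strict--weak polymer in \cite{o2014tracy, corwin2014strict}. Fix $\theta\in(0,1/2)$, set $\alpha=\beta=1$ (so that $g^{\mathrm{RW}}(v)=v^{-(t-x)/2}(1+v)^{(t+x)/2}\Gamma(v)$ is elementary), and take $x$ to be the integer of the same parity as $t$ closest to $x(\theta)\,t$; this rounding perturbs the forthcoming scales only by $o(t^{1/3})$. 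I would apply Theorem~\ref{thmLaplaceRWRE} with the spectral parameter
\[
u=u_t:=-\exp\!\big(I(x(\theta))\,t-t^{1/3}\sigma(\theta)\,y\big)\in\R_{<0}\subset\C\setminus\R_{>0},
\]
which gives $\mathbb{E}\big[e^{u_t P(t,x)}\big]=\det(I+K^{\mathrm{RW}}_{u_t})_{\mathbb{L}^2(C_0)}$; the goal then becomes showing that this determinant tends to $F_{\mathrm{GUE}}(y)$.

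Next I would rewrite the kernel. Using $(-u)^s\,g^{\mathrm{RW}}(v)/g^{\mathrm{RW}}(v+s)=G(v+s)/G(v)$ with $G(w):=(-u_t)^w/g^{\mathrm{RW}}(w)$, one has $\log G(w)=t\,f_\theta(w)-w\,t^{1/3}\sigma(\theta)\,y-\log\Gamma(w)$, where
\[
f_\theta(w)=I(x(\theta))\,w+\tfrac{1-x(\theta)}{2}\log w-\tfrac{1+x(\theta)}{2}\log(1+w)
\]
(for general $\alpha,\beta$ the two logarithms become combinations of $\log\Gamma$'s). A direct computation shows $f_\theta'(\theta)=f_\theta''(\theta)=0$ — the equations $f_\theta''(\theta)=0$ and $f_\theta'(\theta)=0$ being exactly \eqref{eq:defxintro} and \eqref{eq:defIintro} — and $f_\theta'''(\theta)=2\sigma(\theta)^3>0$, which is \eqref{eq:defsigmaintro}. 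So $w=\theta$ is a coalescing double saddle of $f_\theta$, the signature of Airy-type (Tracy--Widom GUE) asymptotics; the values $x(\theta)$, $I(x(\theta))$ and $\sigma(\theta)$ were tailored precisely for this.

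I would then deform the Fredholm contour $C_0$ to a contour $\mathcal{C}$ through $\theta$, leaving $\theta$ at angles $\pm 2\pi/3$ (the Airy-contour shape) and chosen so that $\Re f_\theta(v)>\Re f_\theta(\theta)$ on $\mathcal{C}\setminus\{\theta\}$, and deform the $s$-line $\tfrac12+i\R$ to a contour through $0$ (leaving $0$ at angles $\pm\pi/3$) along which $v+s$ enters $\{\Re f_\theta<\Re f_\theta(\theta)\}$; one must verify no residues are crossed (the $v$-singularity of the kernel at $0$ stays enclosed; the pole $s=v'-v$ of $\tfrac1{s+v-v'}$ is dodged by keeping the $v$- and $v'$-contours slightly nested; and $\Re(v+s)>0$ throughout — this last point is where $\theta<1/2$ is used). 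Rescaling $v=\theta+t^{-1/3}\sigma(\theta)^{-1}z$ and $s=t^{-1/3}\sigma(\theta)^{-1}w$ and expanding $f_\theta$ to third order, $t\big(f_\theta(v+s)-f_\theta(v)\big)\to\tfrac13\big((z+w)^3-z^3\big)$, the term $-s\,t^{1/3}\sigma(\theta)\,y\to -wy$ supplies the $y$-dependence, $\tfrac{\pi}{\sin\pi s}\,\mathrm{d}s\to\tfrac{\mathrm{d}w}{w}$, $\log\Gamma(v+s)-\log\Gamma(v)\to0$, and $\tfrac1{s+v-v'}\to t^{1/3}\sigma(\theta)\tfrac1{w+z-z'}$; hence, after conjugating and rescaling, $K^{\mathrm{RW}}_{u_t}$ converges to the contour-integral form of the Airy kernel whose Fredholm determinant equals $F_{\mathrm{GUE}}(y)$. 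To turn the pointwise kernel convergence into convergence of Fredholm determinants, I would prove uniform-in-$t$ trace-norm bounds: on the non-local parts of the contours $e^{t(\Re f_\theta(v+s)-\Re f_\theta(v))}$ is exponentially small, which dominates the Fredholm series and legitimizes passage to the limit, giving $\det(I+K^{\mathrm{RW}}_{u_t})_{\mathbb{L}^2(C_0)}\to F_{\mathrm{GUE}}(y)$. I expect this steepest-descent step to be the main obstacle: one must exhibit explicit contours along which $\Re f_\theta$ has the required monotonicity \emph{globally}, not just near $\theta$, while avoiding the poles at $0,-1,-2$ and the moving pole of $\tfrac1{s+v-v'}$, and then establish the uniform trace-norm estimates; by contrast the local Airy expansion is routine.

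Finally I would convert the Laplace-transform asymptotics into the distributional statement. Since $P(t,x)\in[0,1]$, for every $M>1$,
\[
e^{-1/M}\,\PP\!\big(P(t,x)\le r_t/M\big)\;\le\;\mathbb{E}\big[e^{u_t P(t,x)}\big]\;\le\;\PP\!\big(P(t,x)\le M r_t\big)+e^{-M},
\]
with $r_t:=-1/u_t=\exp\!\big(-I(x(\theta))t+t^{1/3}\sigma(\theta)y\big)$. Replacing $r_t$ by $Mr_t$ or $r_t/M$ only shifts $y$ by $\pm\log M/(t^{1/3}\sigma(\theta))\to 0$, so letting $t\to\infty$ (by the previous step), then $M\to\infty$, and using the continuity of $F_{\mathrm{GUE}}$, one concludes
\[
\lim_{t\to\infty}\PP\!\left(\frac{\log\big(P(t,x(\theta)t)\big)+I(x(\theta))t}{t^{1/3}\sigma(\theta)}\le y\right)=F_{\mathrm{GUE}}(y),
\]
which is the assertion of the theorem.
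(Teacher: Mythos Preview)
Your proposal is correct and follows essentially the same steepest-descent route as the paper: the same spectral parameter $u_t$, the same phase function (your $f_\theta$ is the paper's $h$), the same double-critical-point computation, localization to a neighbourhood of $\theta$, rescaling to the Airy kernel, and the Laplace-to-distribution conversion (the paper invokes \cite[Lemma~4.1.39]{borodin2014macdonald} rather than your explicit sandwich, but they are equivalent).

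Two small points where your sketch deviates from the paper. First, the paper's concrete contours are the circle $\mathcal{C}_\theta=\{|v|=\theta\}$ for the Fredholm variable and the vertical line $\mathcal{D}_\theta=\theta+i\R$ for $z=v+s$; the steep-descent properties are then proved by hand (Lemmas~\ref{lemma:steepdescentz} and~\ref{lemma:steepdescentw}), and it is precisely Lemma~\ref{lemma:steepdescentw} that forces the restriction $\alpha=\beta=1$. Your wedge-shaped $\pm2\pi/3$ contour would also work near $\theta$, but the global monotonicity check you flag as ``the main obstacle'' is exactly what the paper's circle-plus-line choice makes tractable. Second, your stated reason for $\theta<1/2$ is not quite the operative one: with $v\in\mathcal{C}_\theta$ and $z\in\mathcal{D}_\theta$ one has $\Re(z-v)\in[0,2\theta]$, and the condition $2\theta<1$ is what keeps the sine poles at $z-v\in\Z_{>0}$ from being crossed during the deformation; keeping $\Re(v+s)=\Re z=\theta>0$ holds for any $\theta>0$ and does not itself require $\theta<1/2$.
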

\begin{remark}
As we explain in Section \ref{sec:RWREasymptotics}, we expect Theorem \ref{thm:RWasymptoticsintro} to hold more generally for arbitrary parameters $\alpha, \beta>0$ and $\theta>0$. The assumption $\alpha=\beta$ is made for simplifying the computations, whereas the assumption $\theta<1/2$ is present because certain deformations of contours are justified only for $\theta<\min\lbrace 1/2, \alpha+\beta\rbrace$. 
The condition $\theta>0$ is natural, it corresponds to looking at $x(\theta)<1$. We know that for $x(\theta)>1$, then $P(t, x(\theta)t)=0$. 

In the case $\alpha=\beta=1$, the condition $\theta<1/2$ corresponds to $x(\theta)>4/5$. 
\label{rem:restrictiveconditiontheta}
\end{remark}
\begin{remark}
The Tracy-Widom limit theorem from Theroem \ref{thm:RWasymptoticsintro} should be understood as an analogue of limit theorems for the free energy fluctuations of exactly-solvable random directed polymers. Similar results are proved in \cite{amir2011probability, borodin2012free} for the continuum polymer, in \cite{borodin2014macdonald, borodin2012free} for the O'Connell-Yor semi-discrete polymer, in \cite{borodin2013log} for the log-gamma polymer, and in \cite{o2014tracy, corwin2014strict} for the strict-weak-lattice polymer. 

In light of KPZ universality for directed polymers, we expect the conclusion of Theorem \ref{thm:RWasymptoticsintro} to be more general with respect to weight distribution, but this is only the first RWRE to verify this. 
\end{remark}

In Section \ref{sec:RWREasymptotics}, we also provide an interesting corollary of Theorem \ref{thmTWBetaintro}. Corollary \ref{cor:extreme} states that if one considers an exponential number of Beta RWRE drawn in the same environment, then the maximum of the endpoints satisfies a Tracy-Widom limit theorem. It turns out that even if the rescaled endpoint of a random walk converges in distribution to a Gaussian random variable for large $t$, the limit theorem that we get is quite different from the one verified by Gaussian random variables having the same dependence structure.

\subsection{Localization of the paths}
\label{subsec:localizationintro}

The localization properties of random walks in random environment are quite different from localization properties of random directed polymers in $1+1$ dimensions. For instance, in the log-gamma polymer model, the endpoint of a polymer path of size $n$ fluctuates on the scale $n^{2/3}$ \cite{seppalainen2012scaling}, and localizes in a region of size $\mathcal{O}(1)$ when one conditions on the environment\cite{comets2014loalization}. For random walks in random environment, it is clear by the central limit theorem that the endpoint of a path of size $n$ fluctuates on the scale $\sqrt{n}$. 

Remarkably, the central limit theorem also holds if one conditions on the environment. A general quenched central limit theorem is proved in \cite{rassoul2005almost} for space-time i.i.d. random walks in $\Z^d$. The only hypotheses are that the environment is not deterministic, and that the expectation over the environment of the variance of an elementary increment is finite. These two conditions are clearly satisfied by the Beta-RWRE model. In the particular case of one-dimensional random walks, and when transition probabilities have mean $1/2$, the result was also proved in \cite{berard2004almost}. However, most of the other papers proving a quenched central limit theorem for similar RW models assume a strict ellipticity condition, which is not satisfied by the Beta-RWRE. See also  \cite{rassoul2009almost, bouchet2014quenched} for similar results about random walks in random environment under weaker conditions.

In any case, if we let the environment vary, the fluctuations of the endpoints at time $t$ in the Beta RWRE live on the $\sqrt{t}$ scale. For the Beta-RWRE, Proposition \ref{prop:calculprobaoverlap} shows that the expected proportion of overlap between two random walks drawn independently in a common environment is of order $\sqrt{t}$ up to time $t$. The $\sqrt{t}$ order of magnitude has already been proved in \cite[Lemma 2]{rassoul2005almost} based on results from \cite{ferrari1998fluctuations}, and our Proposition \ref{prop:calculprobaoverlap} provides the precise equivalent. 

Let us give an intuitive argument explaining the difference of behaviour between polymers and random walks. Assume that the environment of the random walk (resp. the polymer) has been drawn, and consider a random walk starting from the point $0$ (resp. a point-to-point polymer starting from $0$). The quenched probability that the random walk performs a first step upward depends only on the environment at the point $0$ (i.e. the random variable $B_{0,0}$ in the case of the Beta RWRE). However, the probability  for the polymer path to start with a step upward depends on the global environment. For instance, if the weight on some edge is very high, this will influence the probability that the first step of the polymer path is upward or downward, so as to enable the polymer path to go through the edge with high weight. This explains why two independent paths in the same environment have more tendency to overlap in the polymer model. 

In \cite{georgiou2013ratios}, a random walk in dynamic random environment is associated to a random directed polymer in $1+1$ dimensions, under a condition called north-east induction on the edge-weights. For the log-gamma polymer, it turns out that the associated random walk has Beta distributed transition probabilities. However, the environment is correlated, so that this RWRE is very different from the Beta RWRE. The random walk considered in \cite{georgiou2013ratios} defines a measure on lattice paths which can be seen as a limit of point-to-point polymer measures. Hence, as pointed out in \cite[Remark 8.3]{georgiou2013ratios}, it has very different localization properties than random walks in space-time i.i.d random environment that we consider in the present paper. 

\subsection{Limit theorem at zero-temperature}

Turning to the zero-temperature limit, Theorem \ref{thmLaplaceRWRE} degenerates to the following for the Bernoulli-Exponential FPP model: 
\begin{theorem}\label{thmProb}
For $r\in\mathbb{R}_{>0}$, fix $n,m\geqslant 0$ and consider $T(n,m)$ the first passage time to the set $D_{n,m}$ in the Bernoulli-Exponential FPP model with parameters $a,b>0$.  Then, one has
\begin{equation*}
\mathbb{P}\Big( T(n,m) > r\Big) = \det(I-K^{\mathrm{FPP}}_r)_{\mathbb{L}^2(C'_0)}
\end{equation*}
where $C'_0$ is a small positively oriented circle containing $0$ but not $-a-b$, %%%%% edit here
 and $K^{\mathrm{FPP}}_r : \mathbb{L}^2(C'_0)\rightarrow \mathbb{L}^2(C'_0)$ is defined by its integral kernel 
\begin{equation}
K^{\mathrm{FPP}}_r(u,u') = \frac{1}{2i\pi} \int_{1/2-i\infty}^{1/2+i\infty} \frac{e^{rs}}{s} 
\frac{g^{\mathrm{FPP}}(u)}{g^{\mathrm{FPP}}(u+s)} \frac{\mathrm{d}s}{s+u-u'},
\label{eq:defkernelFPPintro}
\end{equation}
where 
\begin{equation}
g^{\mathrm{FPP}}(u) = \left(\frac{a+u}{u}\right)^n \left(\frac{a+u}{a+b+u} \right)^m \frac{1}{u}.
\label{eq:defgfppintro}
\end{equation}
\end{theorem}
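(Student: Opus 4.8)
The plan is to obtain Theorem~\ref{thmProb} as the $\epsilon\to0$ degeneration of Theorem~\ref{thmLaplaceRWRE}, using the convergence of the Beta RWRE to the Bernoulli--Exponential FPP model recorded in Proposition~\ref{prop:RWREtoFPP}. Fix $a,b>0$, set $\alpha_\epsilon=\epsilon a$, $\beta_\epsilon=\epsilon b$, $t=n+m$, $x=m-n$ (so $(t-x)/2=n$ and $(t+x)/2=m$, the exponents appearing in $g^{\mathrm{RW}}$), and take the spectral parameter $u=u_\epsilon:=-e^{r/\epsilon}\in\R_{<0}$. Put $\tau_\epsilon:=-\epsilon\log P_\epsilon(n+m,m-n)$, which is a.s.\ finite and nonnegative. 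Then $e^{u_\epsilon P_\epsilon(n+m,m-n)}=\exp\!\big(-e^{(r-\tau_\epsilon)/\epsilon}\big)$, and the map $\tau\mapsto\exp(-e^{(r-\tau)/\epsilon})$ is $[0,1]$-valued, nondecreasing, and converges as $\epsilon\to0$ to $\mathbf 1_{\{\tau>r\}}$ for every $\tau\neq r$. Since $\tau_\epsilon\Rightarrow T(n,m)$ by Proposition~\ref{prop:RWREtoFPP}, and since $\mathbb{P}(T(n,m)=r)=0$ for $r>0$ — conditionally on the Bernoulli variables $\xi_{i,j}$, $T(n,m)$ is the minimum of the finitely many path-sums $\sum_{e\in\pi}t_e$, each of which is either identically $0$ or a sum of independent exponentials, hence absolutely continuous on $(0,\infty)$ — a routine sandwiching/portmanteau argument gives $\mathbb{E}\big[e^{u_\epsilon P_\epsilon(n+m,m-n)}\big]\to\mathbb{P}(T(n,m)>r)$. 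By Theorem~\ref{thmLaplaceRWRE} this left-hand side equals $\det(I+K^{\mathrm{RW}}_{u_\epsilon})_{\mathbb{L}^2(C_0)}$, so it remains to identify the limit of this Fredholm determinant.

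\textbf{Rescaling the kernel and the contour.} I would rescale all spectral variables by $\epsilon$. For the outer contour, $C_0$ must enclose $0$ while avoiding $-\alpha_\epsilon-\beta_\epsilon=-\epsilon(a+b)$ and $-1$; take $C_0=\{|v|=c\epsilon\}$ with a fixed $c\in(0,\min(a+b,\tfrac14))$, so that $v=\epsilon w$ maps $C_0$ onto the fixed circle $C'_0=\{|w|=c\}$, which encloses $0$ but not $-(a+b)$ (the point $-1$, with image $-1/\epsilon$, being excluded automatically for small $\epsilon$). Inside the kernel, first deform the $s$-contour from $\mathrm{Re}(s)=\tfrac12$ to $\mathrm{Re}(s)=\tfrac\epsilon2$: for $\epsilon$ small the only poles of the $s$-integrand — located at $s\in\Z$, at $s=v'-v$, and at the points of $-\alpha_\epsilon-v-\Z_{\geq0}$ coming from the $\Gamma(\alpha_\epsilon+v+s)$ factor of $1/g^{\mathrm{RW}}(v+s)$ — all lie strictly to the left of $\mathrm{Re}(s)=\tfrac\epsilon2$ with our choice of $c$, while $\pi/\sin(\pi s)$ gives exponential decay as $|\mathrm{Im}(s)|\to\infty$, so no residue is collected. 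Then substitute $s=\epsilon\tilde s$, $v=\epsilon w$, $v'=\epsilon w'$: using $\log(-u_\epsilon)=r/\epsilon$ one gets $(-u_\epsilon)^{s}=e^{r\tilde s}$; using $\Gamma(z)=z^{-1}(1+O(z))$ as $z\to0$ one gets $g^{\mathrm{RW}}(\epsilon w)=\epsilon^{-1}g^{\mathrm{FPP}}(w)(1+O(\epsilon))$ with $g^{\mathrm{FPP}}$ as in \eqref{eq:defgfppintro}, so the $\epsilon^{-1}$ prefactors cancel in the ratio $g^{\mathrm{RW}}(\epsilon w)/g^{\mathrm{RW}}(\epsilon(w+\tilde s))$; the Jacobian $\mathrm{d}s=\epsilon\,\mathrm{d}\tilde s$ cancels the $\epsilon$ in $s+v-v'=\epsilon(\tilde s+w-w')$; and $\pi/\sin(\pi\epsilon\tilde s)=\epsilon^{-1}\tilde s^{-1}(1+O(\epsilon^2\tilde s^2))$. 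One concludes that $\widehat K_\epsilon(w,w'):=\epsilon\,K^{\mathrm{RW}}_{u_\epsilon}(\epsilon w,\epsilon w')\to K^{\mathrm{FPP}}_r(w,w')$ pointwise on $C'_0\times C'_0$, with $K^{\mathrm{FPP}}_r$ the kernel of \eqref{eq:defkernelFPPintro}. In the $k$-th term of the Fredholm expansion of $\det(I+K^{\mathrm{RW}}_{u_\epsilon})_{\mathbb{L}^2(C_0)}$, the change of variables $v_i=\epsilon w_i$ produces $\epsilon^{k}$ from $\prod\mathrm{d}v_i$ and $\epsilon^{-k}$ from $\det[K^{\mathrm{RW}}_{u_\epsilon}(\epsilon w_i,\epsilon w_j)]=\epsilon^{-k}\det[\widehat K_\epsilon(w_i,w_j)]$, so that term becomes $\tfrac1{k!}\int_{(C'_0)^k}\det[\widehat K_\epsilon(w_i,w_j)]\prod\tfrac{\mathrm{d}w_i}{2\pi i}$, converging to the $k$-th term of $\det(I+K^{\mathrm{FPP}}_r)_{\mathbb{L}^2(C'_0)}$.

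\textbf{Exchanging the limit with the series, and the main obstacle.} To interchange $\lim_{\epsilon\to0}$ with $\sum_k$, apply Hadamard's inequality $|\det[\widehat K_\epsilon(w_i,w_j)]_{i,j\le k}|\le k^{k/2}M^k$ with $M:=\sup_{\epsilon\le\epsilon_0}\sup_{C'_0\times C'_0}|\widehat K_\epsilon|<\infty$; then $\sum_k \tfrac{1}{k!}(2\pi c)^k k^{k/2}M^k<\infty$, so dominated convergence applies. Hence the content of the proof is the pointwise limit $\widehat K_\epsilon\to K^{\mathrm{FPP}}_r$ together with the uniform bound $M<\infty$; both follow from a dominated-convergence analysis of the inner $\tilde s$-integral, splitting it into the region $|\mathrm{Im}\,\tilde s|\le R$, where the small-argument Gamma asymptotics hold uniformly and yield the limiting integrand up to an error that is $O(\epsilon)$ for fixed $R$, and the tail $|\mathrm{Im}\,\tilde s|>R$, where $|\pi/\sin(\pi\epsilon\tilde s)|\lesssim e^{-\pi\epsilon|\mathrm{Im}\,\tilde s|}$ decays faster than the at-most-polynomial (and $\epsilon$-uniform, by Stirling) growth of the Gamma ratio, so the tail is uniformly negligible. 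This tail estimate — controlling the $\tilde s$-integral uniformly in $\epsilon$ precisely on the range where $\epsilon\tilde s=\Theta(1)$ and the naive Taylor expansion of the Gamma functions is unavailable — is the step I expect to be the main obstacle; the rest is bookkeeping.

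\textbf{Conclusion.} Assembling the pieces gives $\det(I+K^{\mathrm{RW}}_{u_\epsilon})_{\mathbb{L}^2(C_0)}\to\det(I+K^{\mathrm{FPP}}_r)_{\mathbb{L}^2(C'_0)}$, which combined with the first step yields $\mathbb{P}(T(n,m)>r)=\det(I+K^{\mathrm{FPP}}_r)_{\mathbb{L}^2(C'_0)}$, as claimed. This degeneration is the zero-temperature analogue of, and runs in parallel to, the passage from the strict-weak polymer to its associated directed first-passage model in \cite{o2014tracy,corwin2014strict}.
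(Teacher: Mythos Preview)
Your overall strategy matches the paper's proof: scale $u=-e^{r/\epsilon}$, use Proposition~\ref{prop:RWREtoFPP} with a sandwiching argument to turn the Laplace transform into $\mathbb{P}(T(n,m)>r)$, rescale all spectral variables by $\epsilon$, shift the inner contour to $\mathrm{Re}(s)=\epsilon/2$, and pass to the limit term by term in the Fredholm expansion via Hadamard's bound. The first step and the pointwise identification of the limiting integrand are correct and essentially identical to what the paper does.

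The gap is exactly where you flagged it, but your proposed tail argument does not work. First, the Gamma ratio is not polynomially bounded: the factor $1/\Gamma(\epsilon(w+\tilde s))$ sitting inside $1/g^{\mathrm{RW}}$ contributes growth of order $e^{\pi\epsilon|\mathrm{Im}\,\tilde s|/2}$ along the vertical line, so the net exponential decay from $\pi/\sin(\pi\epsilon\tilde s)$ is only $e^{-\pi\epsilon|\mathrm{Im}\,\tilde s|/2}$. More importantly, this decay \emph{rate} vanishes as $\epsilon\to0$, so for any fixed cutoff $R$ the tail $\int_{|\mathrm{Im}\,\tilde s|>R}$ is not uniformly small in $\epsilon$. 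The underlying obstruction is structural: the limiting integral defining $K^{\mathrm{FPP}}_r$ on $\mathrm{Re}(\tilde s)=\tfrac12$ is only an oscillatory, conditionally convergent integral --- its integrand decays merely like $1/|\tilde s|$ --- so no dominated-convergence argument with an integrable majorant on a fixed tail can possibly succeed.

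The paper instead splits at the \emph{moving} cutoff $|\mathrm{Im}\,\tilde s|=\tfrac{1}{4\epsilon}$. On the near region $|\epsilon\tilde s|<\tfrac12$ it writes the integrand as a three-term telescope,
\[
\Big(\tfrac{\epsilon\pi}{\sin(\pi\epsilon\tilde s)}-\tfrac{1}{\tilde s}\Big)e^{r\tilde s}G(\epsilon,\tilde s)
\;+\;\tfrac{e^{r\tilde s}}{\tilde s}\big(G(\epsilon,\tilde s)-G(0,\tilde s)\big)
\;+\;\tfrac{e^{r\tilde s}}{\tilde s}\,G(0,\tilde s),
\]
and shows the first two contributions are $O(\epsilon)$ and $O(\epsilon\log\epsilon^{-1})$ respectively, while the third is a truncation of the target improper integral. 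On the far region it subtracts the constant tail $G\equiv1$, handles $G-1$ by dominated convergence, and disposes of the remaining purely oscillatory piece $\int\frac{\pi}{\sin(\pi s)}e^{rs/\epsilon}\,ds$ with the Riemann--Lebesgue lemma. This oscillatory-integral bookkeeping, rather than a crude exponential-vs-polynomial comparison, is the substantive analytical content your sketch is missing.
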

The integral in (\ref{eq:defkernelFPPintro}) is an improper oscillatory integral if one integrates on the vertical line $1/2+i\R$. One could justify a deformation of the integration contour (so that the tails go to $\infty e^{\pm i2\pi/3}$ for instance) in order to have an absolutely convergent integral, but it happens that the vertical contour is more practical for analyzing the asymptotic behaviour of $\det(I+K^{\mathrm{FPP}}_r)$ in Section \ref{sec:FPPasymptotics}. 

One has a Tracy-Widom limit theorem for the fluctuations of the first passage time $T(n,\kappa n)$ when $n$ goes to infinity, for some slope $\kappa>\frac{a}{b}$. Theorem \ref{thmTWFPPintro} is proved as Theorem \ref{thmTWFPP} in Section \ref{sec:FPPasymptotics}. 
\begin{theorem}
We have that for any $\theta>0$ and parameters $a,b>0$, 
$$ \lim_{n \to \infty}\mathbb{P}\left(\frac{T\big(n, \kappa(\theta)n\big) - \tau(\theta)n}{\rho(\theta)n^{1/3}}\geqslant -y \right) = F_{\mathrm{TW}}(y),$$
where $\kappa(\theta), \tau(\theta)$ and $\rho(\theta)$ are explicit constants (see Section \ref{sec:FPPasymptotics}) such that when $\theta$ ranges from $0$ to infinity, $\kappa(\theta)$ ranges from $+\infty$ to $a/b$. 
\label{thmTWFPPintro}
\end{theorem}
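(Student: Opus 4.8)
\textbf{Proof plan for Theorem \ref{thmTWFPPintro}.}
The starting point is the Fredholm determinant representation for $\mathbb{P}(T(n,m)>r)$ given in Theorem \ref{thmProb}. The plan is to perform a steepest-descent (critical point) asymptotic analysis of $\det(I+K^{\mathrm{FPP}}_r)_{\mathbb{L}^2(C'_0)}$ along the line $m=\kappa(\theta)n$, $r=\tau(\theta)n + \rho(\theta)n^{1/3}y$. First I would rewrite the kernel so that the ratio $g^{\mathrm{FPP}}(u)/g^{\mathrm{FPP}}(u+s)$ together with the factor $e^{rs}$ is expressed as $\exp\big(n(f(u+s)-f(u)) + n^{1/3}y\, s + \text{lower order}\big)$, where, using \eqref{eq:defgfppintro} and $r=\tau n$,
$$
f(u) = -\log\left(\frac{a+u}{u}\right) - \kappa\log\left(\frac{a+u}{a+b+u}\right) + \tau\, u \quad \text{(schematically)},
$$
with $\kappa,\tau$ depending on $\theta$. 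The parameter $\theta$ is chosen, exactly as in the definition of $x(\theta)$ in the introduction, to be the location of a double critical point: I would impose $f'(\theta)=f''(\theta)=0$, which produces two equations; solving the first for $\tau$ in terms of $\theta,\kappa$ and the second gives the relation defining $\kappa(\theta)$, and then $\rho(\theta)$ is read off from $f'''(\theta)=2\rho^3$ (up to the appropriate sign and constant). This is the origin of the explicit constants $\kappa(\theta),\tau(\theta),\rho(\theta)$; that $\kappa(\theta)$ decreases from $+\infty$ to $a/b$ as $\theta:0\to\infty$ should follow from monotonicity of the resulting rational/elementary expression, checked directly.

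Next I would carry out the standard localization argument. The contour $C'_0$ is deformed to pass through the critical point $\theta$ in the direction of steepest descent for $\Real f$, and the vertical $s$-contour $1/2+i\mathbb{R}$ is likewise deformed (or kept, as the text notes it is convenient) so that $\Real\big(n(f(u+s)-f(u))\big)$ is maximized at the coalescing saddle. After the change of variables $u = \theta + n^{-1/3}\tilde u$, $s = n^{-1/3}\tilde s$, the exponent converges to the cubic $\tfrac{1}{3}\rho^3(\tilde u+\tilde s)^3 - \tfrac{1}{3}\rho^3\tilde u^3 + y\tilde s$ (modulo constants absorbed into $\rho$), the factor $e^{rs}/s$ becomes $e^{y\tilde s}/\tilde s$ in the rescaled variable, and $1/(s+u-u')$ scales like $n^{1/3}$, which exactly compensates the $n^{-1/3}$ from $du$. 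One then recognizes the limiting kernel, after the usual contour manipulation, as the Airy kernel $K_{\mathrm{Ai}}$, whose Fredholm determinant on $(y,\infty)$ (equivalently $I + K_{\mathrm{Ai}}$ on a suitably deformed contour) is $F_{\mathrm{TW}}(y) = F_{\mathrm{GUE}}(y)$. The matching of the $s$-integral $\int \frac{e^{rs}}{s}\cdots$ to the contour-integral form of the Airy kernel uses that $\frac{\pi}{\sin\pi s}(-u)^s$ from the polymer kernel has degenerated, in the zero-temperature limit leading to \eqref{eq:defkernelFPPintro}, into $e^{rs}/s$; the residue at $s=0$ plays the role of the $1/s$ pole and must be tracked carefully.

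The technical core — and the step I expect to be the main obstacle — is the rigorous justification of the interchange of limit and Fredholm determinant: namely, (i) showing that the deformed $u$- and $s$-contours can indeed be chosen globally (not just locally near $\theta$) so that $\Real f$ has a strict global max at $\theta$ along the $u$-contour and the $s$-integrand decays, including controlling the contribution of the far tails of the vertical $s$-line where $e^{rs}$ is merely oscillatory (this is the analytic subtlety flagged in the remark after Theorem \ref{thmProb}, and is presumably why the analysis is restricted to a range of $\theta$ in the polymer case, though here the claim is for all $\theta>0$ and $a,b>0$); (ii) obtaining dominated-convergence bounds on the rescaled kernel uniform in $n$ so that trace-class convergence $K^{\mathrm{FPP}}_r \to K_{\mathrm{Ai}}$ holds and hence the Fredholm determinants converge; and (iii) handling the poles at $u=0$ of $g^{\mathrm{FPP}}$ and at $s=0$ of the integrand while deforming contours, making sure no spurious residues are collected. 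I would model this part on the corresponding arguments for the strict-weak polymer in \cite{o2014tracy, corwin2014strict} and the log-gamma polymer in \cite{borodin2013log}, where the same determinant-to-Airy passage is made; the Bernoulli–Exponential case should be, if anything, cleaner because the degeneration to $e^{rs}/s$ removes the $\sin$ poles. Once trace-class convergence is in hand, the conclusion $F_{\mathrm{TW}}(y)=F_{\mathrm{GUE}}(y)$ is immediate from the known identification of the Airy-kernel Fredholm determinant.
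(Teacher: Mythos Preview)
Your plan is correct and matches the paper's approach closely: rewrite the kernel via $z=u+s$ in terms of a function $H$ (your $f$) with a double critical point at $\theta$, verify $H'''(\theta)>0$, prove steep-descent for $\Real[H]$ along the vertical line $\theta+i\mathbb{R}$, localize, rescale, and identify the Airy kernel. The one ingredient you anticipate as the main obstacle but do not name explicitly is how the paper obtains a suitable $\mathbb{L}^2$ contour for \emph{all} $\theta>0$ and $a,b>0$: rather than proving steep-descent along an explicit curve (as in the restricted Beta-RWRE case), the paper argues topologically from the structure of the level set $\{\Real[H(z)]=\Real[H(\theta)]\}$ --- three branches meet at $\theta$, the logarithmic singularities at $0,-a,-a-b$ have known signs, and two branches escape to $\pm i\infty$ --- which forces the existence of a closed contour through $\theta$ enclosing $0$ on which $\Real[H]>\Real[H(\theta)]$ away from $\theta$ (following \cite{borodin2014stochastic}); this is precisely what removes the parameter restriction.
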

Notice that in Theorem \ref{thmTWFPPintro}, we do not have any restriction on the range of the parameters $a, b$ and $\theta$. 

Another direction of study for the Bernoulli-Exponential FPP model is to compute the asymptotic shape of the percolation cluster $C(t)$ for a fixed time $t$ (but looking very far from the origin). In Section \ref{subsec:nonrigorous} we explain, based on a degeneration of the results of Theorem \ref{thmTWFPPintro}, what should be the limit shape of the the convex envelope of the percolation cluster, and  guess the scale of the fluctuations. However, these arguments are based on a non-rigorous interchange of limits and we leave a rigorous proof for future consideration.

\subsection*{Acknowledgements}  G.B. would like to thank Vu-Lan Nguyen for interesting discussions. G.B. and I.C. thank Firas Rassoul-Agha and Timo Sepp{\"a}l{\"a}inen for useful comments on a first version of the paper. 
 
G.B. acknowledges support from the Laboratoire de Probabilit\'es et Mod\`eles Al\'eatoires, Universit\'e Paris-Diderot--Paris 7.   I.C. was partially supported by the NSF through DMS-1208998, the Clay Mathematics Institute through a Clay Research Fellowship, the Institute Henri Poincaré through the Poincaré Chair, and the Packard Foundation through a Packard Fellowship for Science and Engineering.

Since the publication of this paper as \cite{barraquand2017random}, we have noticed or been made aware of several minor mistakes. These mistakes do not affect significantly the main results and are all remedied in the present version. We are grateful to Mark Rychnovsky for noticing the mistake in the (former version of the) proof of Lemma \ref{lemma:steepdescentz} and a sign mistake in the statement of Theorem \ref{thmTWFPPintro} (this was noticed in \cite{barraquand2017tracy} which makes rigorous the claims from Section \ref{subsec:nonrigorous}). We also thank Thimothée Thiery and  Pierre Le Doussal for telling us about the sign mistake in Proposition \ref{prop:shrinkingcontours} that they have noticed in \cite[footnote 4 page 23]{thiery2016exact}.

\subsection*{Outline of the paper}
In Section \ref{sec:qhahntobeta}, we introduce the $q$-Hahn TASEP \cite{corwin2014q, povolotsky2013integrability} and show how some observables of the $q$-Hahn TASEP  converge to the partition function of the Beta polymer (and likewise endpoint distribution of the Beta RWRE). This enables us to give a first proof of the Fredholm determinant formulas in Theorems \ref{thmLaplacepolymer} and \ref{thmLaplaceRWRE}. In Section \ref{sec:directproof}, we give a direct proof of Theorems \ref{thmLaplacepolymer} and \ref{thmLaplaceRWRE} using an approach which can be seen as a rigorous instance of the replica method. In Section \ref{sec:limitBetatoFPP}, we show that the Beta RWRE converges to the Bernoulli-Exponential FPP, and prove the Fredholm determinant formula of Theorem \ref{thmProb}.  
In Section \ref{sec:RWREasymptotics} we perform an asymptotic analysis of the Fredholm determinant from Theorem \ref{thmLaplaceRWRE} to prove Theorem \ref{thmTWBetaintro}. We also discuss Corollary \ref{cor:extreme} which is  about the maximum of the endpoints of several Beta RWRE drawn in a common environment, and we relate this result to extreme value theory. In Section \ref{sec:FPPasymptotics}, we perform  an asymptotic analysis of the Bernoulli-Exponential FPP model to prove Theorem \ref{thmTWFPPintro}.

\section{From the $q$-Hahn TASEP to the Beta polymer}
\label{sec:qhahntobeta}

In this section, we explain how the Beta-RWRE and the Beta polymer arise as limits of the $q$-Hahn TASEP introduced in \cite{povolotsky2013integrability} (see also \cite{corwin2014q}). We first show that some observables of the $q$-Hahn TASEP converge to the partition function of the polymer model (Proposition \ref{proprecursionlimit}). This yields a first proof of Theorem \ref{thmLaplacepolymer}. Then we prove that the Beta RWRE and the Beta polymer model are equivalent models in the sense of Proposition \ref{prop:equivRWREpolymer}.

\subsection{The $q$-Hahn TASEP}
\label{subsec:defqHahn}
Let us recall the definition of the $q$-Hahn-TASEP: This is a discrete time interacting particle system on the one-dimensional integer lattice. Fix $0<q<1$ and $0\leqslant \bar{\nu} \leqslant \bar{\mu}<1$. Then the $N$-particle $q$-Hahn TASEP is a discrete time Markov chain $\vec{x}(t) = \lbrace x_n(t) \rbrace_{n=0}^{N} \in \mathbb{X}_N$ where the state space $\mathbb{X}_N$ is 
$$\mathbb{X}_N  = \Big\lbrace +\infty = x_0 > x_1 >\dots >x_N : \forall i, x_i\in \mathbb{Z} \Big\rbrace.$$
At time $t+1$, each coordinate $x_n(t)$ is updated independently and in parallel to $x_n(t+1) = x_n(t)+j_n$ where $0\leqslant j_n \leqslant x_{n-1}(t)-x_n(t)-1$ is drawn according to the $q$-Hahn probability distribution $\varphi_{q, \bar{\mu}, \bar{\nu}}(j_n\vert x_{n-1}(t)-x_n(t)-1) $. The $q$-Hahn probability distribution on $j\in\lbrace 0, 1, \dots ,m\rbrace$ is defined by the probabilities
\begin{equation}
\varphi_{q, \bar{\mu}, \bar{\nu}}(j\vert m) = \bar{\mu}^j \frac{(\bar{\nu}/\bar{\mu} ; q)_{j}(\bar{\mu} ; q)_{m-j}}{(\bar{\nu};q)_{m}} \frac{(q;q)_m}{(q;q)_{j}(q;q)_{m-j}},
\label{eq:defqhahnweights}
\end{equation}
where for $a\in\mathbb{C}$ and $n\in \mathbb{Z}_{\geqslant 0} \cup\lbrace +\infty\rbrace$, $(a;q)_n$ is the $q$-Pochhammer symbol
$$ (a ; q)_n = (1-a)(1-qa) \dots (1-q^{n-1}a).$$

\subsection{Convergence of the $q$-Hahn TASEP to the Beta polymer}
\label{subsec:convergenceqhahntoBeta}
An interesting interpretation of the $q$-Hahn distribution is provided in Section 4 of \cite{gnedin2009q}. The authors define a $q$-analogue of the P\'olya urn process: One considers two urns, initially empty, in which one sequentially adds balls. When the first urn contains $k$ balls, and the second urn contains $n-k$ balls, one adds a ball to the first urn with probability $ [\nu-\mu+n-k]_q/[\nu+n]_q $, where for any integer $m$, $[m]_q=(1-q^m)/(1-q)$ denotes the $q$-deformed integer, and we set $\bar{\mu} = q^{\mu}$ and $\bar{\nu} = q^{\nu}$. One adds a ball to the second urn with the complementary probability. Then $\varphi_{q, \bar{\mu}, \bar{\nu}}(j\vert m)$ is the probability that after $m$ steps, the first urn contains $j$ balls. When $q$ goes to $1$, one recovers the classical P\'olya urn process. 

For the classical P\'olya urn, it is known that after $n$ steps, the number of balls in the first urn is distributed according to the Beta-Binomial distribution. Further, the proportion of balls in the first urns converges in distribution to the Beta distribution when the number of added balls tends to infinity. Thus, it is natural to consider the $q$-Hahn distribution as a $q$-analogue of the Beta-Binomial distribution.  Further, we expect that if $X$ is a random variable drawn according to the $q$-Hahn distribution on $\lbrace 0, \dots, m\rbrace$ with parameters $(q, \bar{\mu}, \bar{\nu})$, the $q$-deformed proportion $[X]_q /[m]_q$  converges as $m$ goes to infinity to a $q$ analogue of the Beta distribution, which converges as $q$ goes to $1$ to the Beta distribution with parameters  $(\nu-\mu, \mu)$. 

Now, we show that the partition function of the Beta polymer is a limit of observables of the $q$-Hahn TASEP. Let $F^{\epsilon}(t,n)$ be the rescaled quantity
\begin{equation}
\label{eq:defFepsilon}
F^{\epsilon}(t,n) = -\epsilon (x_n(t)+n ), 
\end{equation} 
where $x_n(t)$ is the location of the $n^{th}$ particle in $q$-Hahn TASEP and we set $q=e^{-\epsilon}, \bar{\mu} = q^{ \mu}$ and $ \bar{\nu} = q^{ \nu}$.  

\begin{proposition}
\label{proprecursionlimit}
For $t\geqslant 0$ and $n\geqslant 1$ such that  $n\leqslant t+1$, the sequence of random variables $\left(F^{\epsilon}(t,n) \right)_{\epsilon}$ converges in distribution as $\epsilon \to 0$ to a limit  $F(t,n)$ and one has 
$$ e^{F(t,n)} = e^{F(t-1,n)} B_{t,n} + e^{F(t-1,n-1)}(1- B_{t,n})$$
where $B_{t,n}$ are i.i.d. Beta distributed random variables with parameters $(\mu, \nu-\mu)$. 
Additionally, we have the weak convergence of processes
\begin{equation*}
 \lbrace e^{F^{\epsilon}(t,n)}\rbrace_{t\geqslant 0, n\geqslant 1} \Rightarrow  \lbrace Z(t,n) \rbrace_{t\geqslant 0, n\geqslant 1}.
\end{equation*}
\end{proposition}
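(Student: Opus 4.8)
The plan is to establish convergence in two stages: first identify the limit of the defining recursion for the $q$-Hahn TASEP particle positions, and then upgrade this to joint (process-level) convergence. The key observation is that the $q$-Hahn TASEP update rule has a natural one-step recursive structure: $x_n(t+1) = x_n(t) + j_n$ where $j_n$ is drawn from $\varphi_{q,\bar\mu,\bar\nu}(\cdot \mid x_{n-1}(t) - x_n(t) - 1)$. Rewriting this in terms of $F^\epsilon(t,n) = -\epsilon(x_n(t) + n)$, one sees that the gap $x_{n-1}(t) - x_n(t) - 1$ governs the jump, and after the $-\epsilon$ rescaling this gap is $\big(F^\epsilon(t,n) - F^\epsilon(t,n-1)\big)/\epsilon$ — a large number. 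So the first step is to understand the $q$-Hahn distribution $\varphi_{q,\bar\mu,\bar\nu}(j \mid m)$ in the regime $q = e^{-\epsilon} \to 1$, $\bar\mu = q^\mu$, $\bar\nu = q^\nu$, with $m \to \infty$ such that $\epsilon m \to \text{const}$. Using the P\'olya-urn interpretation recalled just before the proposition (the $q$-deformed urn with selection probabilities $[\nu - \mu + n - k]_q / [\nu + n]_q$), one expects the $q$-deformed proportion $[j]_q/[m]_q$ to converge to a $\mathrm{Beta}(\mu,\nu-\mu)$ random variable. Concretely, I would show that if the gap rescaled is $\ell := \epsilon\big(x_{n-1}(t) - x_n(t) - 1\big) \to a$, then $\epsilon j_n$ converges in distribution to $-\log\big(1 - B(1 - e^{-a})\big)$ for $B \sim \mathrm{Beta}(\mu,\nu-\mu)$, which is precisely what is needed so that $e^{F^\epsilon(t,n)} = e^{F^\epsilon(t-1,n)} \cdot e^{-\epsilon j_n}$ (up to the index bookkeeping) converges to $e^{F(t-1,n)} B_{t,n} + e^{F(t-1,n-1)}(1 - B_{t,n})$.

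The second step is to set up the induction on $t$ (and check the boundary cases $n = t+1$ and $n = 1$ separately — these correspond to the degenerate lines in the recurrence \eqref{eq:recurrencerelation}, where the gap is infinite or the dynamics is frozen, and one should verify the limiting initial data $e^{F(0,1)} = 1$ matches \eqref{eq:initialconditionpoint2point}). Assuming by induction that $\big(F^\epsilon(t-1,n), F^\epsilon(t-1,n-1)\big)$ converges jointly to $\big(F(t-1,n), F(t-1,n-1)\big)$, and using that the jump $j_n$ at time $t$ is drawn conditionally independently given the gap, one applies a conditional version of the weak-convergence statement from Step 1. The fact that at a fixed time $t$ all coordinates $j_n$ (for varying $n$) are updated independently in parallel lets one bootstrap from pairwise to joint convergence of the whole vector $\{F^\epsilon(t,n)\}_n$, and then Skorokhod representation (or a direct coupling) combines the convergence of the conditioning gap with the convergence of the conditional jump law to get convergence of the product $e^{F^\epsilon(t,n)}$. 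Since all the quantities involved at fixed $(t,n)$ depend only on finitely many of the $B_{t,n}$'s and finitely many $q$-Hahn updates, the process-level convergence $\{e^{F^\epsilon(t,n)}\} \Rightarrow \{Z(t,n)\}$ follows by the fact that convergence of all finite-dimensional distributions suffices here (the index set is discrete), and the limiting recursion is exactly \eqref{eq:recurrencerelation}, whose solution is $Z(t,n)$.

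The main obstacle I anticipate is Step 1: making rigorous the convergence of the (suitably rescaled) $q$-Hahn distribution to the Beta law in the coupled limit $q \to 1$, $m \to \infty$. One must control the $q$-Pochhammer symbols $(a;q)_k$ uniformly as $q \to 1$ with $k$ growing like $1/\epsilon$; the naive termwise limit is not enough, and one needs tail estimates on $\varphi_{q,\bar\mu,\bar\nu}(j\mid m)$ to rule out escape of mass (e.g. that $\epsilon j_n$ does not concentrate at $0$ or run off to $\infty$). A clean route is to exploit the urn representation: write $j_n$ as a sum of $m$ dependent Bernoulli increments with the explicit $q$-deformed probabilities, and show the empirical $q$-proportion is a bounded martingale-type quantity converging to its Beta limit — this sidesteps delicate asymptotics of the explicit product formula. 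Alternatively, one can invoke the moment formulas: the moments of $[j]_q/[m]_q$ under $\varphi_{q,\bar\mu,\bar\nu}$ have closed $q$-hypergeometric forms that converge as $q \to 1,\ m\to\infty$ to the Beta moments $\prod (\mu + i)/(\nu + i)$, and since Beta is determined by its moments, weak convergence follows. Once Step 1 is in hand, Steps 2 is routine induction and coupling.
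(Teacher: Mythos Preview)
Your overall architecture --- induction on $(t,n)$, with the inductive step reducing to the identification of the Beta limit of the conditional $q$-Hahn jump --- is exactly the paper's. The difference lies in how Step~1 is executed. You flag the direct $q$-Pochhammer asymptotics as the main obstacle and propose to circumvent it via the urn/martingale picture or the moment method. The paper instead attacks the density directly and finds it tractable: the single technical input is the elementary limit $(r;q)_\infty/(rq^x;q)_\infty \to (1-r)^x$ as $q\to 1$ (proved by a short double-series manipulation), together with the rewriting $(q^x;q)_j = (1-q)^j\,\Gamma_q(x+j)/\Gamma_q(x)$. With these, one computes explicitly that $\epsilon^{-1}\varphi_{q,\bar\mu,\bar\nu}\big({-}\epsilon^{-1}\log s \,\big|\, {-}\epsilon^{-1}\log r\big)$ converges pointwise to the $\mathrm{Beta}(\mu,\nu-\mu)$ density in the variable $(s-r)/(1-r)$, and weak convergence then follows from the standard ``lattice density $\to$ continuous density'' argument; no separate tightness or tail estimate is needed. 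Your alternative routes would also succeed --- the moment method in particular is clean since the Beta law is moment-determined --- but the paper's direct computation is shorter and yields the limiting density formula at once. (One small bookkeeping point: in your displayed limit for $\epsilon j_n$ the roles of $B$ and $1-B$ appear swapped relative to the $\mathrm{Beta}(\mu,\nu-\mu)$ convention; this does not affect the argument.)
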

\begin{proof}
We first state a lemma useful for taking limits of $q$-Pochhammer symbols. 
\begin{lemma}
For $r,q\in(0,1) $ and $x>0$, we have that 
$$ \frac{(r ; q)_{\infty}}{(rq^x ; q)_{\infty}} \xrightarrow[q\to 1]{}  (1-r)^{x}.$$
\label{lemma:limitpochhammer}
\end{lemma}
\begin{proof}
We take the limit of $\log\left( \frac{(r ; q)_{\infty}}{(rq^x ; q)_{\infty}} \right)$. Since $r,q\in (0,1)$, one can use the series expansion of the logarithm around $1$. This yields
\begin{align*}
\log\left( \frac{(r ; q)_{\infty}}{(rq^x ; q)_{\infty}} \right) &= \sum_{i=0}^{\infty} \log\left( \frac{1-rq^i}{1-rq^{x+i}}\right)\\
&= \sum_{i=0}^{\infty} -\left(\sum_{j=1}^{\infty} \left( \frac{(rq^i)^j}{j} - \frac{(rq^{x+j})^j}{j}\right)\right)\\
&= \sum_{j=1}^{\infty}\frac{-r^j}{j}\left(\sum_{i=0}^{\infty} (q^j)^i - q^{xj} \sum_{i=0}^{\infty} (q^j)^i\right)\\
&= - \sum_{j=1}^{\infty}\frac{r^j}{j}\frac{1-q^{xj}}{1-q^j}\\
&\xrightarrow[q\to 1]{} x \log(1-r).
\end{align*}
The last convergence comes from term-wise convergence as $q\to 1$ along with absolute convergence of the sum for $r\in(0,1)$. 
\end{proof}

\begin{lemma}
The sequence of random variables $\exp(F^{\epsilon}(1,1))$ converges as $\epsilon \to 0$ to a Beta distributed random variable with parameters $(\mu, \nu-\mu)$.
\label{lemma:betalimit1}
\end{lemma}
\begin{proof}
By the definition of $F^{\epsilon}(t,n)$ given in Equation (\ref{eq:defFepsilon}),
 $$\exp(F^{\epsilon}(1,1))= r \Longleftrightarrow x_1(1)+1 = -\epsilon^{-1}\log r .$$
 For $r$ such that $-\epsilon^{-1}\log(r)\in \Z$, 
 \begin{align*}
 \mathbb{P}\left(x_1(1)+1 = -\epsilon^{-1}\log r \right) &= \varphi_{q, \bar{\mu}, \bar{\nu}}(-\epsilon^{-1}\log r \vert \infty) \\
 &= \bar{\mu}^{-\epsilon^{-1}\log r} \frac{( \bar{\nu}/\bar{\mu} ; q)_{-\epsilon^{-1}\log r}}{(q ; q)_{-\epsilon^{-1}\log r}} \frac{(\bar{\mu} ; q)_{\infty}}{(\bar{\nu} ;q)_{\infty}}.
 \end{align*}
We shall use the $q$-Gamma function defined by 
$$ \Gamma_q(z) = \frac{(q ; q)_{\infty}}{(q^x ; q)_{\infty}} (1-q)^{1-z}.$$
Note that for $j\in\mathbb{Z}_{\geqslant 0}$, 
$$ (q^x ; q)_j  = \frac{(q^x ; q)_{\infty}}{(q^{x+j} ; q)_{\infty}} = (1-q)^{j} \frac{\Gamma_q(x+j)}{\Gamma_q(x)}, $$
so that 
\begin{align*}
%\mathbb{P}\left(x_1(1)+1 = -\epsilon^{-1}\log r \right) &= r^{\mu} \frac{(1-q)^{-\epsilon^{-1}\log r} \frac{\Gamma_q(\nu-\mu-\epsilon^{-1}\log r)}{\Gamma_q(\nu-\mu)}}{(1-q)^{-\epsilon^{-1}\log r } \Gamma_q(1-\epsilon^{-1}\log r)} \frac{(1-q)^{\nu-\mu}\Gamma_q(\nu)}{\Gamma_q(\mu)} \\
%&= r^{\mu}  (1-q)^{\nu-\mu}  \frac{\Gamma_q(\nu-\mu-\epsilon^{-1}\log r)}{\Gamma_q(1-\epsilon^{-1}\log r)} \frac{\Gamma_q(\nu)}{\Gamma_q(\nu-\mu)\Gamma_q(\mu)}.
\mathbb{P}\left(X_1(1)+1 = -\epsilon^{-1}\log r \right) &= r^{\mu} \dfrac{\dfrac{(q^{\nu-\mu};q)_{\infty}}{(rq^{\nu-\mu};q)_{\infty}}}{\dfrac{(q;q)_{\infty}}{(rq;q)_{\infty}}} \frac{(1-q)^{\nu-\mu}\Gamma_q(\nu)}{\Gamma_q(\mu)} \\
&= (1-q) r^{\mu}  \frac{(rq ; q)_{\infty}}{(rq^{\nu-\mu} ; q)_{\infty}} \frac{\Gamma_q(\nu)}{\Gamma_q(\nu-\mu)\Gamma_q(\mu)}.
\end{align*}

As $\epsilon$ goes to zero, using Lemma \ref{lemma:limitpochhammer} and the fact that $\Gamma_q(x)\xrightarrow[q\to 1]{} \Gamma(x)$, 
$$  \frac{\Gamma_q(\nu)}{\Gamma_q(\nu-\mu)\Gamma_q(\mu)} \rightarrow  \frac{\Gamma(\nu)}{\Gamma(\nu-\mu)\Gamma(\mu)}; $$
$$  \frac{(rq ; q)_{\infty}}{(rq^{\nu-\mu} ; q)_{\infty}}  \rightarrow \left(1-r \right)^{\nu-\mu-1}.$$
Thus as $\epsilon$ goes to zero,
\begin{equation}
 \epsilon^{-1}\mathbb{P}\left(F^{\epsilon}(1,1) = \log r \right)\rightarrow r \times r^{\mu-1} (1-r)^{\nu-\mu-1} \frac{\Gamma(\nu)}{\Gamma(\nu-\mu)\Gamma(\mu)}.
 \label{eq:limitproba}
\end{equation}
Hence $F^{\epsilon}(1,1)$, which takes values in $a_{\epsilon} + \epsilon \mathbb{Z}$ where $a_\epsilon$ is an $\epsilon$-dependent shift, converges weakly to a continuous random variable $F$ whose density is given by $f(s) = \lim \epsilon^{-1} \mathbb{P}\left(F^{\epsilon}(1,1) = s_{\epsilon} \right)$ (where $s_{\epsilon}$ is the closest point to $s$ in $a_{\epsilon} + \epsilon \mathbb{Z}$). For more details, see the proof of \cite[Lemma 2.1]{corwin2014strict} which is very similar. Consequently, $\exp(F^{\epsilon}(1,1))$ converges weakly to the continuous random variable $\exp(F)$. Since the density of $F$ is given by the right-hand-side of (\ref{eq:limitproba}) with $s=\log r$, one concludes that the density of $\exp(F(1,1))$ is 
$$ r^{\mu-1} (1-r)^{\nu-\mu-1} \frac{\Gamma(\nu)}{\Gamma(\nu-\mu)\Gamma(\mu)} $$
which is the density of a $Beta(\mu, \nu-\mu)$ random variable. 

\end{proof}
\begin{lemma}
Conditionally on $e^{F^{\epsilon}(t-1,n)} = Z$ and $e^{F^{\epsilon}(t-1,n-1)} = Z'$, the sequence of random variables $\frac{\exp(F^{\epsilon}(t,n)) - Z'}{Z-Z'}$ converges as $\epsilon \to 0$ to a Beta distributed random variable with parameters $(\mu, \nu-\mu)$.
\label{lemma:betalimit2}
\end{lemma}
\begin{proof}
Conditioning on $e^{F^{\epsilon}(t-1,n)} = Z$ and $e^{F^{\epsilon}(t-1,n-1)} = Z'$ corresponds to conditioning on the fact that the gap $x_{n-1}(t-1) - x_{n}(t-1)-1$ is $\epsilon^{-1} \log(Z/Z')$. 
The probability that $e^{F^{\epsilon}(t,n)}/Z = s$, conditioned to $e^{F^{\epsilon}(t-1,n)} = Z$ and $e^{F^{\epsilon}(t-1,n-1)} = Z'$ is 
\begin{equation*}
% \mathbb{P}\Big(e^{F^{\epsilon}(t,n)}/Z = s \Big\vert e^{F^{\epsilon}(t-1,n)} = Z, e^{F^{\epsilon}(t-1,n-1)} = Z'\Big) = \\ 
\mathbb{P}\Big( x_n(t)-x_n(t-1)  = -\epsilon^{-1}\log(s)  \Big\vert x_{n-1}(t-1) - x_{n}(t-1)-1= -\epsilon^{-1} \log(r) \Big),
\end{equation*}
where we have set $r=Z'/Z$. By the definition of the $q$-Hahn TASEP, this probability is exactly $\varphi_{q, \bar{\mu}\vert \bar{\nu}}\big(-\epsilon^{-1}\log(s)\big\vert -\epsilon^{-1}\log(r)\big)$.

\begin{multline*}
\varphi_{q, \bar{\mu}, \bar{\nu}}\Big(-\epsilon^{-1}\log(s)\Big\vert -\epsilon^{-1}\log(r)\Big) = \\
 s^{\mu} \frac{(q^{\mu} ; q)_{-\epsilon\log(r/s)}}{(q^{\nu} ; q)_{-\epsilon\log(r)}} \frac{(q^{\nu-\mu} ; q)_{-\epsilon\log(s)}}{(q ; q)_{-\epsilon\log(s)}} \frac{(q ; q)_{-\epsilon\log(r)}}{(q ; q)_{-\epsilon\log(r/s)}}\\
%&= \frac{\frac{\Gamma_q(\mu-\epsilon\log(r/s))}{\Gamma_q(\mu)}}{\frac{\Gamma_q(\nu-\epsilon\log(r))}{\Gamma_q{\nu}}} \frac{\frac{\Gamma_q(\nu-\mu-\epsilon\log(s))}{\Gamma_q(\nu-\mu)}}{\Gamma(1-\epsilon\log(s))} \frac{\Gamma_q(1-\epsilon\log(r))}{\Gamma_q(1-\epsilon\log(r/s))}.
= (1-q) s^\mu \frac{(\frac r s q^\nu ; q)_{\infty}}{(\frac r s q^\mu ; q)_{\infty}}\frac{(s q ; q)_{\infty}}{(s q^{\nu-\mu} ; q)_{\infty}}\frac{(r q ; q)_{\infty}}{(\frac r s q ; q)_{\infty}}\frac{\Gamma_q(\nu)}{\Gamma_q(\nu-\mu)\Gamma_q(\mu)}.
\end{multline*}
Using again Lemma (\ref{lemma:limitpochhammer}), one finds that as $\epsilon$ goes to zero, 
$$ \epsilon^{-1}\varphi_{q, \bar{\mu}, \bar{\nu}}\Big(-\epsilon^{-1}\log(s)\Big\vert -\epsilon^{-1}\log(r)\Big) \rightarrow s^{\mu}(1-s)^{\nu-\mu - 1} \frac{(1-r/s)^{\mu-1}}{(1-r)^{\nu-1}} \frac{\Gamma(\nu)}{\Gamma(\nu-\mu)\Gamma(\mu)}$$
which can be rewritten as 
\begin{multline}
 \epsilon^{-1}\varphi_{q, \bar{\mu}, \bar{\nu}}\Big(-\epsilon^{-1}\log(s)\Big\vert -\epsilon^{-1}\log(r)\Big) \rightarrow \\
 \frac{s}{1-r}\times \left(\frac{s-r}{1-r} \right)^{\mu-1}\left( \frac{1-s}{1-r}\right)^{\nu-\mu-1} \frac{\Gamma(\nu)}{\Gamma(\nu-\mu)\Gamma(\mu)}.
\end{multline}
By the same arguments as in the proof of Lemma \ref{lemma:betalimit1}, this shows that $\exp{F^{\epsilon}(t,n)}/Z$ converges to a continuous random variable in $(r,1)$ having density 
$$\frac{1}{1-r} \left(\frac{s-r}{1-r} \right)^{\mu-1}\left( \frac{1-s}{1-r}\right)^{\nu-\mu-1} \frac{\Gamma(\nu)}{\Gamma(\nu-\mu)\Gamma(\mu)}\mathrm{d}s.$$
This implies that  
$$\frac{\exp{F^{\epsilon}(t,n)}/Z - Z'/Z }{1-Z'/Z} \Longrightarrow Beta(\mu, \nu-\mu).$$
\end{proof}
It is clear that $\exp{F^{\epsilon}(0,1)}=1$ for any $\epsilon$. By applying several times Lemma \ref{lemma:betalimit1}, one sees that $\exp{F^{\epsilon}(t,1)}$ converges to a product of independent Beta random variables with parameters $(\mu, \nu-\mu)$. Finally, by recurrence on $t+n$ and using Lemma \ref{lemma:betalimit2}, $\exp{F^{\epsilon}(t,n)}$ converges in distribution to $\exp{F(t,n)}$ where the family of random variables  $\left(\exp{F(t,n)}\right)_{t,n}$ is defined by the recurrence formula of the statement of Proposition \ref{proprecursionlimit}. This, in turn,  implies the weak convergence of processes
\begin{equation}
 \lbrace e^{F^{\epsilon}(t,n)}\rbrace_{t\geqslant 0, n\geqslant 1} \Rightarrow  \lbrace Z(t,n) \rbrace_{t\geqslant 0, n\geqslant 1}.
 \label{eq:convergenceprocess}
\end{equation}
\end{proof}

One has the following Fredholm determinant representation for the $e_q$-Laplace transform of $X_n(t)$.
\begin{theorem}[Theorem 1.10 in \cite{corwin2014q}]
Consider $q$-Hahn TASEP started from step initial data $x_n(0)=-n \ \forall n \geqslant 1$. Then for all $\zeta\in \mathbb{C}\setminus \mathbb{R}_{>0}$,
\begin{equation}
\mathbb{E}\left[\frac{1}{(\zeta q^{x_n(t)+n} ; q)_{\infty}} \right] = \det(I+K^{\mathrm{qHahn}}_{\zeta})_{\mathbb{L}^2(C_1)} 
\label{eq:fredholmqHahn}
\end{equation}
where $C_1$ is a small positively oriented circle containing $1$ but not $1/\bar{\nu}, 1/q$ nor $0$, and $K^{\mathrm{qHahn}}_{\zeta} : \mathbb{L}^2(C_1)\rightarrow \mathbb{L}^2(C_1)$ is defined by its integral kernel 
$$K^{\mathrm{qHahn}}_{\zeta} (w,w') =\frac{1}{2i\pi} \int_{1/2+i\mathbb R} \frac{\pi}{\sin(\pi s) } (-\zeta)^s \frac{g^{\mathrm{qHahn}}(w)}{g^{\mathrm{qHahn}}(q^sw)} \frac{\rm{d}s}{q^s w -w'}$$
with $$ g(w)  = \left(\frac{(\bar{\nu} w ; q)_{\infty}}{(w ; q)_{\infty}} \right)^n \left( \frac{(\bar{\mu} w ; q)_{\infty}}{(\bar{\nu} w ; q)_{\infty}}\right)^t \frac{1}{(\bar{\nu} w ; q)_{\infty}}.$$
\end{theorem}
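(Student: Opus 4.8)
The plan is to follow the standard ``duality to determinants'' route for $q$-deformed integrable particle systems: first produce nested contour integral formulas for the $q$-moments $\mathbb{E}_{\mathrm{step}}\big[\prod_{i=1}^{k}q^{x_{n_i}(t)+n_i}\big]$, then assemble them into the $e_q$-Laplace transform via the $q$-binomial theorem, and finally resum the resulting series into a Fredholm determinant using the Macdonald process machinery of \cite{borodin2014macdonald}. This reconstructs the argument of \cite{corwin2014q}.

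\textbf{Step 1 (moment formulas via duality).} By construction, the $q$-Hahn weights $\varphi_{q,\bar\mu,\bar\nu}$ are exactly those making the one-row transfer matrix stochastic and compatible with the Yang--Baxter relation (this is Povolotsky's integrability); this produces a Markov duality between $q$-Hahn TASEP and a $q$-Hahn zero-range (``$q$-boson'') process, with duality function $\prod_i q^{x_{n_i}+n_i}$. Consequently $H(t;\vec n):=\mathbb{E}_{\mathrm{step}}\big[\prod_{i=1}^{k}q^{x_{n_i}(t)+n_i}\big]$ satisfies a closed linear system of difference equations in $t$. The key structural step is to show that $H$ coincides with the solution of a \emph{free} evolution equation --- in which the variables $n_1,\dots,n_k$ evolve as independent one-particle $q$-Hahn walks --- subject to $k-1$ two-body boundary conditions along the diagonals $n_i=n_{i+1}$, together with the step initial data. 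Uniqueness of the solution to this ``free evolution plus boundary conditions'' problem then reduces the moment formula to checking that the $k$-fold nested contour integral
$$
H(t;\vec n)=\frac{(-1)^{k}q^{\binom{k}{2}}}{(2i\pi)^{k}}\oint\!\!\cdots\!\!\oint\prod_{1\le A<B\le k}\frac{z_A-z_B}{z_A-qz_B}\;\prod_{j=1}^{k}f_{t,n_j}(z_j)\,\frac{dz_j}{z_j}
$$
(with nested contours for $z_1,\dots,z_k$, and $f_{t,n}$ the appropriate ratio of $q$-Pochhammer symbols carrying the powers $n$ and $t$) (i) solves the free equation termwise, (ii) satisfies the boundary condition --- a residue computation at $z_A=qz_B$ --- and (iii) matches the step initial condition --- a contour evaluation.

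\textbf{Step 2 (moments to $q$-Laplace transform).} Because particles only jump rightward, $x_n(t)\ge x_n(0)=-n$, so $q^{x_n(t)+n}\in(0,1]$ and every diagonal moment $\bar\mu_k:=H(t;(n,\dots,n))$ lies in $[0,1]$. The $q$-binomial theorem $\tfrac{1}{(\zeta w;q)_\infty}=\sum_{k\ge0}\tfrac{(\zeta w)^k}{(q;q)_k}$ then yields, for $|\zeta|$ small,
$$
\mathbb{E}\!\left[\frac{1}{(\zeta q^{x_n(t)+n};q)_\infty}\right]=\sum_{k\ge0}\frac{\zeta^{k}}{(q;q)_k}\,\bar\mu_k ,
$$
an absolutely convergent series; both sides extend analytically to $\zeta\in\mathbb{C}\setminus\mathbb{R}_{>0}$ (the left side because $\zeta q^{x_n(t)+n}$ then avoids the poles $\{q^{-m}\}_{m\ge0}$ of $w\mapsto 1/(w;q)_\infty$).

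\textbf{Step 3 (resummation to a Fredholm determinant --- the main obstacle).} Insert the nested contour formula for $\bar\mu_k$ into the series, deform all $k$ contours to the single circle $C_1$ (around $1$, excluding $0$, $1/q$, $1/\bar\nu$), and bookkeep the residues picked up as a sum over set partitions; the combinatorial factors conspire so that the $\tfrac{1}{(q;q)_k}$ weights are absorbed. Applying the Macdonald-process resummation identity of \cite{borodin2014macdonald} then collapses the series to $\det(I+K^{\mathrm{qHahn}}_\zeta)_{\mathbb{L}^2(C_1)}$: the Mellin--Barnes $s$-integral carrying the factor $\tfrac{\pi}{\sin(\pi s)}(-\zeta)^s$ is the $q$-analogue of the representation $e^{\zeta x}=\tfrac{1}{2i\pi}\int\Gamma(-s)(-\zeta x)^s\,ds$, obtained here from the $q$-Gamma/Mellin--Barnes representation of $\sum_m\tfrac{\zeta^m}{(q;q)_m}y^m$, while the ratio $g^{\mathrm{qHahn}}(w)/g^{\mathrm{qHahn}}(q^sw)$ packages the $n$- and $t$-dependent $q$-Pochhammer products of $f_{t,n}$. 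One must also verify that $K^{\mathrm{qHahn}}_\zeta$ is trace class on $\mathbb{L}^2(C_1)$ --- using the decay of $1/\sin(\pi s)$ along $\tfrac12+i\mathbb{R}$ and analyticity of $1/g^{\mathrm{qHahn}}(q^sw)$ for $w\in C_1$ on that line --- so that the Fredholm determinant is well defined and the series genuinely converges to it. The delicate contour deformations and the trace-class/convergence estimates are where essentially all the work lies; the algebraic resummation identity is by now standard, and the duality-to-moments passage of Step 1 is the conceptual core.
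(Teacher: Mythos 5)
This theorem is not proved in the paper at all: it is imported verbatim as Theorem 1.10 of \cite{corwin2014q}, and the paper only uses it (via the $q\to1$ limit in Proposition \ref{prop:limitFredholmqhahn}) as one of two routes to Theorem \ref{thmLaplacepolymer}. Your outline is a faithful reconstruction of the strategy actually used in the cited reference --- duality to nested-contour moment formulas via free evolution plus two-body boundary conditions, $e_q$-Laplace transform of the diagonal moments, contour unnesting, and Mellin--Barnes resummation --- and it also mirrors, step for step, the rigorous replica argument this paper carries out in Section \ref{sec:directproof} for the $q\to1$ degeneration (compare Proposition \ref{prop:momentformula}, Proposition \ref{prop:shrinkingcontours}, and Lemma \ref{lem:mellinbarnes}). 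One small logical correction: uniqueness is invoked for the \emph{true} evolution equation (a finite linear recursion in $t$ with given initial data), not for the free-evolution-plus-boundary-conditions system, which admits many solutions; the point is that any solution of the latter, restricted to the Weyl chamber, solves the former. With that caveat, your sketch identifies the right checks and the right places where the analytic work (contour bookkeeping, absolute summability of the Fredholm expansion) lives.
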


Let us scale the parameter $\zeta$ as 
$$\zeta = (1-q) u, $$
and scale the other parameters as previously: $q=e^{-\epsilon}, \bar{\mu} = q^{\mu}, \bar{\nu} = q^{\nu }$. Then we have 
$$\mathbb{E}\left[\frac{1}{(\zeta q^{x_n(t)+n} ; q)_{\infty}} \right] = \mathbb{E}\left[ e_q\left(ue^{F^{\epsilon}(t,n)} \right) \right] $$
where 
$$ e_q(x) = \frac{1}{\big((1-q)x ; q\big)_{\infty}} $$
is the $e_q$-exponential function. Since $e_q(x)\rightarrow e^x$ uniformly for $x$ in a compact set, we have, using the convergence of processes (\ref{eq:convergenceprocess}) and the fact that $e^{F^{\epsilon}(t,n)}$ are uniformly bounded by $1$, that 
\begin{equation}
 \lim_{\epsilon \to 0}\mathbb{E}\left[\frac{1}{(\zeta q^{x_n(t)+n} ; q)_{\infty}} \right] = \mathbb{E}\left[\exp(ue^{F(t,n)}) \right]. 
 \label{eq:star}
\end{equation}
Hence, in order to prove Theorem \ref{thmLaplacepolymer}, one has to take the limit when $\epsilon$ goes to zero of the Fredholm determinant in the right-hand-side of (\ref{eq:fredholmqHahn}). This is achieved in Proposition \ref{prop:limitFredholmqhahn}. 
\begin{proposition}
\begin{equation*}
 \lim_{\epsilon \to 0}\mathbb{E}\left[\frac{1}{(\zeta q^{x_n(t)+n} ; q)_{\infty}} \right] =\det(I-K^{\mathrm{BP}}_u)_{\mathbb{L}^2(C_0)}
\end{equation*}
where $C_0$ is a small positively oriented circle containing $0$ but not $-\nu$ nor $-1$, and $K^{\mathrm{BP}}_u : \mathbb{L}^2(C_0)\rightarrow \mathbb{L}^2(C_0)$ is defined by its integral kernel 
\begin{equation}
K^{\mathrm{BP}}_u(v,v') = \frac{1}{2i\pi} \int_{1/2-i\infty}^{1/2+i\infty} \frac{\pi}{\sin(\pi s)} (-u)^s\frac{g^{\mathrm{BP}}(v)}{g^{\mathrm{BP}}(v+s)} \frac{\mathrm{d}s}{s+v - v'}
\label{eq:formulakernelBeta}
\end{equation} 
where 
\begin{equation*}
g^{\mathrm{BP}}(v) = \left(\frac{\Gamma(v)}{\Gamma(\nu+v)} \right)^n \left( \frac{\Gamma(\nu+v)}{\Gamma(\mu+v)}\right)^t \Gamma(\nu+ v).
\end{equation*}
\label{prop:limitFredholmqhahn}
\end{proposition}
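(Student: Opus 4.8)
The plan is to take the $\epsilon \to 0$ limit of the $q$-Hahn Fredholm determinant $\det(I + K^{\mathrm{qHahn}}_\zeta)_{\mathbb{L}^2(C_1)}$ term by term in its series expansion, and to identify the limit of each term with the corresponding term in the series expansion of $\det(I + K^{\mathrm{BP}}_u)_{\mathbb{L}^2(C_0)}$. First I would set up the change of variables that connects the two contours: writing $w = q^v = e^{-\epsilon v}$, a small circle $C_1$ around $1$ in the $w$-plane corresponds (after rescaling) to a small circle $C_0$ around $0$ in the $v$-plane, and $dw = -\epsilon e^{-\epsilon v}\, dv \sim -\epsilon\, dv$. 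The condition that $C_1$ excludes $1/\bar\nu = q^{-\nu}$ and $1/q$ becomes the condition that $C_0$ excludes $-\nu$ and $-1$; this matches the statement exactly. Under this substitution one checks, using Lemma \ref{lemma:limitpochhammer} together with $\Gamma_q(z) \to \Gamma(z)$, that
$$ \frac{(\bar\nu w ; q)_\infty}{(w;q)_\infty} = \frac{(q^{\nu+v};q)_\infty}{(q^v;q)_\infty} \sim (1-q)^{-\nu} \frac{\Gamma_q(v)}{\Gamma_q(\nu+v)} \longrightarrow (\text{const})\cdot \frac{\Gamma(v)}{\Gamma(\nu+v)},$$
and similarly for the other factors, so that $g^{\mathrm{qHahn}}(w)$ converges, up to a $w$-independent (but $\epsilon$-dependent) prefactor that cancels in the ratio $g^{\mathrm{qHahn}}(w)/g^{\mathrm{qHahn}}(q^s w)$, to $g^{\mathrm{BP}}(v)$ as defined in \eqref{eq:defgbp}.

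Next I would track the remaining pieces of the kernel. The factor $(-\zeta)^s = ((1-q)(-u))^s$ contributes $(1-q)^s(-u)^s$; the factor $\frac{1}{q^s w - w'}$ equals $\frac{1}{e^{-\epsilon(s+v)} - e^{-\epsilon v'}} \sim \frac{1}{-\epsilon}\cdot\frac{1}{s+v-v'}$ after Taylor expansion; and the contour $1/2 + i\mathbb{R}$ for the $s$-integral is unchanged. Collecting the $\epsilon$-powers: each of the $k$ kernel entries in the $k$-th term of the Fredholm expansion carries one factor $\frac{1}{q^sw-w'} \sim \frac{1}{-\epsilon}\cdot\frac{1}{s+v-v'}$, while the $k$-fold contour integral $\oint_{C_1}\cdots\oint_{C_1} \prod dw_i$ carries $(-\epsilon)^k$ from the $k$ factors $dw_i \sim -\epsilon\, dv_i$; these cancel, and the leftover factors $(1-q)^s$ are absorbed by scaling $v \mapsto v$ appropriately or simply noted to be $\to 1$ in the relevant regime. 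Hence the $k$-th term $\frac{1}{k!}\int \det[K^{\mathrm{qHahn}}_\zeta(w_i,w_j)]\prod dw_i$ converges to $\frac{1}{k!}\int \det[K^{\mathrm{BP}}_u(v_i,v_j)]\prod dv_i$, the $k$-th term of $\det(I+K^{\mathrm{BP}}_u)_{\mathbb{L}^2(C_0)}$, with $K^{\mathrm{BP}}_u$ exactly as in \eqref{eq:formulakernelBeta}.

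Finally, to pass from termwise convergence to convergence of the full Fredholm determinant, I would establish a uniform (in $\epsilon$) bound on the entries $|K^{\mathrm{qHahn}}_\zeta(w_i,w_j)|$ for $w_i, w_j$ on the (rescaled) contour, so that Hadamard's inequality gives a summable bound $\frac{k^{k/2} C^k}{k!}$ on the $k$-th terms uniformly in $\epsilon$, permitting dominated convergence on the series. This requires controlling the $s$-integral $\int_{1/2+i\mathbb{R}} \frac{\pi}{\sin(\pi s)}(-\zeta)^s \frac{g^{\mathrm{qHahn}}(w)}{g^{\mathrm{qHahn}}(q^sw)}\frac{ds}{q^sw-w'}$ uniformly in $\epsilon$: the factor $\pi/\sin(\pi s)$ decays like $e^{-\pi|\Im s|}$ along the vertical line, and one needs the ratio $g^{\mathrm{qHahn}}(w)/g^{\mathrm{qHahn}}(q^sw)$ not to grow faster than this decays — which follows from the Gamma-function asymptotics once the $q$-Pochhammer ratios are controlled, e.g.\ via the same logarithmic-series estimate used in Lemma \ref{lemma:limitpochhammer} together with standard bounds on $\Gamma_q$. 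This uniform tail estimate is the main obstacle; the algebraic identification of the limiting kernel is essentially bookkeeping, and indeed this type of argument is carried out in detail in \cite[Section 2]{corwin2014strict} in a closely analogous setting, so I would follow that template. Combining the termwise limits, \eqref{eq:star}, and the dominated convergence just described yields the claimed identity.
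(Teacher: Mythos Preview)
Your approach is essentially the same as the paper's: change variables $w=q^v$, identify the pointwise limit of the kernel via $\Gamma_q \to \Gamma$, then justify convergence of the Fredholm series by a uniform bound on the $s$-integral together with Hadamard's inequality. Two points deserve correction.

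First, your handling of the factor $(1-q)^s$ is wrong. You write that it is ``absorbed by scaling $v\mapsto v$ appropriately or simply noted to be $\to 1$ in the relevant regime.'' Neither is correct: for $s$ on $1/2+i\R$ one has $|(1-q)^s| = (1-q)^{1/2}\to 0$, so if this factor were genuinely left over the kernel would vanish in the limit. What actually happens is that the ratio $g^{\mathrm{qHahn}}(q^v)/g^{\mathrm{qHahn}}(q^{v+s})$ itself carries a compensating factor $(1-q)^{-s}$, coming from the last piece $1/(q^{\nu+v};q)_\infty$ of $g^{\mathrm{qHahn}}$: writing $(q^{\nu+v};q)_\infty = (q;q)_\infty(1-q)^{1-\nu-v}/\Gamma_q(\nu+v)$, one sees that $(1-q)^s\,g^{\mathrm{qHahn}}(q^v)/g^{\mathrm{qHahn}}(q^{v+s}) \to g^{\mathrm{BP}}(v)/g^{\mathrm{BP}}(v+s)$. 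The paper carries out this algebra explicitly, and the cancellation is exact rather than asymptotic.

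Second, you correctly flag the uniform-in-$\epsilon$ integrability of the $s$-integrand as the main obstacle, but your justification (``Gamma-function asymptotics once the $q$-Pochhammer ratios are controlled'') is too thin. The exponential decay of $\pi/\sin(\pi s)$ along $1/2+i\R$ is only $e^{-\pi|\Im s|}$, and the reciprocal $q$-Gamma factors in $1/g^{\mathrm{qHahn}}(q^{v+s})$ can grow like $e^{+\pi|\Im s|/2}$ times a polynomial, so the margin is tight and one must be careful that the bound is uniform as $q\to 1$. The paper does this via two concrete estimates: a bound $|\Gamma_q(a+iy)/\Gamma_q(b+iy)| \le C(|y|^{|b-a|+1}+1)$ uniform in $q\in(1/2,1)$, and the inequality $|\Gamma(1+iy)|\le|\Gamma_q(1+iy)|$, which together give a dominating function of the form $C e^{-\pi|\Im s|/2}|\Im s|^c$. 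The template in \cite{corwin2014strict} is close but the specific $q$-Gamma estimates needed here are not quite the same, so you should prove them rather than cite.
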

\begin{proof}
Let us first show that the pointwise limit of the kernel of the Fredholm determinant (\ref{eq:fredholmqHahn}) of the $q$-Hahn TASEP agrees with  (\ref{eq:formulakernelBeta}). Make the change of variables 
$$ w=q^v,\ \  w'=q^{v'}.$$
The function $g^{\mathrm{qHahn}}$ used inside the integrand of the kernel $K^{\mathrm{qHahn}}_{\zeta}$ becomes
$$ g^{\mathrm{qHahn}}(q^v) = \left(\frac{(q^{\nu+v} ; q)_{\infty}}{(q^v; q)_{\infty}} \right)^n \left( \frac{(q^{\mu+v} ; q)_{\infty}}{(q^{\nu+v} ; q)_{\infty}}\right)^t \frac{1}{(q^{\nu+v} ; q)_{\infty}}.$$
We again use the $q$-Gamma function and the formula
$$(q^z ; q)_{\infty} = \frac{(q ; q)_{\infty}}{\Gamma_q(z) } (1-q)^{1-z}.$$
In terms of $q$-Gamma function
$$ g^{\mathrm{qHahn}}(q^v) = \left(\frac{(1-q)^{v}\Gamma_q(v)}{(1-q)^{\nu+v}\Gamma_q(\nu+v)} \right)^n \left( \frac{(1-q)^{\nu+v}\Gamma_q(\nu+v)}{(1-q)^{\mu+v}\Gamma_q(\mu+v)}\right)^t  \frac{(1-q)^{\nu+v}\Gamma_q(\nu+v)}{(1-q)(q;q)_{\infty}}.$$
Thus, 
$$\frac{g^{\mathrm{qHahn}}(q^v)}{g^{\mathrm{qHahn}}(q^{v+s})}  =  \left(\frac{\Gamma_q(\nu+v+s)\Gamma_q(v)}{\Gamma_q(v+s)\Gamma_q(\nu+v)} \right)^n \left( \frac{\Gamma_q(\mu+v+s)\Gamma_q(\nu+v)}{\Gamma_q(\nu+v+s)\Gamma_q(\mu+v)}\right)^t  \frac{\Gamma_q(\nu+v)}{(1-q)^s\Gamma_q(\nu+v+s)}.$$
and hence,
$$ (1-q)^s \frac{g^{\mathrm{qHahn}}(q^v)}{g^{\mathrm{qHahn}}(q^{v+s})} \xrightarrow[\epsilon \to 0]{} \frac{g^{\mathrm{BP}}(v)}{g^{\mathrm{BP}}(v+s)}.$$
The extra factor $(1-q)^s$ in $g(q^v)/g(q^{v+s})$ cancels with the one coming from $\zeta$. 
Moreover, 
$$ \frac{-\epsilon}{q^{s+v}-q^{v'}} \xrightarrow[\epsilon \to 0]{} \frac{1}{s+v-v'},$$
where the  factor $-\epsilon$ comes from the Jacobian of the change of variables $\mathrm{d}w = -\epsilon q^v \mathrm{d}v$. This demonstrates pointwise convergence  of the integrand in the integral defining $K^{\mathrm{qHahn}}_{\zeta}$ to that defining $K^{\mathrm{BP}}_u$. 

In order to show that $-\epsilon K^{\mathrm{qHahn}}_{\zeta}(q^{v}, q^{v'})$ converges to $K^{\mathrm{BP}}_u(v,v')$, one needs an integrable bound.  We will to show that  for a fixed $v$, the quantity 
\begin{equation}
 \frac{\pi}{\sin(\pi s)}  \frac{\Gamma_q(\mu+v+s)^t}{\Gamma_q(\nu+v+s)^{t-n}\Gamma_q(v+s)^n}\frac{1}{\Gamma_q(\nu+v+s)}
 \label{eq:tointegrate}
\end{equation}
is uniformly integrable in $s$ as $q$ varies. 
We need a few estimates to show this. For $z=x+iy$ with fixed $x$, we have from \cite[Chapter 1, 1.18 (2)]{erdelyi1953higher},
\begin{equation}
\big\vert \Gamma(x+iy) \big\vert e^{\vert y \vert \pi/2} \vert y\vert^{1/2-x}  \xrightarrow[\vert y \vert \to \infty]{} e^{-x}\sqrt{2\pi}.
\label{eq:estimategamma}
\end{equation}
% First,  there exist a constant $C_1>0$ such that for $s\in 1/2+i\R$, 
%\begin{equation}
% \left| \frac{\pi}{\sin(\pi s)} \right| \leqslant C_1 e^{-\pi \vert \Imag(s)\vert}.
% \label{eq:boundsine}
%\end{equation}
We also need the estimates for the $q$-Gamma function in the two next lemmas.
\begin{lemma}
For any fixed $a,b>0$,  there exists a constant $C_2>0$ such that for any $y\in \R$ and $q\in (\frac 1 2,1)$, 
$$ \left| \frac{\Gamma_q(a+i y)}{\Gamma_q(b+iy)}\right|\leqslant C_2 \Big(\big\vert  y\big\vert^{\vert b-a\vert+1}+1\Big). $$
\label{lem:estimateqGamma1}
\end{lemma}
\begin{proof}
%First observe that $\Gamma_q(x+iy)$ is periodic in the variable $y$ with period $2\pi \epsilon^{-1}$, where as before, $\epsilon = -\log(q)$. Thus, we only need to prove the bound for $y\in[0, 2\pi \epsilon^{-1}]$. 
By symmetry, it is enough to prove the result for $y>0$. We have 
\begin{equation*}
\left| \frac{\Gamma_q(a+i y)}{\Gamma_q(b+iy)}\right| = (1-q)^{b-a} \bigg| \prod_{n \geqslant 0}\left(\frac{1-q^{b+iy+n}}{1-q^{a+iy+n}} \right) \bigg|.
\end{equation*}
If $a<b$, then 
$$ \left| \frac{\Gamma_q(a+i y)}{\Gamma_q(b+iy)}\right| \leqslant (1-q)^{b-a} \bigg| \prod_{n \geqslant 0}\left(\frac{1-q^{b+n}}{1-q^{a+n}} \right) \bigg| =\frac{\Gamma_q(a)}{\Gamma_q(b)} \leqslant 1. $$
If $a>b$, 
We write 
$$\left| \frac{\Gamma_q(a+i y)}{\Gamma_q(b+iy)}\right| =  \left| \frac{\Gamma_q(a+i y)}{\Gamma_q\big(b+\lceil a-b\rceil+iy\big)} \frac{\Gamma_q\big(b+\lceil a-b\rceil+i y\big)}{\Gamma_q(b+iy)}\right|.$$
Since $b+\lceil a-b \rceil>a$, we have from the first part of the proof that 
$$\left|\frac{\Gamma_q(a+i y)}{\Gamma_q\big(b+\lceil a-b\rceil+iy\big)}\right|\leqslant 1.$$
Moreover, since the q-Gamma function satisfies the functional equation 
$$ \Gamma_q(z+1) = [z]_q \Gamma_q(z) ,$$
where $[z]_q:=\frac{1-q^z}{1-q}$ is the $q$-deformed complex number, we have that 
$$\left| \frac{\Gamma_q(a+i y)}{\Gamma_q(b+iy)}\right|\leqslant \prod_{j=0}^{\lceil a-b\rceil-1}\Big| [b+j+iy]_q\Big|.$$
It can be checked that for $x,y\in\R$, we have the identity
$$ \big|[x+iy]_q \big|^2 = \big|[x]_q \big|^2  + q^x\big| [iy]_q\big|^2.$$
For $q\in (1/2, 1)$, $\vert \log(q)/(1-q)\vert^2 \leqslant 2$, and hence
$$ \big| [iy]_q\big|^2 = 2\frac{\Big(1-\cos\big( y\log(q)\big)\Big)}{(1-q)^2} \leqslant \frac{y^2\log(q)^2}{(1-q)^2} \leqslant 2 y^2.$$
This implies that there exist a constant $C_2>0 $ independent of $q$ such that 
$$\left|\prod_{j=0}^{\lceil a-b\rceil-1} [b+j+iy]_q  \right| \leqslant C_2 (y^{(a-b+1)}+1),$$
which concludes the proof. 
\end{proof}

\begin{lemma}
For any $y\in\R$  and $q\in(0,1)$, 
$$ \big|\Gamma(1+iy) \big| \leqslant \big|\Gamma_q(1+iy) \big|.$$
\label{lem:estimateqGamma2}
\end{lemma}
\begin{proof}
%According to Section 1.10 in \cite{gasper2004basic}, 
%$$ \frac{\Gamma(1+z)}{\Gamma_q(1+z)}  = \prod_{n=1}^{\infty} \frac{\frac{n}{n+s} \left(\frac{n+1}{n} \right)^s}{\frac{(1-q^n)(1-q^{n+1})^s}{(1-q^{n+s})(1-q^{n})^s}}.$$
We have 
$$ \Gamma_q(1+iy)  = (1-q)^{-iy} \prod_{n=1}^{\infty} \frac{1-q^n}{1-q^{n+iy}},$$
so that 
$$ \big\vert\Gamma_q(1+iy)\big\vert  = \prod_{n=1}^{\infty} \frac{1-q^n}{\sqrt{1+q^{2n}-2q^n \cos\big(y \log(q)\big)}}.$$
We also have that 
$$ \Gamma(1+iy) =  \prod_{n=1}^{\infty} \frac{n}{n+iy} \left(\frac{n+1}{n} \right)^{iy}, $$
so that 
$$   \big\vert \Gamma(1+iy)\big\vert =  \prod_{n=1}^{\infty} \left|\frac{n}{n+iy}\right| = \prod_{n=1}^{\infty} \frac{n}{\sqrt{n^2+y^2}}.$$
Hence, it is enough to show that for all $n\geqslant 1$, 
$$ \frac{n}{\sqrt{n^2+y^2}} \leqslant  \frac{1-q^n}{\sqrt{1+q^{2n}-2q^n \cos\big(y \log(q)\big)}}.$$
Setting $Y=y/n$ and $Q=q^n$, it is equivalent to 
$$ \frac{1}{1+Y^2} \leqslant \frac{(1-Q)^2}{1+Q^2 -2Q\cos\big(Y\log(Q)\big)},$$
which is equivalent to 
$$ Y^2 (1-Q)^2 \geqslant 2Q\Big(1-\cos\big(Y\log(Q)\big)\Big), $$
which is true for any $Q\in(0,1)$ and $Y\in \R$.
\end{proof}
Finally, we can write using Lemma \ref{lem:estimateqGamma2} that  for $s\in 1/2+i\R$,
\begin{multline*}
\bigg\vert \Gamma(s)\Gamma(1-s) \frac{1}{\Gamma_q(\nu+v+s)}\bigg\vert =\bigg\vert \frac{\Gamma(s)\Gamma(1-s)}{\Gamma(1/2+s)} \frac{\Gamma(1/2+s)}{\Gamma_q(1/2+s)} \frac{\Gamma_q(1/2+s)}{\Gamma_q(\nu+v+s)}\bigg\vert \\ \leqslant \bigg\vert\frac{\Gamma(s)\Gamma(1-s)}{\Gamma(1/2+s)} \frac{\Gamma_q(1/2+s)}{\Gamma_q(\nu+v+s)}\bigg\vert. 
\end{multline*}
Hence, using Lemma \ref{lem:estimateqGamma1}, there exist constants $C,c>0$ such that (\ref{eq:tointegrate}) is uniformly bounded by 
$$ Ce^{-\pi/2 \vert \Imag[s]\vert} \big\vert \Imag[s]\big\vert^c.$$
Thus, (\ref{eq:tointegrate}) is uniformly integrable for $s$ along $1/2+i\R$, as $q$ varies near $1$. 
Consequently the integrand of $-\epsilon K^{\mathrm{qHahn}}_{\zeta}(q^v,q^{v'})$ is uniformly integrable. By dominated convergence, it implies that we have pointwise convergence of the kernel $-\epsilon K^{\mathrm{qHahn}}_{\zeta}(q^{v}, q^{v'})$ to the kernel $K^{\mathrm{BP}}_u(v,v')$. 

However, it is a priori not sufficient. In order to prove the convergence of the Fredholm determinant, we use again dominated convergence. First notice that since the Fredholm determinant contour is finite, one can prove as in \cite[Lemma 3.2]{corwin2014strict} 
that $-\epsilon K^{\mathrm{qHahn}}_{\zeta}(q^v, q^{v'})$ is uniformly bounded for $v,v'$ in the contour $C_0$ and $q$ near $1$. 
Moreover, each term in the Fredholm determinant expansion
$$ \det(I+K^{\mathrm{qHahn}}_{\zeta}) = 1 + \sum_{n=1}^{\infty} \frac{1}{n!} \int\dots\int \det(K^{\mathrm{qHahn}}_{\zeta}(w_i, w_j))_{i,j=1}^{n} \mathrm{d}w_1\dots \mathrm{d}w_n, $$
can be bounded using Hadamard's bound, so that the sum absolutely converges. Combining this with the above established pointwise convergence of the kernels allows us to  conclude the proof of Proposition \ref{prop:limitFredholmqhahn}. 
\end{proof}

\begin{proof}[Proof of Theorems \ref{thmLaplacepolymer} and \ref{thmLaplaceRWRE}]
 Proposition \ref{prop:limitFredholmqhahn} combined with (\ref{eq:star}) yields the Fredholm determinant formula for the Laplace transform of $Z(t, n)$ given in Theorem \ref{thmLaplacepolymer}. In order to deduce Theorem \ref{thmLaplaceRWRE}, we use the equivalence between the Beta polymer and the Beta-RWRE from  Proposition \ref{prop:equivRWREpolymer}, proved in Section \ref{subsec:equivRWREpolymer}.
\end{proof}

\subsection{Equivalence Beta-RWRE and Beta polymer}
\label{subsec:equivRWREpolymer}
\begin{figure}
\begin{tikzpicture}[scale=0.6]
\clip (-0.8, -5.5) rectangle (16.5, 5.5);
\draw[gray, dotted] (-1, -10) grid (17, 10);
\fill (0,0) circle(0.1);
\draw (-0.3, 0.3) node{$0$};
\begin{scope}[rotate=45, scale= sqrt(2)]
\draw[gray] (0, -11) grid (11, 0);
\end{scope}
\draw[thick] (15, -0.1)  -- (15,0.1) node[anchor=south]{$t$};
%\draw[dotted, thick] (14, -0.1) -- (14, -2);
\draw[ultra thick] (0,0) -- (1, -1)-- (3, 1) -- (6, -2) -- (8, 0) -- (9, -1) -- (13, 3)-- (14, 2) -- (15, 3);
\fill (15, 3) circle(0.1);
\draw (15, 3) node[anchor=west]{$X_t $};
\draw[->, thick] (0,-5.5) -- (0,5.5);
\draw[->, thick] (-1,0) -- (16.5,0);

\draw (-0.1, -1) node[anchor=east]{$x$} -- (0.1, -1);
\draw[dashed, gray , thick] (0.1, -1) -- (15, -1);
\fill[gray, opacity=0.5] ( 14.9, 7) -- ( 14.9, -1) -- (15.1, -1) -- (15.1, 7) ;
\fill[gray, opacity=0.5] (15, -1) circle(0.1);
\end{tikzpicture}
\caption{A possible path for the Beta-RWRE is shown. It corresponds to the half-line to point polymer path in Figure \ref{fig:betapolymer}. $P(t,x)$ is the (quenched) probability that the random walk ends at time $t$ in the gray region.}
\label{fig:betaRWRE2}
\end{figure}
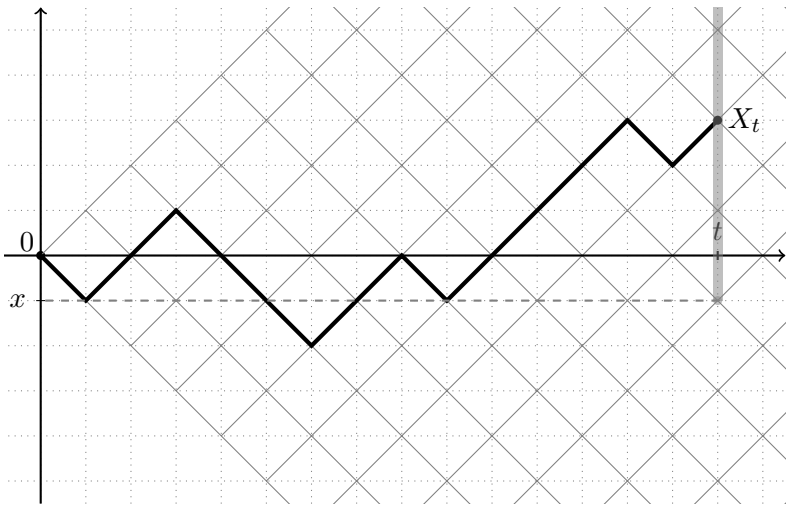
We show that the Beta RWRE and the Beta polymer are equivalent models in the sense that if the parameters $\alpha, \beta$ of the random walk and the parameters $\mu, \nu$ of the polymer are such that $\mu=\alpha$ and $\nu=\alpha+\beta$, we have the equality in law
$$ Z(t, n)  = P(t, t-2n+2).$$

The equality in law is true for fixed $t$ and $n$. However, as families of random variables, $\left(Z(t,n) \right)$ and  $\left(P(t,t-2n+2) \right)$  for $t+1\geqslant n\geqslant 1$ have different laws.

\begin{proof}[Proof of Proposition \ref{prop:equivRWREpolymer}]
Let us first notice that since $\mu=\alpha$ and $\nu=\alpha+\beta$, the i.i.d. collection of Beta random variables defining the environment for the Beta polymer, and the i.i.d. collection of r.v. defining the environment of the Beta RWRE, have the same law. 

Also, as it was already pointed-out in Section \ref{subsubsec:halflinepolymer}, the point-to-point Beta polymer is equivalent to a half line to point  Beta polymer.

Let $t$ and $n$ having the same parity. The random variable $P(t, t-2n+2)$ is the probability for the Beta RWRE to arrive above (or exactly at) $t-2n+2$. This is also the probability for the Beta RWRE to make at most $n-1$ downward steps up to time $t$. Let us imagine that we deform the underlying lattice of the Beta polymer so that Beta polymer paths are actually up-right path, and we also consider the path from $(t,n)$ to its initial point. Then the polymer path is the trajectory of a random walk, and one can interpret the weight of this polymer path as the quenched probability of the corresponding random walk trajectory (compare the polymer path depicted in Figure \ref{fig:betapolymer} with the RWRE path depicted in Figure \ref{fig:betaRWRE2}, using the correspondence shown in Figure \ref{figdeformationlattice}). Moreover the event that the random walk performs at most $n-1$ downward steps is equivalent to the fact that the polymer path starts with positive $n$-coordinate. These events correspond to the fact that the path intersects the thick gray half-lines in Figures \ref{fig:betapolymer} and \ref{fig:betaRWRE2}. 

\begin{figure}
\begin{tikzpicture}[scale=2]
\draw[gray] (-0.5, 0) -- (1.5, 0);
\draw[gray] (-0.5, 1) -- (1.5, 1);
\draw[dotted, gray] (0, -0.5) -- (0,1.5);
\draw[dotted, gray] (1, -0.5) -- (1,1.5);
\draw[gray] (-0.5, -0.5) -- (1.5, 1.5);
\draw[gray] (-0.5, 0.5) -- (0.5, 1.5);
\draw[gray] (0.5, -0.5) -- (1.5, 0.5);
\draw[thick, ->] (0,1) -- (0.95,1);
\draw[thick, ->] (0,0) -- (0.96,0.96);
\fill (1,1) circle(0.05);
\draw (1,1) node[anchor=south west]{$(t,n)$};
\draw (0.5, 1.15) node{$B_{t,n} $};
\draw (0.6, 0.2) node{\footnotesize{$1-B_{t,n} $}};

\draw  (2.5,0.5) node{\Large{$\Longleftrightarrow $}};

\begin{scope}[xshift=4cm, yshift=0.5cm]
\draw[dotted, gray] (-0.3, 0) -- (1.3, 0);
\draw[dotted, gray] (-0.3, 1) -- (1.3, 1);
\draw[dotted, gray] (-0.3, -1) -- (1.3, -1);
\draw[dotted, gray] (0, -1.3) -- (0,1.3);
\draw[dotted, gray] (1, -1.3) -- (1,1.3);
\draw[gray] (-0.3,-0.3) -- (1.3, 1.3);
\draw[gray] (-0.3,0.3) -- (1.3, -1.3);

\draw[thick, ->] (0.03,0.03) -- (1,1);
\draw[thick, ->] (0.03,-0.03) -- (1,-1);
\fill (0,0) circle(0.05);

\draw (0,0) node[anchor=east]{$(x,t)$};
\draw (0.3, 0.6) node{$ B_{x, t}$};
\draw (0.3, -0.7) node{$ 1-B_{x, t}$};
\end{scope}
\end{tikzpicture}
\caption{Illustration of the deformation of the underlying lattice for the Beta polymer. The left picture corresponds to the Beta polymer whereas the right picture corresponds to the RWRE. Black arrows represents possible steps for the polymer path (resp. the RWRE) with their associated weight (resp.  probability). }
\label{figdeformationlattice}
\end{figure}
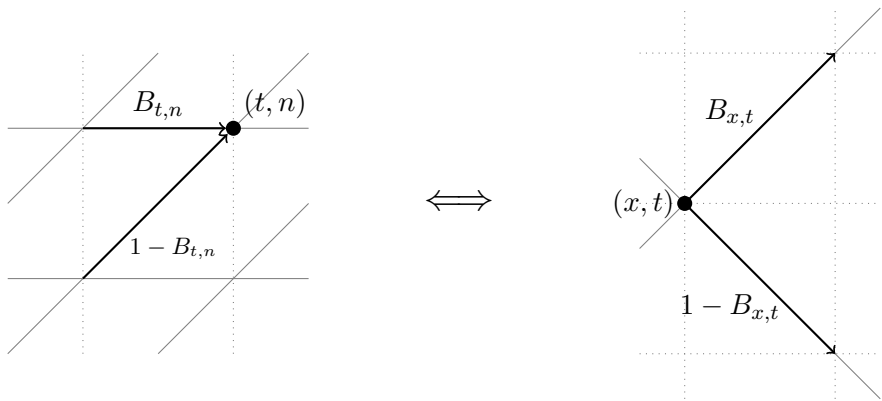

Finally, for any fixed $t, n\in \Z_{\geqslant 0}$ such that $t+1\geqslant n$, if we set $x=t-2n+2$, then $P(t,x)$ and $Z(t,n)$ have the same probability law. Moreover, conditioning on the environment of the Beta polymer corresponds to conditioning on the probability of each step for the Beta RWRE. 
\end{proof}

\section{Rigorous replica method for the Beta polymer}
\label{sec:directproof}
\subsection{Moment formulas}
Let $\Weyl{k}$ be the Weyl chamber 
$$ \Weyl{k} = \big\lbrace \vec{n}\in \Z^k : n_1\geqslant n_2\geqslant \dots \geqslant n_k \big\rbrace.$$
For $\vec{n}\in \Weyl{k}$, let us define 
\begin{equation}
u(t, \vec{n})  = \EE\big[Z(t, n_1)\dots Z(t, n_k)\big], 
\label{eq:defu}
\end{equation} 
with the convention that $Z(t,n)=)$ for $n<1$.
The recurrence relation (\ref{eq:recurrencerelation}) implies a recurrence relation for $u(t, \vec{n})$. We are going to solve this recurrence to find a closed formula for $u(t, \vec{n})$, using a variant of the Bethe ansatz. It is the  analogue of Section 5 in \cite{corwin2014strict}. Besides the strict weak polymer \cite{corwin2014strict}, such ``replica method'' calculations have been performed to study moments of the partition function for the continuum polymer \cite{dotsenko2010replica,calabrese2010free,borodin2014macdonald}, the semi-discrete polymer \cite{borodin2012duality, borodin2014macdonald}, and the log-gamma polymer \cite{borodin2014macdonald, thiery2014log}.  However, in those models, the moment problems are ill-posed and one cannot rigorously recover the distribution from them. In the present case, since the $Z(t,n) \in [0,1]$, the moments do determine the distribution as explained in Section  \ref{subsec:secondproof}.

Using the recurrence relation (\ref{eq:recurrencerelation}), 
\begin{equation}
u(t+1,\vec{n}) = \EE\left[ \prod_{i=1}^k \Big( (1-B_{t+1,n_i}) Z(t,n_i) + B_{t+1, n_i} Z(t, n_i-1)\Big)\right].
\end{equation}

Let us first simplify this expression when $k=c$ and  $\vec{n} = (n, \dots, n)$ is a vector of length $c$ with all components equal. In this case, setting $B=B_{t+1, n}$ to simplify the notations, we have
\begin{align*}
u(t+1,\vec{n}) =& \sum_{j=0}^c \binom{c}{j} \EE\left[ (1-B)^j B^{c-j} Z(t, n-1)^j Z(t, n)^{c-j} \right]\\
=&\sum_{j=0}^c \binom{c}{j} \EE\left[ (1-B)^j B^{c-j}\right]u(t, n, \dots, n,\underbrace{n-1, \dots , n-1}_{j}).
\end{align*}
The recurrence relation can be further simplified using the next Lemma. 
\begin{lemma}
Let $B$ a random variable following the $Beta(\mu, \nu-\mu)$ distribution.
Then for integers $0\leqslant j\leqslant c$, 
$$ \EE\big[(1-B)^jB^{c-j}\big]  = \frac{(\nu-\mu)_j(\mu)_{c-j}}{(\nu)_c}.$$
where $(a)_k$ is the Pochhammer symbol $ (a)_k= a (a+1) \dots (a+k-1)$ and $(a)_0 =1 $. 
\label{lemma:betamoments}
\end{lemma}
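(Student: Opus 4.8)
The statement to prove is the evaluation
$$\EE\big[(1-B)^jB^{c-j}\big]=\frac{(\nu-\mu)_j(\mu)_{c-j}}{(\nu)_c}$$
for $B\sim Beta(\mu,\nu-\mu)$ and $0\leqslant j\leqslant c$.

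\textbf{Plan.} The proof is a direct Beta-integral computation. First I would write out the expectation as
$$\EE\big[(1-B)^jB^{c-j}\big]=\frac{\Gamma(\nu)}{\Gamma(\mu)\Gamma(\nu-\mu)}\int_0^1 x^{(c-j)+\mu-1}(1-x)^{j+(\nu-\mu)-1}\,\mathrm{d}x,$$
using the density of the $Beta(\mu,\nu-\mu)$ law from Definition \ref{def:BetaRWRE} (with $\alpha=\mu$, $\beta=\nu-\mu$). Then I would recognize the integral as the Euler Beta integral $B(a,b)=\int_0^1 x^{a-1}(1-x)^{b-1}\,\mathrm{d}x=\Gamma(a)\Gamma(b)/\Gamma(a+b)$ with $a=\mu+c-j$ and $b=\nu-\mu+j$, so that $a+b=\nu+c$. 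This gives
$$\EE\big[(1-B)^jB^{c-j}\big]=\frac{\Gamma(\nu)}{\Gamma(\mu)\Gamma(\nu-\mu)}\cdot\frac{\Gamma(\mu+c-j)\,\Gamma(\nu-\mu+j)}{\Gamma(\nu+c)}.$$

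\textbf{Converting to Pochhammer symbols.} The final step is to rewrite each ratio of Gamma functions as a Pochhammer symbol via the identity $\Gamma(a+k)/\Gamma(a)=(a)_k$ for nonnegative integers $k$. Applying this with $\Gamma(\mu+c-j)/\Gamma(\mu)=(\mu)_{c-j}$, $\Gamma(\nu-\mu+j)/\Gamma(\nu-\mu)=(\nu-\mu)_j$, and $\Gamma(\nu+c)/\Gamma(\nu)=(\nu)_c$ yields exactly
$$\EE\big[(1-B)^jB^{c-j}\big]=\frac{(\nu-\mu)_j(\mu)_{c-j}}{(\nu)_c},$$
as claimed. All the quantities appearing are positive since $0<\mu<\nu$ and $0\leqslant j\leqslant c$, so there are no convergence or degeneracy issues.

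\textbf{Main obstacle.} There is essentially no obstacle here; this is a routine identity and the only care needed is bookkeeping with the shifts in the parameters (matching $\alpha\leftrightarrow\mu$, $\beta\leftrightarrow\nu-\mu$) and making sure the Beta integral exponents are correctly identified as $\mu+c-j-1$ and $\nu-\mu+j-1$. One could alternatively give a purely algebraic proof by induction on $c$ using the recursion for moments of the Beta distribution, but the direct integral evaluation is cleanest and I would present that.
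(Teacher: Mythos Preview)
Your proof is correct and follows exactly the same approach as the paper: write the expectation as a Beta integral, evaluate it via the Euler Beta identity to get $\frac{\Gamma(\nu)}{\Gamma(\mu)\Gamma(\nu-\mu)}\cdot\frac{\Gamma(\mu+c-j)\Gamma(\nu-\mu+j)}{\Gamma(\nu+c)}$, and then rewrite the Gamma ratios as Pochhammer symbols. There is nothing to add.
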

\begin{proof}
By the definition of the Beta law, we have 
\begin{align*}
\EE\big[(1-B)^jB^{c-j}\big] &= \frac{\Gamma(\nu)}{\Gamma(\mu)\Gamma(\nu-\mu)}\int_{0}^1 (1-x)^j x^{c-j} x^{\mu-1} (1-x)^{\nu-\mu-1},\\
&= \frac{\Gamma(\nu)}{\Gamma(\mu)\Gamma(\nu-\mu)}\frac{\Gamma(\mu+c-j)\Gamma(\nu-\mu+ j)}{\Gamma(\nu+c)},\\
&=\frac{(\nu-\mu)_j(\mu)_{c-j}}{(\nu)_c}.
\end{align*}
\end{proof}
In order to write the general case, we need a little more notation. For $\vec{n}\in \Weyl{k}$, we denote by $c_1, c_2, \dots c_\ell$ the sizes of clusters of equal components in $\vec{n}$. More precisely, $c_1, c_2, \dots c_{\ell}$ are positive integers such that $\sum c_i = k$ and 
$$ n_1 = \dots = n_{c_1}>n_{c_1+1} = \dots n_{c_1+c_2} >\dots >n_{c_1+\dots + c_{k-1}+1} = \dots = n_k.$$
Define also the operator $\tau^{(i)}$ acting on a function $f:\Weyl{k}\rightarrow \R$ by  
$$ \tau^{(i)} f(\vec{n}) = f(n_1, \ldots , n_i-1,\ldots, n_k).$$
Using the Lemma \ref{lemma:betamoments}, we have that 
\begin{equation}
u(t+1,\vec{n}) = \sum_{j_1=0}^{c_1}\dots \sum_{j_{\ell}=0}^{c_{\ell}} \left(\prod_{i=1}^{\ell}\binom{c_i}{j_i}  \frac{(\nu-\mu)_{j_i}(\mu)_{c_i-j_i}}{(\nu)_{c_i}} \prod_{r=0}^{j_i-1} \tau^{(c_1 + \dots + c_{i} - r)} \right) u(t,\vec{n}).
\label{eq:trueevolution}
\end{equation}
In words, for each $\ell$-tuple $j_1, \dots, j_{\ell}$ such that $0\leqslant j_i \leqslant c_i$, we decrease the $j_i$ last coordinates of the cluster $i$ in $\vec{n}$, for each cluster, and multiply by  
$$\prod_{i=1}^{\ell}\binom{c_i}{j_i}  \frac{(\nu-\mu)_{j_i}(\mu)_{c_i-j_i}}{(\nu)_{c_i}}.$$
\begin{lemma}
Let $X,Y$ generate an associative algebra such that 
$$ YX = \frac{1}{1+\nu}XX + \frac{\nu-1}{1+\nu} XY  + \frac{1}{1+\nu} YY.$$
Then we have the following non-commutative binomial identity:
$$ \big(pX+(1-p)Y \big)^n  = \sum_{j=0}^n \binom{n}{j} \frac{(\nu-\mu)_j(\mu)_{n-j}}{(\nu)_n}X^j Y^{n-j},$$
where $p=\frac{\nu-\mu}{\nu}$. 
\label{lemma:binomial}
\end{lemma}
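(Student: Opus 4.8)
The plan is to prove the non-commutative binomial identity
\[
\big(pX+(1-p)Y\big)^n = \sum_{j=0}^n \binom{n}{j}\frac{(\nu-\mu)_j(\mu)_{n-j}}{(\nu)_n}X^jY^{n-j},
\qquad p=\frac{\nu-\mu}{\nu},
\]
by induction on $n$. For $n=0$ (or $n=1$) the identity is trivial. Assuming it for $n$, I would multiply on the left by $pX+(1-p)Y$. The term $pX\cdot X^jY^{n-j}=pX^{j+1}Y^{n-j}$ causes no difficulty and simply contributes after reindexing. The real work is the term $(1-p)Y\cdot X^jY^{n-j}$, where the factor $Y$ must be commuted past the block $X^j$ using the quadratic relation $YX=\frac{1}{1+\nu}XX+\frac{\nu-1}{1+\nu}XY+\frac{1}{1+\nu}YY$ until it reaches $Y^{n-j}$, producing $Y X^j = \sum_{r} (\text{coefficient}) X^{j-r}Y^{r+1}$-type terms.

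The key computational step is therefore a one-variable sublemma: to determine the coefficients $a_{j,r}$ in $YX^j = \sum_{r=0}^{j} a_{j,r}\, X^{j-r}Y^{r+1}$. These satisfy their own recursion coming from $YX^{j}=(YX)X^{j-1}$ and applying the quadratic relation once, then the inductive formula for $YX^{j-1}$. I would guess (and then verify) that $a_{j,r}$ has a closed form of hypergeometric/Pochhammer type — most naturally something like $a_{j,r}=\binom{j}{r}\big/\binom{\nu+j-1}{r}$ up to normalization, since the eventual answer must reassemble into ratios of Pochhammer symbols $\frac{(\nu-\mu)_j(\mu)_{n-j}}{(\nu)_n}$. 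An alternative, possibly cleaner route: recognize that the relation $YX=\frac{1}{1+\nu}X^2+\frac{\nu-1}{1+\nu}XY+\frac{1}{1+\nu}Y^2$ is exactly the one that would be satisfied if $X,Y$ acted as multiplication operators weighted by transition probabilities of a P\'olya-type urn, and in fact Lemma~\ref{lemma:binomial} is the operator-level shadow of the moment identity in Lemma~\ref{lemma:betamoments}; one can then prove the identity by checking it holds after applying both sides to the "vacuum" in the appropriate representation, or by matching it termwise against the combinatorial identity obtained from expanding $(pX+(1-p)Y)^n$ naively and collecting all words with $j$ letters $X$ — each word reduces, via repeated use of the relation, to $X^jY^{n-j}$, and the claim is that the total coefficient is $\binom{n}{j}\frac{(\nu-\mu)_j(\mu)_{n-j}}{(\nu)_n}$.

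Concretely I would carry out the induction as follows. Write $(pX+(1-p)Y)^{n+1}=(pX+(1-p)Y)\sum_{j}c_{n,j}X^jY^{n-j}$ with $c_{n,j}=\binom{n}{j}\frac{(\nu-\mu)_j(\mu)_{n-j}}{(\nu)_n}$. The $X$-part gives $\sum_j p\,c_{n,j}X^{j+1}Y^{n-j}$. For the $Y$-part, insert the sublemma for $YX^j$, so that the coefficient of $X^{j}Y^{n+1-j}$ in the whole expression becomes $p\,c_{n,j-1}+(1-p)\sum_{i\ge 0}c_{n,j+i}\,a_{j+i,i}$; the task is then the purely algebraic verification that this equals $c_{n+1,j}=\binom{n+1}{j}\frac{(\nu-\mu)_j(\mu)_{n+1-j}}{(\nu)_{n+1}}$. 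This is a finite-sum Pochhammer identity which should follow from Chu–Vandermonde (or a contiguous-relation argument), using $p(\nu)=\nu-\mu$ and $(1-p)(\nu)=\mu$, and the Pochhammer recursions $(\nu)_{n+1}=(\nu)_n(\nu+n)$, $(\mu)_{n+1-j}=(\mu)_{n-j}(\mu+n-j)$, etc.

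The main obstacle is pinning down the closed form of the commutation coefficients $a_{j,r}$ and then executing the Chu–Vandermonde-type summation cleanly; once $a_{j,r}$ is identified the rest is bookkeeping. If the direct coefficient-chase proves unwieldy, the fallback is the representation-theoretic argument: exhibit the associative algebra as acting on a space of functions where $X,Y$ are the natural "up/right" weighted shift operators of the Beta polymer recursion, reduce the identity to Lemma~\ref{lemma:betamoments} applied inductively, and thereby avoid the explicit combinatorics altogether. I expect the paper takes the direct inductive route, so I would present that, keeping the urn interpretation in reserve as motivation for the guessed form of $a_{j,r}$.
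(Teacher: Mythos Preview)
Your approach is sound in principle but differs substantially from the paper's. The paper does \emph{not} carry out a direct induction: it simply observes that the identity is the $q\to 1$ degeneration of a known $q$-deformed result, namely Theorem~1 of Povolotsky \cite{povolotsky2013integrability}. There one has the quadratic relation $YX=\alpha XX+\beta XY+\gamma YY$ with
\[
\alpha=\frac{\bar\nu(1-q)}{1-q\bar\nu},\qquad \beta=\frac{q-\bar\nu}{1-q\bar\nu},\qquad \gamma=\frac{1-q}{1-q\bar\nu},
\]
together with $\bar\mu=\bar p+\bar\nu(1-\bar p)$, and the conclusion $(\bar pX+(1-\bar p)Y)^n=\sum_k\varphi_{q,\bar\mu,\bar\nu}(k\mid n)X^kY^{n-k}$ with the $q$-Hahn weights. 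Setting $\bar\mu=q^\mu$, $\bar\nu=q^\nu$ and sending $q\to1$ recovers exactly the coefficients $\frac{1}{1+\nu},\frac{\nu-1}{1+\nu},\frac{1}{1+\nu}$, the value $p=(\nu-\mu)/\nu$, and the Beta--binomial weights $\binom{n}{j}\frac{(\nu-\mu)_j(\mu)_{n-j}}{(\nu)_n}$. That is the entire proof.

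Your route---induction on $n$, a sublemma for $YX^j$, and a Chu--Vandermonde summation---is more elementary and self-contained, and would make the lemma independent of the $q$-Hahn machinery. One small slip to fix if you pursue it: your expansion $YX^j=\sum_{r=0}^{j}a_{j,r}X^{j-r}Y^{r+1}$ omits the pure-$X$ term. Already for $j=1$ the relation gives $YX=\tfrac{1}{1+\nu}X^2+\cdots$, so the correct normal form is $YX^j=\sum_{k=0}^{j+1}A_{j,k}X^{k}Y^{j+1-k}$ with $A_{j,j+1}\neq 0$ in general. This changes the bookkeeping in your expression for the coefficient of $X^jY^{n+1-j}$ but not the overall strategy. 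The trade-off is clear: the paper's argument is a two-line appeal to existing literature, while yours would be a page of explicit combinatorics---more work, but no external dependence.
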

\begin{proof}
It is shown in \cite[Theorem 1]{povolotsky2013integrability} that if X and Y satisfy the quadratic homogeneous relation 
$$ YX= \alpha XX + \beta XY + \gamma YY, $$
with 
$$ \alpha= \frac{\bar{\nu}(1-q)}{1-q\bar{\nu}}, \ \ \beta = \frac{q-\bar{\nu}}{1-q\bar{\nu}}, \ \ \gamma = \frac{1-q}{1-q\bar{\nu}}, $$
and 
$$ \bar{\mu} = \bar{p}+\bar{\nu}(1-\bar{p}),$$
then 
$$ \big(\bar{p}X+(1-\bar{p})Y\big)^n = \sum_{k=0}^{n}\varphi_{q, \bar{\mu}, \bar{\nu}}(j\vert n) X^kY^{n-k},$$
where $\varphi_{q, \bar{\mu}, \bar{\nu}}(j\vert n)$ are the $q$-Hahn weights defined in (\ref{eq:defqhahnweights}). 
Our lemma is the $q\to 1$ degeneration of this result. 
\end{proof}
Let $ \mathcal{L}^{\mathrm{cluster}}_{c}$ denote the operator 
$$ \mathcal{L}^{\mathrm{cluster}}_c =  \sum_{j=0}^{c} \binom{c}{j}  \frac{(\nu-\mu)_{j}(\mu)_{c-j}}{(\nu)_{c}} \prod_{r=0}^{j-1} \tau^{(c - r)}$$
which appears in the R.H.S. of (\ref{eq:trueevolution}), and $\mathcal{L}^{free}_{c}$ the operator 
$$  \mathcal{L}^{free}_{c} = \prod_{i=1}^{c} \nabla_i,$$
where $\nabla_i = p\tau^{(i)} + (1-p)$. It is worth noticing that for $c=1$, $\mathcal{L}^{\mathrm{cluster}}_{c} = \mathcal{L}^{free}_{c}$. 

For a function $f: \Z^c\to\C$, we formally  identify monomials $X_1 X_2 \dots X_c$ where $X_i\in\lbrace X,Y\rbrace$ with terms $f(\vec{n})$ where for all $1\leqslant i \leqslant c$, $n_{c-i}=n-1$ if $X_i=X$ and $n_{c-i}=n$ if $X_i=Y$. Using this identification, the binomial formula from Lemma \ref{lemma:binomial} says that the operators $\mathcal{L}^{free}_{c}$ and $\mathcal{L}^{\mathrm{cluster}}_{c}$ act identically on functions $f$ satisfying the  condition
\begin{equation}
\forall 1\leqslant i <c\left(\frac{1}{1+\nu} \tau^{(i)}\tau^{(i+1)} + \frac{\nu-1}{1+\nu}\tau^{(i+1)} +\frac{1}{1+\nu} -\tau^{(i)}\right) f(n, \ldots ,n) = 0.
\end{equation}

One notices that the operator involved in (\ref{eq:trueevolution}) acts independently by $\mathcal{L}^{\mathrm{cluster}}_{c}$ on each cluster of equal components. It follows that 
if a function $u : \Z_{\geqslant 0}\times \Z^k \to \C$ satisfies the \emph{boundary condition}
\begin{equation}
\left(\frac{1}{1+\nu} \tau^{(i)}\tau^{(i+1)} + \frac{\nu-1}{1+\nu}\tau^{(i+1)} +\frac{1}{1+\nu} -\tau^{(i)}\right) u(t, \vec{n}) = 0, 
\end{equation}
for all $\vec{n}$ such that $n_i=n_{i+1}$ for some $1\leqslant i < k$, and satisfies the \emph{free evolution equation}
\begin{equation}
 u(t+1, \vec{n}) = \left(\prod_{i=1}^{k} \nabla_i \right)u(t, \vec{n}),
\end{equation}
for all $\vec{n}\in \Z^k$, then the restriction of $u(t,\vec{n})$ to $\Weyl{k}$ satisfies the \emph{true evolution equation}  (\ref{eq:trueevolution}).

\begin{remark}
The coefficients $\binom c j \frac{(\nu-\mu)_j(\mu)_{c-j}}{(\nu)_c}$ that appear in the true evolution equation  (\ref{eq:trueevolution}) are probabilities of the Beta-binomial distribution with parameters $c, \mu, \nu-\mu$. Hence, the true evolution equation could be interpreted as  the ``evolution equation'' for a series of urns where each urn evolves according to the P\'olya urn scheme. Such dynamics could be interpreted as the $q\to 1$ degeneration of the $q$-Hahn Boson, which is dual to the $q$-Hahn TASEP \cite{corwin2014q}. 
\end{remark}

\begin{proposition} For $n_1 \geqslant n_2\geqslant \dots \geqslant n_k\geqslant 1$, one has the following moment formula,
\begin{multline}
\mathbb{E}\Big[Z(t,n_1) \cdots Z(t,n_k) \Big] = 
\frac{1}{(2i\pi)^k} \int\dots\int \prod_{1\leqslant A<B\leqslant k} \frac{z_A-z_B}{z_A-z_B-1}  \prod_{j=1}^k \left( \frac{\nu+z_j}{z_j}\right)^{n_j} \left(\frac{\mu + z_j}{\nu+z_j}\right)^t\frac{\mathrm{d}z_j}{\nu+z_j}.
\label{eq:momentformula}
\end{multline}
where the contour for $z_k$ is a small circle around the origin, and the contour for $z_j$ contains the contour for $z_{j+1}$ shifted by $+1$ for all $j=1, \dots , k-1$, as well as the origin, but all contours exclude $-\nu$. 
\label{prop:momentformula}
\end{proposition}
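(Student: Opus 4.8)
The plan is to verify that the right-hand side of \eqref{eq:momentformula} solves the system consisting of the free evolution equation, the boundary condition, and the correct initial data, and then invoke uniqueness. Write $\tilde u(t,\vec n)$ for the proposed contour integral. First I would check the \textbf{free evolution equation}: applying $\nabla_i = p\tau^{(i)} + (1-p)$ with $p = (\nu-\mu)/\nu$ to $\left(\frac{\nu+z_i}{z_i}\right)^{n_i}$ produces the scalar factor $p\frac{z_i}{\nu+z_i} + (1-p) = \frac{\mu+z_i}{\nu+z_i}$, which is exactly the factor $\left(\frac{\mu+z_j}{\nu+z_j}\right)^t$ picks up when $t\to t+1$. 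So $\prod_i \nabla_i \tilde u(t,\vec n) = \tilde u(t+1,\vec n)$ term by term under the integral, with no contour deformation required. This is routine.

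Next I would check the \textbf{boundary condition}. Suppose $n_i = n_{i+1} = n$. Apply the operator $\frac{1}{1+\nu}\tau^{(i)}\tau^{(i+1)} + \frac{\nu-1}{1+\nu}\tau^{(i+1)} + \frac{1}{1+\nu} - \tau^{(i)}$ to $\tilde u$. Under the integral this multiplies the integrand by
\[
\frac{1}{1+\nu}\frac{z_i z_{i+1}}{(\nu+z_i)(\nu+z_{i+1})} + \frac{\nu-1}{1+\nu}\frac{z_{i+1}}{\nu+z_{i+1}} + \frac{1}{1+\nu} - \frac{z_i}{\nu+z_i},
\]
which, after clearing denominators, has numerator proportional to $(z_i - z_{i+1} - 1)\cdot(\text{something})$ — more precisely to $z_i - z_{i+1} - 1$ times a factor symmetric-ish in the remaining data. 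The point is that this numerator cancels the pole of the factor $\frac{z_i - z_{i+1}}{z_i - z_{i+1} - 1}$ appearing in the Bethe-type product, so that the integrand in $z_i$ becomes analytic in the region between the $z_i$-contour and the $z_{i+1}$-contour. The contours are nested precisely so that the $z_i$-contour can then be contracted onto the $z_{i+1}$-contour; by antisymmetry of the remaining integrand under $z_i \leftrightarrow z_{i+1}$ (the product $\prod_{A<B}\frac{z_A-z_B}{z_A-z_B-1}$ combined with the symmetric factors is set up so the residual integral over the coincident contour vanishes), the whole expression is $0$. This residue/symmetrization argument is the standard mechanism from \cite{corwin2014strict} and I would follow it closely, being careful about which poles lie inside which contour.

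For the \textbf{initial data}, I would check that $\tilde u(0,\vec n)$ equals $\EE[\prod_j Z(0,n_j)] = \prod_j \mathbf 1_{n_j = 1}$ (since $Z(0,1)=1$ and $Z(0,n)=0$ for $n>1$ by the conventions in \eqref{eq:initialconditionpoint2point} and just below \eqref{eq:defu}). At $t=0$ the integrand is $\prod_{A<B}\frac{z_A-z_B}{z_A-z_B-1}\prod_j \left(\frac{\nu+z_j}{z_j}\right)^{n_j}\frac{1}{\nu+z_j}$; integrating in $z_k$ first over a small circle about $0$, the only pole enclosed is at $z_k=0$ with the factor $z_k^{-n_k}$, giving $0$ unless $n_k=1$, in which case one gets a residue that, proceeding inductively in $z_{k-1},\dots,z_1$, forces all $n_j=1$ and yields $1$. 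One must check no spurious poles from $z_A - z_B - 1 = 0$ are enclosed, which follows from the nesting of the contours. Finally, \textbf{uniqueness}: a function on $\Z_{\ge0}\times\Weyl k$ satisfying the true evolution equation \eqref{eq:trueevolution} with given initial data is uniquely determined (the recursion expresses $u(t+1,\cdot)$ in terms of $u(t,\cdot)$), and any solution of the free evolution plus boundary conditions restricts to a solution of the true evolution by the discussion preceding the proposition; since both $\EE[\prod Z(t,n_j)]$ and $\tilde u(t,\vec n)$ arise this way with the same initial data, they agree.

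The main obstacle is the boundary-condition step: one must check carefully that the numerator produced by the boundary operator indeed contains the factor $z_i - z_{i+1} - 1$, cancel it against the Bethe kernel, then justify the contour contraction and the vanishing-by-antisymmetry of the resulting integral, keeping precise track of which poles (at $z_j = 0$, $z_j = -\nu$, and $z_A - z_B = 1$) sit inside which nested contour. Everything else is a direct computation or an appeal to the uniqueness of solutions of a first-order-in-$t$ recurrence.
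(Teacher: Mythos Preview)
Your overall strategy matches the paper's proof exactly: verify the free evolution equation, the boundary condition, and the initial data, then invoke uniqueness of the time recursion. The free evolution and boundary condition steps are fine (the paper computes the boundary factor explicitly as $\frac{-\nu^2(z_i-z_{i+1}-1)}{(1+\nu)(\nu+z_i)(\nu+z_{i+1})}$, confirming your cancellation of the pole at $z_i=z_{i+1}+1$ and the subsequent vanishing by antisymmetry).

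However, your treatment of the \textbf{initial condition} contains a genuine error. You assert that $u(0,\vec n)=\prod_j \mathbf 1_{n_j=1}$, but this is wrong: the moment formula is set up using the half-line-to-point interpretation (Section~\ref{subsubsec:halflinepolymer}), in which $Z(0,n)=1$ for \emph{every} $n\ge 1$, so the correct initial data is $u(0,\vec n)=\mathbf 1_{n_k\ge 1}$. Your residue computation is correspondingly flawed: the pole of $z_k^{-n_k}$ at $z_k=0$ has order $n_k$, and for $n_k\ge 2$ its residue is not zero (for instance already for $k=1$ one has $\frac{1}{2\pi i}\oint \frac{(\nu+z)^{n-1}}{z^{n}}\,dz=1$ for every $n\ge 1$, not just $n=1$). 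Thus the integral does \emph{not} force all $n_j=1$; it equals $1$ whenever $n_k\ge 1$.

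The paper's verification of the initial data proceeds differently and this is what you should do: for $n_k\le 0$ there is no pole at $0$ and the $z_k$-contour shrinks to nothing; for $n_k\ge 1$ (hence all $n_j\ge 1$) observe that at $t=0$ the factor $(\nu+z_j)^{n_j-1}/z_j^{n_j}$ has no pole at $-\nu$, so one may expand the contours to infinity, where each integrand behaves like $z_j^{-1}$ and contributes a factor $1$. Fix the initial condition to $\mathbf 1_{n_k\ge 1}$ and replace the residue-at-zero inductive argument by this expansion-to-infinity argument; the rest of your proof then goes through.
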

\begin{proof}
We show that the right-hand-side of (\ref{eq:momentformula}) satisfies the free evolution equation, the boundary condition and the initial condition for $u(0,\vec{n})$ for $\vec{n}\in \Weyl{k}$ (the initial condition outside $\Weyl{k}$ is inconsequential). The above discussion shows that the restriction to $\vec{n}\in \Weyl{k}$ then solves the true evolution equation (\ref{eq:trueevolution}).
By the definition of the function $u$ in (\ref{eq:defu}) and the initial condition for the half-line to point polymer, $u(0,\vec{n}) = \prod_{i=1}^k \mathds{1}_{n_i\geqslant 1}=\mathds{1}_{n_k\geqslant 1}$ (the second equality holds because the $n_i$'s are ordered).  Let us consider the right-hand-side of (\ref{eq:momentformula}) when $t=0$.    If $n_k\leqslant 0$, there is no pole in zero, so one can shrink the $z_k$ contour to zero, and consequently $u(0,\vec{n})=0$. When $n_k>0$ (and consequently all $n_i$'s are positive), there is no pole at $-\nu$ for $t=0$, so that one can successively send to infinity the contours for the variables $z_k$, $z_{k-1}, \dots$. Since the residue at infinity is one for each variable, then $u(0,\vec{n})=1$. Hence, the initial condition is satisfied. 

In order to show that the boundary condition is satisfied, we assume that $n_i=n_{i+1}$ for some $i$.  Let us  apply the operator 
$$\left(\frac{1}{1+\nu} \tau^{(i)}\tau^{(i+1)} + \frac{\nu-1}{1+\nu}\tau^{(i+1)} +\frac{1}{1+\nu} -\tau^{(i)}\right)$$
inside the integrand. This brings into the integrand a factor 
\begin{equation*}
 \frac{1}{1+\nu} \frac{z_i}{\nu +z_i}\frac{z_{i+1}}{\nu +z_{i+1}} + \frac{\nu-1}{\nu+1} \frac{z_{i+1}}{\nu +z_{i+1}} + \frac{1}{1+\nu} - \frac{z_i}{\nu +  z_i} 
  = \frac{-\nu^2(z_i-z_{i+1}-1)}{(1+\nu)(\nu+ z_i)(\nu +z_{i+1})}.
 \end{equation*}
Since it cancels the pole for $z_i=z_{i+1}+1$, one can use the same contour for both variables, and since the integrand is now antisymmetric in the variables $(z_i, z_{i+1})$ the integral is zero as desired. 

In order to show that the free evolution equation is satisfied, it is enough to show that applying the operator $p\tau^{(i)} + (1-p)$ for $i$ from $1$ to $k$ inside the integrand brings an extra factor $\prod_{j=1}^{k}\frac{\mu+z_j}{\nu+z_j}$. This is clearly true since 
$$ \left( p\tau^{(i)} + (1-p)\right) \left( \frac{\nu+z_i}{z_i}\right)^{n_i} = \left( \frac{\nu+z_i}{z_i}\right)^{n_i} \frac{\mu+z_i}{\nu+z_i}. $$
\end{proof}

\begin{remark}
It is possible to prove a generalization of Proposition \ref{prop:momentformula} where the parameter $\mu$ depends on $t$. In this generalization, the weight of an edge starting from a point $(s,n)$ for any $n$ would have a weight $B$ or $1-B$ (depending on the direction of the edge), where $B$ is a random variable distributed according to the Beta distribution with parameters $(\mu_s, \nu-\mu_s)$. In the formula (\ref{eq:momentformula}), the factor $\left(\frac{\mu + z_j}{\nu+z_j}\right)^t$ would be replaced by 
$$ \prod_{s=0}^{t-1} \frac{\mu_s + z_j}{\nu+z_j}.$$
Such moment formulas with time inhomogeneous parameters have been proved for the discrete-time $q$-TASEP \cite{borodin2013discrete} and for the $q$-Hahn TASEP in \cite[Section 2.4]{corwin2014q} (See also the discussion in \cite[Section 5.7]{corwin2015stochastic} which deals with a generalization of the $q$-Hahn TASEP). In all these cases, this allows to prove Fredholm determinant formulas with time-dependent parameters, using the same method as in the homogeneous case.  It is not clear however if one can find moment formulas with a parameter inhomogeneity depending on $n$ (e.g. the parameter $\nu$ would depend on $n$). 
\end{remark}
Proposition \ref{prop:momentformula} provides an integral formula for the moments of $Z(t, n)$. In order to form the generating series, it is convenient to transform the formula so that all integrations are on  the same contour. 
\begin{proposition}
For all $n,t \geqslant 0$, we have 
\begin{multline}
\EE\left[ Z(t,n)^k\right] = k!\sum_{\lambda \vdash k} \frac{1}{m_1!m_2!\dots}\frac{1}{(2i\pi)^{\ell(\lambda)}} \int \dots \int  \det\left( \frac{1}{v_i +\lambda_i-v_j}\right)_{i,j=1}^{\ell(\lambda)}\\ \times \prod_{j=1}^{\ell(\lambda)} f(v_j)f(v_j+1)\dots f(v_j+\lambda_j-1) \mathrm{d}v_1 \dots \mathrm{d}v_{\ell (\lambda)},
\end{multline}
where 
$$ f(v) = \frac{g^{\mathrm{BP}}(v)}{g^{\mathrm{BP}}(v+1)} = \left(\frac{\nu+v}{v} \right)^n \left(\frac{\mu+v}{\nu+v}\right)^t \frac{1}{v+\nu}.$$
where $g^{\mathrm{BP}}$ is defined in (\ref{eq:defgbp}) and the integration contour is a small circle around $0$ excluding $-\nu$ and for a partition $\lambda \vdash k$ (i.e. $\sum_i \lambda_i =k$) we write $\lambda=1^{m_1} 2^{m_2}\dots$, meaning that $m_j$ is the number of indices $i$ such that $\lambda_i=j$ components;  and $\ell(\lambda)$ is the number of non-zero components $\ell(\lambda)=\sum_i m_i$.
\label{prop:shrinkingcontours}
\end{proposition}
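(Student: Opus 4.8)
The plan is to start from the moment formula of Proposition~\ref{prop:momentformula} in the special case $n_1=\dots=n_k=n$, namely
$$ \EE[Z(t,n)^k] = \frac{1}{(2i\pi)^k}\int\dots\int \prod_{1\leqslant A<B\leqslant k}\frac{z_A-z_B}{z_A-z_B-1}\prod_{j=1}^k f(z_j)\left(\frac{\nu+z_j}{z_j}\right)^{n}\left(\frac{\mu+z_j}{\nu+z_j}\right)^{t}\frac{\mathrm{d}z_j}{\nu+z_j}, $$
wait --- more precisely with the integrand $\prod_j \left(\frac{\nu+z_j}{z_j}\right)^n\left(\frac{\mu+z_j}{\nu+z_j}\right)^t\frac{1}{\nu+z_j}$, which is exactly the function $f(z_j)$ of the statement. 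The contours are nested: $z_k$ encircles $0$ only, and each $z_j$-contour contains the $z_{j+1}$-contour shifted by $+1$. The goal is to deform all of these onto one common small circle $C_0$ around $0$ (excluding $-\nu$), picking up residues as the contours cross the poles at $z_A - z_B - 1 = 0$, and to organize the resulting residue terms by the combinatorial type of how the variables have been ``strung together'' --- this is precisely the source of the sum over partitions $\lambda\vdash k$ and the determinant $\det\left(\frac{1}{v_j-v_i-\lambda_i}\right)$.

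The key steps, in order: (1) Deform the outermost contour $z_1$ inward to coincide with the $z_2$-contour. When $z_1$ crosses the pole at $z_1=z_2+1$ we pick up a residue; the residue term has $z_1$ replaced by $z_2+1$, which produces the factor $f(z_2)f(z_2+1)$ collapsing two variables into a ``string'' of length $2$. Iterating this over all variables in all possible orders generates, by a standard bookkeeping argument (identical in structure to \cite[Section~5]{corwin2014strict} and to the analogous step in \cite{borodin2014macdonald}), a sum over set partitions of $\{1,\dots,k\}$ into strings, where a string of consecutive indices $i,i+1,\dots,i+\lambda-1$ contributes $f(v)f(v+1)\cdots f(v+\lambda-1)$ with a single surviving integration variable $v$. (2) For a fixed string-length composition, count how many orderings/clusterings give the same partition type $\lambda=1^{m_1}2^{m_2}\cdots$: this yields the prefactor $k!/(m_1!m_2!\cdots)$, where the $k!$ comes from the number of ways to build the ordered strings and the $1/\prod m_j!$ corrects for permuting strings of equal length. (3) The cross terms $\prod_{A<B}\frac{z_A-z_B}{z_A-z_B-1}$ among the surviving (now all-on-$C_0$) variables, after all the residues have been taken and the remaining poles disentangled, collapse exactly into the determinant $\det\left(\frac{1}{v_j-v_i-\lambda_i}\right)_{i,j=1}^{\ell(\lambda)}$; this is an application of the Cauchy-type determinantal identity already used in the Macdonald-process literature (see \cite[Section 3]{borodin2014macdonald} or the parallel computation in \cite{corwin2014strict}), and one must check that the spurious poles at $v_j = v_i$ (no shift) inside a string do not actually contribute because the residue structure forces $v_j - v_i \notin\{0\}$ for the surviving configuration, while the genuine poles sit at $v_j - v_i = \lambda_i$.

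I expect the main obstacle to be step~(1)--(3) \emph{bookkeeping}: carefully tracking which poles are crossed in which order as the nested contours are pulled onto the common circle, and verifying that no pole at $-\nu$ or at a coincidence $v_i=v_j$ is illegitimately swept across. Concretely, one must argue that throughout the deformation the only relevant poles are those of the form $z_A - z_B = 1$ with $A<B$ (so that deforming $z_A$ through $z_B$'s contour is the only source of residues), that the pole at $z=0$ is always kept inside, and that $-\nu$ is always kept outside --- the latter being guaranteed by the hypothesis $\nu>\mu>0$ together with the small-circle assumption. Once the combinatorial identity ``sum over residue configurations $=$ sum over partitions with the stated prefactor and determinant'' is established --- which is formally identical to the strict-weak polymer case --- the proof is complete; I would therefore write step~(1)--(3) by reduction to \cite[Proposition~5.?]{corwin2014strict}, indicating the (minor) changes needed because here $f$ has the Beta-polymer form $\left(\frac{\nu+v}{v}\right)^n\left(\frac{\mu+v}{\nu+v}\right)^t\frac{1}{v+\nu}$ rather than the strict-weak one, and because the $Z(t,n)\in[0,1]$ bound makes all the interchanges of sums and integrals trivially justified.
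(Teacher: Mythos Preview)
Your proposal is correct and follows essentially the same approach as the paper: start from the nested-contour moment formula of Proposition~\ref{prop:momentformula}, shrink all contours to the common small circle $C_0$, collect the residues from the poles at $z_A=z_B+1$ into strings indexed by partitions $\lambda\vdash k$, and recognize the surviving cross terms as the Cauchy-type determinant. The paper's proof is terser---it simply cites Proposition~6.2.7 of \cite{borodin2014macdonald} (and Proposition~7.4 of \cite{borodin2015spectral}) for the contour-shift bookkeeping rather than sketching your steps~(1)--(3), and notes that the only new feature, the pole of $f$ at $-\nu$, plays no role since it stays outside all contours throughout the deformation---but the content is the same.
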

\begin{proof}
This type of deduction, called the contour shift argument, has already occurred in the context of the $q$-Whittaker process in \cite[Section 3.2.1]{borodin2014macdonald}. %A more direct proof is given in Proposition 7.4 ...
See \cite{borodin2015spectral}, in particular Proposition 7.4, and references therein for more background on the contour shift argument. The present formulation corresponds to a degeneration when $q\to 1$ of Proposition 3.2.1 in \cite{borodin2014macdonald}. 

 One starts with the moment formula given by Proposition  \ref{prop:momentformula}: 
$$\EE\left[ Z(t,n)^k\right] = \frac{1}{(2i\pi)^k} \int\dots\int \prod_{A<B} \frac{z_A-z_B}{z_A-z_B-1}  \prod_{j=1}^k f(z_j)\mathrm{d}z_j.$$
We need to shrink all contours to a small circle around $0$. During the deformation of contours, one encounters all poles of the product $\prod_{A<B} \frac{z_A-z_B}{z_A-z_B-1}$. Thus, a direct proof would amount to carefully book-keeping the residues. Although one could adapt to the present setting the proof of \cite[Proposition 7.4]{borodin2015spectral}, we refer to Proposition 6.2.7 in \cite{borodin2014macdonald} which provides a very similar  formula. The only modification is that the function $f$ that we consider has a pole at $-\nu$, but this does not play any role in the deformation of contours. 

It is also worth remarking that applying Proposition 3.2.1 in \cite{borodin2014macdonald} to $q$-Hahn moment formula \cite[Theorem 1.8]{corwin2014q} and taking a suitable limit yields the statement of Proposition \ref{prop:shrinkingcontours}.
\end{proof}
\subsection{Second proof of Theorem \ref{thmLaplacepolymer}}
\label{subsec:secondproof}

Thanks to Proposition \ref{prop:shrinkingcontours}, the moments of $Z(t,n)$ have a suitable form for taking the generating series. Let us denote $\mu_k = \EE\left[ Z(t,n)^k\right]$. A degeneration when $q$ goes to $1$ of Proposition 3.2.8 in \cite{borodin2014macdonald} shows  that 
$$ \sum_{k\geqslant 0} \mu_k \frac{u^k}{k!} = \det(I+K)_{\mathbb{L}^2(\Z_{>0}\times C_0)}, $$
where 
$\det(I+K)$ is the formal Fredholm determinant expansion of the operator $K$ defined  by the integral kernel 
$$ K(n_1, v_1 ; n_2, v_2) =   \frac{u^{n_1} f(v_1) f(v_1+1) \dots f(v_1+n_1-1)}{v_1+n_1-v_2}, $$
and $C_0$ is a positively oriented circular contour around $0$ excluding $-\nu$.  
Since $f(v+n_1)$ is uniformly bounded for $v\in C_0$ and $n_1\geqslant 1$, and $v_1+n_1-v_2$ is uniformly bounded away from $0$ for $v_1, v_2 \in C_0, n_1\geqslant 1$, the identity holds also numerically.  
Since $\vert Z(t, n)\vert \leqslant 1$, then one can exchange summation and expectation so that for any $u\in \C$
$$\sum_{k\geqslant 0} \mu_k \frac{u^k}{k!}  = \EE\left[e^{uZ(t,n)} \right].$$
It is useful to notice that  
$$f(v_1) f(v_1+1) \dots f(v_1+n_1-1) = \frac{g^{\mathrm{BP}}(v_1)}{g^{\mathrm{BP}}(v_1+n_1)}.$$
Next, we want to rewrite  $\det(I+K)$ as the Fredholm determinant of an operator acting on a single contour. For that purpose we use the following Mellin-Barnes integral formula: 
\begin{lemma}
For $u\in \C\setminus \R_{>0}$ with $\vert u\vert <1$,
\begin{equation}
\sum_{n=1}^{\infty} u^n \frac{g^{\mathrm{BP}}(v)}{g^{\mathrm{BP}}(v+n)}\frac{1}{v+n-v'} = \frac{1}{2i\pi} \int_{1/2-i\infty}^{1/2+i\infty} \Gamma(-s)\Gamma(1+s) (-u)^s \frac{g^{\mathrm{BP}}(v)}{g^{\mathrm{BP}}(v+s)}\frac{\mathrm{d}s}{v+s-v'}, 
\label{eq:mellinbarnesidentity}
\end{equation}
where $z^s$ is defined with respect to a branch cut along $z\in\R_{\leqslant 0}$.
\label{lem:mellinbarnes}
\end{lemma}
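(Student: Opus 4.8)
\textbf{Proof plan for Lemma \ref{lem:mellinbarnes} (Mellin-Barnes identity).}
The plan is to evaluate the right-hand side by residues, closing the vertical contour $1/2+i\R$ to the right and picking up the poles of $\Gamma(-s)$. First I would record the key analytic facts: $\Gamma(-s)\Gamma(1+s) = -\pi/\sin(\pi s)$ has simple poles exactly at $s \in \Z_{\geqslant 0}$ coming from $\Gamma(-s)$ (the pole of $\Gamma(1+s)$ at $s=0$ is cancelled, and its poles at negative integers lie to the left of the contour and will not be crossed), with $\Res_{s=n}\Gamma(-s)\Gamma(1+s) = (-1)^{n+1}$ for $n\geqslant 0$; equivalently $\Res_{s=n}\big(\Gamma(-s)\Gamma(1+s)(-u)^s\big) = (-1)^{n+1}(-u)^n = -u^n$. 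Meanwhile $(-u)^s$, defined via the branch cut along $\R_{\leqslant 0}$, decays like $e^{-\pi|\Imag s|}\cdot|{-u}|^{\Real s}$ times a polynomial factor precisely when $u\notin\R_{>0}$, so it controls the growth of $1/\sin(\pi s) \sim e^{-\pi|\Imag s|}$-badness along vertical lines; for $|u|<1$ the factor $|{-u}|^{\Real s}\to 0$ as $\Real s\to+\infty$, which is what makes closing to the right legitimate.

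The main steps, in order, are: (i) justify that the integrand is meromorphic in the half-plane $\Real s > 0$ with its only singularities the simple poles at $s=1,2,3,\dots$ — here I need that $g^{\mathrm{BP}}(v)/g^{\mathrm{BP}}(v+s)$ is analytic in $s$ for $\Real s>0$ (true since the $\Gamma$-ratios defining $g^{\mathrm{BP}}$ have poles only at nonpositive arguments, and $v$ is fixed on the small circle $C_0$ near the origin, so $v, v+s, \nu+v+s, \mu+v+s$ stay away from $\Z_{\leqslant 0}$ for $\Real s \geqslant 1/2$), and that $1/(v+s-v')$ contributes no pole for $\Real s\geqslant 1/2$ (again since $v,v'$ are near $0$); (ii) estimate the integrand on the vertical segments $\Real s = N+1/2$ and on the horizontal connecting arcs of a large box, using the asymptotics \eqref{eq:estimategamma} for $\Gamma$ together with the ratio bounds already proved in Lemmas \ref{lem:estimateqGamma1}--\ref{lem:estimateqGamma2} (taking $q\to 1$, i.e. the ordinary-$\Gamma$ versions), to show the contributions of these boundary pieces vanish as $N\to\infty$ — this uses $|u|<1$ crucially so that $|{-u}|^{N+1/2}\to 0$ faster than any polynomial growth of the $\Gamma$-ratio; (iii) apply the residue theorem to conclude that the contour integral equals $\sum_{n\geqslant 1}$ of $(-2i\pi)\times$ (residue at $s=n$), and the extra sign from the clockwise orientation of the right-closing contour combines with $\Res_{s=n}(\Gamma(-s)\Gamma(1+s)(-u)^s) = -u^n$ to give exactly $\sum_{n\geqslant 1} u^n \,\frac{g^{\mathrm{BP}}(v)}{g^{\mathrm{BP}}(v+n)}\,\frac{1}{v+n-v'}$, which is the left-hand side; (iv) note the $n=0$ term is absent on both sides (left-hand sum starts at $n=1$; on the right, $s=0$ is not a pole since $\Gamma(1+s)$ is regular there and cancels the pole of $\Gamma(-s)$).

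The hard part will be step (ii): making the decay estimate on the shifted vertical lines $\Real s = N+1/2$ genuinely uniform. One must check that the $\Gamma$-ratio $g^{\mathrm{BP}}(v)/g^{\mathrm{BP}}(v+s)$ does not grow too fast as $\Real s \to +\infty$ — it behaves like a ratio of $\Gamma$ functions with arguments whose real parts grow linearly, which by Stirling grows at most like $C^{\Real s}(\Real s)!^{\pm(\text{something})}$, and one needs the geometric decay $|{-u}|^{\Real s}$ (with $|u|<1$) plus the factorial-type growth/decay to balance so that the box contour integral tends to $0$; this is a standard but slightly delicate Stirling estimate. Everything else — the residue computation and the pole-location bookkeeping — is routine once the branch of $(-u)^s$ and the location of the contour relative to the poles are pinned down. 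An alternative, if one prefers to avoid the large-box estimate at fixed $v$, is to first establish the identity for $u$ in a small enough punctured neighbourhood of $0$ where absolute convergence of the sum and the contour integral are both transparent, and then extend to all $u\in\C\setminus\R_{>0}$ with $|u|<1$ by analytic continuation in $u$ of both sides.
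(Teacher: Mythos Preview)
Your approach is essentially the paper's: close the vertical contour to the right, pick up the residues of $\Gamma(-s)\Gamma(1+s)(-u)^s$ at the positive integers, and justify the vanishing of the far boundary by Stirling-type estimates (the paper phrases this as first writing the sum as an integral over a Hankel contour encircling $\Z_{\geqslant 1}$ and then deforming to $\tfrac12+i\R$, which is the same argument run backwards).

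Two small corrections. First, your remark in step~(iv) that ``$s=0$ is not a pole since $\Gamma(1+s)$ is regular there and cancels the pole of $\Gamma(-s)$'' is wrong: $\Gamma(1+s)|_{s=0}=1$, so nothing is cancelled, and $\Gamma(-s)\Gamma(1+s)=-\pi/\sin(\pi s)$ does have a simple pole at $s=0$. The reason $s=0$ does not contribute is simply that it lies to the \emph{left} of the contour $\Real s=\tfrac12$ and is never enclosed when you close to the right. Second, your worry in step~(ii) about ``balancing'' the growth of the $\Gamma$-ratio against the geometric decay $|{-u}|^{\Real s}$ is overstated: the factor $\Gamma(\nu+v+s)$ sitting alone in $g^{\mathrm{BP}}(v+s)$ (not in a ratio) grows like $s^{s}e^{-s}$ by Stirling, so $1/g^{\mathrm{BP}}(v+s)$ already has \emph{factorial} decay in $|s|$ throughout the right half-plane, swamping the at-most-exponential growth of $\pi/\sin(\pi s)$ and $(-u)^s$; this is exactly the estimate the paper isolates, and it makes the box-closing step routine rather than delicate.
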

\begin{proof}
The statement of the Lemma is very similar to  \cite[Lemma 3.2.13]{borodin2014macdonald}. 

Since $\Res{s=k} \left( \Gamma(-s)\Gamma(1+s)\right) = (-1)^{k+1}$, we have that 
\begin{equation}
\sum_{n=1}^{\infty} u^n \frac{g^{\mathrm{BP}}(v)}{g^{\mathrm{BP}}(v+n)}\frac{1}{v+n-v'} = \frac{1}{2i\pi} \int_{\mathcal{H}} \Gamma(-s)\Gamma(1+s) (-u)^s \frac{g^{\mathrm{BP}}(v)}{g^{\mathrm{BP}}(v+s)}\frac{\mathrm{d}s}{v+s-v'},
\label{eq:mellinbarnesidentity2}
\end{equation}
where  $\mathcal{H}$ is a negatively oriented integration contour enclosing all positive integers. For the identity to be valid, the L.H.S. of (\ref{eq:mellinbarnesidentity2}) must converge, and the contour must be approximated by a sequence of contours $\mathcal{H}_k$ enclosing the integers $1, \dots , k$ such that the integral along the symmetric difference $\mathcal{H}\setminus \mathcal{H}_k$ goes to zero. 

The following estimates show that one can chose the contour $\mathcal{H}_k$ as a rectangular contour connecting the points $1/2+i$, $k+1/2+i$, $k+1/2-i$ and $1/2-i$; and the contour $\mathcal{H}$ as the infinite contour from $\infty -i$ to $1/2-i$ to $1/2+i$ to $\infty+i$.

We first need an estimate for the Gamma function \cite[Chapter 1, 1.18 (2)]{erdelyi1953higher}: for any $\delta>0$
\begin{equation}
\Gamma(z) = \sqrt{2\pi} e^{-z} z^{z-1/2} (1+\mathcal{O}\left(1/z\right))\ \ \text{ as } \vert z\vert \to\infty, \ \  \vert \arg(z)\vert <\pi-\delta.
\label{eq:estimategammageneral}
\end{equation}
Then recall that 
$$ g^{\mathrm{BP}}(v+s) = \left(\frac{\Gamma(v+s)}{\Gamma(\nu+v+s)} \right)^n \left( \frac{\Gamma(\nu+v+s)}{\Gamma(\mu+v+s)}\right)^t \Gamma(\nu+ v+s). $$
Using (\ref{eq:estimategammageneral}), 
$$ g^{\mathrm{BP}}(v+s)  = \sqrt{2\pi} e^{-\nu-v-s}(\nu+v+s)^{\nu+v+s-1/2} \frac{(\nu+v+s)^{(\nu-\mu)t}}{(\nu+v+s)^{\nu n}} \left(1+\mathcal{O}\left(\frac 1 s\right) \right).$$
It implies that for $s$ going to $\infty e^{i\phi}$ with $\phi\in(-\pi/2, \pi/2)$,  $1/g^{\mathrm{BP}}(v+s)$  has exponential decay in $\vert s\vert$. Moreover, for $s$ going to $\infty e^{i\phi}$ with $\phi\in(-\pi/2, \pi/2)$ and  $\phi\neq 0$, 
$$ (-u) ^s \frac{\pi}{\sin(\pi s)} \frac{1}{v+s-v'}$$
is bounded. Further, using the exponential decay of $\frac{\pi}{\sin(\pi s)}$ along vertical lines,  one can freely deform the integration contour $\mathcal{H}$ in (\ref{eq:mellinbarnesidentity2}) to become the straight line from $1/2 -i\infty$ to $1/2+i\infty$. 
\end{proof}
This shows that for any $u\in \C\setminus \R_{>0}$ with $\vert u\vert <1$, one has that 
\begin{equation}
\EE\left[e^{uZ(t,n)} \right] = \det(I+K^{\mathrm{BP}}_u)_{\mathbb{L}^2(C_0)},
\label{eq:identityfredholm}
\end{equation}
where the kernel $K^{\mathrm{BP}}_u$ is defined in the statement of Theorem \ref{thmLaplacepolymer}. One extends the result to any $u\in \C\setminus \R_{>0}$ by analytic continuation. The right-hand-side in (\ref{eq:identityfredholm}) is analytic since we have already shown in the proof of Proposition  \ref{prop:limitFredholmqhahn} that the  Fredholm determinant expansion is absolutely summable and integrable. The left-hand-side is analytic since $\vert Z(t,n)\vert <1$.  

\section{Zero-temperature limit}
\label{sec:limitBetatoFPP}

\subsection{Proof of Proposition \ref{prop:RWREtoFPP}}
In this section, we prove that the Bernoulli-Exponential first passage percolation model is the zero-temperature limit of the Beta-RWRE. The zero temperature limit corresponds to sending the parameters $\alpha, \beta$ of the Beta RWRE to zero. 
\begin{proof} We first show how the transition probabilities for the Beta RWRE degenerate in the zero temperature limit. 
\begin{lemma}
Fix $a,b >0$. For $\epsilon >0$, let $B_{\epsilon}$ be a Beta distributed random variable with parameters $(\epsilon a, \epsilon b)$.  We have the convergence in distribution 
$$ \Big(-\epsilon \log(B_{\epsilon}), -\epsilon \log(1-B_{\epsilon}) \Big) \Longrightarrow \big(\xi E_a , (1-\xi)E_b\big) $$
as $\epsilon$ goes to zero, where $\xi$ is a Bernoulli random variable with parameter $b/(a+b)$ and $(E_a, E_b)$ are exponential random variables with parameters $a$ and $b$, independent of $\xi$. 
\label{lem:betaconvergence}
\end{lemma}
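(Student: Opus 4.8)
The plan is to work directly with the pair $U_\epsilon := -\epsilon\log B_\epsilon$ and $V_\epsilon := -\epsilon\log(1-B_\epsilon)$, which satisfy the deterministic relation $e^{-U_\epsilon/\epsilon}+e^{-V_\epsilon/\epsilon}=1$. Consequently, on the event $\{B_\epsilon\leqslant 1/2\}$ one has $0\leqslant V_\epsilon\leqslant\epsilon\log 2$, while on $\{B_\epsilon>1/2\}$ one has $0\leqslant U_\epsilon\leqslant\epsilon\log 2$; thus as $\epsilon\to 0$ the vector $(U_\epsilon,V_\epsilon)$ concentrates on the union of the two coordinate axes of $\R_{\geqslant 0}^2$, which is exactly the support of $(\xi E_a,(1-\xi)E_b)$. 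The proof then reduces to identifying the limiting mass of each axis and the limiting (conditional) law along it, which I will extract from the left-tail asymptotics of $B_\epsilon$, using the symmetry $1-B_\epsilon\sim\mathrm{Beta}(\epsilon b,\epsilon a)$ (second bullet after Definition \ref{def:BetaRWRE}) for the other axis, and then assemble everything through a test-function argument.

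The key estimate is: for every fixed $u>0$,
\begin{equation*}
\mathbb{P}\big(B_\epsilon\leqslant e^{-u/\epsilon}\big)\ \xrightarrow[\epsilon\to 0]{}\ \frac{b}{a+b}\,e^{-au}.
\end{equation*}
To prove it, write $\mathbb{P}(B_\epsilon\leqslant\delta)=\frac{\Gamma(\epsilon(a+b))}{\Gamma(\epsilon a)\Gamma(\epsilon b)}\int_0^\delta x^{\epsilon a-1}(1-x)^{\epsilon b-1}\,\mathrm{d}x$ and specialize to $\delta=\delta_\epsilon=e^{-u/\epsilon}$. From $\Gamma(z)=\Gamma(1+z)/z$ with $\Gamma(1+z)\to 1$ as $z\to 0$ one gets $\Gamma(\epsilon c)=(1+o(1))/(\epsilon c)$, hence $\frac{\Gamma(\epsilon(a+b))}{\Gamma(\epsilon a)\Gamma(\epsilon b)}=(1+o(1))\,\epsilon ab/(a+b)$; and since $\delta_\epsilon\to 0$, on $[0,\delta_\epsilon]$ one has the elementary bound $1\leqslant(1-x)^{\epsilon b-1}\leqslant(1-\delta_\epsilon)^{\epsilon b-1}=1+o(1)$, so $\int_0^{\delta_\epsilon}x^{\epsilon a-1}(1-x)^{\epsilon b-1}\,\mathrm{d}x=(1+o(1))\,\delta_\epsilon^{\epsilon a}/(\epsilon a)$. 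Multiplying gives $(1+o(1))\frac{b}{a+b}\delta_\epsilon^{\epsilon a}=(1+o(1))\frac{b}{a+b}e^{-au}$. The symmetric statement $\mathbb{P}(1-B_\epsilon\leqslant e^{-v/\epsilon})\to\frac{a}{a+b}e^{-bv}$ for $v>0$ follows by exchanging $a,b$. Finally, since $\mathbb{P}(B_\epsilon\leqslant 1/2)\geqslant\mathbb{P}(B_\epsilon\leqslant e^{-u/\epsilon})$ for $\epsilon$ small, letting $u\downarrow 0$ gives $\liminf_\epsilon\mathbb{P}(B_\epsilon\leqslant 1/2)\geqslant b/(a+b)$, and likewise $\liminf_\epsilon\mathbb{P}(B_\epsilon>1/2)\geqslant a/(a+b)$; as the two probabilities sum to $1$, both limits exist and equal $b/(a+b)$ and $a/(a+b)$ respectively.

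For the assembly, fix $h\in C_b(\R_{\geqslant 0}^2)$ and split $\mathbb{E}[h(U_\epsilon,V_\epsilon)]=\mathbb{E}[h(U_\epsilon,V_\epsilon)\mathbf{1}_{B_\epsilon\leqslant 1/2}]+\mathbb{E}[h(U_\epsilon,V_\epsilon)\mathbf{1}_{B_\epsilon>1/2}]$. On $\{B_\epsilon\leqslant 1/2\}$ one has $V_\epsilon\leqslant\epsilon\log 2$, and a routine truncation — uniform continuity of $h$ on $[0,M]^2$ together with $\mathbb{P}(U_\epsilon>M)=\mathbb{P}(B_\epsilon\leqslant e^{-M/\epsilon})\to\frac{b}{a+b}e^{-aM}$, then $M\to\infty$ — shows $\mathbb{E}[h(U_\epsilon,V_\epsilon)\mathbf{1}_{B_\epsilon\leqslant1/2}]-\mathbb{E}[h(U_\epsilon,0)\mathbf{1}_{B_\epsilon\leqslant1/2}]\to 0$. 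The subprobability measure $\nu_\epsilon(\mathrm{d}u):=\mathbb{P}(U_\epsilon\in\mathrm{d}u,\,B_\epsilon\leqslant 1/2)$ satisfies $\nu_\epsilon((u,\infty))\to\frac{b}{a+b}e^{-au}$ for every $u\geqslant 0$ (for $u>0$ this is the key estimate, since $\{U_\epsilon>u\}\subseteq\{B_\epsilon\leqslant 1/2\}$ for small $\epsilon$; for $u=0$ it is $\mathbb{P}(B_\epsilon\leqslant 1/2)\to b/(a+b)$), so $\nu_\epsilon$ converges weakly to $\frac{b}{a+b}\,\mathrm{Exp}(a)$ and $\mathbb{E}[h(U_\epsilon,0)\mathbf{1}_{B_\epsilon\leqslant1/2}]\to\frac{b}{a+b}\int_0^\infty h(u,0)\,ae^{-au}\,\mathrm{d}u$. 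The term with $\mathbf{1}_{B_\epsilon>1/2}$ is handled identically (now $U_\epsilon\leqslant\epsilon\log 2$, and the symmetric estimate applies to $V_\epsilon$), converging to $\frac{a}{a+b}\int_0^\infty h(0,v)\,be^{-bv}\,\mathrm{d}v$. Adding the two limits yields exactly $\mathbb{E}\big[h(\xi E_a,(1-\xi)E_b)\big]$, which is the claimed convergence in distribution. The only genuine calculations are the uniform control of the Beta integral in the key estimate and the truncation bound in the last step; neither presents any real difficulty, so I expect the main point to be simply organizing the $\{B_\epsilon\lessgtr 1/2\}$ dichotomy cleanly.
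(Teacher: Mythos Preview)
Your proof is correct and follows essentially the same approach as the paper: both split according to $\{B_\epsilon\leqslant 1/2\}$ versus $\{B_\epsilon>1/2\}$, observe that on each half one of the two coordinates is $O(\epsilon)$, and identify the limiting law of the other coordinate via the same $\Gamma(\epsilon c)\sim 1/(\epsilon c)$ asymptotics for the Beta normalization. The only cosmetic difference is that the paper works with the density (substituting $z=-\epsilon\log x$ in the integral $\int_0^{1/2}$) while you work with the tail CDF $\mathbb{P}(B_\epsilon\leqslant e^{-u/\epsilon})$; the computations and the underlying idea are the same.
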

\begin{proof}
Let $f, g : \R\to\R$ be continuous bounded functions. 

\begin{multline}
 \EE\Big[ f\big(-\epsilon \log(B_{\epsilon})\big)g\big(-\epsilon \log(1-B_{\epsilon})\big)\Big]  = \\ 
 \int_0^{1} f\big(-\epsilon \log(x)\big) g\big(-\epsilon \log(1-x)\big) x^{\epsilon a -1} (1-x)^{\epsilon b-1}  \frac{\Gamma(\epsilon a+\epsilon b)}{\Gamma(\epsilon a) \Gamma(\epsilon b)} \mathrm{d}x.
 \label{eq:esperanceselonepsilon}
 \end{multline}
In order to compute the limit of (\ref{eq:esperanceselonepsilon}), we evaluate separately the contribution of the integral between $0$ and $1/2$, and between $1/2$ and $1$. By making the change of variable $z=-\epsilon \log(x)$, we have that 
\begin{multline}
 \int_0^{1/2} f\big(-\epsilon \log(x)\big) g\big(-\epsilon \log(1-x)\big) x^{\epsilon a -1} (1-x)^{\epsilon b-1}  \frac{\Gamma(\epsilon a+\epsilon b)}{\Gamma(\epsilon a) \Gamma(\epsilon b)}= \\
 \frac{\Gamma(\epsilon a+\epsilon b)}{\Gamma(\epsilon a) \Gamma(\epsilon b)} \int_{\epsilon \log(2)}^{\infty} f(z) g\big(-\epsilon \log(1-e^{-z/\epsilon})\big) e^{-az} e^{(\epsilon b-1)\log(1-e^{-z/\epsilon})} \mathrm{d}z.
 \label{eq:esperancecoupee}
 \end{multline}
Since 
$$  \frac{\Gamma(\epsilon a+\epsilon b)}{\Gamma(\epsilon a) \Gamma(\epsilon b)} \xrightarrow[\epsilon\to 0]{} \frac{ab}{a+b},$$
the limit of the right-hand-side in (\ref{eq:esperancecoupee}) is 
$$ \frac{b}{a+b} \int_{0}^{\infty} f(z)g(0) ae^{-az} \mathrm{d}z = \frac{b}{a+b} \EE[f(E_a)g(0)].$$ 
The contribution of the integral in (\ref{eq:esperanceselonepsilon}) between $1/2$ and $1$ is computed in the same way, and we find that 
\begin{align*}
\lim_{\epsilon\to 0}  \EE\Big[ f\big(-\epsilon \log(B_{\epsilon})\big)g\big(-\epsilon \log(1-B_{\epsilon})\big)\Big] &=   \frac{b}{a+b} \EE\big[f(E_a)g(0)\big] + \frac{a}{a+b} \EE\big[f(0)g(E_b)\big]\\
&= \EE\Big[ f(\xi E_a)g((1-\xi)E_b)\Big],
\end{align*}
which proves the claim.
\end{proof}
\begin{remark}
Whether $E_a$ and $E_b$ are independent or not does not have any importance. However, it is important that  $E_a$ and $E_b$ are independent of the Bernoulli random variable $\xi$. 
\end{remark}

Let $\alpha_{\epsilon}=\epsilon a, \beta_{\epsilon} = \epsilon b$ and $P_{\epsilon}(t,x)$ the (quenched) distribution function of the endpoint at time $t$ for the Beta random walk with parameters $\alpha_{\epsilon}$ and $\beta_{\epsilon}$. Let $T(n,m)$ be the first-passage time in the Bernoulli-Exponential model with parameters $a,b$. 

It is convenient to define the analogue of the set of weights $w_e$ of the Beta polymer in the context of the Beta RWRE. For an edge $e$ in $(\Z_{\geqslant 0})^2 $ we define $p_e$ by 
$$p_e= \begin{cases}
 B_{j-i, i+j} \text{ if }e\text{ is the vertical edge }(i,j)\to(i, j+1)\\
  1-B_{j-i, i+j} \text{ if }e\text{ is the horizontal edge }(i,j)\to(i+1,j);\\
\end{cases}  $$
where the variables $B_{\cdot , \cdot}$ define the environment of the random walk.
 Lemma \ref{lem:betaconvergence} implies that as $\epsilon$ goes to zero, we 
have the weak convergence 
$$ \min_{\pi : (0,0)\to D_{n,m}}\left\lbrace  \sum_{e\in\pi} -\epsilon \log(p_e)\right\rbrace \Rightarrow \min_{\pi: (0,1)\to D_{n,m}}\left\lbrace \sum_{e\in \pi} t_e\right\rbrace,$$
where the minimum is taken over up-right paths, and the passage times $t_e$ are defined in (\ref{eq:defte}).
%as in the definition of the Bernoulli-Exponential first passage percolation model.

Since the times $t_e$ in the FPP model are either zero or exponential, and there is at most one path with zero passage time, the minimum over paths of $\sum_{e\in\pi} t_e$ is attained for a unique path with probability one. We know by the principle of the largest term that as $\epsilon \to 0$,
$$ -\epsilon \log\big(P_{\epsilon}(n+m,m-n)\big) = -\epsilon \log\left( \sum_{\pi : (0,0) \to D_{n,m}} \exp\left(\sum_{e\in\pi} \log(p_e)\right) \right) $$
has the same limit as 
$$ \min_{\pi : (0,0)\to D_{n,m}} \left\lbrace \sum_{e\in\pi} -\epsilon \log(p_e) \right\rbrace. $$
Since the family of rescaled weights $\big(-\epsilon \log(p_e)\big)_e$ weakly converges to $(t_e)_e$, then 
$$  \min_{\pi : (0,0)\to D_{n,m}} \left\lbrace \sum_{e\in\pi} -\epsilon \log(p_e) \right\rbrace \Rightarrow \min_{\pi: (0,0)\to D_{n,m}}\left\lbrace \sum_{e\in \pi} t_e\right\rbrace.$$
Hence for any $n, m\geqslant 0$, $-\epsilon \log(P_{\epsilon}(t,n)$ weakly converges as $\epsilon$ goes to zero to $T(n ,m)$.
\end{proof}

\subsection{Proof of Theorem \ref{thmProb}}
Theorem \ref{thmProb} states that for $r\in\mathbb{R}_{>0}$, one has
\begin{equation*}
\mathbb{P}\big( T(n,m) > r\big) = \det(I-K^{\mathrm{FPP}}_r)_{\mathbb{L}^2(C'_0)}
\end{equation*}
where $C'_0$ is a small positively oriented circle containing $0$ but not $-\nu$, and $K^{\mathrm{FPP}}_r : \mathbb{L}^2(C'_0)\rightarrow \mathbb{L}^2(C'_0)$ is defined by its integral kernel 
\begin{equation}
K^{\mathrm{FPP}}_r(u,u') = \frac{1}{2i\pi} \int_{1/2-i\infty}^{1/2+i\infty} \frac{e^{rs}}{s} 
\frac{g^{\mathrm{FPP}}(u)}{g^{\mathrm{FPP}}(u+s)} \frac{\mathrm{d}s}{s+u-u'}
\label{eq:defkernelFPP}
\end{equation}
where 
\begin{equation}
g^{\mathrm{FPP}}(u) = \left(\frac{a+u}{u}\right)^n \left(\frac{a+u}{a+b+u} \right)^m \frac{1}{u}.
\label{eq:defgfpp}
\end{equation}
\begin{proof}
The proof splits into two pieces. We first show that under appropriate scalings, the Laplace transform $\mathbb{E}\big[ e^{u P_{\epsilon}(n+m, m-n)} \big]$ converges to  $\mathbb{P}\big(T(n,m)\geqslant r\big)$. Then we show that the Fredholm determinant $\det(I-K^{\mathrm{BP}}_u)$ from \ref{thmLaplaceRWRE} converges to $\det(I-K^{\mathrm{FPP}}_r)_{\mathbb{L}^2(C'_0)}$.  

\textbf{First step:} We have an exact formula for $\mathbb{E}\left[ e^{u P_{\epsilon}(n+m, m-n)} \right]$. Let us scale $u$ as $u=-\exp\left(\epsilon^{-1}r\right)$ so that 
$$ \mathbb{E}\left[ e^{u P_{\epsilon}(n+m,m-n)} \right] = \mathbb{E}\left[\exp\left(-e^{-\epsilon^{-1}(-\epsilon \log(P_{\epsilon}(n+m,m-n))-r)}\right) \right].$$
If $f_{\epsilon}(x):= \exp\left(-e^{-\epsilon^{-1}x}\right)$, then the sequence of functions $\lbrace f_{\epsilon} \rbrace$  maps $\mathbb{R}$ to $(0,1)$, is strictly increasing with a limit of $1$ at $+\infty$ and $0$ at $-\infty$, and for each $\delta>0$,  on $\mathbb{R}\setminus [-\delta, \delta]$ converges uniformly to $\mathds{1}_{x>0}$. We define the $r$-shift of $f_{\epsilon}$  as 
$f_{\epsilon}^r(x) = f_{\epsilon}(x-r)$. Then,
$$\mathbb{E}\Big[ e^{u P_{\epsilon}(n+m, m-n)} \Big] = \mathbb{E}\Big[ f_{\epsilon}^r(-\epsilon \log(P_{\epsilon}(n+m, m-n)))\Big].$$ 
Since the variable $T(n,m)$ has an atom in zero, we are not exactly in the situation of Lemma 4.1.38 in \cite{borodin2014macdonald}, but we can adapt the proof. 
Let $s<r<u$. By the properties of the functions $f_{\epsilon}$ mentioned above, we have that for any $ \eta>0$, there exists an $\epsilon_0$ such that for any $\epsilon <\epsilon_0$, 
\begin{multline*}
\mathbb{P}\Big( -\epsilon \log\big(P_{\epsilon}(n+m, m-n)\big) \geqslant u \Big)  \leqslant
 \mathbb{E}\bigg[ f_{\epsilon}^r\Big(-\epsilon \log\big(P_{\epsilon}(n+m, m-n)\big)\Big)\bigg]\leqslant\\
  \mathbb{P}\Big( -\epsilon \log\big(P_{\epsilon}(n+m, m-n)\big) \geqslant s \Big).
\end{multline*}
Since we have established the weak convergence of $-\epsilon \log\big(P_{\epsilon}(n+m, m-n)\big)$ one can take limits as $\epsilon$ goes to zero in the probabilities and we find that 
\begin{multline*}
  \mathbb{P}\big( T(n,m)\geqslant u \big)  \leqslant \liminf_{\epsilon\to 0} \mathbb{E}\bigg[ f_{\epsilon}^r\Big(-\epsilon \log\big(P_{\epsilon}(n+m, m-n)\big)\Big)\bigg] \\
  \leqslant \limsup_{\epsilon\to 0} \mathbb{E}\bigg[ f_{\epsilon}^r\Big(-\epsilon \log\big(P_{\epsilon}(n+m, m-n)\big)\Big)\bigg]\leqslant \mathbb{P}\big( T(n,m) \geqslant s \big). 
  \end{multline*}
Now we take $s$ and $u$ to $r$ and notice that $T(n,m)$ can be decomposed as an atom at zero and an absolutely continuous part. Thus, for any $r>0$, 
$$  \mathbb{P}\big( T(n,m)> r \big)  = \lim_{\epsilon \to  0} \mathbb{E}\bigg[ f_{\epsilon}^r\Big(-\epsilon \log\big(P_{\epsilon}(n+m, m-n)\big)\Big)\bigg].$$

\textbf{Second step: }We shall prove that the limit when $\epsilon$ goes to zero of $\mathbb{E}\left[ e^{u P_{\epsilon}(n+m, m-n)} \right]$ is $\det(I-K^{\mathrm{FPP}}_r)_{\mathbb{L}^2(C_0)}$ where $K_r^{\mathrm{FPP}}$ is defined as in Theorem \ref{thmProb}. For that we take the limit of the Fredholm determinant $K^{RW}$ from Theorem \ref{thmLaplaceRWRE}. 
Let us use the change of variables 
$$ v=\epsilon \tilde{v},\ \  v'=\epsilon \tilde{v}',\ \  s=\epsilon \tilde{s}.$$
%We also set $\tilde{\mu} = a $ and $\tilde{\nu} = a+b$ so that $\mu=\epsilon \tilde{\mu} $ and $\nu=\epsilon \tilde{\nu}$.
Assuming that the limit of the Fredholm determinant is the Fredholm determinant of the limit, which we prove below, we have to take the limit of $\epsilon K^{RW}(\epsilon \tilde{v}, \epsilon\tilde{v}')$. The factor $\epsilon$ in front of $K^{RW}$ is a priori necessary, it comes from the Jacobian of the change of variables $v=\epsilon \tilde{v}$ and $v'=\epsilon \tilde{v}'$. For any $1> \epsilon >0$ the kernel $K^{RW}(v,v')$ can be written as an integral over $\frac 1 2 \epsilon + i\R$ instead of an integral over $\frac 1 2+i\R$, since we do not cross any singularity of the integrand during the contour deformation, and the integrand has exponential decay. Thus, one can write
\begin{equation}
\epsilon K^{RW}(\epsilon \tilde{v}, \epsilon\tilde{v}') = \frac{1}{2i\pi} \int_{1/2 -i\infty}^{1/2 +i\infty} \frac{\epsilon\pi}{\sin(\pi\epsilon \tilde{s})} (-u)^{\epsilon \tilde{s}} \frac{g^{RW}(\epsilon\tilde{v})}{g^{RW}(\epsilon\tilde{v}+\tilde{s})} \frac{\mathrm{d}\tilde{s}}{\tilde{s}+\tilde{v}-\tilde{v}'}.
\label{eq:limiteaprendre}
\end{equation}
 With $u=-\exp\left(\epsilon^{-1}r\right)$, we have that  $(-u)^{\epsilon \tilde{s}} = e^{\tilde{s}r}$. 
Moreover, since 
$$ \lim_{\epsilon\to 0} \epsilon \Gamma(\epsilon z) = \frac{1}{z},$$
we have that 
$$ \lim_{\epsilon\to 0} \frac{g^{RW}(\epsilon\tilde{v})}{g^{RW}(\epsilon\tilde{v}+\tilde{s})} =  \frac{g^{\mathrm{FPP}}(\tilde{v})}{g^{\mathrm{FPP}}(\tilde{v}+\tilde{s})},$$
where $g^{\mathrm{FPP}}$ is defined in (\ref{eq:defgfpp}), and 
$$\lim_{\epsilon\to 0}\frac{\epsilon\pi}{\sin{\epsilon \tilde{s}}}  = \frac{1}{\tilde{s}}.$$
Because the integrand in (\ref{eq:defkernelFPP}) is not absolutely integrable, one cannot apply dominated convergence directly. Instead, we will split the integral  (\ref{eq:limiteaprendre}) into two pieces: the integral over $s$ when $\Imag[\epsilon s ]<1/4$ and the integral over $s$ when  $\Imag[\epsilon s ] \geqslant 1/4$.
Let us begin with some estimates. Since the function $z\mapsto z/\sin(z)$ is holomorphic on a circle of radius $1/2$ around zero, there exists a constant 
$C>0$ such that for $s\in 1/2+i\R$ and $\epsilon>0$ such that $\vert \epsilon s \vert <1/2$, we have 
$$ \Big\vert \frac{\epsilon\pi}{\sin(\pi\epsilon \tilde{s})} -\frac{1}{s} \Big\vert <C\epsilon. $$ 
In order to lighten the notations, we denote 
$$G(\epsilon, \tilde{s}) = \frac{g^{RW}(\epsilon\tilde{v})}{g^{RW}(\epsilon\tilde{v}+\epsilon\tilde{s})} \frac{1}{\tilde{s}+\tilde{v}-\tilde{v}'} .$$ 
The variables $\tilde{v}$ and $\tilde{v}'$ are fixed for the moment. We know that $G(\epsilon, \tilde{s})$ is bounded for $\epsilon $ close to zero and $ \tilde{s}\in 1/2+i\R$. Moreover, there exists a constant $C'>0$ such that for $\vert\epsilon s \vert<1/2$, 
$$\Big\vert G(\epsilon , \tilde{s}) - \frac{g^{\mathrm{FPP}}(\tilde{v})}{g^{\mathrm{FPP}}(\tilde{v}+\tilde{s})} \frac{1}{\tilde{v}+\tilde{s}-\tilde{v}'} \Big\vert <C'\epsilon. $$
We have the decomposition
\begin{multline}
\frac{1}{2i\pi} \int_{\frac12 -\frac{i\epsilon^{-1}}{4}}^{\frac12 +\frac{i\epsilon^{-1}}{4}} \frac{\epsilon\pi}{\sin(\pi\epsilon \tilde{s})} e^{r\tilde{s}} G(\epsilon, \tilde{s})\mathrm{d}\tilde{s} =
\frac{1}{2i\pi} \int_{\frac12 -\frac{i\epsilon^{-1}}{4}}^{\frac12 +\frac{i\epsilon^{-1}}{4}} \left(\frac{\epsilon\pi}{\sin(\pi\epsilon \tilde{s})} - \frac{1}{\tilde{s}}\right) e^{r\tilde{s}} G(\epsilon, \tilde{s})\mathrm{d}\tilde{s}\\ + 
\frac{1}{2i\pi} \int_{\frac12 -\frac{i\epsilon^{-1}}{4}}^{\frac12 +\frac{i\epsilon^{-1}}{4}} \frac{e^{r\tilde{s}}}{\tilde{s}} \Big( G(\epsilon, \tilde{s})-G(0, \tilde{s})\Big)\mathrm{d}\tilde{s}  + 
\frac{1}{2i\pi} \int_{\frac12 -\frac{i\epsilon^{-1}}{4}}^{\frac12 +\frac{i\epsilon^{-1}}{4}} \frac{e^{r\tilde{s}}}{\tilde{s}}G(0, \tilde{s}) \mathrm{d}\tilde{s}.
\label{eq:decoupage}
\end{multline}
The first integral in the R.H.S of (\ref{eq:decoupage}) can be bounded by 
$$ C \epsilon \frac{1}{2\pi} \int_{\frac{1}{2}\epsilon -\frac{i}{4}}^{\frac{1}{2}\epsilon +\frac{i}{4}}  \vert \Gamma(1-s)\vert e^{r/2}  \vert G(\epsilon, s\epsilon^{-1})\vert\mathrm{d}s, $$
which is $\mathcal{O}(\epsilon)$. The second integral in the R.H.S of (\ref{eq:decoupage}) can be bounded by 
$$ C'\epsilon \frac{1}{2\pi}  \int_{\frac{1}{2}-\frac{i\epsilon^{-1}}{4}}^{\frac{1}{2} +\frac{i\epsilon^{-1}}{4}} \frac{e^{r/2}}{\vert \tilde{s}\vert} \mathrm{d}\tilde{s},$$
which is $\mathcal{O}(\epsilon \log(\epsilon^{-1}))$. 
The third integral in the R.H.S of (\ref{eq:decoupage}) converges to a limit as $\epsilon$ goes to zero, even if the integrand is not absolutely integrable. The limit is the improper integral 
$$ \frac{1}{2i\pi}  \int_{1/2-i\infty}^{1/2+i\infty} \frac{e^{r\tilde{s}}}{\tilde{s}} \frac{g^{\mathrm{FPP}}(\tilde{v})}{g^{\mathrm{FPP}}(\tilde{v}+\tilde{s})} \frac{\mathrm{d}\tilde{s}}{\tilde{v}+\tilde{s}-\tilde{v}'} = K^{\mathrm{FPP}}_r(\tilde{v}, \tilde{v}'). $$ 
It remains to show that we have made a negligible error when cutting the tails of the integral. We have 
\begin{multline}
\frac{1}{2i\pi} \int_{\frac12 +\frac{i\epsilon^{-1}}{4}}^{\frac12 +i\infty} \frac{\epsilon\pi}{\sin(\pi\epsilon \tilde{s})} e^{r\tilde{s}} G(\epsilon, \tilde{s})\mathrm{d}\tilde{s}  = \frac{1}{2i\pi} \int_{\frac12 \epsilon +\frac{i}{4}}^{\frac12 \epsilon +i\infty} \frac{\pi}{\sin(\pi s)} e^{rs \epsilon^{-1}} G(\epsilon, s\epsilon^{-1})\mathrm{d}s = \\
\frac{1}{2i\pi} \int_{\frac12 \epsilon +\frac{i}{4}}^{\frac12 \epsilon +i\infty} \frac{\pi}{\sin(\pi s)} e^{rs \epsilon^{-1}} \big(G(\epsilon, s\epsilon^{-1})-1\big)\mathrm{d}s
+
\frac{1}{2i\pi} \int_{\frac12 \epsilon +\frac{i}{4}}^{\frac12 \epsilon +i\infty} \frac{\pi}{\sin(\pi s)} e^{rs \epsilon^{-1}} \mathrm{d}s.
\label{eq:seconddecoupage}
\end{multline}
The first integral in the R.H.S of (\ref{eq:seconddecoupage}) goes to zero by dominated convergence, and the second integral in the R.H.S of (\ref{eq:seconddecoupage}) goes to zero by the Riemann-Lebesgue lemma. 
At this point we have shown that for any $\tilde{v}, \tilde{v}'\in C_0$, 
$$ \lim_{\epsilon \to 0} \epsilon K^{RW}(\epsilon \tilde{v}, \epsilon\tilde{v}') = K^{\mathrm{FPP}}_r(\tilde{v}, \tilde{v}').$$
Observe now that the kernel $K^{\mathrm{FPP}}_r(\tilde{v},\tilde{v}')$ is bounded as $\tilde{v}, \tilde{v}'$ vary along their contour. Using Hadamard's bound, one can bound the Fredholm series expansion of $K^{\mathrm{FPP}}_r$ by an absolutely convergent series of integrals, and conclude by dominated convergence that under the scalings above
$$ \det(I-K_u^{\mathrm{RW}})_{\mathbb{L}^2(C_0)} \xrightarrow[\epsilon\to 0]{}\det(I-K_r^{\mathrm{FPP}})_{\mathbb{L}^2(C_0)}.$$
\end{proof}

\section{Asymptotic analysis of the Beta RWRE}
\label{sec:RWREasymptotics}

Let us first define the Tracy-Widom distribution governing the fluctuations of extreme eigenvalues of Gaussian hermitian random matrices. We refer to \cite[Section 3.2.2]{borodin2014macdonald} for an introduction to Fredholm determinants. 
\begin{definition}
\label{def:TWdist}
The distribution function $F_{\rm GUE}(x)$ of the GUE Tracy-Widom distribution is defined  by $F_{\rm GUE}(x)=\det(I-K_{\rm Ai})_{\mathbb{L}^2(x,+\infty )}$ where $K_{\rm Ai}$ is the Airy kernel,
\begin{equation*}
K_{\rm Ai} (u, v) = \frac{1}{(2i\pi)^2} \int_{e^{-2i\pi/3}\infty}^{e^{2i\pi/3}\infty} \mathrm{d}w \int_{e^{-i\pi/3}\infty}^{e^{i\pi/3}\infty} \mathrm{d}z \frac{e^{z^3/3-zu}}{e^{w^3/3-wv}}\frac{1}{z-w},
\end{equation*}
where the contours for $z$ and  $w$ do not intersect. There is some freedom in the choice of contours. For instance, one can choose the contour for $z$ (resp. $w$) as constituted of two infinite rays departing $1$ (resp. $0$) in directions $\pi/3$ and $-\pi/3$ (resp. $2\pi/3$ and $-2\pi/3$). 
\end{definition}

\subsection{Fredholm determinant asymptotics}

We consider a Beta RWRE $(X_t)_{t\geqslant 0}$ with parameters $\alpha, \beta>0$. 
For a parameter $\theta>0$, we define the quantity
\begin{equation}
x(\theta) = \frac{\Psi_1(\theta+\alpha+\beta) +\Psi_1(\theta )- 2 \Psi_1(\theta + \alpha)}{\Psi_1(\theta) - \Psi_1(\theta + \alpha+\beta)}
\label{eq:defx}
\end{equation}
and the function $I:\big(\frac{\alpha-\beta}{\alpha+\beta}, 1\big) \to \R_{>0}$ such that 
\begin{equation}
I\big(x(\theta)\big) = \frac{\Psi_1(\theta+\alpha+\beta) - \Psi_1(\theta + \alpha)}{\Psi_1(\theta) - \Psi_1(\theta + \alpha+\beta)} \Big(\Psi(\theta + \alpha+\beta)- \Psi(\theta)   \Big)+ \Psi(\theta + \alpha+\beta)- \Psi(\theta+\alpha),  
\label{eq:defI}
\end{equation}
where $\Psi$ is the digamma function ($\Psi(z)= \Gamma'(z)/\Gamma(z)$) and $\Psi_1$ is the trigamma function ($\Psi_1(z)=\Psi'(z)$). Moreover, we define a real-valued $\sigma(\theta)>0$ such that 
\begin{equation}
2\sigma(\theta)^3 = \Psi_2(\theta+\alpha) - \Psi_2(\alpha+\beta+\theta) + \frac{\Psi_1(\alpha+\theta) - \Psi_1(\alpha+\beta+\theta)}{\Psi_1(\theta) - \Psi_1(\alpha+\beta+\theta)}\left( \Psi_2(\alpha+\beta+\theta) - \Psi_2(\theta)\right).
\label{eq:defsigma}
\end{equation}
The fact that we can choose $\sigma(\theta)>0$ is proved in Lemma \ref{lemma:thirdderivative}. We will see that a critical point Fredholm determinant asymptotic analysis shows that for all $\theta>0$ and $\alpha, \beta>0$,
\begin{equation}
\lim_{t\to\infty} \PP\left( \frac{\log\Big(P\big(t, x(\theta)t\big)\Big) + I\big(x(\theta)\big)t}{t^{1/3}\sigma(\theta)} \leqslant y \right)  = F_{\rm GUE}(y).
\end{equation}
However, due to increased technical challenges in the general parameter case, we presently prove rigorously only the case of Theorem \ref{thm:RWasymptotics}, which deals with $\alpha=\beta=1$ (i.e. when the $B_{x,t}$ variables are distributed uniformly on $(0,1)$). 

When $\alpha=\beta$ the expressions for $x(\theta)$ and $I(x(\theta))$ simplify. We find that 
$$ x(\theta) = \frac{1+2\theta}{\theta^2+(\theta+1)^2}$$
and
$$ I\big(x(\theta)\big) = \frac{1}{\theta^2+(\theta+1)^2} ,$$
so that the rate function $I$ is simply the function $I:x \mapsto 1-\sqrt{1-x^2}$. We also find that for $\alpha=\beta=1$,
\begin{equation}
   \sigma(\theta)^3 = \frac{1}{\theta + 3 \theta^2+ 4\theta^3 + 2\theta^4} =\frac{2\left(1-\sqrt{1-x^2} \right)^2}{\sqrt{1-x^2}} = \frac{2 I(x)^2}{1-I(x)}, 
   \label{eq:simpleexpressionsigma}
\end{equation}
 where $x=x(\theta)$. 
\begin{theorem}
\label{thm:RWasymptotics}
For $0<\theta<1/2$ and $\alpha=\beta=1$, we have that 
\begin{equation}
\lim_{t\to\infty} \PP\left( \frac{\log\Big(P\big(t, x(\theta)t\big)\Big) + I\big(x(\theta)\big)t}{t^{1/3}\sigma(\theta)} \leqslant y \right)  = F_{\rm GUE}(y).
\end{equation}
\end{theorem}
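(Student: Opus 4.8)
The plan is to run a steepest-descent (critical point) analysis of the Fredholm determinant of Theorem~\ref{thmLaplaceRWRE}, in the spirit of the directed polymer asymptotics of \cite{borodin2014macdonald,corwin2014strict,o2014tracy}.

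\emph{Reduction to a Laplace transform asymptotic.} Exactly as in the proof of Theorem~\ref{thmProb}, I would first recast the statement. Set
\begin{equation*}
u=u(t,y):=-\exp\!\big(t\,I(x(\theta))-y\,t^{1/3}\sigma(\theta)\big)\in\R_{<0},\qquad W_t:=\frac{\log P\big(t,x(\theta)t\big)+I(x(\theta))\,t}{t^{1/3}\sigma(\theta)},
\end{equation*}
so that $e^{u P(t,x(\theta)t)}=\phi_t(W_t)$ with $\phi_t(w)=\exp(-e^{t^{1/3}\sigma(\theta)(w-y)})$. The functions $\phi_t$ are decreasing, tend to $\bfone_{w<y}$, and converge to $1$ (resp.\ $0$) uniformly on $(-\infty,y-\delta]$ (resp.\ on $[y+\delta,\infty)$) for every $\delta>0$. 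A sandwiching argument — the same one used for Theorem~\ref{thmProb}, cf.\ \cite[Lemma~4.1.38]{borodin2014macdonald} — then reduces the theorem to showing
\begin{equation*}
\EE\big[e^{u(t,y)P(t,x(\theta)t)}\big]\xrightarrow[t\to\infty]{}F_{\rm GUE}(y)\qquad\text{for every }y\in\R ,
\end{equation*}
since the continuity of $F_{\rm GUE}$ then forces $\PP(W_t\leqslant y)\to F_{\rm GUE}(y)$. (Here $x(\theta)t$ is read as the nearest integer of the parity of $t$, a harmless rounding.)

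\emph{The critical point.} By Theorem~\ref{thmLaplaceRWRE} the left-hand side equals $\det(I+K^{\mathrm{RW}}_u)_{\mathbb{L}^2(C_0)}$. For $\alpha=\beta=1$ one has $g^{\mathrm{RW}}(v)=v^{-t(1-x(\theta))/2}(1+v)^{t(1+x(\theta))/2}\Gamma(v)$, so with $u=u(t,y)$ the integrand of $K^{\mathrm{RW}}_u(v,v')$ becomes $\tfrac{\pi}{\sin(\pi s)}\,e^{t[\tilde f(v)-\tilde f(v+s)]-y t^{1/3}\sigma(\theta)s}\,\tfrac{\Gamma(v)}{\Gamma(v+s)}\,\tfrac{1}{s+v-v'}$ with
\begin{equation*}
\tilde f(v)=-\frac{1-x(\theta)}{2}\log v+\frac{1+x(\theta)}{2}\log(1+v)-I(x(\theta))\,v .
\end{equation*}
A short computation shows that, for $\alpha=\beta=1$, the definitions \eqref{eq:defx} and \eqref{eq:defI} are precisely the equations $\tilde f''(\theta)=0$ and $\tilde f'(\theta)=0$, and that $\tilde f'''(\theta)=-2\sigma(\theta)^3$ with $\sigma(\theta)^3=(\theta+3\theta^2+4\theta^3+2\theta^4)^{-1}>0$ as in \eqref{eq:simpleexpressionsigma}; thus $\theta$ is a genuine double critical point of $\tilde f$ (and $\sigma(\theta)>0$ is well defined, which is Lemma~\ref{lemma:thirdderivative}).

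\emph{Deformation, localization and rescaling.} Next I would deform the circle $C_0$ (the contour for $v,v'$) so that it still encircles $0$ and excludes $-1,-\alpha-\beta$ but passes through $\theta$ in the steepest-descent directions $e^{\pm 2i\pi/3}$, and deform the line $\tfrac12+i\R$ (the contour for $s$) so that near the saddle it passes, in the directions $e^{\pm i\pi/3}$, through a point close to $0$ while staying in the strip $0<\Real(s)<1$. The hypothesis $\theta<1/2$ is exactly what makes these deformations legitimate: it keeps $\Real(s+v-v')$ bounded away from $0$ and keeps the saddle the only relevant feature crossed. One checks that $\Real[\tilde f(v)-\tilde f(\theta)]<0$, uniformly away from an $\mathcal{O}(t^{-1/3})$ window of $\theta$ on the deformed $v$-contour, so that only that window contributes. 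After the substitutions $v=\theta+t^{-1/3}\sigma(\theta)^{-1}w$, $v'=\theta+t^{-1/3}\sigma(\theta)^{-1}w'$, $v+s=\theta+t^{-1/3}\sigma(\theta)^{-1}z$, the triple zero of $\tilde f$ at $\theta$ turns $t[\tilde f(v)-\tilde f(v+s)]$ into $\tfrac{z^3}{3}-\tfrac{w^3}{3}+o(1)$, the term $-yt^{1/3}\sigma(\theta)s$ into $-(z-w)y$, the factor $\tfrac{\Gamma(v)}{\Gamma(v+s)}$ into $1$, and $\tfrac{\pi}{\sin(\pi s)}$ together with the rescaling Jacobian and $\tfrac{1}{s+v-v'}$ into $\tfrac{1}{(z-w)(z-w')}$; so (after a harmless conjugation) the rescaled kernel converges pointwise to
\begin{equation*}
\widehat K(w,w')=\frac{1}{2i\pi}\int_{\mathcal C_z}\frac{e^{z^3/3-zy}}{e^{w^3/3-wy}}\,\frac{\mathrm dz}{(z-w)(z-w')},
\end{equation*}
with $\mathcal C_z$, and the limiting $w$-contour $\mathcal C_w$, the rays of Definition~\ref{def:TWdist}. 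A now-standard identity ($\det(I+AB)=\det(I+BA)$, as in \cite[Section~3.2]{borodin2014macdonald} and \cite{corwin2014strict}) identifies $\det(I+\widehat K)_{\mathbb{L}^2(\mathcal C_w)}$ with $\det(I-K_{\rm Ai})_{\mathbb{L}^2(y,\infty)}=F_{\rm GUE}(y)$. Finally, as in the proof of Proposition~\ref{prop:limitFredholmqhahn} and of Theorem~\ref{thmProb}, a $t$-uniform exponentially decaying bound on the conjugated rescaled kernel along the contours, combined with Hadamard's inequality and dominated convergence, upgrades the pointwise kernel convergence to convergence of the Fredholm determinants, which completes the proof.

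\emph{Main obstacle.} The crux is the contour engineering of the last paragraph together with its tail estimates: since the $s$-contour is a vertical line on which $\pi/\sin(\pi s)$ decays only like $e^{-\pi|\Imag s|/2}$ and has poles at every integer, and since the $v$-contour must simultaneously enclose $0$, avoid $-1$ and $-\alpha-\beta$, and descend from $\theta$, establishing the global sign of $\Real[\tilde f(v)-\tilde f(v+s)]$ on admissible contours and producing the $t$-uniform integrable bound needed for dominated convergence is the delicate part. It is also the source of the restriction $\theta<1/2$, which should be only a technical convenience rather than an essential limitation (see Remark~\ref{rem:restrictiveconditiontheta}).
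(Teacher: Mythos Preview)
Your proposal is correct and follows essentially the same route as the paper: reduce via \cite[Lemma~4.1.39]{borodin2014macdonald} to a Fredholm determinant limit, identify $\theta$ as a double critical point of (what the paper calls) $h=-\tilde f$, deform contours, localize near $\theta$, rescale to the Airy-type kernel, and identify with $F_{\rm GUE}$. The only substantive difference is in the contour engineering you flag as the obstacle: rather than generic wedge contours, the paper makes the concrete choices $\mathcal{D}_\theta=\theta+i\R$ for the $z$-variable and the \emph{circle} $\mathcal{C}_\theta=\{|v|=\theta\}$ for the $\mathbb{L}^2$ contour, and then proves two explicit steep-descent lemmas --- the second of which (that $\phi\mapsto\Real[h(\theta e^{i\phi})]$ is monotone on $(0,\pi)$) is exactly where the computation is tractable only for $\alpha=\beta=1$ and is the source of that restriction; the bound $\theta<1/2$ (together with $\theta<\alpha+\beta$) is used so that enlarging $C_0$ to $\mathcal{C}_\theta$ does not cross $-\alpha-\beta$ and so that the poles of $\pi/\sin(\pi(z-v))$ stay to the right of $\mathcal{D}_\theta$.
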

The rest of this section is devoted to the proof of Theorem \ref{thm:RWasymptotics}. Most arguments in the proof apply equally  for any parameters $\alpha, \beta$ except the deformation of contours which is valid for small $\theta$ and  Lemma \ref{lemma:steepdescentw} which is only valid for $\alpha=\beta =1$. We expect the general $\alpha, \beta, \theta$ to still hold but do not attempt to extend to that case. 

We first observe that we do not need to invert the Laplace transform of $P(t, x(\theta)t)$. 
Setting $u=-e^{t I(x(\theta)) - t^{1/3} \sigma(\theta) y}$, 
one has that 
\begin{equation}
\lim_{t\to\infty }\mathbb{E}\left[ e^{u P(t,x(\theta)t)} \right] = \lim_{t\to\infty} \mathbb{P}\left(\frac{\log\Big(P\big(t, x(\theta) t\big)\Big)+ I\big(x(\theta)\big)t}{t^{1/3}\sigma(\theta)} <y \right). 
\end{equation}
This convergence is justified by Lemma 4.1.39 in \cite{borodin2014macdonald}, provided that the limit is a continuous  probability distribution function, and we see later that this is the case. Hence, in order to prove Theorem \ref{thm:RWasymptotics}, one has to take the $t\to\infty$ limit of the Fredholm determinant  (\ref{eq:fredholmRWRE}) in the statement of Theorem \ref{thmLaplaceRWRE}.

The asymptotic analysis of this Fredholm determinant proceeds by steepest descent analysis, and is very close to the analysis presented in the recent  papers \cite{borodin2012free,ferrari2013tracy,barraquand2014phase,veto2014tracy,corwin2014strict,barraquand2015q}, that deal with similar kernels. Let us assume for the moment that the contour $C_0$ is a circle around $0$ with very small radius. One can make the change of variables $v+s=z$ in the kernel $K_u^{\mathrm{RW}}$ so that, with the value of $u$ that we choose,
\begin{equation*}
K^{\mathrm{RW}}_u(v,v') = \frac{1}{2i\pi} \int_{1/2-i\infty}^{1/2+i\infty} \frac{\pi}{\sin(\pi (z-w))} e^{(z-w)\big(tI(x(\theta))-t^{1/3}\sigma(\theta)y\big)}\frac{g^{\mathrm{RW}}(v)}{g^{\mathrm{RW}}(z)} \frac{\mathrm{d}z}{z - v'}, 
\end{equation*} 
and the contour for $z$ can be chosen as $1/2+i\R$. The kernel can be rewritten
\begin{equation}
K^{\mathrm{RW}}_u(v,v') = \frac{1}{2i\pi} \int_{1/2-i\infty}^{1/2+i\infty} \frac{\pi}{\sin(\pi (z-w))} \exp\left( t(h(z)-h(v)) -t^{1/3}\sigma(\theta)y (z-v)\right) \frac{\Gamma(v)}{\Gamma(z)}\frac{\mathrm{d}z}{z - v'},
\label{eq:firstkernel}
\end{equation} 
where 
$$ h(z) = I\big(x(\theta)\big) z  + \frac{1-x(\theta)}{2}\log\left(\frac{\Gamma(\alpha+z)}{\Gamma(z)} \right) +  \frac{1+x(\theta)}{2}\log\left(\frac{\Gamma(\alpha+z)}{\Gamma(\alpha+\beta+z)} \right).$$

 The function $h$ governs the asymptotic behaviour of the Fredholm determinant of $K_u^{\mathrm{RW}}$. The principle of the steepest-descent method is to deform the integration contour -- both the contour in the definition of $K_u^{\mathrm{RW}}$ and the $\mathbb{L}^2$ contour -- so that they go across a critical point of the function $h$. Then one needs to prove that only the integration around the critical point has a contribution in the limit, and one can approximate all terms by their Taylor approximation close to the critical point. 
 
 The first derivatives of $h$ are  
$$ h'(z) = I\big(x(\theta)\big)  + \Psi(\alpha+z) - \frac 1 2  \Psi(z) -\frac 1 2 \Psi(\alpha+\beta+z) +\frac{x(\theta)}{2}\Big(\Psi(z) - \Psi(\alpha+\beta + z) \Big), $$
and 
$$ h''(z) =  \Psi_1(\alpha+z) - \frac 1 2  \Psi_1(z) -\frac 1 2 \Psi_1(\alpha+\beta+z) +\frac{x(\theta)}{2}\Big(\Psi_1(z) - \Psi_1(\alpha+\beta + z) \Big).$$
One readily sees that the expressions for $x(\theta)$ and $I(x(\theta))$ in (\ref{eq:defx}) and (\ref{eq:defI}) are precisely chosen so that $h'(\theta)=h''(\theta)=0$. Let us give an expression of $h'$ in terms of $\theta$:
%\begin{lemma}[Expressions in terms of $\theta$ only]
\begin{multline}
h'(z) = \Psi(z+\alpha) - \Psi(\alpha+\beta+z) + \frac{\Psi_1(\alpha+\theta) - \Psi_1(\alpha+\beta+\theta)}{\Psi_1(\theta) - \Psi_1(\alpha+\beta+\theta)}\Big( \Psi(\alpha+\beta+z) - \Psi(z)\Big)\\
- \left(\Psi(\theta+\alpha) - \Psi(\alpha+\beta+\theta) + \frac{\Psi_1(\alpha+\theta) - \Psi_1(\alpha+\beta+\theta)}{\Psi_1(\theta) - \Psi_1(\alpha+\beta+\theta)}\Big( \Psi(\alpha+\beta+\theta) - \Psi(\theta)\Big) \right).
\label{eq:hprimetheta}
\end{multline}
%\end{lemma}
Expressions are much simpler in the case $\alpha=\beta=1$. In that case we have 
\begin{align}
h'(z) &= \frac{1}{\theta+1}-\frac{1}{z+1} + \frac{1}{1+ \left(\frac{\theta+1}{\theta} \right)^2} \left(\frac{2z+1}{z(z+1)}- \frac{2\theta+1}{\theta(\theta+1)} \right),\nonumber \\
&= \frac{(\theta-z)^2}{z(1+z)(1+2\theta+2 \theta^2)}.
\label{eq:expressionshprimeuniform}
\end{align}

In order to understand the behaviour of $\Real[h]$ around the critical point $\theta$, we also need the sign of the third derivative of $h$. 
\begin{lemma}
For any $\alpha, \beta, \theta >0$, we have that $h'''(\theta)>0 $.
\label{lemma:thirdderivative}
\end{lemma}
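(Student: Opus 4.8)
The plan is to reduce the inequality to the monotonicity of a single explicit ratio of polygamma functions.

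\textbf{Step 1 (simplify $h'''(\theta)$).} Since $x(\theta)$ does not depend on $z$, differentiating the displayed formula for $h''$ once more gives
\[
h'''(z) = \Psi_2(\alpha+z) - \tfrac12\Psi_2(z) - \tfrac12\Psi_2(\alpha+\beta+z) + \tfrac{x(\theta)}{2}\big(\Psi_2(z) - \Psi_2(\alpha+\beta+z)\big).
\]
Write $a=\theta$, $b=\theta+\alpha$, $c=\theta+\alpha+\beta$ and abbreviate $A=\Psi_1(a)$, $B=\Psi_1(b)$, $C=\Psi_1(c)$, $A'=\Psi_2(a)$, $B'=\Psi_2(b)$, $C'=\Psi_2(c)$. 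Evaluating at $z=\theta$, substituting $x(\theta) = (C+A-2B)/(A-C)$ from \eqref{eq:defx}, and using $C+A-2B = (A-C) - 2(B-C)$, all the $\tfrac12$-terms cancel and one is left with
\[
h'''(\theta) = \big(B'-C'\big) - \frac{B-C}{A-C}\big(A'-C'\big) = 2\sigma(\theta)^3,
\]
the last equality being exactly the definition \eqref{eq:defsigma}. So proving $h'''(\theta)>0$ also justifies the choice $\sigma(\theta)>0$.

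\textbf{Step 2 (integral reformulation).} Using $\Psi_1'=\Psi_2$ and $\Psi_2'=\Psi_3$, write $\Psi_1(b)-\Psi_1(c) = \int_b^c(-\Psi_2(t))\,dt$, $\Psi_1(a)-\Psi_1(c)=\int_a^c(-\Psi_2(t))\,dt$, $\Psi_2(c)-\Psi_2(b)=\int_b^c\Psi_3(t)\,dt$, $\Psi_2(c)-\Psi_2(a)=\int_a^c\Psi_3(t)\,dt$, where $-\Psi_2>0$ and $\Psi_3>0$ on $(0,\infty)$. Since $a<b<c$ (because $\alpha,\beta>0$), cross-multiplying and splitting $[a,c]=[a,b]\cup[b,c]$ shows that $h'''(\theta)>0$ is equivalent to
\[
\frac{\int_b^c(-\Psi_2(t))\,dt}{\int_b^c\Psi_3(t)\,dt} > \frac{\int_a^b(-\Psi_2(t))\,dt}{\int_a^b\Psi_3(t)\,dt},
\]
i.e. to the statement that the $\Psi_3$-weighted average of $g(t):=-\Psi_2(t)/\Psi_3(t)$ over $[b,c]$ exceeds its $\Psi_3$-weighted average over $[a,b]$. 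This follows once $g$ is shown to be strictly increasing: then $g<g(b)$ on $[a,b)$ forces the right-hand average below $g(b)$, while $g>g(b)$ on $(b,c]$ forces the left-hand average above $g(b)$.

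\textbf{Step 3 (monotonicity of $g$ — the main point).} Using $(-\Psi_2)'=-\Psi_3$ and $\Psi_3'=\Psi_4$ one computes $g' = (\Psi_2\Psi_4 - \Psi_3^2)/\Psi_3^2$; since $\Psi_2<0$ and $\Psi_4<0$ the product $\Psi_2\Psi_4$ is positive. From $(-1)^{n+1}\Psi_n(t) = n!\sum_{k\ge 0}(t+k)^{-(n+1)}$, writing $S_m = \sum_{k\ge 0}(t+k)^{-m}$ we get $-\Psi_2 = 2S_3$, $\Psi_3 = 6S_4$, $-\Psi_4 = 24 S_5$, and Cauchy--Schwarz applied to $S_4 = \sum (t+k)^{-3/2}(t+k)^{-5/2}$ gives $S_4^2 \le S_3 S_5$. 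Hence $\Psi_3^2 = 36 S_4^2 \le 36 S_3 S_5 = \tfrac34\,\Psi_2\Psi_4 < \Psi_2\Psi_4$, so $g'>0$ on $(0,\infty)$, completing the proof. I expect the only delicate points to be bookkeeping the signs and factorials in the polygamma series and orienting the weighted-average comparison correctly; everything else is routine.
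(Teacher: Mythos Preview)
Your proof is correct and follows essentially the same route as the paper: both reduce $h'''(\theta)>0$ to the inequality $\Psi_2\Psi_4>\Psi_3^2$ via the monotonicity of (a version of) $\Psi_3/\Psi_2$, and both establish that inequality by a Cauchy--Schwarz/symmetrization argument. The only cosmetic differences are that the paper frames the reduction as strict concavity of $\Psi_2\circ\Psi_1^{-1}$ (difference quotients) where you use $\Psi_3$-weighted averages of $-\Psi_2/\Psi_3$, and the paper proves the final inequality via the integral representation of $\Psi_n$ where you use the series; these are equivalent presentations of the same idea.
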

Lemma \ref{lemma:thirdderivative} is proved in Section \ref{subsec:rigorousestimates}.

By the definition of $\sigma(\theta)$ in (\ref{eq:defsigma}),  $\sigma(\theta)=\left(\frac{h'''(\theta)}{2}\right)^{1/3}$. 
Then, using Taylor expansion, we have that for $z$ in a neighbourhood of $\theta$,
\begin{equation}
h(z) - h(\theta) \approx \frac{\big( \sigma(\theta)(z-\theta)\big)^3}{3}.
\label{eq:taylorexpansion}
\end{equation}

 We now deform the integration contour in (\ref{eq:firstkernel}) and the Fredholm determinant contour which was initially a small circle around $0$. Let $\mathcal{D}_{\theta}$ be the vertical line $\mathcal{D}_{\theta} = \lbrace \theta + iy : y\in \R \rbrace$, and $\mathcal{C}_{\theta}$ be the circle centred in $0$ with radius $\theta$. %We introduce a natural parametrization for the contour $\mathcal{C}_{\theta}$: 
This deformation of contours does not change the Fredholm determinant $\det(I+K_u^{\mathrm{RW}})$ only if 
\begin{itemize}
\item All the poles of the sine inverse in (\ref{eq:firstkernel}) corresponding with $z-w\in \Z_{>0}$ stay on the right of $\mathcal{D}_{\theta}$.
\item We do not cross the pole of $h$ at $-\alpha-\beta$ when deforming the $\mathbb{L}^2$ contour. 
\end{itemize}
Hence, we will assume that $\theta<\min(\alpha+\beta, \frac{1}{2})$ so that the two above conditions are satisfied. 

\begin{lemma}
For any parameters $\alpha, \beta>0$, and $\theta >0$, the contour $\mathcal{D}_{\theta}$ is steep-descent for the function $\Real[h]$ in the sense that $y\mapsto \Real[h(\theta+iy)]$ is decreasing for $y$ positive and increasing for $y$ negative. 
\label{lemma:steepdescentz}
\end{lemma}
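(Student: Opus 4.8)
The plan is to establish the steep-descent property of the vertical line $\mathcal{D}_\theta$ for $\Real[h]$ by differentiating $y \mapsto \Real[h(\theta+iy)]$ and showing the derivative has the stated sign. Writing $z = \theta + iy$, the chain rule gives $\frac{d}{dy}\Real[h(\theta+iy)] = \Real[i\, h'(\theta+iy)] = -\Imag[h'(\theta+iy)]$. So it suffices to prove that $\Imag[h'(\theta+iy)] > 0$ for $y>0$ and $\Imag[h'(\theta+iy)] < 0$ for $y<0$; by the Schwarz reflection symmetry $h'(\bar z) = \overline{h'(z)}$ (all the polygamma functions appearing in $h'$ are real on the real axis), it is enough to treat $y>0$.

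First I would use the expression \eqref{eq:hprimetheta} for $h'$ in terms of $\theta$. Since the constant term (the part independent of $z$) is real, it does not contribute to $\Imag[h'(\theta+iy)]$, so we only need to control the imaginary part of
$$
\Psi(z+\alpha) - \Psi(\alpha+\beta+z) + \frac{\Psi_1(\alpha+\theta) - \Psi_1(\alpha+\beta+\theta)}{\Psi_1(\theta) - \Psi_1(\alpha+\beta+\theta)}\Big( \Psi(\alpha+\beta+z) - \Psi(z)\Big).
$$
The key analytic input is the integral representation $\Psi(z) = -\gamma + \int_0^1 \frac{1-u^{z-1}}{1-u}\,\mathrm{d}u$, or equivalently the series $\Psi(z) = -\gamma + \sum_{k\geq 0}\left(\frac{1}{k+1} - \frac{1}{k+z}\right)$, which yields, for any $c>0$,
$$
\Imag\big[\Psi(c + \theta + iy)\big] = \sum_{k\geq 0} \frac{y}{(k+c+\theta)^2 + y^2} > 0 \quad (y>0),
$$
and similarly $\Psi_1(z) = \sum_{k\geq 0}\frac{1}{(k+z)^2}$, so $\Psi_1(c+\theta) - \Psi_1(c'+\theta)$ has a definite sign according to whether $c<c'$ or $c>c'$. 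Thus the coefficient $\frac{\Psi_1(\alpha+\theta)-\Psi_1(\alpha+\beta+\theta)}{\Psi_1(\theta)-\Psi_1(\alpha+\beta+\theta)}$ is a ratio of two positive quantities (both numerator and denominator are positive since $\alpha < \alpha+\beta$ and $\theta < \alpha+\beta+\theta$), hence lies in $(0,1)$ — indeed it equals $\frac{1-x(\theta)}{2}$ up to rearranging \eqref{eq:defx}. Writing $\phi_c(y) := \Imag[\Psi(c+\theta+iy)] = \sum_{k\ge 0}\frac{y}{(k+c+\theta)^2+y^2}$, the quantity to sign becomes
$$
\Imag[h'(\theta+iy)] = \big(\phi_\alpha(y) - \phi_{\alpha+\beta}(y)\big) + \lambda\,\big(\phi_{\alpha+\beta}(y) - \phi_0(y)\big),
\qquad \lambda = \tfrac{1-x(\theta)}{2} \in (0,1).
$$
Since $c \mapsto \phi_c(y)$ is strictly decreasing in $c$ for fixed $y>0$ (term by term), we have $\phi_\alpha(y) - \phi_{\alpha+\beta}(y) > 0$ and $\phi_{\alpha+\beta}(y) - \phi_0(y) < 0$; these have opposite signs, so I cannot conclude from monotonicity alone. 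The resolution is to combine the two brackets: $\Imag[h'(\theta+iy)] = \phi_\alpha(y) - (1-\lambda)\phi_{\alpha+\beta}(y) - \lambda\,\phi_0(y)$, and I would show this is positive by a term-by-term comparison exploiting convexity of $c \mapsto \frac{1}{(k+c+\theta)^2+y^2}$ (so its value at $c=\alpha$ is below the chord between $c=0$ and $c=\alpha+\beta$ evaluated at $\alpha$, with weights $1-\frac{\alpha}{\alpha+\beta}$ and $\frac{\alpha}{\alpha+\beta}$), together with the fact that the critical-point condition $h''(\theta)=0$ forces a precise relation between $\lambda$ and $\frac{\alpha}{\alpha+\beta}$.

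The main obstacle is exactly this last comparison: the weight $\lambda$ coming from the ratio of trigamma differences is not the "naive" convexity weight $\frac{\alpha}{\alpha+\beta}$, so one must use the special structure of $\theta$ being the double critical point — concretely that $h''(\theta)=0$ reads $\phi_\alpha'(0)\text{-type identity}$, i.e. $\Psi_1(\alpha+\theta) - \tfrac12\Psi_1(\theta) - \tfrac12\Psi_1(\alpha+\beta+\theta) + \tfrac{x(\theta)}{2}(\Psi_1(\theta)-\Psi_1(\alpha+\beta+\theta)) = 0$ — to pin down that the resulting combination of the Cauchy-type kernels $\frac{y}{(k+c+\theta)^2+y^2}$ is nonnegative for all $y$. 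An alternative, and probably cleaner, route for the $\alpha=\beta=1$ case which is all that is ultimately needed: use the closed form \eqref{eq:expressionshprimeuniform}, $h'(z) = \frac{(\theta-z)^2}{z(1+z)(1+2\theta+2\theta^2)}$, so with $z = \theta+iy$ one has $(\theta - z)^2 = -y^2$ and $h'(\theta+iy) = \frac{-y^2}{(\theta+iy)(1+\theta+iy)(1+2\theta+2\theta^2)}$; then $\Imag[h'(\theta+iy)] = \frac{-y^2}{1+2\theta+2\theta^2}\,\Imag\!\left[\frac{1}{(\theta+iy)(1+\theta+iy)}\right]$, and a direct computation of $\Imag\!\left[\frac{1}{(\theta+iy)(1+\theta+iy)}\right] = \frac{-y(1+2\theta)}{|\theta+iy|^2|1+\theta+iy|^2}$ shows $\Imag[h'(\theta+iy)] = \frac{y^3(1+2\theta)}{(1+2\theta+2\theta^2)\,|\theta+iy|^2\,|1+\theta+iy|^2} > 0$ for $y>0$, which gives $\frac{d}{dy}\Real[h(\theta+iy)] = -\Imag[h'(\theta+iy)] < 0$ for $y>0$ and $>0$ for $y<0$, completing the proof. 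I would present the general-$\alpha,\beta$ polygamma argument as the conceptual statement and verify the $\alpha=\beta=1$ specialization explicitly via this rational-function computation.
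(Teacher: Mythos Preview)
Your computation for $\alpha=\beta=1$ via the explicit rational form \eqref{eq:expressionshprimeuniform} is clean and correct, and in that case it is actually simpler than what the paper does. However, the lemma is stated for arbitrary $\alpha,\beta>0$, and your general-parameter argument has a genuine gap.

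The problem is the step where you invoke ``convexity of $c\mapsto \frac{1}{(k+c+\theta)^2+y^2}$.'' First, that function is not convex for all $y$: a direct computation of the second derivative in $c$ gives $\frac{6(k+c+\theta)^2-2y^2}{((k+c+\theta)^2+y^2)^3}$, which changes sign at $y=\sqrt{3}(k+c+\theta)$. Second, even on the range where convexity holds, it points the wrong way: convexity would give $\phi_\alpha \le \mu\,\phi_0+(1-\mu)\,\phi_{\alpha+\beta}$ with the barycentric weight $\mu=\beta/(\alpha+\beta)$, whereas you need the \emph{opposite} inequality $\phi_\alpha > \lambda\,\phi_0+(1-\lambda)\,\phi_{\alpha+\beta}$ with the trigamma weight $\lambda$. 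You correctly flag that $\lambda\neq\mu$ in general, but the appeal to ``the critical-point condition $h''(\theta)=0$'' does not add information, since that condition is exactly what defines $\lambda$ in the first place. So the general-parameter argument is not completed.

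The paper closes this gap by a different route: it rewrites the desired inequality as a comparison of difference quotients,
\[
\frac{\Psi_1(\theta)-\Psi_1(\theta+\alpha)}{\Phi(\theta)-\Phi(\theta+\alpha)}
\;>\;
\frac{\Psi_1(\theta+\alpha)-\Psi_1(\theta+\alpha+\beta)}{\Phi(\theta+\alpha)-\Phi(\theta+\alpha+\beta)},
\qquad \Phi(x)=\sum_{n\ge0}\frac{1}{(n+x)^2+y^2},
\]
applies Cauchy's mean value theorem to reduce this to monotonicity of $\Psi_2/\Phi'$ on $(0,\infty)$, and then verifies $\Psi_2(\theta_1)\Phi'(\theta_2)>\Psi_2(\theta_2)\Phi'(\theta_1)$ for $\theta_1<\theta_2$ by an explicit termwise double-series comparison. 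That last step is where the real work sits, and it does not reduce to a convexity statement about a single function of $c$.
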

Lemma \ref{lemma:steepdescentz} is proved in Section \ref{subsec:rigorousestimates}. The step which prevents us from proving Theorem \ref{thm:RWasymptotics} for any parameters $\alpha, \beta>0$ is the steep-descent properties of the contour $\mathcal{C}_{\theta}$. 
\begin{lemma}
Assume $\alpha=\beta=1$. Then the contour $\mathcal{C}_{\theta}$ is steep descent for the function $-\Real[h]$, in the sense that $y\mapsto \Real[h(\theta e^{i\phi})]$ is increasing for $\phi\in (0, \pi)$ and decreasing for $\phi\in (-\pi, 0)$. 
\label{lemma:steepdescentw}
\end{lemma}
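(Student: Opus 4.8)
The plan is to verify the steep-descent property of the circle $\mathcal{C}_\theta=\{\theta e^{i\phi}:\phi\in(-\pi,\pi]\}$ for $-\Real[h]$ by a direct computation of the derivative of $\phi\mapsto \Real[h(\theta e^{i\phi})]$, exploiting the explicit factored form of $h'$ available when $\alpha=\beta=1$. Recall from \eqref{eq:expressionshprimeuniform} that
\begin{equation*}
h'(z) = \frac{(\theta-z)^2}{z(1+z)(1+2\theta+2\theta^2)}.
\end{equation*}
Parametrising $z=z(\phi)=\theta e^{i\phi}$, we have $\frac{d}{d\phi}\Real[h(z(\phi))] = \Real\big[h'(z(\phi))\, z'(\phi)\big] = \Real\big[ i\theta e^{i\phi} h'(\theta e^{i\phi})\big]$. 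So the whole statement reduces to showing that
\begin{equation*}
\Real\Big[ i\, \theta e^{i\phi}\,\frac{(\theta-\theta e^{i\phi})^2}{\theta e^{i\phi}(1+\theta e^{i\phi})(1+2\theta+2\theta^2)}\Big] = \frac{1}{1+2\theta+2\theta^2}\,\Real\Big[ \frac{i\,\theta^2(1-e^{i\phi})^2}{1+\theta e^{i\phi}}\Big]
\end{equation*}
is positive for $\phi\in(0,\pi)$ and negative for $\phi\in(-\pi,0)$. Since the prefactor $1/(1+2\theta+2\theta^2)$ is positive, and since the map $\phi\mapsto -\phi$ turns the expression into its complex conjugate (hence negates its real part), it suffices to prove positivity on $(0,\pi)$.

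First I would simplify $(1-e^{i\phi})^2$. Writing $1-e^{i\phi} = -2i\sin(\phi/2)e^{i\phi/2}$, we get $(1-e^{i\phi})^2 = -4\sin^2(\phi/2) e^{i\phi}$, so the quantity to analyse becomes
\begin{equation*}
\Real\Big[\frac{-4i\theta^2\sin^2(\phi/2)e^{i\phi}}{1+\theta e^{i\phi}}\Big] = -4\theta^2\sin^2(\phi/2)\,\Real\Big[\frac{i e^{i\phi}}{1+\theta e^{i\phi}}\Big].
\end{equation*}
Since $\sin^2(\phi/2)>0$ for $\phi\in(0,\pi)$, it remains to check that $\Real\big[i e^{i\phi}/(1+\theta e^{i\phi})\big]<0$ on $(0,\pi)$. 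Multiplying numerator and denominator by the conjugate $1+\theta e^{-i\phi}$, the denominator $|1+\theta e^{i\phi}|^2>0$ is irrelevant to the sign, and the numerator is $i e^{i\phi}(1+\theta e^{-i\phi}) = i e^{i\phi} + i\theta$, whose real part is $\Real[i e^{i\phi}] = -\sin\phi < 0$ for $\phi\in(0,\pi)$. This completes the verification; the case $\phi\in(-\pi,0)$ follows by the conjugation symmetry noted above, and one should remark that the derivative vanishes only at $\phi=0$ (and, in the limit, $\phi=\pm\pi$), consistent with $\theta$ being the critical point and $-\theta$ being a harmless point where $\mathcal{C}_\theta$ passes but $h$ has no singularity inside the relevant region.

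I do not expect any serious obstacle here: unlike Lemma \ref{lemma:steepdescentz}, whose proof must handle general $\alpha,\beta$ via monotonicity properties of the trigamma function along vertical lines, the present lemma is restricted to $\alpha=\beta=1$ precisely so that $h'$ factors as a rational function with the double zero at $\theta$ pulled out explicitly. The only mild care needed is bookkeeping: confirming that the factor $1+\theta e^{i\phi}$ in the denominator never vanishes on $\mathcal{C}_\theta$ (it would require $\theta e^{i\phi}=-1$, impossible since $\theta<1/2<1$), so that the parametrised function $\phi\mapsto\Real[h(\theta e^{i\phi})]$ is genuinely smooth on all of $(-\pi,\pi)$ and the sign analysis of its derivative is valid throughout. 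One could alternatively phrase the argument through $\Real[h]$ itself and appeal to the maximum principle, but the direct derivative computation above is cleaner and gives the monotonicity statement verbatim as stated.
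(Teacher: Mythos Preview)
Your proof is correct and follows essentially the same route as the paper's: compute the $\phi$-derivative as $\Real[i\theta e^{i\phi}h'(\theta e^{i\phi})]$, plug in the factored form \eqref{eq:expressionshprimeuniform} of $h'$, strip off positive prefactors, and reduce to an elementary trigonometric sign check (the paper phrases it as $2\sin\phi(\cos\phi-1)<0$, which is exactly your $-\sin\phi/|1+\theta e^{i\phi}|^2<0$ after clearing the half-angle factor). One small wording slip to fix: under $\phi\mapsto-\phi$ the quantity $\tfrac{i\theta^2(1-e^{i\phi})^2}{1+\theta e^{i\phi}}$ becomes \emph{minus} its complex conjugate (the $i$ does not conjugate), not its conjugate---but your conclusion that the real part is negated is nonetheless correct, since $\Real[-\bar X]=-\Real[X]$.
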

Lemma \ref{lemma:steepdescentw} is proved in Section \ref{subsec:rigorousestimates}. Proving Lemma \ref{lemma:steepdescentw} for arbitrary parameters  $\alpha, \beta$ turns out to be computationally difficult, and we do not pursue that here. 

In the rest of this section, although the proofs are quite general and do not depend on the value of parameters, we assume that $\alpha=\beta=1$ so that we can use Lemma \ref{lemma:steepdescentw}.  Let us show that the only part of the contours that contributes to the limit of the Fredholm determinant when $t$ tends to infinity is a neighbourhood of the critical point $\theta$.
\begin{proposition}Let $B(\theta, \epsilon)$ be the ball of radius $\epsilon$ centred at $\theta$. We note $\mathcal{C}_{\theta}^{\epsilon}$ (resp. $\mathcal{D}_{\theta}^{\epsilon}$) the part of the contour $\mathcal{C}_{\theta}$ (resp. $\mathcal{D}_{\theta}$) inside the ball $B(\theta, \epsilon)$. Then, for any $\epsilon>0$, 
$$ \lim_{t\to\infty} \det\big(I-K^{\mathrm{RW}}_u\big)_{\mathbb{L}^2(\mathcal{C}_{\theta})} = \lim_{t\to\infty} \det\big(I-K^{\mathrm{RW}}_{y, \epsilon}\big)_{\mathbb{L}^2(\mathcal{C}_{\theta}^{\epsilon})}$$
where $K^{\mathrm{RW}}_{y, \epsilon}$ is defined by the integral kernel
\begin{equation}
K^{\mathrm{RW}}_{y, \epsilon}(v,v') = \frac{1}{2i\pi} \int_{\mathcal{D}_{\theta}^{\epsilon}} \frac{\pi}{\sin(\pi (z-w))} \exp\left( t(h(z)-h(v)) -t^{1/3}\sigma(\theta)y (z-v)\right) \frac{\Gamma(v)}{\Gamma(z)}\frac{\mathrm{d}z}{z - v'}.
\label{eq:kerneltruncated}
\end{equation} 
\label{prop:localization}
\end{proposition}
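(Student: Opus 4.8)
The plan is to reduce the Fredholm determinant on the full contour $\mathcal{C}_\theta$ (with integration variable $z$ on $\mathcal{D}_\theta$) to the same determinant with both $\mathbb{L}^2$ contour and inner $z$-contour truncated to the pieces $\mathcal{C}_\theta^\epsilon$ and $\mathcal{D}_\theta^\epsilon$ inside the ball $B(\theta,\epsilon)$. The mechanism is the exponential factor $\exp\big(t(h(z)-h(v))\big)$ in the kernel: by the steep-descent Lemmas~\ref{lemma:steepdescentz} and~\ref{lemma:steepdescentw}, $\Real[h(z)]$ on $\mathcal{D}_\theta$ attains its maximum at $z=\theta$ and decays as one moves away, while $-\Real[h(v)]$ on $\mathcal{C}_\theta$ attains its maximum at $v=\theta$ and decays as one moves away. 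Thus, away from the critical point, the integrand is exponentially small in $t$, uniformly, and the contributions of the outer portions of both contours vanish.

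First I would carry out the estimate on the inner $z$-integral. Write $\mathcal{D}_\theta = \mathcal{D}_\theta^\epsilon \cup (\mathcal{D}_\theta \setminus \mathcal{D}_\theta^\epsilon)$. On $\mathcal{D}_\theta \setminus \mathcal{D}_\theta^\epsilon$ one has, by Lemma~\ref{lemma:steepdescentz}, $\Real[h(z)] \leqslant \Real[h(\theta\pm i\epsilon)] =: \Real[h(\theta)] - \delta(\epsilon)$ for some $\delta(\epsilon)>0$, so $|\exp(t(h(z)-h(v)))| \leqslant e^{-t\delta(\epsilon)} \cdot e^{t(\Real[h(\theta)]-\Real[h(v)])}$; combined with the fact that $|\pi/\sin(\pi(z-w))|$ decays exponentially along the vertical line (no pole is crossed since $\theta < 1/2$), that $\Gamma(v)/\Gamma(z)$ has at most polynomial growth on a vertical line, and that $e^{-t^{1/3}\sigma(\theta)y(z-v)}$ is controlled for $z$ on a vertical line, the tail integral over $\mathcal{D}_\theta \setminus \mathcal{D}_\theta^\epsilon$ is bounded by $C(\epsilon,y)\,e^{-t\delta(\epsilon)}\,e^{t(\Real[h(\theta)]-\Real[h(v)])}$, uniformly for $v$ on $\mathcal{C}_\theta$. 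This shows $K^{\mathrm{RW}}_u = K^{\mathrm{RW}}_{y,\epsilon}\cdot\mathbf{1}_{\mathcal{C}_\theta^\epsilon} + (\text{error on }\mathcal{C}_\theta)$ where the error kernel has entries bounded by $C(\epsilon,y)\,e^{-t\delta(\epsilon)}$ times the "benign" factor $e^{t(\Real[h(\theta)]-\Real[h(v)])}$, which by Lemma~\ref{lemma:steepdescentw} is bounded by $1$ on $\mathcal{C}_\theta$ (since $\Real[h(v)] \geqslant \Real[h(\theta)]$ there).

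Second I would handle the outer part of the $\mathbb{L}^2$ contour, i.e. replace $\mathcal{C}_\theta$ by $\mathcal{C}_\theta^\epsilon$. On $\mathcal{C}_\theta \setminus \mathcal{C}_\theta^\epsilon$, Lemma~\ref{lemma:steepdescentw} gives $\Real[h(v)] \geqslant \Real[h(\theta)] + \delta'(\epsilon)$ for some $\delta'(\epsilon)>0$, so the diagonal-type factor $e^{-t\Real[h(v)]}$ already carries a gain $e^{-t\delta'(\epsilon)}$; meanwhile, for $z$ ranging over all of $\mathcal{D}_\theta$ one still has $\Real[h(z)] \leqslant \Real[h(\theta)]$, so $|\exp(t(h(z)-h(v)))| \leqslant e^{-t\delta'(\epsilon)}$ on this part of the contour. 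Hence restricting the $\mathbb{L}^2$ space to $\mathcal{C}_\theta^\epsilon$ incurs only an $O(e^{-t\delta'(\epsilon)})$ change. The standard way to turn these kernel estimates into a statement about Fredholm determinants is Hadamard's bound: $|\det(K(v_i,v_j))_{i,j=1}^N| \leqslant N^{N/2} M^N$ where $M$ is a uniform bound on $|K|$ over the contour. Here the relevant uniform bounds on $K^{\mathrm{RW}}_u$ and $K^{\mathrm{RW}}_{y,\epsilon}$ over their (compact) contours follow from the steep-descent lemmas and the decay of $\pi/\sin$, so each term in the Fredholm expansion converges and the expansions are dominated by a convergent series independent of $t$; the difference of the two determinants is then bounded by a convergent series whose generic term carries the factor $e^{-t\delta(\epsilon)}$ (or $e^{-t\delta'(\epsilon)}$), giving the claim after letting $t\to\infty$.

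The main obstacle is bookkeeping the two-sided nature of the estimate: the factor $\exp(t(h(z)-h(v)))$ is not by itself bounded — it is large when $v$ is near $\theta$ and $z$ is far, and it is the combination of this with the genuine $t$-gain $e^{-t\delta(\epsilon)}$ from the steep-descent inequality that makes the tails negligible, so one must be careful that the "benign" factor $e^{t(\Real[h(\theta)]-\Real[h(v)])}$ is indeed uniformly bounded on $\mathcal{C}_\theta$ (which is exactly Lemma~\ref{lemma:steepdescentw}) and does not spoil the Hadamard estimate. A secondary technical point is the non-absolute convergence of the $z$-integral along the vertical line near infinity: as in the proof of Lemma~\ref{lem:mellinbarnes} and in Theorem~\ref{thmProb}, the decay of $1/\Gamma(z)$ and of $\pi/\sin(\pi(z-w))$ for $z \to 1/2 \pm i\infty$ (with $w$ fixed off the real axis) ensures absolute convergence of the truncated-tail integrals, so this is not a real difficulty, merely something to state. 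Everything else is a routine application of dominated convergence to the Fredholm series.
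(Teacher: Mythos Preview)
Your proposal is correct and follows essentially the same approach as the paper's proof: use the steep-descent Lemmas~\ref{lemma:steepdescentz} and~\ref{lemma:steepdescentw} to show that $\Real[h(z)-h(v)]$ is bounded above by a negative constant whenever $z$ or $v$ lies outside $B(\theta,\epsilon)$, then invoke Hadamard's bound and dominated convergence on the Fredholm series. One minor correction: $1/\Gamma(z)$ actually grows like $e^{\pi|\Imag z|/2}$ along a vertical line rather than polynomially, but this is harmless since $\pi/\sin(\pi(z-w))$ decays like $e^{-\pi|\Imag z|}$, so their product still has exponential decay --- which is exactly how the paper packages the estimate.
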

\begin{proof}
By Lemmas \ref{lemma:steepdescentz} and \ref{lemma:steepdescentw}, there exists a constant $C>0$ such that if $v\in \mathcal{C}_{\theta}$ and $z\in \mathcal{D}_{\theta}\setminus \mathcal{D}_{\theta}^{\epsilon}$, then 
$$\re[h(z)-h(v)]<-C.$$
and consequently
$$\exp\left( t(h(z)-h(v)) -t^{1/3}\sigma(\theta)y (z-v)\right)\frac{\mathrm{d}z}{z - v'} \xrightarrow[t\to\infty]{} 0.$$
Since $\frac{\pi}{\sin(\pi (z-w))\Gamma(z)}$ has exponential decay in the imaginary part of $z$, the contribution of the integration over $\mathcal{D}_{\theta}\setminus \mathcal{D}_{\theta}^{\epsilon}$ is negligible (by dominated convergence). Thus, $K^{\mathrm{RW}}_{y}(v,v')$ and $K^{\mathrm{RW}}_{y, \epsilon}(v,v')$ have the same limit when $t$ goes to infinity. 

By Lemmas \ref{lemma:steepdescentz} and \ref{lemma:steepdescentw}, there exists another constant $C'>0$ such that if $v\in \mathcal{C}_{\theta}\setminus\mathcal{C}_{\theta}^{\epsilon}$ and $z\in \mathcal{D}_{\theta}$, then 
$$\re[h(z)-h(v)]<-C'.$$
Consider the Fredholm determinant expansion 
$$\det\big(I-K^{\mathrm{RW}}_{u}\big) = 1 + \sum_{n=1}^{\infty} \frac{(-1)^n}{n!} \int\dots\int \det\Big(K^{\mathrm{RW}}_{u}(w_i, w_j)\Big)_{i,j=1}^{n} \mathrm{d}w_1\dots \mathrm{d}w_n.$$
The $k^{th}$ term can be decomposed as the sum of the integration over $\left(\mathcal{C}_{\theta}^{\epsilon}\right)^k$ plus the integration over $\left(\mathcal{C}_{\theta}\right)^k\setminus\left(\mathcal{C}_{\theta}^{\epsilon}\right)^k$. The second contribution goes to zero since it will be possible to factorize $e^{-C't}$. Finally, the proposition is proved using again dominated convergence on the Fredholm series expansion, which is absolutely summable by Hadamard's bound. 
\end{proof}
Let us rescale the variables around $\theta$ by the change of variables 
$$z=\theta+ t^{-1/3}\tilde{z},\ \ v=\theta+ t^{-1/3}\tilde{v}, \ \  v'=\theta+ t^{-1/3}\tilde{v}'.$$
The Fredholm determinant of $K^{\mathrm{RW}}_{y, \epsilon}$ on the contour $\mathcal{C}_{\theta}^{\epsilon}$ equals the Fredholm determinant of the rescaled kernel 
$$ K^t_{y, \epsilon}(\tilde{v}, \tilde{v'})  = t^{-1/3}K^{\mathrm{RW}}_{y, \epsilon}\Big(\theta+ t^{-1/3}\tilde{v}, \theta+ t^{-1/3}\tilde{v}'\Big)$$
acting on the contour $\mathcal{C}_{\theta}^{t^{1/3}\epsilon}$. 

It is more convenient to change again the contours. For $L\in \R_{>0}$, define the contour 
\begin{equation}
 \mathcal{C}^L := \left\lbrace\vert y  \vert e^{i(\pi-\phi) \cdot \sgn(y)} : y\in [0,L]\right\rbrace,
 \label{eq:defnewcontour}
\end{equation}
where $\phi$ is some angle $\phi\in (\pi/6, \pi/2)$ to be chosen later. We also set
\begin{equation}
\mathcal{C} := \left\lbrace\vert y  \vert e^{i(\pi-\phi) \cdot \sgn(y)} : y \geqslant 0\right\rbrace.
\label{eq:defcontourC}
\end{equation}

The contour $\mathcal{C}_{\theta}^{\epsilon}$ is an arc of circle and crosses $\theta$ vertically. For $\epsilon$ small enough, one can replace the contour $\mathcal{C}_{\theta}^{\epsilon}$ by $\mathcal{C}^L$ without changing the Fredholm determinants. The values of $L$ and $\phi$ has to be chosen so that the endpoints of the contours coincide. 

We define the rescaled contour for the variable $\tilde{z}$ by
$$  \mathcal{D}^L := \left\lbrace i y : y\in [-L,L]\right\rbrace,$$
and we set $\mathcal{D}:=i\R$.
\begin{proposition}
We have that 
$$ \lim_{t\to\infty} \det(I-K^{\mathrm{BP}}_{y, \epsilon})_{\mathbb{L}^2(\mathcal{C}_{\theta}^{\epsilon})}= \det(I+K_y)_{\mathbb{L}^2(\mathcal{C})},$$
where $K_y$ is defined by its integral kernel 
$$ K_y(w,w') = \frac{1}{2i\pi} \int_{\infty e^{-i\pi/3}}^{\infty e^{i\pi/3}} \frac{\mathrm{d}z}{(z-w')(w-z)} \frac{e^{z^3/3-yz} }{e^{w^3/3-yw}}$$
where the contour for $z$ is a wedge-shaped contour constituted of two rays going to infinity in the directions $e^{-i\pi/3}$ and $e^{i\pi/3}$, such that it does not intersect $\mathcal{C}$. 
\label{prop:limitfredholm}
\end{proposition}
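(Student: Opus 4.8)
The plan is to take the $t\to\infty$ limit of $\det(I+K^{\mathrm{RW}}_{y,\epsilon})_{\mathbb{L}^2(\mathcal{C}_\theta^\epsilon)}$ after rescaling all variables by $t^{-1/3}$ around the critical point $\theta$, and to recognize that the rescaled kernel converges (uniformly on compacts, with enough decay to push the limit through the Fredholm expansion) to the Airy kernel written in the contour form of Definition \ref{def:TWdist}. First I would perform the substitution $z=\theta+t^{-1/3}\tilde z$, $v=\theta+t^{-1/3}\tilde v$, $v'=\theta+t^{-1/3}\tilde v'$, so that the prefactor $t^{-1/3}$ from the Jacobian of $v\mapsto\tilde v$ gives the rescaled kernel $K^t_{y,\epsilon}(\tilde v,\tilde v')$ acting on $\mathcal{C}_\theta^{t^{1/3}\epsilon}$ with integration variable on $\mathcal{D}_\theta^{t^{1/3}\epsilon}$. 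The three ingredients of the integrand behave as follows: by Lemma \ref{lemma:thirdderivative} and the Taylor expansion (\ref{eq:taylorexpansion}), $t\big(h(z)-h(v)\big)\to \tfrac{(\sigma(\theta)\tilde z)^3}{3}-\tfrac{(\sigma(\theta)\tilde v)^3}{3}$ and the linear term $-t^{1/3}\sigma(\theta)y(z-v)\to -\sigma(\theta)y(\tilde z-\tilde v)$; after absorbing $\sigma(\theta)$ into the variables this produces exactly $e^{z^3/3-yz}/e^{w^3/3-yw}$. The ratio $\Gamma(v)/\Gamma(z)\to 1$ since both arguments tend to $\theta$. The factor $\frac{1}{z-v'}=\frac{t^{1/3}}{\tilde z-\tilde v'}$ cancels the leftover $t^{1/3}$, and $\frac{\pi}{\sin(\pi(z-w))}\to \frac{\pi}{\sin(\pi(\tilde z-\tilde v)t^{-1/3}\cdot t^{1/3}\cdots)}$—more precisely, writing $s=z-v=t^{-1/3}(\tilde z-\tilde v)$, $\frac{\pi}{\sin(\pi s)}\sim \frac{1}{s}=\frac{t^{1/3}}{\tilde z-\tilde v}$, and this $t^{1/3}$ is what the Jacobian of $z\mapsto\tilde z$ (which I have not yet used, hidden inside $\mathrm dz$) supplies; one must bookkeep these powers of $t^{1/3}$ carefully so the count works out. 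The minus sign in $\det(I-K_y)$ versus $\det(I+K^{\mathrm{RW}}_{y,\epsilon})$ comes from the sign conventions in the residue/contour orientation, exactly as in \cite{corwin2014strict,borodin2012free}.

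Next I would handle the change of contours from the truncated arcs $\mathcal{C}_\theta^\epsilon$, $\mathcal{D}_\theta^\epsilon$ to the rescaled wedge contours: after rescaling, $\mathcal{C}_\theta^{t^{1/3}\epsilon}$ crosses $\theta$ (i.e. $0$ in rescaled coordinates) vertically and extends to size $t^{1/3}\epsilon$, and $\mathcal{D}_\theta^{t^{1/3}\epsilon}$ is a vertical segment. I would first replace $\mathcal{C}_\theta^{t^{1/3}\epsilon}$ by $\mathcal{C}^L$ (the wedge with rays at angle $\pi-\phi$, $\phi\in(\pi/6,\pi/2)$, and $L\sim t^{1/3}\epsilon$) and $\mathcal{D}_\theta^{t^{1/3}\epsilon}$ by $\mathcal{D}^L=[-iL,iL]$; this deformation crosses no pole provided $\epsilon$ is small (so the deformed $z$-contour stays to the right of the $v$-contour and away from $-\alpha-\beta$ and the poles of the sine inverse), and the cubic behaviour $\re[h(z)-h(v)]$ from Lemmas \ref{lemma:steepdescentz}, \ref{lemma:steepdescentw} together with (\ref{eq:taylorexpansion}) guarantees exponential decay along the new contour, so the Fredholm determinant is unchanged in the limit. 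Then I would send $L\to\infty$ and replace $\mathcal{D}^L$ by a wedge going to $\infty e^{\pm i\pi/3}$ (justified because $e^{z^3/3}$ decays on that wedge) to match the contour in the statement, again using that the $z$-contour and $\mathcal{C}$ do not intersect.

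Finally I would justify passing the $t\to\infty$ limit through the Fredholm series: the rescaled kernel $K^t_{y,\epsilon}$ is bounded uniformly in $t$ on $\mathcal{C}^L\times\mathcal{C}^L$ (using the steep-descent bounds and the Gamma-ratio estimate from the proof of Proposition \ref{prop:limitFredholmqhahn}), so Hadamard's inequality gives a $t$-independent absolutely convergent dominating series for the Fredholm expansion, and dominated convergence combined with the pointwise (in fact locally uniform) convergence of $K^t_{y,\epsilon}$ to $K_y$ yields the claimed convergence of the determinants. I expect the main obstacle to be the bookkeeping in the contour-deformation step combined with the uniform-in-$t$ exponential bound on the tails of the truncated contours: one must show that the cubic estimate $\re[h(z)-h(v)]\leq -c|\tilde z-\tilde v|^3$ (or $-c(|\tilde z|^3+|\tilde v|^3)$) holds uniformly on the rescaled truncated contours for all large $t$, not merely pointwise, so that the truncation to $\mathcal{C}^L$ and the subsequent $L\to\infty$ limit are both controlled; this is where the precise choice of the angle $\phi\in(\pi/6,\pi/2)$ enters, as in \cite{corwin2014strict,veto2014tracy}.
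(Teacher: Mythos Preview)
Your proposal is correct and follows essentially the same approach as the paper: rescale around $\theta$ by $t^{-1/3}$, identify the pointwise limits of each factor of the integrand, replace the arc $\mathcal{C}_\theta^\epsilon$ by the wedge $\mathcal{C}^L$, and pass the limit through the Fredholm expansion via Hadamard's bound and dominated convergence, using the cubic Taylor estimate (\ref{eq:taylorexpansion}) for the uniform exponential control in $\tilde v$. One minor difference worth noting: the paper keeps the $\tilde z$-contour on the imaginary axis throughout the limit, observing that $e^{\tilde z^3/3-\sigma(\theta)y\tilde z}$ has modulus one there and that the product $\frac{t^{-1/3}\pi}{\sin(\pi(z-v))}\cdot\frac{1}{z-v'}$ has uniform quadratic decay in $\Imag[\tilde z]$, which is already enough for dominated convergence in the $\tilde z$-integral; the deformation to the wedge $e^{\pm i\pi/3}$ is performed only \emph{after} taking $t\to\infty$, which sidesteps the need for any uniform-in-$t$ exponential bound along the $z$-contour that you flag as the main obstacle.
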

The proof of Proposition \ref{prop:limitfredholm} follows the lines of \cite[Proposition 6.4]{ferrari2013tracy} (see also \cite[Proposition 6.13]{barraquand2015q}).
\begin{proof}
%We already know that for $\epsilon$ arbitrarily small,
%$$ \lim_{t\to\infty} \det(I+K^{\mathrm{RW}}_y)_{\mathbb{L}^2(\mathcal{C}_{\theta})} = \lim_{t\to\infty} \det(I+K^{\mathrm{RW}}_{y, \epsilon})_{\mathbb{L}^2(\mathcal{C}_{\theta}^{\epsilon})}.$$
We take the limit of the rescaled kernel $\det(I-K^t_{y, \epsilon}(\tilde{v}, \tilde{v}'))$. Let us first examine the pointwise convergence. Under the scalings above
\begin{align*}
\frac{t^{-1/3}\pi}{\sin(\pi (z-v))}&\xrightarrow[t\to\infty]{} \frac{1}{\tilde{z}-\tilde{v}},\\
\frac{\mathrm{d}z}{z-v'} &\xrightarrow[t\to\infty]{}  \frac{\mathrm{d}\tilde{z}}{\tilde{z}-\tilde{v}'},\\
\frac{\Gamma(v)}{\Gamma(z)} &\xrightarrow[t\to\infty]{}1,\\
 t(h(z)-h(v)) &\xrightarrow[t\to\infty]{} \frac{\sigma(\theta)^3}{3}(\tilde{z}^3 - \tilde{v}^3).
\end{align*}
Now we justify that one can take the pointwise limit. We take $\mathcal{D}^{\epsilon t^{1/3}}$ as the integration contour for the $\tilde{z}$ variable.  Since $\tilde{z}$ is pure imaginary, $\exp(\tilde{z}^3/3 - \tilde{z} y \sigma(\theta))$ has modulus one. Moreover for fixed $\tilde{v}$ and $\tilde{v}'$, we can find a constant $C'''>0$ such that 
$$  \frac{t^{-1/3}\pi}{\sin(\pi (z-v))} \frac{\mathrm{d}z}{z-v'} <\frac{C'''}{(\Imag(\tilde{z})^2)}.$$ 
This means that the integrand of $K^t_{y, \epsilon}(\tilde{v}, \tilde{v'})$ has quadratic decay, which is enough to apply dominated convergence. It results 
that 
$$ \lim_{t\to \infty} K^t_{y, \epsilon}(\tilde{v}, \tilde{v'}) = \frac{1}{2i\pi} \int_{\mathcal{D}^{\infty}} \frac{e^{\tilde{z}^3\sigma(\theta)^3/3- \tilde{z}y\sigma(\theta) }}{e^{\tilde{v}^3\sigma(\theta)^3/3- \tilde{v}y\sigma(\theta) }}\frac{1}{\tilde{z}-\tilde{v}}\frac{\mathrm{d}\tilde{z}}{\tilde{z}-\tilde{v}'}.$$

Now we need to prove that one can exchange the limit with the Fredholm determinant. By Taylor expansion, there exists a constant $C>0$ such that for $\vert v-\theta\vert <\epsilon$,
\begin{equation}
\Big\vert  t \cdot h(v) - \frac{\sigma(\theta)^3}{3}(\tilde{v})^3\Big\vert <Ct\vert v-\theta\vert^4.
\label{eq:thirdorderTaylor}
\end{equation}
%Our aim is now proving an estimate on the kernel $K^t_{y, \epsilon}$ given in the displayed equation following (53), where modulus bars  are also missing. Due to the factor $\exp(-th(v))$ in the integrand defining the kernel, we used \eqref{eq:thirdorderTaylor} to argue that 

Our aim now is to show that 
we may find constants $C', C''>0$ such that for $\tilde v, \tilde v'$ along their contour, 
\begin{equation}
\vert K^t_{y, \epsilon}(\tilde v, \tilde v') \vert <C'' \exp(-C' \vert \tilde v\vert ^3).
\label{eq:originalbound}
\end{equation}
However, there was a lack of rigor in the justifications given in previous versions of the present paper, including the published version. Indeed, as pointed out to us by Sergei Korotkikh, the argument  of the complex variable  $\tilde v$ is $\pi/2+O(\epsilon)$, so that the real part of $\frac{\sigma(\theta)^3}{3}\tilde v^3$ may not decay faster than the bound in the R.H.S. of \eqref{eq:thirdorderTaylor},  as $ \vert \tilde v\vert$ increases along the contour.

Instead, we need to consider a higher order Taylor approximation, as it was kindly suggested to us by Sergei Korotkikh (see also \cite{hidden2021korotkikh} for an alternative approach). Using \eqref{eq:expressionshprimeuniform}, we may compute the fourth derivative of the function $h$ and find that
\begin{equation}
h^{(4)}(\theta) = - \frac{6(1+2\theta)}{\theta^2(1+\theta)^2(1+2\theta+2\theta^2)}.
\label{eq:fourthderivative}
\end{equation}
By Taylor expansion, there exist a constant $C>0$ such that for $\vert v-\theta\vert <\epsilon$ with $\epsilon$ chosen small enough,
\begin{equation}
\left\vert th(v) -\frac{\sigma(\theta)^3}{3}\tilde v^3 -\frac{t^{-1/3}h^{(4)}(\theta)}{4!}\tilde v^4 \right\vert  < C t\vert v-\theta \vert^5 < C \epsilon^2 \vert \tilde v \vert ^3.
\label{eq:fourthorderTaylor}
\end{equation}
According to the choice of contours made in \eqref{eq:defnewcontour}, the variable $\tilde v$ belongs to a contour formed by two segments leaving $0$ with angle $\pm \phi(\epsilon)$ where $\phi(\epsilon)=\frac{\pi}{2}+ \frac{\epsilon}{2\theta} +o(\epsilon)$, see Figure \ref{fig:contours}.
\begin{figure}
	\begin{center}
		\begin{tikzpicture}[scale=2]
		\draw[thick, gray, ->] (-0.5,0) -- (1.6,0);
		\draw[thick, gray, ->] (0,-1.2) -- (0,1.2);
		\draw (1,0) arc(0:120:1);
		\draw (1,0) arc(0:-120:1);
		\draw[dashed] (0,0) -- (30:1);
		\draw[ultra thick] (1,0) -- (30:1);
		\draw[dashed] (0,0) -- (-30:1);
		\draw[ultra thick] (1,0) -- (-30:1);
		\draw (1,0) node[anchor=north west]{$\theta$};
		\draw[->] (.45,0) arc(0:30:.45);
		\draw (.3,.06) node{$\alpha$};
		\draw[<->, black!70] (.1,.05) + (30:1) -- (1.1,0.05);
		\draw[black!70] (1.15,0.3) node{$\epsilon$};
		\draw (1.05,-0.5) node{$\mathcal C_{\theta}^{\epsilon}$};
		\draw[fill=white] (0,0.6) node[opacity=1]{$\boxed{\alpha=\frac{\epsilon}{\theta}+ o(\epsilon)}$};
		\begin{scope}[xshift=4cm]
		\draw[thick, gray, ->] (-1,0) -- (1,0);
		\draw[thick, gray, ->] (0,-1) -- (0,1);
		\draw[ultra thick] (0,0) -- (105:1);
		\draw[ultra thick] (0,0) -- (-105:1);
		\draw[<->, black!70] (-0.1,-0.05) + (105:1) -- (-0.1,0.03);
		\draw[black!70] (-0.45,0.4) node{$\epsilon t^{1/3}$};
		\draw[->]  (0.3,0) arc(0:105:0.3);
		\draw (1,0.3) node{$\phi(\epsilon)= \frac{\pi}{2}+\frac{\epsilon}{2\theta}+o(\epsilon)$};
		\end{scope}
		\end{tikzpicture}
	\end{center}		
	\caption{Left: The contour $\mathcal C_{\theta}^{\epsilon}$ for variables $v,v'$ is shown (thick black segments). Since the length of each segment is $\epsilon$, the length of the corresponding arc it intercepts is $\epsilon+o(\epsilon)$. Taking into account that the circle has radius $\theta$, it  implies that the angle $\alpha$ shown in the figure is $\alpha=\frac{\epsilon}{\theta}+ o(\epsilon)$. Right: The contour (thick black segments) for variables $\tilde v, \tilde v'$ obtained after the change of variables $v=\theta+t^{-1/3}\tilde v$. The angle of the segments is the same as on the left, that is $\phi(\epsilon)= \frac{\pi}{2}+\frac{\epsilon}{2\theta}+o(\epsilon)$.}
	\label{fig:contours}
\end{figure}

Thus, using that $t^{-1/3} \vert \tilde v\vert <\epsilon$ and $h^{(4)}(\theta)<0$,
\begin{align}
\mathrm{Re}\left[\frac{-\sigma(\theta)^3}{3}\tilde v^3 - \frac{t^{-1/3}h^{(4)}(\theta)}{4!}\tilde v^4\right] &= -\sin\left(\frac{3\epsilon}{2\theta} +o(\epsilon)\right)\frac{\sigma(\theta)^3}{3}\vert \tilde v\vert ^3 - \cos\left(\frac{2\epsilon}{\theta} +o(\epsilon)\right) \frac{t^{-1/3}h^{(4)}(\theta)}{4!}\vert \tilde v\vert^4 \nonumber \\
&< - \epsilon \vert \tilde v\vert ^3 \left(\frac{\sigma(\theta)^3}{2\theta}+ \frac{h^{(4)}(\theta)}{4!} \right)+ o(\epsilon)  
\label{eq:boundonrealpart}
\end{align}
Using \eqref{eq:fourthderivative} and the expression of $\sigma(\theta)$ in \eqref{eq:defsigma}, we find that
$$\frac{\sigma(\theta)^3}{2\theta}+ \frac{h^{(4)}(\theta)}{4!}= \frac{1}{4\theta^2(1+\theta)^2(1+2\theta+2\theta^2)} >0,  $$
Combining \eqref{eq:fourthorderTaylor} and \eqref{eq:boundonrealpart}, there exist a constant $c>0$ such that
\begin{equation*}
\mathrm{Re}\left[-th(v) \right] <-c \epsilon \vert \tilde v\vert^3 + C \epsilon^2 \vert \tilde v \vert ^3.
\end{equation*}
Hence, choosing $\epsilon$ small enough, we may find constants $C', C''>0$ such that
\begin{equation*}
\vert K^t_{y, \epsilon}(\tilde v, \tilde v') \vert <C'' \exp(-C' \epsilon \vert \tilde v\vert ^3).
\end{equation*}
%Since $\vert v-\theta\vert <\epsilon$, we have that $Ct(v-\theta)^4<C\epsilon \tilde{v}^3$. 
%Hence, for $\epsilon$ small enough, one can factor out $\exp(-C'\tilde{v}^3/3)$ for some $C'>0$. By using the same bound as before for the factors in the integrand of  $K^t_{y, \epsilon}$,  there exist constants $C', C''>0$ such that 
%$$ K^t_{y, \epsilon}(\tilde{v}, \tilde{v'}) < C'' \exp(C' \tilde{v}^3).$$ 
%As $\exp(- \tilde{v}^3)$ decays exponentially in the direction $\infty e^{\pm i\phi}$ for $\phi\in(\pi/2, 5\pi/6)$, we have that for $\epsilon$ small enough,
Thus,  the integrand of the rescaled kernel decays exponentially and we can apply dominated convergence. 
%Now recall that we can take $\epsilon$ arbitrarily small in Proposition \ref{prop:localization}. 
Further, the Fredholm determinant expansion of $K^t$ is integrable and summable (using Hadamard's bound), and dominated convergence implies that 
the limit of $ \det(I+K^{\mathrm{BP}}_{y, \epsilon})_{\mathbb{L}^2(\mathcal{C}_{\theta}^{\epsilon})}$ is the Fredholm determinant of an operator $\tilde{K}_y$ acting on $\mathcal{C}$ defined by the integral kernel 
$$ \tilde{K}_y(\tilde{v}, \tilde{v}') =\frac{1}{2i\pi} \int_{\mathcal{D}^{\infty}} \frac{e^{\tilde{z}^3\sigma(\theta)^3/3- \tilde{z}y\sigma(\theta) }}{e^{\tilde{v}^3\sigma(\theta)^3/3- \tilde{v}y\sigma(\theta) }}\frac{1}{\tilde{z}-\tilde{v}}\frac{\mathrm{d}\tilde{z}}{\tilde{z}-\tilde{v}'}.$$
Since the integrand of $\tilde{K}_y$ has quadratic decay on the tails of the contour $\mathcal{D}^{\infty}$ one can freely deform the contours so that it goes from $\infty e^{-i\pi/3}$ to  $\infty e^{i\pi/3}$ without intersecting $\mathcal{C}^{\infty}$. 
Finally, by doing another change of variables to eliminate the dependency in $\sigma(\theta)$ in the integrand, one recovers the Fredholm determinant of $K_y$ as claimed. 
\end{proof}
Using the $\det(I-AB)=\det(I-BA)$ trick, one can reformulate the Fredholm determinant of $K_y$ as the Fredholm  determinant of an operator on $\mathbb{L}^2(y, \infty)$ (see e.g. \cite[Lemma 8.6]{borodin2012free}). It turns out that 
$$\det(I+K_y)_{\mathbb{L}^2(\mathcal{C})} =\det(I-K_{\rm Ai})_{\mathbb{L}^2(x,+\infty )}, $$
and this concludes the proof of Theorem \ref{thm:RWasymptotics}.

\subsection{Precise estimates and steep-descent properties}
\label{subsec:rigorousestimates}
The following series representations will be useful:
\begin{equation}
\Psi(z)-\Psi(w) = \sum_{n=0}^{\infty} \frac{z-w}{(n+z)(n+w)},
\label{eq:seriesrepresentationdigamma}
\end{equation}
is valid for $z$ and $w$ away from the negative integers. We also use
\begin{equation}
\Psi_1(z)- \Psi_1(w) = \sum_{n=0}^{\infty}\left[\frac{1}{(n+z)^2} - \frac{1}{(n+w)^2}\right].
\label{eq:seriesrepresentationtrigamma}
\end{equation}

\begin{proof}[Proof of Lemma \ref{lemma:thirdderivative}]
Given the expression (\ref{eq:hprimetheta}) for the first derivative of $h$, we have 
\begin{equation}
h'''(\theta) = \Psi_2(\theta+\alpha) - \Psi_2(\alpha+\beta+\theta) + \frac{\Psi_1(\alpha+\theta) - \Psi_1(\alpha+\beta+\theta)}{\Psi_1(\theta) - \Psi_1(\alpha+\beta+\theta)}\Big( \Psi_2(\alpha+\beta+\theta) - \Psi_2(\theta)\Big), 
\label{eq:hthird}
\end{equation}
 where $\Psi_2$ is the second polygamma function ($\Psi_2(z)=\frac{\mathrm{d}}{\mathrm{d}z}\Psi_1(z))$. Hence $h'''(\theta)>0$ is equivalent to 
 \begin{multline*}
\Big(\Psi_2(\theta+\alpha+\beta) - \Psi_2(\theta+\alpha)\Big)\Big(\Psi_1(\theta+\alpha+\beta) -\Psi_1(\theta) \Big)\\
 - \Big(\Psi_1(\theta+\alpha+\beta) - \Psi_1(\theta+\alpha)\Big)\Big(\Psi_2(\theta+\alpha+\beta) -\Psi_2(\theta) \Big) >0,
 \end{multline*}
 which is equivalent to 
\begin{equation}
\frac{\Psi_2(\theta+\alpha+\beta) - \Psi_2(\theta+\alpha)}{\Psi_1(\theta+\alpha+\beta) - \Psi_1(\theta+\alpha)}>\frac{\Psi_2(\theta+\alpha+\beta) -\Psi_2(\theta)}{\Psi_1(\theta+\alpha+\beta) -\Psi_1(\theta) }.
\label{eq:compare}
\end{equation}
The function trigamma $\Psi_1$ is positive and decreasing on $\R_{>0}$. The function $\Psi_2$ is negative and increasing. One recognizes in (\ref{eq:compare}) difference quotients for the function $\Psi_2 \circ  \Psi_1^{-1}$. Thus, it is enough to prove that $ \Psi_2 \circ  \Psi_1^{-1}$ is strictly concave.  The derivative of $ \Psi_2 \circ  \Psi_1^{-1}$ is $ \Psi_3\circ \Psi_1^{-1} / \Psi_2\circ\Psi_1^{-1}$. Since $\Psi_1$ is decreasing, it is enough to show that $\Psi_3/\Psi_2$ is increasing, which, by taking the derivative, is equivalent to $\Psi_4 \Psi_2 >\Psi_3 \Psi_3$. 

For all $n\geqslant 1$, one has the integral representation 
\begin{equation}
\Psi_n(x) =  - \int_0^{\infty} \frac{(-t)^n e^{-xt}}{1-e^{-t}}  \mathrm{d}t.
\label{eq:representationPolygamma}
\end{equation}
Thus for $x>0$, $\Psi_4(x) \Psi_2(x) >\Psi_3(x) \Psi_3(x)$ is equivalent to 
\begin{equation*}
\int_0^{\infty} \int_0^{\infty}\frac{e^{-xt-xu}}{(1-e^{-t})(1-e^{-u})}t^3 u^3 < \int_0^{\infty} \int_0^{\infty} \frac{e^{-xt-xu}}{(1-e^{-t})(1-e^{-u})}t^2 u^4.
\end{equation*}
By symmetrizing the right-hand-side, the inequality is equivalent to 
\begin{equation*}
\int_0^{\infty} \int_0^{\infty} \frac{e^{-xt-xu} t^2 u^2 }{(1-e^{-t})(1-e^{-u})}tu < \int_0^{\infty} \int_0^{\infty} \frac{e^{-xt-xu}  t^2 u^2}{(1-e^{-t})(1-e^{-u})}\frac{t^2+u^2}{2},
\end{equation*}
which is true for all $x>0$. 
\end{proof}

\begin{proof}[Proof of Lemma \ref{lemma:steepdescentz}]
Note that the published version of this proof in \cite{barraquand2017random} contained a mistake. The present version remedies this issue, closely following  the proof of \cite[Lemma 2.6]{barraquand2020large}, which is a statement very similar as Lemma \ref{lemma:steepdescentz} (it corresponds to the $\alpha, \beta \to 0$ limit).

By symmetry, it is enough to treat only the case $y>0$. Hence we show that if $y>0$, then $\Imag[h'(\theta+iy)]>0.$
Using (\ref{eq:hprimetheta}), $\Imag[h'(\theta+iy)]>0$ is equivalent to 
\begin{multline}
\Big( \Psi_1(\theta) - \Psi_1(\alpha+\beta+\theta)\Big) \Imag\Big[\Psi(\alpha+ \theta+iy)-\Psi(\alpha+\beta+\theta+iy)\Big]\\
+ \Big( \Psi_1(\alpha +\theta) - \Psi_1(\alpha+\beta+\theta)\Big) \Imag\Big[\Psi(\alpha+\beta + \theta+iy)-\Psi(\theta+iy)\Big]>0. 
\label{eq:inequality1steepdesent}
\end{multline}
Using the series representations (\ref{eq:seriesrepresentationdigamma}), 
Equation (\ref{eq:inequality1steepdesent}) is equivalent to 
\begin{multline}
\Big( \Psi_1(\theta) - \Psi_1(\alpha+\beta+\theta)\Big)\Imag \sum_{m=0}^{\infty} \frac{-\beta}{(m+\theta+\alpha+ iy)(m+\theta+\alpha+\beta+iy)} \\
+  \Big( \Psi_1(\alpha +\theta) - \Psi_1(\alpha+\beta+\theta)\Big)\Imag \sum_{m=0}^{\infty} \frac{\alpha+\beta}{(m+\theta+ iy)(m+\theta+\alpha+\beta+iy)}>0,
\label{eq:inequalityimag}
\end{multline}
We have that
$$\Imag\left[ \frac{-\beta}{(m+\theta+\alpha+ iy)(m+\theta+\alpha+\beta+iy)}\right] =\frac{1}{(m+\theta+\alpha)^2+y^2} - \frac{1}{(m+\theta+\alpha+\beta)^2+y^2}$$
and 
$$  \Imag\left[\frac{-(\alpha+\beta)}{(m+\theta+ iy)(m+\theta+\alpha+\beta+iy)}\right] = \frac{1}{(m+\theta)^2+y^2} - \frac{1}{(m+\theta+\alpha+\beta)^2+y^2}.$$
It yields that (\ref{eq:inequalityimag}) can be rewritten as
\begin{multline}
\Big( \Psi_1(\theta) - \Psi_1(\alpha+\beta+\theta)\Big)\Big(\Phi(\theta+\alpha)- \Phi(\theta+\alpha+\beta)\Big) >\\
\Big( \Psi_1(\theta)+\alpha - \Psi_1(\alpha+\beta+\theta)\Big)\Big(\Phi(\theta)- \Phi(\theta+\alpha+\beta)\Big),
\label{eq:inequalityPsiPhi1}
\end{multline}
where 
$$\Phi(x) = \sum_{n\geqslant 0} \frac{1}{(n+x)^2+y^2} .$$
The inequality (\ref{eq:inequalityPsiPhi1}) is equivalent to 
\begin{equation}
 \frac{ \Psi_1(\theta) - \Psi_1(\theta+\alpha)}{\Phi(\theta) - \Phi(\theta+\alpha)}> \frac{ \Psi_1(\theta+\alpha) - \Psi_1(\theta+\alpha+\beta)}{\Phi(\theta+\alpha) - \Phi(\theta+\alpha+\beta)}
 \label{eq:inequalityratiosPsiPhi}
\end{equation}
Using Cauchy's mean value theorem, there exist $\theta_1\in(\theta, \theta+\alpha)$ and $\theta_2\in(\theta+\alpha, \theta+\alpha+\beta)$ such that (\ref{eq:inequalityratiosPsiPhi}) is equivalent to 
$$ \frac{\Psi_2(\theta_1)}{\Phi'(\theta_1)} >\frac{\Psi_2(\theta_2)}{\Phi'(\theta_2)}.$$
We need to show that the function $\Psi_2(\theta)/\Phi'(\theta)$ is decreasing. Taking the derivative, this amount to showing that for all $\theta>0$,
$$ \Psi_3(\theta)\Phi'(\theta) - \Psi_2(\theta)\Phi''(\theta) <0.$$
Using, the series representations for the digamma and $\Phi$ functions, this is equivalent to showing that
\begin{equation}
\sum_{n,m=0}^{\infty} T_{n,m} >0 \text{ where } T_{n,m} = a(n+\theta)B(m+\theta) - A(n+\theta) b(m+\theta),
\label{eq:toproveseries}
\end{equation}
with
$$a(x) = \frac{1}{x^4}, \;\; b(x) =\frac{1-\frac{y^2}{3x^2}}{x^4\left(1+\frac{y^2}{x^2}\right)^3}, \;\;
A(x) = \frac{1}{x^3}, \;\;B(x) = \frac{1}{x^3\left(1+\frac{y^2}{x^2}\right)^2}.$$
Note that $$ T_{n,m}> \tilde T_{n,m}:= a(n+\theta)B(m+\theta) - A(n+\theta) \tilde b(m+\theta),$$
where $\tilde b(x) = \frac{1}{x^4\left(1+\frac{y^2}{x^2}\right)^3}\geqslant b(x)$.

In order to prove \eqref{eq:toproveseries}, we will show that $\tilde T_{n,m}+\tilde T_{m,n}\geqslant 0$, and for that  we will show that
\begin{enumerate}
	\item[(1)] For all $0\leqslant n\leqslant m$, $\tilde T_{n,m}\geqslant 0$.
	\item[(2)] For all $0\leqslant n\leqslant m$, either $\tilde T_{n,m}/\tilde T_{m,n}\geqslant 0$ or $\vert \tilde T_{n,m}/\tilde T_{m,n}\vert\geqslant 1$.
\end{enumerate}
To prove (1), observe that using the shorthand notation $n_{\theta}= n+\theta, m_{\theta} =m+\theta$, we have
$$\tilde T_{n,m} = \frac{m_{\theta} \left(m_{\theta}(m_{\theta}-n_{\theta})+y^2\right)}{n_{\theta}^4 \left(m_{\theta}^2+y^2\right)^3}$$
which is clearly non-negative for all $0\leqslant n\leqslant m$.
Now we turn to proving (2). We have
\begin{equation*}
\frac{\tilde T_{n,m}}{\tilde T_{m,n}} = \frac{n_{\theta}}{m_\theta}\frac{\left(1+\frac{y^2}{n_{\theta}^2}\right)^3}{\left(1+\frac{y^2}{m_{\theta}^2}\right)^3}\frac{m_{\theta}(m_{\theta}-n_{\theta})+y^2}{n_{\theta}(n_{\theta}-m_{\theta})+ y^2}
\end{equation*}
For  $ n\leqslant m$, the numerator is always positive. Regarding the denominator, there are two cases to consider. Either it is nonnegative, which implies $T_{n,m}/T_{m,n}>  0$, or the denominator is negative. In the latter case, we have that $y^2<n_{\theta}(m_{\theta}-n_{\theta})$ and
\begin{equation*}
\left\vert\frac{\tilde T_{n,m}}{\tilde T_{m,n}} \right\vert  =  \frac{\left(1+\frac{y^2}{n_{\theta}^2}\right)^3}{\left(1+\frac{y^2}{m_{\theta}^2}\right)^3}   \frac{n_{\theta}}{m_\theta} \frac{m_{\theta}(m_{\theta}-n_{\theta})+y^2}{n_{\theta}(m_{\theta}-n_{\theta})- y^2}.
\end{equation*}
For  $ n\leqslant m$, we clearly have  that $\frac{\left(1+\frac{y^2}{n_{\theta}^2}\right)^3}{\left(1+\frac{y^2}{m_{\theta}^2}\right)^3}\geqslant 1$ and
$$  \frac{n_{\theta}}{m_\theta} \frac{m_{\theta}(m_{\theta}-n_{\theta})+y^2}{n_{\theta}(m_{\theta}-n_{\theta})- y^2} \geqslant   \frac{n_{\theta}}{m_\theta} \frac{m_{\theta}(m_{\theta}-n_{\theta})}{n_{\theta}(m_{\theta}-n_{\theta})} = 1,$$
so that $\left\vert\tilde T_{n,m}/\tilde T_{m,n} \right\vert\geqslant 1$. Therefore we have proved (2) and this concludes the proof of Lemma \ref{lemma:steepdescentz}.

%Finally, this last inequality is always true for $\theta_1<\theta_2$ since we have the series of equivalences
%\begin{align}
%&\Psi_2(\theta_1)\Phi'(\theta_2)> \Psi_2(\theta_1)\Phi'(\theta_2) \nonumber \\
%\Leftrightarrow\ \  &\sum_{n=0}^{\infty} \frac{2}{(n+\theta_1)^3} \sum_{m=0}^{\infty}\frac{2(m+\theta_2)}{\big((m+\theta_2)^2+y^2\big)^2}  > \sum_{n=0}^{\infty} \frac{2}{(n+\theta_2)^3} \sum_{m=0}^{\infty}\frac{2(m+\theta_1)}{\big((m+\theta_1)^2+y^2\big)^2} \nonumber\\
%\Leftrightarrow\ \  & \sum_{n,m=0}^{\infty}  \frac{1}{(n+\theta_1)^3(n+\theta_2)^3} \frac{1}{1+\frac{2y^2}{(m+ \theta_2)^2}+\frac{y^2}{(m+\theta_2)^4}} > \nonumber\\
%&\hspace{5cm}\sum_{n,m=0}^{\infty}  \frac{1}{(n+\theta_1)^3(n+\theta_2)^3} \frac{1}{1+\frac{2y^2}{(m+ \theta_1)^2}+\frac{y^2}{(m+\theta_1)^4}}.\label{eq:inequalityobvious}
%\end{align}
%The inequality (\ref{eq:inequalityobvious}) is satisfied because $\theta_1<\theta_2$. 
\end{proof}

\begin{proof}[Proof of Lemma \ref{lemma:steepdescentw}]
We have that 
$$ \frac{\mathrm{d}}{\mathrm{d}\phi} \Real\Big[h(\theta e^{i\phi})\Big] = \Real\Big[i\theta e^{i\phi} h'(\theta e^{i\phi})\Big] . $$
Using formula (\ref{eq:expressionshprimeuniform}), we have 
$$ h'(\theta e^{i\phi}) = \frac{\theta (1-e^{i\phi})^2}{e^{i\phi}(\theta e^{i\phi}+1)\big((\theta+1)^2+\theta^2) \big)}.$$
We have to show that for any $\phi\in (0, \pi)$, $\Real\big[i\theta e^{i\phi} h'(\theta e^{i\phi})\big]>0$. 
We can forget the factor $\theta/\left((\theta+1)^2+\theta^2) \right)$ which is positive. 
Thus, we have to show that 
$$ \Imag\left[\frac{(1-e^{i\phi})^2}{(\theta e^{i\phi} +1)}\right]<0.$$
One can see that the inequality is equivalent to 
$$ 2 \sin(\phi)(\cos(\phi)-1) <0,$$
which is always true for $\phi\in (0,\pi)$.
\end{proof}

\subsection{Relation to extreme value theory}

Let us now state a corollary of Theorem \ref{thm:RWasymptotics}. Let $(X^{(1)}_t)_{t\in \Z_{\geqslant 0}}, \dots , (X^{(N)}_t)_{t\in \Z_{\geqslant 0}}$ be $N$ independent random walks drawn in the same Beta environment (Definition \ref{def:BetaRWRE}). We denote by $\mathcal{P}$ and $\mathcal{E}$ 
   the measure and expectation associated with the probability space which is the product of the environment probability space and the $N$ random walks probability space (for $f$ a function of the environment and the $N$ random walk paths, we have $\mathcal{E}\left[ f\right] = \EE \left[ \mathsf{E}^{\otimes N}[f] \right]$ and $\mathcal{P}(A) = \mathcal{E}[\mathds{1}_{A}]$). 
\begin{corollary} Assume $\alpha=\beta=1$.
 We set $N=\lfloor e^{ct}\rfloor$ for some $c\in\left(\frac 2 5,1\right)$, and $x_0 = I^{-1}(c) = \sqrt{1-(1-c)^2}$.  Then we have 
 \begin{equation}
\lim_{t\to\infty} \mathcal{P}\left(  \frac{ \max_{i=1, \dots , \lfloor e^{ct}\rfloor}\left\lbrace X^{(i)}_t \right\rbrace - t x_0}{t^{1/3} d}  \leqslant y  \right) = F_{\rm GUE}(y),
\end{equation}
where 
$$ d= \frac{2^{1/3}c^{2/3}(1-c)^{2/3}}{\sqrt{1-(1-c)^2}}.$$
\label{cor:extreme}
\end{corollary}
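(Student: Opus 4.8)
The plan is to deduce this extreme-value statement from Theorem \ref{thm:RWasymptotics} by conditioning on the environment and exploiting the conditional independence of the $N$ walkers. First I would condition on the environment $(B_{x,t})$. Given the environment, the walkers $X^{(1)},\dots,X^{(N)}$ are i.i.d.\ with the quenched law, so
\[
\mathcal{P}\Big( \max_{i\le N} X^{(i)}_t \le x t + y t^{1/3} d \,\Big|\, \text{env}\Big) = \mathsf{P}\big(X_t \le xt + yt^{1/3} d\big)^{N} = \big(1 - P(t, \lceil xt + yt^{1/3}d\rceil)\big)^{N},
\]
where $x = x_0 = I^{-1}(c)$ and $P(t,x) = \mathsf{P}(X_t \ge x)$. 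Writing $N = \lfloor e^{ct}\rfloor$ and using $(1-p)^N \approx \exp(-Np)$ when $p\to 0$, the event has probability close to $\exp\!\big(-e^{ct} P(t, x_0 t + yt^{1/3}d)\big)$, so the whole problem reduces to showing that
\[
e^{ct}\, P\big(t, x_0 t + y t^{1/3} d\big) \;\xrightarrow[t\to\infty]{(\text{in law / in prob.})}\; \text{(something governed by a Tracy--Widom variable)},
\]
and that $\mathcal{P}(\exp(-e^{ct}P(t,\dots))\le \cdot)$ converges to $F_{\rm GUE}$.

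The second step is to relate $P(t, x_0 t + yt^{1/3}d)$ along a \emph{spatially tilted} ray to the quantity controlled by Theorem \ref{thm:RWasymptotics}, which describes $\log P(t, x(\theta)t)$ along the straight ray $x = x(\theta)t$. Here I would use a Taylor expansion of the rate function: since the LDP is $\frac{1}{t}\log P(t,xt)\to -I(x)$ with $I$ smooth and $I'(x_0) = \theta$ (this is the standard Legendre-duality identity; $\theta$ is precisely the critical point and $I'(x(\theta))=\theta$ follows from differentiating \eqref{eq:defxintro}--\eqref{eq:defIintro}, or from the critical-point structure $h'(\theta)=0$), choosing $x_0 = I^{-1}(c)$ gives $I(x_0) = c$ and
\[
I\big(x_0 + y t^{-2/3} d\big) = c + \theta\, d\, y\, t^{-2/3} + O(t^{-4/3}).
\]
Therefore $-\log P\big(t, x_0 t + y t^{1/3} d\big) \approx I(x_0 + yt^{-2/3}d)\, t + (\text{fluctuation}) = ct + \theta d y t^{1/3} + \sigma(\theta) t^{1/3} \chi + o(t^{1/3})$, where $\chi$ is in the limit a GUE Tracy--Widom variable by Theorem \ref{thm:RWasymptotics} (applied at the appropriate $\theta$, noting the shift of $y$-argument by $\theta d/\sigma(\theta)$ inside $F_{\rm GUE}$). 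Consequently
\[
e^{ct} P\big(t, x_0 t + y t^{1/3}d\big) \approx \exp\!\big(- \theta d y\, t^{1/3} - \sigma(\theta) t^{1/3}\chi + o(t^{1/3})\big),
\]
which converges to $0$ or $\infty$ according to the sign of $\theta d y + \sigma(\theta)\chi$; hence $\exp(-e^{ct}P)$ converges to $\mathds{1}_{\{\theta d y + \sigma(\theta)\chi > 0\}} = \mathds{1}_{\{\chi > -\theta d y/\sigma(\theta)\}}$, giving in the limit $\mathcal{P}(\max \le \cdots) \to \mathbb{P}(\chi \le \theta d y/\sigma(\theta)) = F_{\rm GUE}(\theta d y /\sigma(\theta))$. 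The constant $d$ in the statement is then pinned down by requiring $\theta d/\sigma(\theta) = 1$, i.e.\ $d = \sigma(\theta)/\theta$; plugging in the explicit $\alpha=\beta=1$ formulas $x_0 = \sqrt{1-(1-c)^2}$, $\theta$ solving $x_0 = x(\theta)$, $I(x_0)=c$, and $\sigma(\theta)^3 = 2I(x_0)^2/(1-I(x_0)) = 2c^2/(1-c)$ from \eqref{eq:simpleexpressionsigma}, together with $\theta = I'(x_0) = \frac{1-c}{\sqrt{1-(1-c)^2}}\cdot\frac{1}{\text{(derivative)}}$ — a short computation — yields $d = \sigma(\theta)/\theta = (2c^2\sqrt{1-c})^{1/3}/\sqrt{1-(1-c)^2}$, matching the claim. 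The constraint $c\in(2/5,1)$ corresponds exactly to $\theta\in(0,1/2)$ via $x_0 > 4/5$, so Theorem \ref{thm:RWasymptotics} applies.

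The main obstacle is making the heuristic $(1-p)^N \approx e^{-Np}$ and the passage from ``$e^{ct}P(t,\cdot)$ blows up/decays'' to a genuine distributional limit fully rigorous, because $P(t,\cdot)$ is itself a random variable of size $e^{-ct + O(t^{1/3})}$ and one is multiplying by $e^{ct}$: the product is $e^{O(t^{1/3})}$, extremely sensitive to the $t^{1/3}$ fluctuation term. The clean way is to fix $y$, set $p_t = P\big(t, \lceil x_0 t + y t^{1/3} d\rceil\big)$, and for any $\varepsilon > 0$ use the sandwich $e^{-N p_t/(1-p_t)} \le (1-p_t)^N \le e^{-N p_t}$ valid for $p_t\in[0,1)$; then observe that by Theorem \ref{thm:RWasymptotics} (with the $y$-argument shifted and $d = \sigma(\theta)/\theta$), for any $\delta$,
\[
\mathbb{P}\big(t^{-1/3}(-\log p_t - ct) \le \theta d y + \sigma(\theta)(-\delta)\big) \to F_{\rm GUE}\big(-\delta\cdot{\rm something}\big),
\]
so $N p_t = \exp(t^{1/3}(-\theta d y - \sigma(\theta)\chi_t + o(1)))$ with $\chi_t \Rightarrow \chi$; on the event $\{\chi_t > -\theta d y/\sigma(\theta) + \delta\}$ one has $Np_t \to 0$ hence $(1-p_t)^N \to 1$, and on $\{\chi_t < -\theta d y/\sigma(\theta) - \delta\}$ one has $Np_t \to \infty$ hence $(1-p_t)^N \to 0$. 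Taking $\mathcal{E}$ and letting $\delta\downarrow 0$ (using continuity of $F_{\rm GUE}$) gives $\mathcal{P}(\max \le x_0 t + yt^{1/3}d) \to F_{\rm GUE}(\theta d y/\sigma(\theta)) = F_{\rm GUE}(y)$. One also needs a mild uniform-integrability / tail bound to justify passing the bounded conditional probabilities through $\mathcal{E}$, which is automatic since $(1-p_t)^N \in [0,1]$, so bounded convergence suffices. The genuinely delicate point that deserves care is the identity $I'(x(\theta)) = \theta$ and the smoothness of $I$ near $x_0$, used to get the first-order expansion of $I$; this follows from \eqref{eq:defxintro}--\eqref{eq:defIintro} by implicit differentiation ($\frac{d}{d\theta}I(x(\theta)) = \theta\, x'(\theta)$ can be checked directly from the explicit $\alpha=\beta=1$ forms $x(\theta) = (1+2\theta)/(\theta^2+(\theta+1)^2)$, $I(x(\theta)) = 1/(\theta^2+(\theta+1)^2)$), which I would verify as a short lemma before assembling the argument above.
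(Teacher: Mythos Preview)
Your overall strategy is the same as the paper's: condition on the environment, use conditional independence of the walkers to write $\mathsf{P}(\max\le xt)=(1-\mathsf{P}(X_t>xt))^N$, set $x=x_0+t^{-2/3}d\,y$, Taylor-expand the rate function, and invoke Theorem~\ref{thm:RWasymptotics} to identify the $t^{1/3}$ fluctuation as Tracy--Widom. Your sandwiching $e^{-Np/(1-p)}\le(1-p)^N\le e^{-Np}$ together with the $\delta$-argument is in fact a cleaner way to make the $(1-p)^N\approx e^{-Np}$ step rigorous than the paper's expansion of $\log(1-p)$, and bounded convergence handles the outer expectation exactly as you say.

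There is, however, a concrete error in your identification of the constant. The claim $I'(x(\theta))=\theta$ is false: in the $\alpha=\beta=1$ case one computes directly from $I(x)=1-\sqrt{1-x^2}$ and $x(\theta)=(1+2\theta)/(2\theta^2+2\theta+1)$ that
\[
I'\big(x(\theta)\big)=\frac{x(\theta)}{\sqrt{1-x(\theta)^2}}=\frac{1+2\theta}{2\theta(\theta+1)},
\]
which is not $\theta$ (e.g.\ at $\theta=1$ one gets $3/4$). The ``Legendre-duality'' heuristic does not apply here because $\theta$ is the steepest-descent critical point, not the dual variable in a Legendre transform; your proposed check $\tfrac{d}{d\theta}I(x(\theta))=\theta\,x'(\theta)$ likewise fails. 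Consequently your formula $d=\sigma(\theta)/\theta$ is wrong. The correct constant, as the paper uses, is
\[
d=\frac{\sigma(\theta)}{I'\big(x(\theta)\big)} \quad\text{with}\quad I'(x_0)=\frac{x_0}{1-c},
\]
and the explicit expression for $d$ follows by combining this with $\sigma(\theta)^3=2c^2/(1-c)$ from \eqref{eq:simpleexpressionsigma}. Once you replace $\theta$ by $I'(x_0)$ in your argument, the rest goes through unchanged and coincides with the paper's proof.
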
 
\begin{remark}
The condition $c>2/5$ is equivalent to $x_0>4/5$. It is also equivalent to the condition that $\theta<1/2$ in Theorem \ref{thm:RWasymptotics}. Hence, it is most probably purely technical. 
\label{rem:restrictiononc}
\end{remark}
\begin{remark}
We expect that Corollary \ref{cor:extreme} holds more generally for arbitrary parameters $\alpha, \beta>0$. One would have the following result: 

Let $N=\lfloor e^{ct}\rfloor$ such that there exists  $x_0 >\frac{\alpha-\beta}{\alpha+\beta}$  and $\theta_0>0$ with  $x(\theta_0)=x_0$ and $I(x(\theta_0)) = c$. Then
 \begin{equation}
\lim_{t\to\infty} \mathcal{P}\left(  \frac{ \max_{i=1, \dots , \lfloor e^{ct}\rfloor}\left\lbrace X^{(i)}_t \right\rbrace - t x_0}{t^{1/3}\sigma(x_0)/I'(x_0)}  \leqslant y  \right) = F_{\rm GUE}(y),
\end{equation}
where $I'(x)= \frac{\mathrm{d}}{\mathrm{d} x}I(x)$. 
\end{remark}
\begin{remark}
The range of the parameter $c$ in Corollary \ref{cor:extreme} is a priori 
$c\in (0,1)$. The reason why the upper bound is precisely $1$ is because we are in the $\alpha=\beta =1$ case. In general, the upper bound is $I(1)$, which is always finite. It is natural that $c$ is bounded. Indeed, we know that for all $i$, $X_t^{(i)}\leqslant t$ (because the random walk performs $\pm 1$ steps), and for $c$ very large there exists some $i$ such that $X_t^{(i)}=t$ with high probability. Hence, one expects that for  $c$ large enough, the maximum  $\max_{i=1, \dots , \lfloor e^{ct}\rfloor}\left\lbrace X^{(i)}_t \right\rbrace$ is exactly $t$ with a probability going to $1$ as $t$ goes to infinity, and there cannot be random fluctuations in that case. 

If one considers $N$ simple symmetric random walks (corresponding to the annealed model), the threshold is $\log(2)$ (i.e. for $c>\log(2)$, $(1-(1/2)^t)^N\to 0$ and for $c<\log(2)$, $(1-(1/2)^t)^N\to 1$). One can calculate the large deviations rate function $I^{a}$ for the simple random walk\footnote{By Cr\'amer's Theorem, it is the Legendre transform of $z\mapsto \log\left(\frac{e^{-z}+e^{z}}{2}\right) $. One finds 
$$ I^a(x)= \begin{cases} \frac 1 2 \big( (1+x)\log(1+x)+ (1-x)\log(1-x)\big)\text{ for }x\in [-1,1],\\
+\infty \text{ else.} \end{cases} $$} and check that $I^a(1)=\log(2)$. 
\end{remark}

\begin{proof}[Proof of Corollary \ref{cor:extreme}]
This proof relies on Theorem \ref{thm:RWasymptotics} which deals only with $\alpha=\beta=1$. However, this type of deduction would also hold in the general parameter case, and we write the proof using general form expressions. 
From Theorem \ref{thm:RWasymptotics}, we have that writing 
\begin{equation}
\log(\mathsf{P}(X_t>xt)) = -I(x) t + t^{1/3} \sigma(x) \chi_t, 
\label{eq:logproba}
\end{equation}
then $\chi_t$ weakly converges to the Tracy-Widom GUE distribution, provided $x$ can be written $x=x(\theta)$ with $0<\theta<1/2$. 
For any realization of the environment, we have on the one hand 
\begin{equation*}
\mathsf{P}\left( \max_{i=1, \dots, \lfloor e^{ct}\rfloor}\left\lbrace X^{(i)}_t\right\rbrace \leqslant x t  \right) = \Big(1- \mathsf{P}(X_t>xt)\Big)^{\lfloor e^{ct}\rfloor } = \exp\Big( \lfloor e^{c t}\rfloor \log\big(1- \mathsf{P}(X_t>xt)\big)\Big). 
\end{equation*}
On the other hand, 
setting $x = x_0 + \frac{t^{-2/3}\sigma(x_0) y}{I'(x_0)}$, we have that 
\begin{equation}
\mathcal{P}\left( \max_{i=1, \dots, \lfloor e^{ct}\rfloor}\left\lbrace X^{(i)}_t\right\rbrace \leqslant x t \right)=\mathcal{P}\left(  \frac{ \max_{i=1, \dots , \lfloor e^{ct}\rfloor}\left\lbrace X^{(i)}_t \right\rbrace - t x_0}{t^{1/3}\sigma(x_0)/I'(x_0)}  \leqslant y  \right).
\end{equation}
By Taylor expansion, we have as $t$ goes to infinity
$$ I(x) = I(x_0) + t^{-2/3}\sigma(x_0) y + \mathcal{O}(t^{-4/3}),$$
and 
$$  \sigma(x)= \sigma(x_0) + t^{-2/3}\frac{\sigma'(x_0) \sigma(x_0) y}{I'(x_0)} + \mathcal{O}(t^{-4/3}).$$
Hence, the R.H.S. of (\ref{eq:logproba}) is approximated by 
\begin{equation}
-I(x)t+ t^{1/3}\sigma(x)\chi_t  = -I(x_0) t +  t^{1/3}\sigma(x_0)(\chi_t -y) + \mathcal{O}(t^{-1/3}) + \mathcal{O}(t^{-1/3}\chi_t).
\label{eq:logprobaapprox}
\end{equation}

Choosing $x_0$ such that $I(x_0)= c$, we have 
\begin{align*}
\mathcal{P}\left( \max_{i=1, \dots, \lfloor e^{ct}\rfloor}\left\lbrace X^{(i)}_t\right\rbrace \leqslant x t \right)
 &= \EE \exp\Big(\lfloor e^{ct }\rfloor  \log\big(1- \mathsf{P}(X_t>xt)\big) \Big) \\
&=  \EE \exp\Big(-\lfloor e^{ct }\rfloor P(t, xt) + \mathcal{O}\left(e^{ct} P(t, xt)^2\right) \Big) \\
&= \EE \exp\Big(e^{t^{1/3}\sigma(x_0)(\chi_t -y) + \mathcal{O}(t^{-1/3}(1+\chi_t))} +\mathcal{O}\left( P(t, xt)\right) +\mathcal{O}\left(e^{ct} P(t, xt)^2\right) \Big) 
\end{align*}
The second equality relies on Taylor expansion of the logarithm around $1$. The third equality is the consequence  (\ref{eq:logproba}) and (\ref{eq:logprobaapprox}). Since $\chi_t$ converges in distribution, $t^{-1/3}(1+\chi_t))$ converges in probability to zero by Slutsky's theorem. Hence, the term $\mathcal{O}(t^{-1/3}(1+\chi_t))$ inside the exponential converges in probability to zero. Recalling that $I(x_0)=c$, we have 
$$ P(t, xt)^2 = \exp\left( 2\log\big(P(t, xt)\big) \right)  =\exp\left( 2\Big( -c t +\mathcal{O}( t^{1/3} \chi_t) \Big)  \right)=\exp\Big( -2c t +2t^{2/3}\mathcal{O}( t^{-1/3} \chi_t) \Big),$$
and since $\mathcal{O}(t^{-1/3}(1+\chi_t))$ converges to zero in probability, we have that $P(t, xt)^2$ is smaller that $e^{-\frac{3}{2}ct}$ with probability going to $1$ as $t$ goes to infinity, so that the term  $\mathcal{O}\left(e^{ct} P(t, xt)^2\right)$ can be neglected. One can bound similarly $\mathcal{O}(P(t, xt))$ by $e^{-\frac{1}{2}ct}$ with probability going to $1$. 
 Thus, 
\begin{align*}
 \lim_{t\to\infty} \mathcal{P}\left(  \frac{ \max_{i=1, \dots , \lfloor e^{ct}\rfloor}\left\lbrace X^{(i)}_t \right\rbrace - t x_0}{t^{1/3}\sigma(x_0)/I'(x_0)}  \leqslant y  \right) &= \lim_{t\to\infty} \mathcal{P}\left( \max_{i=1, \dots, \lfloor e^{ct}\rfloor}\left\lbrace X^{(i)}_t\right\rbrace \leqslant x t \right)  \\
 &=\lim_{t\to\infty}\PP(\chi_t\leqslant y) \\
 &= F_ {\rm GUE}(y). 
\end{align*}
In the case $\alpha=\beta=1$, we have seen that $I(x) = 1-\sqrt{1-x^2}$ so that  $x_0 = \sqrt{1-(1-c)^2}$. Moreover, using (\ref{eq:simpleexpressionsigma}), 
\begin{equation}
\sigma(x_0) = \left(\frac{2c^2}{1-c}\right)^{1/3}
\label{eq:sigma0}
\end{equation}
and
\begin{equation}
I'(x_0) = \frac{x_0}{\sqrt{1-x_0^2}} = \frac{\sqrt{1-(1-c)^2}}{1-c}.
\label{eq:Iprime}
\end{equation}
Combining \eqref{eq:sigma0} and \eqref{eq:Iprime} yields 
$$\sigma(x_0)/I'(x_0) = d = \frac{2^{1/3}c^{2/3}(1-c)^{2/3}}{\sqrt{1-(1-c)^2}},$$
as in the statement of Corollary \ref{cor:extreme}. Finally, we have that $I(x(1/2)) = 2/5$ and $x((1/2)) = 4/5$, so that the hypothesis of Corollary \ref{cor:extreme} match with that of Theorem \ref{thm:RWasymptotics}.  
\end{proof}

In order to put Corollary \ref{cor:extreme} in the perspective of extreme value statistics, recall that if  $(G_i)_i$ for $i=1, \dots, \lfloor e^{ct}\rfloor$ is a sequence of independent Gaussian centred random variables of variance $1$, then we have (\cite[Section 2.3.2]{galambos1987asymptotic}) the weak convergence 
$$ \sqrt{2ct}\max_{i=1, \dots , e^{ct}} \left\lbrace G_i\right\rbrace  - 2 ct + \frac{1}{2}\log(t) +  \frac{1}{2}\log(4\pi c)\Longrightarrow \mathcal{G}, $$
%%%%%% edit here
where $\mathcal{G}$ is a Gumbel random variable with cumulative distribution function $\exp(-e^{-x})$.

 For the Beta-RWRE with  general $\alpha, \beta >0$ parameters, the variables $X^{(i)}_t$ have mean $\frac{\alpha -\beta}{\alpha+\beta}t$ with variance $ \mathcal{O}(t)$ (see Proposition \ref{prop:covariancestructure} (1) and (2)). Let us note $$R_t^{(i)}:=\frac{X_t^{(i)} - \frac{\alpha-\beta}{\alpha+\beta}t}{\sqrt{t}}.$$ 
We know that $R_t^{(i)}$ converges weakly to the Gaussian distribution by the central limit theorem.  Moreover, conditionally on the environment, $R_t^{(i)}$ converges weakly to the Gaussian distribution (It is proved in \cite{rassoul2005almost},
%when $\alpha=\beta$ \cite{berard2004almost}
 see the discussion in Section \ref{subsec:localizationintro}). 
 However, if we let the environment vary, the variables $R^{(i)}_t$ are not independent since the random walks all share the same environment.  

The next proposition characterizes the covariance structure of the family $(X^{(i)}_t)_{i\geqslant 1}$. We state the result for any parameters $\alpha, \beta>0$.
\begin{proposition}
\begin{enumerate}
\item For all  $i\geqslant 1$, we have $\mathcal{E}\left[X^{(i)}_t \right]= t\frac{\alpha-\beta}{\alpha+\beta}$.
\item For all $i\geqslant 1$, we have $\mathcal{E}\left[\left(X^{(i)}_t\right)^2 \right]= \left(\frac{\alpha-\beta}{\alpha+\beta} \right)^2t^2  + \frac{4\alpha\beta}{(\alpha+\beta)^2} t $.
\item For all $i\neq j \geqslant 1$, we have  
\begin{equation}
\mathcal{E}\left[X^{(i)}_t X^{(j)}_t \right]=  \left(t\frac{\alpha-\beta}{\alpha+\beta}\right)^2 +  \frac{4\alpha \beta \sum_{s=0}^{t-1}\mathcal{P}(X^{(i)}_s= X^{(j)}_s)}{(\alpha+\beta)^2 (\alpha+\beta+1)}.
\label{eq:relationcovarianceoverlap}
\end{equation}
\item For two random variables $X$ and $Y$ measurable with respect to $\mathcal{P}$, we denote their correlation coefficient as
$$\rho(X,Y) = \frac{\mathcal{E}[XY]}{\sqrt{\mathcal{E}[X^2]\mathcal{E}[Y^2]}}.$$ 
For all $i\neq j \geqslant 1$, the correlation coefficient $\rho(X^{(i)}_t, X^{(j)}_t )$ equals $1/(\alpha+\beta+1)$ times the $\mathcal{E}$-expected proportion of overlap between the walks $X^{(i)}_t$ and  $X^{(j)}_t$, up to time $t$.  
\end{enumerate}
\label{prop:covariancestructure}
\end{proposition}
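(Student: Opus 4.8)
The plan is to reduce everything to a careful computation with the increments of the walks. Write $X^{(i)}_t = \sum_{s=0}^{t-1}\eta^{(i)}_s$ with $\eta^{(i)}_s := X^{(i)}_{s+1}-X^{(i)}_s \in \{+1,-1\}$, and let $\mathcal{F}_s$ be the $\sigma$-algebra generated by the environment variables $B_{x,u}$ with $u<s$, together with the positions $\big(X^{(j)}_r\big)_{r\leqslant s,\,j}$ of all $N$ walkers and the independent coins they use up to time $s$. The one fact that drives the whole argument is that, conditionally on $\mathcal{F}_s$, the increment $\eta^{(i)}_s$ is a deterministic function of the \emph{fresh} Beta variable $B_{X^{(i)}_s,\,s}$, which is independent of $\mathcal{F}_s$ because $\mathcal{F}_s$ only involves the environment at times $<s$, together with an independent coin; hence $\mathcal{E}\big[\eta^{(i)}_s \mid \mathcal{F}_s\big] = 2\,\mathbb{E}[B]-1 = \tfrac{\alpha-\beta}{\alpha+\beta}$ for $B\sim \mathrm{Beta}(\alpha,\beta)$. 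Summing over $s$ gives (1) at once.

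For (2) I would expand $\big(X^{(i)}_t\big)^2 = \sum_{s,s'}\eta^{(i)}_s\eta^{(i)}_{s'}$: the $t$ diagonal terms each equal $1$, and for $s<s'$, conditioning on $\mathcal{F}_{s'}$ (on which $\eta^{(i)}_s$ is measurable) and applying the identity above gives $\mathcal{E}\big[\eta^{(i)}_s\eta^{(i)}_{s'}\big] = \big(\tfrac{\alpha-\beta}{\alpha+\beta}\big)^2$, so $\mathcal{E}\big[(X^{(i)}_t)^2\big] = t + t(t-1)\big(\tfrac{\alpha-\beta}{\alpha+\beta}\big)^2$, which rearranges to the stated expression via $1-\big(\tfrac{\alpha-\beta}{\alpha+\beta}\big)^2 = \tfrac{4\alpha\beta}{(\alpha+\beta)^2}$. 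Part (3) is the substantive step. In $X^{(i)}_t X^{(j)}_t = \sum_{s,s'}\eta^{(i)}_s\eta^{(j)}_{s'}$ with $i\neq j$, every term with $s\neq s'$ is handled as in (2), conditioning on the $\sigma$-algebra at $\max(s,s')$, and contributes $\big(\tfrac{\alpha-\beta}{\alpha+\beta}\big)^2$. For a diagonal term $s=s'$ I would condition on $\mathcal{F}_s$ and split according to whether the two walkers occupy the same site: on $\{X^{(i)}_s\neq X^{(j)}_s\}$ the increments are driven by two \emph{distinct}, hence independent, Beta variables, so the conditional expectation is $\big(\tfrac{\alpha-\beta}{\alpha+\beta}\big)^2$; on $\{X^{(i)}_s=X^{(j)}_s=x\}$ they are driven by the \emph{same} variable $B=B_{x,s}$ but by independent coins, so given $B$ they are i.i.d.\ with mean $2B-1$ and the conditional expectation is $\mathbb{E}\big[(2B-1)^2\big]$. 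Using $\mathbb{E}[B]=\tfrac{\alpha}{\alpha+\beta}$ and $\mathbb{E}[B^2]=\tfrac{\alpha(\alpha+1)}{(\alpha+\beta)(\alpha+\beta+1)}$ (the relevant case of Lemma~\ref{lemma:betamoments}) one finds $\mathbb{E}\big[(2B-1)^2\big] = 1-\tfrac{4\alpha\beta}{(\alpha+\beta)(\alpha+\beta+1)}$, and subtracting $\big(\tfrac{\alpha-\beta}{\alpha+\beta}\big)^2$ leaves exactly $\tfrac{4\alpha\beta}{(\alpha+\beta)^2(\alpha+\beta+1)}$. Hence $\mathcal{E}\big[\eta^{(i)}_s\eta^{(j)}_s\big] = \big(\tfrac{\alpha-\beta}{\alpha+\beta}\big)^2 + \tfrac{4\alpha\beta}{(\alpha+\beta)^2(\alpha+\beta+1)}\,\mathcal{P}\big(X^{(i)}_s=X^{(j)}_s\big)$, and summing over $s,s'\in\{0,\dots,t-1\}$ gives (3).

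Part (4) is then a one-line consequence of (1)--(3): subtracting $\mathcal{E}[X^{(i)}_t]\,\mathcal{E}[X^{(j)}_t] = \big(t\tfrac{\alpha-\beta}{\alpha+\beta}\big)^2$ from the formulas of (2) and (3) shows that the covariance of $X^{(i)}_t$ and $X^{(j)}_t$ equals $\tfrac{4\alpha\beta}{(\alpha+\beta)^2(\alpha+\beta+1)}\sum_{s=0}^{t-1}\mathcal{P}\big(X^{(i)}_s=X^{(j)}_s\big)$ while the variance of $X^{(i)}_t$ equals $\tfrac{4\alpha\beta}{(\alpha+\beta)^2}\,t$, so the correlation coefficient is $\tfrac{1}{\alpha+\beta+1}$ times $\tfrac1t\sum_{s=0}^{t-1}\mathcal{P}\big(X^{(i)}_s=X^{(j)}_s\big)$, the $\mathcal{E}$-expected fraction of the $t$ time steps at which the two walks sit at a common site. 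I do not foresee a genuine obstacle here: the only delicate points are setting up the filtration $\mathcal{F}_s$ so that the Beta variable driving $\eta^{(i)}_s$ really is independent of the past, and, in (3), the dichotomy of whether the two walkers share a site at time $s$; once these are handled cleanly, the rest is bookkeeping.
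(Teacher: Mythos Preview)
Your proof is correct and follows essentially the same approach as the paper: decompose each walk into its $\pm 1$ increments, use that increments at distinct times are uncorrelated (the paper phrases this via independence of the environment at different times, you via conditioning on $\mathcal{F}_{\max(s,s')}$), and for the diagonal terms in (3) split on the event $\{X^{(i)}_s = X^{(j)}_s\}$ to distinguish whether the two increments are driven by the same or by independent Beta variables, computing $\mathbb{E}[(2B-1)^2]$ exactly as the paper does. The only minor point worth flagging is that in (4) you compute the \emph{centered} correlation (covariance over product of standard deviations), whereas the paper's displayed definition of $\rho$ is uncentered; your version is the one that actually yields the clean formula $\tfrac{1}{\alpha+\beta+1}$ times the overlap fraction for general $\alpha,\beta$, so you are if anything being more careful than the paper here.
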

\begin{proof}
The points (1) and (2) are trivial since $X_t$ is actually a simple random walk if we do not condition on the environment. In any case, let us explain each case explicitly.
\begin{enumerate}
\item Let us write $\Delta_t = X_{t+1}-X_{t}$. Then $X_t=\sum_{i=0}^{t-1} \Delta_i$. $\Delta_i$ is a random variable that takes the value $1$ with probability $\EE[B]$ and the value $-1$ with probability $\EE[1-B]$ for some $Beta(\alpha, \beta)$ random variable $B$. We find that $\mathcal{E}\left[\Delta_t \right] = \frac{\alpha-\beta}{\alpha+\beta}$, and 
$$ \mathcal{E}\left[ X_t\right] = \sum_{i=1}^t \mathcal{E}\left[\Delta_t \right] = t\frac{\alpha-\beta}{\alpha+\beta}.$$
\item We have 
$$ \mathcal{E}\left[ (X_t)^2\right]  =\mathcal{E}\left[ \sum_{i=1}^t \Delta_i\sum_{j=1}^t \Delta_j\right] .$$
For $i\neq j$, $\mathcal{E}\left[ \Delta_i \Delta_j\right] = \mathcal{E}\left[ \Delta_i \right]\mathcal{E}\left[ \Delta_j\right]$, and since $\Delta_i$ equals plus or minus one,  $\mathcal{E}\left[ (\Delta_i)^2 \right]=1$. Hence, 
$$  \mathcal{E}\left[ (X_t)^2\right] = t(t-1)\left(\frac{\alpha-\beta}{\alpha+\beta}\right)^2 + t = \left(t\frac{\alpha-\beta}{\alpha+\beta}\right)^2 +t\frac{4\alpha\beta}{(\alpha+\beta)^2}.$$
\item Let us write $\Delta_t^{(i)} = X^{(i)}_{t+1} -X^{(i)}_t$ and $\Delta_t^{(j)} = X^{(j)}_{t+1} -X^{(j)}_t$. We have 
$$ \mathcal{E}\left[X^{(i)}_t X^{(j)}_t\right] = \mathcal{E}\left[\sum_{n=0}^{t-1}\Delta_n^{(i)}\sum_{m=0}^{t-1}\Delta_m^{(j)} \right].$$
For $n\neq m$, since the increments and the environments corresponding to different times are independent,
$$\mathcal{E}\left[\Delta_n^{(i)}\Delta_m^{(j)} \right] = \mathcal{E}\left[\Delta_n^{(i)}\right]\mathcal{E}\left[\Delta_m^{(j)} \right] = \left(\frac{\alpha-\beta}{\alpha+\beta}\right)^2.$$
However, $\mathcal{E}\left[\Delta_n^{(i)}\Delta_n^{(j)} \right]$ depends on whether $ X^{(i)}_n=X^{(j)}_n$ or not. More precisely, 
$$ \mathcal{E}\left[\Delta_n^{(i)}\Delta_n^{(j)}\Big\vert X^{(i)}_n \neq X^{(j)}_n \right]  = \mathcal{E}\left[\Delta_n^{(i)}\right]\mathcal{E}\left[\Delta_m^{(j)} \right] = \left(\frac{\alpha-\beta}{\alpha+\beta}\right)^2,$$
and 
$$ \mathcal{E}\left[\Delta_n^{(i)}\Delta_n^{(j)}\Big\vert X^{(i)}_n = X^{(j)}_n \right]  = \EE\left[ \mathsf{E}\left[\Delta_n^{(i)} \right]\mathsf{E}\left[\Delta_n^{(j)} \right]\Big\vert X^{(i)}_n = X^{(j)}_n\right] = \EE\left[ (2B-1)^2\right],$$
for some $Beta(\alpha, \beta)$ random variable $B$. This yields 
$$ \mathcal{E}\left[\Delta_n^{(i)}\Delta_n^{(j)} \right] = \mathcal{P}(X^{(i)}_n\neq X^{(j)}_n)\left(\frac{\alpha-\beta}{\alpha+\beta}\right)^2 + \mathcal{P}(X^{(i)}_n= X^{(j)}_n)\EE\left[ (2B-1)^2\right]. $$
Using $ \EE[B^2] = \frac{\alpha(\alpha+1)}{(\alpha+\beta)(\alpha+\beta+1)},$
we find that 
$$ \mathcal{E}\left[X^{(i)}_t X^{(j)}_t\right] = t^2\left( \frac{\alpha-\beta}{\alpha+\beta}\right)^2 + \left(\sum_{s=0}^{t-1}\mathcal{P}\left( X^{(i)}_s= X^{(j)}_s\right) \right) \frac{4\alpha\beta}{(\alpha+\beta)^2(\alpha+\beta+1)}.$$
\item The $\mathcal{E}$-expected proportion of overlap between the walks $X^{(i)}_t$ and  $X^{(j)}_t$ up to time $t$ is  $$\frac{1}{t}\mathcal{E}\left[\sum_{s=0}^{t-1}\mathds{1}_{X^{(i)}_s=X^{(j)}_s}\right]= \frac{1}{t}\sum_{s=0}^{t-1}\mathcal{P}(X^{(i)}_s= X^{(j)}_s).$$
 Hence, the point (4) is a direct consequence of (1), (2) and (3).
\end{enumerate}
\end{proof}

One can precisely describe the behaviour of $\sum_{s=0}^{t-1}\mathcal{P}(X^{(i)}_s= X^{(j)}_s)$. For simplicity, we restrict the study to the case where the random walks have no drift, that is $\alpha=\beta$.
\begin{proposition}
Consider $(X_t^{(1)})_{t\in \Z_{\geqslant 0}}$ and $(X_t^{(2)})_{t\in \Z_{\geqslant 0}}$ two Beta-RWRE drawn independently in the same environment with parameters $\alpha=\beta$. Then 
$$ \sqrt{t} \ \cdot\ \mathcal{P}\left(X_t^{(1)} =  X_t^{(2)}\right)  \xrightarrow[t\to\infty]{} \frac{2\alpha+1}{2\alpha} \frac{1}{\sqrt{\pi}}, $$
and consequently 
$$ \sqrt{t}\ \cdot\  \mathcal{E}\left[\frac{X^{(i)}_t}{\sqrt{t}} \frac{X^{(j)}_t }{\sqrt{t}}\right] \xrightarrow[t\to\infty]{} \frac{1}{\alpha\sqrt{\pi}}.$$
\label{prop:calculprobaoverlap}
\end{proposition}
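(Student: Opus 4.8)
The plan is to compute the probability $\mathcal{P}(X_t^{(1)}=X_t^{(2)})$ via a difference-walk argument. Let $D_t = X_t^{(1)} - X_t^{(2)}$. Conditionally on the environment, the two walks are independent, so
\[
\mathcal{P}\big(X_t^{(1)}=X_t^{(2)}\big) = \EE\left[ \sum_{x} \mathsf{P}(X_t^{(1)}=x)^2 \right] = \mathcal{E}\big[D_t=0\big],
\]
and the point is that $D_t$ is \emph{not} a Markov chain once we average over the environment, but its increments are controlled. First I would record the one-step law of the pair: when $X_s^{(1)}\neq X_s^{(2)}$ the two walks sit in distinct sites, experience independent Beta variables, and $D_{s+1}-D_s \in \{-2,0,2\}$ with $D_{s+1}=D_s$ having probability $2\EE[B]\EE[1-B]$; when $X_s^{(1)}=X_s^{(2)}$ they share the same Beta variable $B=B_{x,s}$ and move together ($D_{s+1}=D_s$) with probability $\EE[B^2+(1-B)^2] = 1-2\EE[B(1-B)]$. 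With $\alpha=\beta$, $\EE[B]=1/2$ and $\EE[B(1-B)] = \EE[B]-\EE[B^2] = \tfrac12 - \tfrac{\alpha+1}{2(2\alpha+1)} = \tfrac{\alpha}{2(2\alpha+1)}$. So off-diagonal the ``stay'' probability is $1/2$ while on-diagonal it is $1 - \tfrac{\alpha}{2\alpha+1} = \tfrac{\alpha+1}{2\alpha+1}$, which is strictly larger — the walks have an extra tendency to stick once they meet.

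Next I would set $p_s := \mathcal{P}(X_s^{(1)}=X_s^{(2)}) = \mathcal{P}(D_s=0)$ and derive a renewal-type relation. Writing $\mathcal{P}(D_{s+1}=0) = \mathcal{P}(D_{s+1}=0, D_s=0) + \mathcal{P}(D_{s+1}=0, D_s\neq 0)$ and using the one-step transition probabilities just computed, the first term is $\tfrac{\alpha+1}{2\alpha+1}p_s$, and the second term involves $\mathcal{P}(D_s=\pm 2)$ times the probability $1/4 = \EE[B]\EE[1-B]$ of a coalescing step. The cleanest route is to compare $D_t$ with the auxiliary process $\tilde D_t$ which uses the \emph{off-diagonal} transition law at every step — i.e.\ the difference of two genuinely independent simple random walks (each $\pm1$ with probability $1/2$), so $\tilde D_t$ is a lazy symmetric random walk with steps $-2,0,2$ of probabilities $1/4,1/2,1/4$, equivalently $D_t^{\mathrm{SRW}}$ with $\mathcal{P}(\tilde D_t=0)\sim \tfrac{1}{\sqrt{\pi}}\,t^{-1/2}$ by the local central limit theorem. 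The actual $D_t$ differs only through the excess holding probability at $0$, which occurs at each of the (rare) times the walks coincide. A last-passage (or first-passage) decomposition at the last visit to $0$ before time $t$ gives
\[
p_t = \sum_{s\leqslant t} q_s\, r_{t-s},
\]
where $q_s = \mathcal{P}(D_s=0)$ and $r$ encodes ``return to $0$ with no visit in between'' under the genuine dynamics; comparing generating functions $\sum p_t z^t$ against the SRW analogue isolates a multiplicative constant. The constant is exactly the ratio of holding-time ``speed'' at $0$: heuristically, each visit is stretched by a geometric factor with mean $\big(1-\tfrac{\alpha+1}{2\alpha+1}\big)^{-1} = \tfrac{2\alpha+1}{\alpha}$ relative to the SRW's mean holding $\big(1-\tfrac12\big)^{-1}=2$, i.e.\ a factor $\tfrac{2\alpha+1}{2\alpha}$, giving $p_t \sim \tfrac{2\alpha+1}{2\alpha}\cdot\tfrac{1}{\sqrt{\pi t}}$. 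I would make this rigorous through a renewal theorem for the (aperiodic, null-recurrent) visits to $0$: the key inputs are (i) $D_t$ visits $0$ infinitely often a.s.\ with the same tail exponent as SRW, and (ii) the excess-holding correction is a bounded perturbation supported on visits to $0$, so Karamata-type Tauberian arguments applied to $\sum_t p_t z^t$ near $z=1$ yield the asymptotic with the stated constant.

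The second statement follows immediately: combining with Proposition~\ref{prop:covariancestructure}(3) (specialized to $\alpha=\beta$, so $\tfrac{4\alpha\beta}{(\alpha+\beta)^2(\alpha+\beta+1)} = \tfrac{1}{2\alpha+1}$) gives
\[
\sqrt t\cdot \mathcal{E}\!\left[\tfrac{X_t^{(i)}}{\sqrt t}\tfrac{X_t^{(j)}}{\sqrt t}\right] = \frac{1}{2\alpha+1}\cdot\frac{1}{\sqrt t}\sum_{s=0}^{t-1}\mathcal{P}(X_s^{(1)}=X_s^{(2)}) \xrightarrow[t\to\infty]{} \frac{1}{2\alpha+1}\cdot 2\cdot\frac{2\alpha+1}{2\alpha}\cdot\frac{1}{\sqrt\pi} = \frac{1}{\alpha\sqrt\pi},
\]
using $\tfrac1{\sqrt t}\sum_{s<t} \tfrac{c}{\sqrt s}\to 2c$. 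The main obstacle is the rigorous renewal argument for a process that is not itself Markov: one must either carry along enough of the joint state $(X^{(1)}_s,X^{(2)}_s)$ to recover the Markov property (the relevant coordinate is really $(D_s, \mathbf 1_{D_s=0})$, which \emph{is} Markov under $\mathcal{P}$ since the environment is i.i.d.\ in space-time and only the indicator of coincidence affects the transition) and then apply a local limit theorem / renewal theorem to this lazy walk with a single modified holding probability at the origin; or invoke the comparison with two coupled simple random walks (annealed dynamics) as in \cite{ferrari1998fluctuations, rassoul2005almost} and track the constant carefully. I expect the bookkeeping of the constant — making sure the factor $\tfrac{2\alpha+1}{2\alpha}$ emerges rather than its reciprocal or $2$ — to be the delicate point, best handled by writing the generating-function identity $P(z) = Q(z)/(1 - R(z))$ explicitly and evaluating the leading singular behavior at $z=1$.
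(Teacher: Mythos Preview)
Your approach is essentially the paper's: reduce to the difference walk $Y_t = X^{(1)}_t - X^{(2)}_t$, note its transition law depends only on whether $Y_t=0$, and extract the asymptotics of $P_t = \mathcal{P}(Y_t=0)$ via generating functions and singularity/Tauberian analysis near $z=1$. One correction: $Y_t$ \emph{is} a Markov chain under $\mathcal{P}$ (not merely $(Y_t,\mathbf{1}_{Y_t=0})$, since the indicator is a function of $Y_t$) --- the space-time i.i.d.\ environment makes the annealed transition law depend on the current state only --- so the ``main obstacle'' you flag does not exist. The paper carries the computation through explicitly: with $r=\EE[B(1-B)]=\alpha/(4\alpha+2)$, a first-return-to-$0$ decomposition yields the closed form $G(z)=\sum_t P_t z^t = \big(1+z(4r-1)+4r(\sqrt{1-z}-1)\big)^{-1}\sim \tfrac{1}{4r\sqrt{1-z}}$ as $z\to1$, whence $P_t\sim \tfrac{1}{4r\sqrt{\pi t}} = \tfrac{2\alpha+1}{2\alpha}\tfrac{1}{\sqrt{\pi t}}$; your holding-time-ratio heuristic recovers exactly this constant.
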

\begin{proof}
First, notice that $\left(X^{(1)}_t - X^{(2)}_t\right)_{t\geqslant 0}$ is a random walk. Let $Y_t:= X^{(1)}_t - X^{(2)}_t$. The transitions probabilities depend on whether $Y_t=0$. If $Y_t=0$, then 
$$Y_{t+1}-Y_t = \begin{cases} +2 &\mbox{with probability }\EE\big[B(1-B)\big]\\
0 &\mbox{with probability }\EE\big[B^2 + (1-B)^2\big]\\  
-2 &\mbox{with probability }\EE\big[B(1-B)\big]\end{cases}$$
where $B$ is a $Beta(\alpha, \alpha)$ random variable. 
If $Y_t\neq 0$, then
$$Y_{t+1}-Y_t = \begin{cases} +2 &\mbox{with probability }1/4\\
0 &\mbox{with probability }1/2\\  
-2 &\mbox{with probability }1/4\end{cases}$$
In the following, we denote $r=\EE\big[B(1-B)\big] = \frac{\alpha}{4\alpha+2}$. We also denote $P_t:= \mathcal{P}\left(Y_t=0\right)$ which is the quantity that we want to approximate. 

We introduce an auxiliary random walk starting from $0$ and  having transitions 
$$\begin{cases} +2 &\mbox{with probability }1/4,\\
0 &\mbox{with probability }1/2,\\  
-2 &\mbox{with probability }1/4.\end{cases}$$
We denote by $Q_{t}$ the probability for the auxiliary random walk 
to arrive at zero at time $t$ and stay in the non-negative region between times $0$ and $t$. 

By conditioning on the first return in zero of the random walk $(Y_t)_t$, we claim that for $t\geqslant 2$,
\begin{equation}
 P_t = (1-2r)P_{t-1} + 2\sum_{i=2}^{t} r \frac{1}{4} Q_{i-2}P_{t-i}.
 \label{eq:recurrencePt}
\end{equation}
Let us explain more precisely equation (\ref{eq:recurrencePt}) (see Figure \ref{fig:recurrenceexplained}):
\begin{itemize}
\item The term $(1-2r)P_{t-1}$ corresponds to the case when the first return at zero occur at time $1$.
\item The factor $2$ in front of the sum in (\ref{eq:recurrencePt}) accounts for the fact that the walk can stay either in the positive, or in the negative region before the first return in zero, with equal probability.
\item  The factor $r$ is the probability that $Y_1=2$ (which is also the probability that $Y_1=-2$).
\item The factor $1/4$ is the probability of the last step before the first return at zero.  
\end{itemize}

\begin{figure}
\begin{center}
\begin{tikzpicture}[scale=1]
\draw[->, thick] (0,0) -- (10.5, 0);
\draw[->, thick] (0,-3) -- (0, 4.5);
\draw[gray, fill=gray!10] (0.4,0.9) -- (4.6,0.9) -- (4.6, 3.1) --(0.4,3.1) --(0.4,0.9);
\foreach \x in {-4,-2, ..., 8}
	{\draw[thick] (0.1, \x/2) -- (-0.1, \x/2) node[anchor=east]{$\x $};}
\draw (0,0)   --(0.5,1)   -- (1,2)  -- (1.5, 2)  -- (2,3)  -- (2.5, 3)  -- (2.5, 2)  -- (3, 1)  -- (3.5, 1)  -- (4, 2)  -- (4.5, 1)  -- (5, 0)  -- (5.5, -1)  -- (6, -1)  -- (6.5, -2)  -- (7, -1)  -- (7.5, 0)  -- (8, 0)  -- (8.5, 1)  -- (9,0)  -- (9.5, -1) -- (10, 0) ;
\foreach \Point in {(0,0) ,(0.5,1) ,(1,2)  , (1.5, 2) , (2,3) ,(2.5, 3) , (2.5, 2) , (3, 1) , (3.5, 1)  , (4, 2)  , (4.5, 1) , (5, 0) , (5.5, -1)  , (6, -1) ,(6.5, -2) , (7, -1) ,(7.5, 0) , (8, 0) ,(8.5, 1) , (9,0) , (9.5, -1) , (10, 0)}{\fill \Point circle(0.05);}

\draw[ultra thick] (0,0) -- (0.5, 1);
\draw[ultra thick] (4.5,1)  -- (5, 0);
\draw[thick, ->] (-2, -1) node[anchor=north]{\footnotesize{probability $r$}} to[bend left] (0.2,0.5);
\draw[thick, ->] (7, 2) node[anchor=south]{\footnotesize{probability $1/4$}} to[bend left] (4.9,0.5);
\draw[thick, ->] (3, -2) node[anchor=north east]{\footnotesize{first return at $0$}} to[bend right] (5,-0.1);
%\draw[gray, fill=gray!90] (0.4,0.9) -- (4.6,0.9) -- (4.6, 3.1) --(0.4,3.1) --(0.4,0.9);
\draw[thick, ->] (8, 4) node[anchor=south west]{\footnotesize{probability $ Q_8$}} to (4.6,3.1);

\clip (0, -2.8) rectangle (10.3,4.3);
\draw[gray, dotted, step=0.5] (0, -5.5) grid (10.5, 5.5);
\end{tikzpicture}
\end{center}
\caption{A possible trajectory of the random walk $Y_t$ is decomposed to explain the recurrence (\ref{eq:recurrencePt}). The trajectory in the gray box has the same probability as that of the auxiliary random walk. }
\label{fig:recurrenceexplained}
\end{figure}
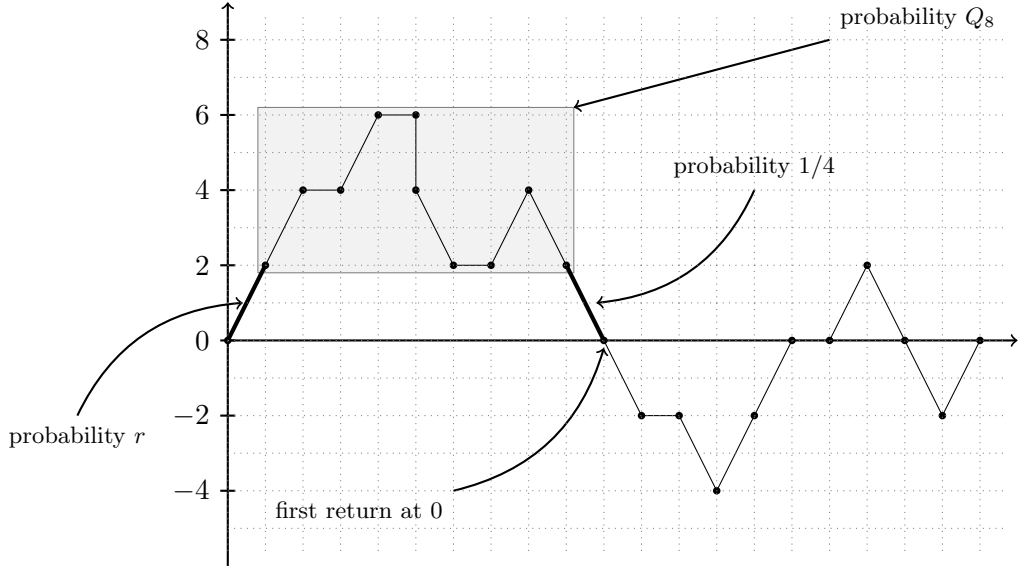

By conditioning on the first return at zero of the auxiliary random walk, one can see that $Q_t$ verifies the recurrence 
$$ Q_t = \frac{1}{2}Q_{t-1} +\sum_{i=2}^{t}\frac{1}{16}Q_{i-2}Q_{t-i} \text{ for }t\geqslant 2.$$
This implies that if $Q(z)=\sum_{n\geqslant 0} Q_n z^n $ is the generating function of the sequence $(Q_n)_n$, then 
$$ Q(z)- 1- 1/2 z  = 1/2 z(Q(z)-1) + 1/16 z^2 Q(z)^2.$$
This yields
$$ Q(z) = \frac{8-4z - 8\sqrt{1-z}}{z^2}.$$

Now, let us denote $G(z)=  \sum_{n\geqslant 0} P_n z^n$ the generating function of the sequence $(P_n)_n$. The recurrence (\ref{eq:recurrencePt}) implies that 
$$ G(z)-1-(1-2r)z = (1-2r) z (G(z)-1) + 2 r (1/4) G(z)Q(z).$$
This yields 
$$ G(z) =\frac{1}{1+z(4r-1) + 4 r (\sqrt{1-z}-1)}.$$
The function $G(z)$ is analytic in the unit open disk, and can be developed in series around $0$ with radius of convergence $1$. The nature of its singularities on the unit circle gives the leading order for the asymptotic behaviour of its series coefficients. As $z\to 1$ (for $z\in \C\setminus D$ where $D$ is the cone $D=\lbrace  z : \vert\arg(z-1) \vert<\epsilon \rbrace$, for some $\epsilon>0$ arbitrarily small, and taking the branch cut of $\sqrt{1-z} $ along $\R_{\geqslant 1}$),
$$ G(z) \sim \frac{1}{4r\sqrt{1-z}}, $$
where $\sim$ means that the ratio of the two sides tends to $1$ as $z\to 1$ and $z$ belongs to the domain described above. We deduce (from e.g. \cite[Corollary VI.1]{flajolet2009analytic}) that
$$ P_t \sim \frac{1}{4r} \frac{1}{\sqrt{\pi t}}.$$ 

This clearly implies that
 $$\frac{\sum_{s=0}^{t-1}P_s}{\sqrt{t}} \xrightarrow[t\to \infty]{} \frac{1}{2r\sqrt{\pi}}.$$ 
  Since $r= \frac{\alpha}{4\alpha+2}$ and using (\ref{eq:relationcovarianceoverlap}), we get 
$$ \sqrt{t}\mathcal{E}\left[\frac{X^{(i)}_t}{\sqrt{t}} \frac{X^{(j)}_t }{\sqrt{t}}\right] \xrightarrow[t\to \infty]{} \frac{1}{\alpha\sqrt{\pi}}.$$
\end{proof}

\subsubsection*{Comparison to correlated  Gaussian variables} Consider for simplicity only the case $\alpha=\beta$. We denote as before  $R_t^{(i)}=X_t^{(i)}/\sqrt{t}$. As already mentioned in Section \ref{subsec:localizationintro},  $R_t^{(i)}$ converges weakly as $t$ goes to infinity to the Gaussian distribution  $\mathcal{N}(0,1)$ (whether we condition on the environment or not). 
It is tempting to ask if the same limit theorem for the maximum holds when one replaces the  $R_t^{(i)}$ by the corresponding limiting collection of Gaussian random variables (it would correspond to taking first the limit when $t$ goes to infinity and then study the maximum as $N$ goes to infinity). The theory of extreme value statistics provides a negative answer. 

Let $\Sigma_N(\lambda)$  be the matrix of size $N$
$$ \Sigma_N(\lambda):= \left( \begin{matrix}
1 & \frac{\lambda}{\sqrt{\log(N)}} &\dots & \frac{\lambda}{\sqrt{\log(N)}}\\
\frac{\lambda}{\sqrt{\log(N)}}&1&  &\vdots\\
\vdots &  & \ddots & \frac{\lambda}{\sqrt{\log(N)}}\\
\frac{\lambda}{\sqrt{\log(N)}}& \dots & \frac{\lambda}{\sqrt{\log(N)}} &1
\end{matrix}\right), $$
where $\lambda>0$ is a parameter. If we set $N=\lfloor e^{ct}\rfloor$, and look at the maximum of the sequence $\lbrace R_t^{(i)}\rbrace_{1\leqslant i\leqslant N} $ as $t$ goes to infinity, the correlation matrix of the sequence is asymptotically  $\Sigma_N(\lambda)$ with $\lambda= \frac{\sqrt{c/\pi}}{\alpha}$ (cf. Proposition \ref{prop:calculprobaoverlap}).  

Let $G_N:=(G^{(1)}, \dots , G^{(N)})$ be a Gaussian vector with covariance matrix $\Sigma_N(\lambda)$ and let denote the maximum $M_N:= \max_{i=1, \dots, N}\lbrace G^{(i)} \rbrace$.  Theorem 3.8.1 in \cite{galambos1987asymptotic} implies that we have the convergence in distribution 
$$ \frac{M_N- \sqrt{2\log(N)} + \lambda\sqrt{2}}{\left(\lambda^{-1}\sqrt{\log(N)}\right)^{-1/2}} \Longrightarrow \mathcal{N}(0,1).$$
In particular, we have the convergence in probability of $M_N/\sqrt{\log(N)}$ to $\sqrt{2}$. 

Thus, we have seen that the maximum of $\left(R_t^{(i)}\right)_{1\leqslant i\leqslant N}$ and the maximum of $\left(G^{(i)}\right)_{1\leqslant i\leqslant N}$ obey to very different limit theorems: both the scales and the limiting laws are different. 
\begin{remark}
By Corollary \ref{cor:extreme}, we have the convergence in probability 
 $$\frac{\max_{i=1, \dots, N}\lbrace R_{\log(N)/c}^{(i)} \rbrace}{\sqrt{\log(N)}} \xrightarrow[N\to\infty]{\mathcal{P}} \frac{x_0}{\sqrt{c}}, $$
 where $c=I(x_0)$. Since for any $\alpha$ and $\beta=\alpha$, $I''(0)=1$, we notice that when $x_0\to 0$, the approximation at the first order coincide with the Gaussian case. To substantiate this parallel, one must extend to the full parameter range  $\alpha, \beta>0$ and  $0<c<1$ in Corollary \ref{cor:extreme} byond $\alpha=\beta=1$ and $c>2/5$  (see also Remark \ref{rem:restrictiononc}).   
\end{remark}
\begin{remark}
It is clear that the sequence $\left(X_t^{(i)} \right)_{1\leqslant i \leqslant N}$ is exchangeable. There exist general results for maxima of exchangeable sequences. In some cases, one can prove that the maximum, properly renormalized, converges to a mixture of one of the classical extreme laws 
(see in \cite{galambos1987asymptotic} the discussion in Section 3.2 and the results of Section 3.6). However, it seems that our particular setting does not fit into this theory. 
\end{remark}

\section{Asymptotic analysis of the Bernoulli-Exponential directed first passage percolation}
\label{sec:FPPasymptotics}

\subsection{Statement of the result}

We investigate the behaviour of the first passage time $T(n,\kappa n)$ when $n$ goes to infinity, for some slope $\kappa>\frac{a}{b}$. When $\kappa=\frac a b$, the first passage time $T(n, \kappa n)$ should go to zero. The case $\kappa<\frac a b$ is similar with $\kappa>\frac a b$ by symmetry. 

As in Theorem \ref{thm:RWasymptotics}, we parametrize the slope $\kappa$ by a parameter $\theta$ (which turns out to be the position of the critical point in the asymptotic analysis). Let 
\begin{align}
\kappa(\theta) &:= \dfrac{\dfrac{1}{\theta^2} -\dfrac{1}{(a+\theta)^2}}{\dfrac{1}{(a+\theta)^2}-\dfrac{1}{(a+b+\theta)^2}}, 
 \label{eq:defkappa}
\\
\tau(\theta) &:= \frac{1}{a+\theta} - \frac{1}{\theta}  +\kappa(\theta)\left(\frac{1}{a+\theta}-\frac{1}{a+b+\theta}\right) = \frac{a(a+b)}{\theta^2 (2a+b+2\theta)},
\label{eq:deftau}
\end{align}
and 
\begin{equation}
 \rho(\theta) := \left[\frac{1}{\theta^3} -\frac{1}{(a+\theta)^3} + \kappa(\theta)\left(\frac{1}{(a+b+\theta)^3} -\frac{1}{(a+\theta)^3} \right) \right]^{1/3}.
\label{eq:defrho}
\end{equation}
When $\theta$ ranges from $0$ to $+\infty$, $\kappa(\theta)$ ranges from $+\infty$ to $a/b$ and $\tau(\theta)$ ranges from $+\infty$ to $0$. 
\begin{theorem}
We have that for any $\theta>0$ and parameters $a,b>0$, 
$$ \lim_{n \to \infty}\mathbb{P}\left(\frac{T\big(n, \kappa(\theta)n\big) - \tau(\theta)n}{\rho(\theta)n^{1/3}}\geqslant -y \right) = F_{\mathrm{GUE}}(y).$$
\label{thmTWFPP}
\end{theorem}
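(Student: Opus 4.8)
The plan is to follow the same steepest-descent scheme used for Theorem~\ref{thm:RWasymptotics}, starting from the Fredholm determinant formula of Theorem~\ref{thmProb}, which is well suited here since no contour deformation ambiguities arise (there is no restriction on $a,b,\theta$). Recall that $\mathbb{P}(T(n,m)>r)=\det(I+K^{\mathrm{FPP}}_r)_{\mathbb{L}^2(C'_0)}$ with kernel \eqref{eq:defkernelFPP} and $g^{\mathrm{FPP}}$ as in \eqref{eq:defgfpp}. First I would substitute $m=\kappa(\theta)n$ and $r=\tau(\theta)n+\rho(\theta)n^{1/3}y$, perform the change of variables $u+s=z$ in the kernel, and write the integrand in the form $\exp\big(n(\mathfrak{h}(z)-\mathfrak{h}(u))-\rho(\theta)n^{1/3}y(z-u)\big)\cdot(\text{lower order})$, where the rate function is
\begin{equation*}
\mathfrak{h}(z)=-\tau(\theta)z+\log\left(\frac{a+z}{z}\right)+\kappa(\theta)\log\left(\frac{a+z}{a+b+z}\right)-\frac{1}{n}\log z .
\end{equation*}
The definitions \eqref{eq:defkappa}, \eqref{eq:deftau}, \eqref{eq:defrho} are precisely chosen so that $\mathfrak{h}'(\theta)=\mathfrak{h}''(\theta)=0$ and $\mathfrak{h}'''(\theta)=2\rho(\theta)^3>0$. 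Indeed one computes $\mathfrak{h}'(z)=-\tau(\theta)+\tfrac{1}{a+z}-\tfrac1z+\kappa(\theta)\big(\tfrac{1}{a+z}-\tfrac{1}{a+b+z}\big)$, whose double root at $z=\theta$ forces the stated formulas; in fact $\mathfrak{h}'$ factors as a rational function with a double zero at $\theta$, which will make the steep-descent estimates transparent.

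Next I would set up the deformed contours: take $\mathcal{D}_\theta=\{\theta+iy\}$ for the $z$ (former $s+u$) variable and a circle (or a suitable closed curve through $\theta$) for the $\mathbb{L}^2$ variable $u$, and verify the two steep-descent lemmas analogous to Lemmas~\ref{lemma:steepdescentz} and \ref{lemma:steepdescentw}. For the vertical line, I would show $\mathrm{Im}[\mathfrak{h}'(\theta+iy)]$ has the correct sign using the partial-fraction form of $\mathfrak{h}'$, which here is elementary since $\mathfrak{h}'$ is a genuine rational function (no polygamma series needed) — this is a clean improvement over the RWRE case and is why no parameter restriction appears. For the $u$-contour I would choose it to pass through $\theta$ and verify $-\mathrm{Re}[\mathfrak{h}]$ is steep descent along it; again rationality of $\mathfrak{h}'$ should let one reduce the inequality to a polynomial positivity statement valid for all $a,b,\theta>0$. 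With the steep-descent properties in hand, a localization argument identical to Proposition~\ref{prop:localization} shows only an $n^{-1/3}$-neighborhood of $\theta$ contributes; note that the factor $e^{rs}/s$ in place of $\pi/\sin(\pi s)$ does not cause trouble because one still has exponential decay of $1/g^{\mathrm{FPP}}(z)$ along the vertical contour and only polynomial growth from $e^{rs}/s$ off the real axis, exactly as in the proof of Theorem~\ref{thmProb}.

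Then I would rescale $z=\theta+n^{-1/3}\tilde z$, $u=\theta+n^{-1/3}\tilde u$, $u'=\theta+n^{-1/3}\tilde u'$, use the Taylor expansion $n(\mathfrak{h}(z)-\mathfrak{h}(u))\to \tfrac{\rho(\theta)^3}{3}(\tilde z^3-\tilde u^3)$, check $\tfrac{e^{rs}/s}{\,\cdot\,}\to\tfrac{1}{\tilde z-\tilde u}$ and $g^{\mathrm{FPP}}(u)/g^{\mathrm{FPP}}(z)\to1$, and obtain in the limit the Airy-type kernel, precisely as in Proposition~\ref{prop:limitfredholm}; a final change of variables removing $\rho(\theta)$ and the $\det(I+AB)=\det(I+BA)$ trick (as in \cite[Lemma 8.6]{borodin2012free}) convert $\det(I-K_y)_{\mathbb{L}^2(\mathcal{C})}$ into $\det(I-K_{\mathrm{Ai}})_{\mathbb{L}^2(y,\infty)}=F_{\mathrm{GUE}}(y)$. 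Finally, since $\mathbb{P}(T(n,\kappa(\theta)n)>r)$ is a genuine probability (the left side is automatically bounded and monotone in $r$), the convergence of the Fredholm determinant directly yields convergence of the distribution functions, with no need for a Laplace-transform inversion step. The main obstacle I anticipate is verifying the steep-descent property of the $u$-contour for the full range of parameters $a,b>0$ and $\theta>0$ — in the RWRE case this was only done for $\alpha=\beta=1$ — but because $\mathfrak{h}'$ is here a rational function rather than a combination of polygammas, I expect the required inequality to reduce to the positivity of an explicit polynomial in $a,b,\theta$ and the contour coordinates, which should be checkable (and is consistent with the theorem being stated without restrictions). Secondary technical points are the precise choice of the $u$-contour shape near infinity so that $\mathrm{Re}[\mathfrak{h}]$ stays controlled, and handling the mild singularity of $1/g^{\mathrm{FPP}}$ at $u=0$ inside $C'_0$, both of which are routine.
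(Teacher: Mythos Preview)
Your overall strategy matches the paper's: start from Theorem~\ref{thmProb}, identify the rate function with a double critical point at $\theta$, verify steep descent on the vertical line for $z$, localize near $\theta$, rescale, and recover the Airy kernel. Two points, however, are handled differently in the paper and one of them is a genuine gap in your sketch.

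\emph{The $u$-contour.} You propose to take a circle through $\theta$ and check steep descent by reducing to a polynomial positivity, anticipating this as the main obstacle. The paper sidesteps this entirely: rather than proving a specific contour is steep-descent, it analyzes the topology of the level set $\Real[H(z)]=\Real[H(\theta)]$ (Lemma~\ref{lem:steepdescentHu}). Since $\Real[H]$ is harmonic with logarithmic singularities at $0,-a,-a-b$ of known sign and a single cubic saddle at $\theta$, the maximum principle and the behavior at infinity pin down the level-line picture, and one can then pick any closed path $\gamma$ enclosing $0$, departing $\theta$ at an angle in $(\pi/2,5\pi/6)$, and staying in the region $\Real[H]>\Real[H(\theta)]$ away from $\theta$. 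This avoids any computation and works uniformly in $a,b,\theta>0$; your proposed route might also work but you would still need to produce the polynomial inequality.

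\emph{Decay along the $z$-contour.} You claim ``exponential decay of $1/g^{\mathrm{FPP}}(z)$ along the vertical contour''. This is false: $g^{\mathrm{FPP}}$ is rational, so $1/g^{\mathrm{FPP}}(u+s)$ only decays polynomially in $\Imag[s]$, and the integrand of $K^{\mathrm{FPP}}_r$ is not absolutely integrable. The paper handles this by treating the tail as an oscillatory integral and invoking Riemann--Lebesgue (see the estimate around \eqref{eq:estimatedecaykernelFPP} in the proof of Proposition~\ref{prop:FPPlocalization}), exactly as was already done in the proof of Theorem~\ref{thmProb}. Your localization argument needs this correction; dominated convergence alone does not suffice here.

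A minor remark: the term $-\tfrac{1}{n}\log z$ you include in $\mathfrak{h}$ is subleading and should be carried as part of the ``lower order'' factor $z/u$ rather than in the rate function (the paper writes $H(z)=\tau(\theta)z+\log\tfrac{z}{a+z}+\kappa(\theta)\log\tfrac{a+b+z}{a+z}$, which is $-\mathfrak{h}(z)$ up to that term).
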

By Theorem \ref{thmProb}, we have a Fredholm determinant representation for the probability 
$$\PP\Big(T\big(n, \kappa(\theta) n\big)>r\Big).$$ 
We set $ r=\tau(\theta) n -\rho(\theta) n^{1/3} y$.
Thus, we have that 
$$\PP\Big(T\big(n, \kappa(\theta) n\big)>\tau(\theta) n -\rho(\theta) n^{1/3} y\Big) = \det(I-K^{\mathrm{FPP}}_r)_{\mathbb{L}^2(C'_0)},$$
 where 
$$ K^{\mathrm{FPP}}_r(u,u') = \frac{1}{2i\pi} \int_{1/2-i\infty}^{1/2+i\infty} \exp\Big(n(H(u+s)- H(u))- \rho(\theta) n^{1/3} ys\Big)\frac{u+s}{u} \frac{\mathrm{d}s}{s(s+u-u')},$$
and 
$$ H(z):= \tau(\theta) z +\log\left(\frac{z}{a+z} \right)+\kappa(\theta)\log\left( \frac{a+b+z}{a+z}\right).$$
We have 
$$ H'(z) = \tau(\theta) +\frac{1}{z} - \frac{1}{a+z} +\kappa(\theta)\left(\frac{1}{a+b+z}-\frac{1}{a+z}\right).$$
and
$$ H''(z) = \frac{1}{(a+z)^2} -\frac{1}{z^2} +\kappa(\theta) \left(\frac{1}{(a+z)^2} - \frac{1}{(a+b+z)^2}\right).$$
We can see from the expressions for the derivatives of $H$ why it is natural to paramametrize $\kappa, \tau$ and $\rho$ as in (\ref{eq:defkappa}), (\ref{eq:deftau}) and (\ref{eq:defrho}): with this choice, we have that $H'(\theta) = H''(\theta)=0$. 

As in Section \ref{sec:RWREasymptotics}, we assume for the moment that the Fredholm determinant contour is a small circle around $0$. We do the change of variables $z=u+s$ in the definition of the kernel, so that 
\begin{equation}
 K^{\mathrm{FPP}}_r(u,u') = \frac{1}{2i\pi} \int_{1/2-i\infty}^{1/2+i\infty} \exp\big(n(H(z)- H(u))- \rho(\theta) n^{1/3} y(z-u)\big)\frac{z}{u} \frac{\mathrm{d}z}{(z-u)(z-u')}.
 \label{eq:KFPPy}
\end{equation}
\begin{lemma}
For any parameters $a,b>0$ and $\theta>0$, we have $H'''(\theta)>0$.
\label{lem:thirdderivativeFPP} 
\end{lemma}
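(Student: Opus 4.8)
The plan is to reduce the claim $H'''(\theta)>0$ to the same concavity statement already established for the Beta RWRE, namely the strict concavity of $\Psi_2\circ\Psi_1^{-1}$ in the proof of Lemma \ref{lemma:thirdderivative}. First I would compute $H'''(\theta)$ explicitly by differentiating the expression for $H''(z)$ twice more and evaluating at $z=\theta$; using the parametrization \eqref{eq:defkappa} of $\kappa(\theta)$, one gets
\begin{equation*}
H'''(\theta) = \frac{2}{\theta^3} - \frac{2}{(a+\theta)^3} + \kappa(\theta)\left(\frac{2}{(a+b+\theta)^3} - \frac{2}{(a+\theta)^3}\right),
\end{equation*}
which is (up to the factor $2$) exactly $\rho(\theta)^3$, so the assertion is equivalent to $\rho(\theta)>0$ being well-defined. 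Substituting the value of $\kappa(\theta)$ from \eqref{eq:defkappa}, the inequality $H'''(\theta)>0$ becomes, after clearing the positive denominator $\frac{1}{(a+\theta)^2}-\frac{1}{(a+b+\theta)^2}>0$,
\begin{multline*}
\left(\frac{1}{\theta^3} - \frac{1}{(a+\theta)^3}\right)\left(\frac{1}{(a+\theta)^2}-\frac{1}{(a+b+\theta)^2}\right) \\
> \left(\frac{1}{(a+\theta)^3} - \frac{1}{(a+b+\theta)^3}\right)\left(\frac{1}{\theta^2}-\frac{1}{(a+\theta)^2}\right),
\end{multline*}
which rearranges to the difference-quotient comparison
\begin{equation*}
\frac{\frac{1}{(a+\theta)^3}-\frac{1}{(a+b+\theta)^3}}{\frac{1}{(a+\theta)^2}-\frac{1}{(a+b+\theta)^2}} < \frac{\frac{1}{\theta^3}-\frac{1}{(a+b+\theta)^3}}{\frac{1}{\theta^2}-\frac{1}{(a+b+\theta)^2}}.
\end{equation*}

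Next I would recognize the two sides as slopes of the secant lines of the function $F:=x^{3/2}\mapsto$ well, more precisely: set $p=1/\theta^2$, $q=1/(a+\theta)^2$, $m=1/(a+b+\theta)^2$ so that $p>q>m>0$, and note that $1/x^3 = (1/x^2)^{3/2}$, i.e. each term $1/y^3$ is $\phi(1/y^2)$ with $\phi(w)=w^{3/2}$. Then the inequality reads
\begin{equation*}
\frac{\phi(q)-\phi(m)}{q-m} < \frac{\phi(p)-\phi(m)}{p-m},
\end{equation*}
which is precisely the statement that the secant slope of $\phi$ from $m$ to $q$ is smaller than the secant slope from $m$ to $p$, for $m<q<p$. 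This holds because $\phi(w)=w^{3/2}$ is strictly convex on $(0,\infty)$ (its second derivative is $\tfrac34 w^{-1/2}>0$): for a strictly convex function, the secant slope from a fixed left endpoint is strictly increasing in the right endpoint. This directly gives $H'''(\theta)>0$.

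There is essentially no obstacle here — the computation of $H'''$ is routine and the convexity of $w\mapsto w^{3/2}$ is elementary — so the only thing to be careful about is the bookkeeping of signs when clearing denominators (the quantities $\tfrac1{\theta^2}-\tfrac1{(a+\theta)^2}$ and $\tfrac1{(a+\theta)^2}-\tfrac1{(a+b+\theta)^2}$ are both positive since $\theta<a+\theta<a+b+\theta$ and $x\mapsto 1/x^2$ is decreasing on $\R_{>0}$). An alternative, if one prefers to mirror the RWRE proof verbatim, is to use the integral representation $1/x^k = \frac{1}{(k-1)!}\int_0^\infty t^{k-1}e^{-xt}\,\mathrm dt$ to write each $1/y^2$ and $1/y^3$ as moments against $e^{-\theta t}$, $e^{-(a+\theta)t}$, $e^{-(a+b+\theta)t}$, and then the inequality becomes a double-integral symmetrization identity of the same type as at the end of the proof of Lemma \ref{lemma:thirdderivative}; but the convexity argument above is shorter and self-contained.
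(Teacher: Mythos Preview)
Your proof is correct and takes a genuinely different route from the paper. The paper proceeds by brute-force algebra: after writing down the same cross-multiplied inequality you reach, it puts both sides over a common denominator and simplifies the difference to $ab(a+b)(a+\theta)^2(2a+b+3\theta)>0$, which is manifestly positive. Your approach is more conceptual: by substituting $p=1/\theta^2$, $q=1/(a+\theta)^2$, $m=1/(a+b+\theta)^2$ and observing $1/y^3=(1/y^2)^{3/2}$, you recast the inequality as a secant-slope comparison for the strictly convex function $\phi(w)=w^{3/2}$. This is shorter, explains \emph{why} the inequality holds, and avoids any polynomial expansion. The paper's computation, on the other hand, yields an explicit factored form of $H'''(\theta)$ that could be useful if one wanted quantitative lower bounds. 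One minor remark: your opening sentence about reducing to the concavity of $\Psi_2\circ\Psi_1^{-1}$ from Lemma~\ref{lemma:thirdderivative} is a red herring --- you never actually use that statement, and your convexity-of-$w^{3/2}$ argument is self-contained.
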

\begin{proof}
We have 
$$H'''(\theta) = \frac{2}{\theta^3} -\frac{2}{(a+\theta)^3} + \dfrac{\frac{1}{\theta^2} -\frac{1}{(a+\theta)^2}}{\frac{1}{(a+\theta)^2}-\frac{1}{(a+b+\theta)^2}}\left(\frac{2}{(a+b+\theta)^3} -\frac{2}{(a+\theta)^3} \right).$$ 
Hence we have to show that 
\begin{multline}
\left( \frac{2}{\theta^3} -\frac{2}{(a+\theta)^3}\right)\left(\frac{1}{(a+\theta)^2}-\frac{1}{(a+b+\theta)^2} \right) >\\
\left(\frac{2}{(a+\theta)^3}-\frac{2}{(a+b+\theta)^3} \right) \left( \frac{1}{\theta^2} -\frac{1}{(a+\theta)^2}\right).
\end{multline}
By putting each side to the same denominator, we arrive at 
\begin{align*}
&b(a+b+\theta)(2\theta+2a + b)\left((a+\theta)^3-  \theta^3\right) > a\theta(2\theta+ a) \left((a+b+\theta)^3-(a+\theta)^3 \right)\\
\Leftrightarrow\ \ & a b (a + b) (a + \theta)^2 (2 a + b + 3 \theta)>0.
\end{align*}
which clearly holds.
\end{proof}
We notice that given the expression (\ref{eq:defrho}), $H'''(\theta) = 2\big(\rho(\theta)\big)^3$. By Taylor expansion around $\theta$,
\begin{equation}
 H(z) -H(\theta) = \frac{(\rho(\theta)(z-\theta))^3}{3}  + \mathcal{O}((z-\theta)^4). 
 \label{eq:taylorexpansionFPP}
\end{equation}

\subsection{Deformation of contours} 
We need to find steep-descent contours for the variables $z$ and $u$. For the $z$ variable, we choose the contour $\mathcal{D}_{\theta} = \theta+i\R$ as in Section \ref{sec:RWREasymptotics}. For the $u$ variable, we notice that since we are integrating on a finite contour, it will be enough that $\Real[H(z)]> \Real[H(\theta)]$ along the contour (See \cite{tracy2009asymptotics} and \cite{borodin2014stochastic}). 
\begin{lemma}
The contour $\mathcal{D}_{\theta}$ is steep-descent for the function $\Real[H]$ in the sense that $y\mapsto \Real[H(\theta+iy)]$ is decreasing for $y$ positive and increasing for $y$ negative. 
\label{lem:steepdescentHz}
\end{lemma}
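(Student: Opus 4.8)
The plan is to argue exactly as in the proof of Lemma \ref{lemma:steepdescentz}, except that for the Bernoulli--Exponential model $H'$ is a rational function, so every step can be carried out by an explicit algebraic computation rather than through polygamma series. Since $\frac{\mathrm d}{\mathrm d y}\Real\bigl[H(\theta+iy)\bigr] = \Real\bigl[i\,H'(\theta+iy)\bigr] = -\Imag\bigl[H'(\theta+iy)\bigr]$, it suffices to show that $\Imag\bigl[H'(\theta+iy)\bigr]>0$ for $y>0$; the case $y<0$ then follows because $H$ has real coefficients, so $\overline{H'(\theta+iy)}=H'(\theta-iy)$. Using the partial-fraction expression $H'(z)=\tau(\theta)+\frac1z-\frac1{a+z}+\kappa(\theta)\bigl(\frac1{a+b+z}-\frac1{a+z}\bigr)$ together with $\Imag\bigl[\frac1{c+iy}\bigr]=\frac{-y}{c^2+y^2}$, the real constant $\tau(\theta)$ drops out and one gets
\[
\Imag\bigl[H'(\theta+iy)\bigr] = -y\,g(y),\qquad
g(y):=\frac{1}{\theta^2+y^2}-\frac{1}{(a+\theta)^2+y^2}+\kappa(\theta)\left(\frac{1}{(a+b+\theta)^2+y^2}-\frac{1}{(a+\theta)^2+y^2}\right),
\]
so the lemma reduces to proving that $g(y)<0$ for every $y\neq0$.

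The key observation is that $g(0)=0$: this is precisely the relation $H''(\theta)=0$, and it is nothing but the definition (\ref{eq:defkappa}) of $\kappa(\theta)$ rearranged. I would then exploit this to factor $g$. Write $A=\theta^2<B=(a+\theta)^2<C=(a+b+\theta)^2$; the definition (\ref{eq:defkappa}) is $\kappa(\theta)=\frac{(B-A)C}{(C-B)A}$, and substituting it yields, after combining the first two terms and the last two terms over common denominators,
\[
g(y)=\frac{B-A}{B+y^2}\left(\frac{1}{A+y^2}-\frac{C}{A(C+y^2)}\right)=\frac{B-A}{B+y^2}\cdot\frac{(A-C)\,y^2}{A(A+y^2)(C+y^2)}.
\]
Every factor now has a fixed sign for $a,b,\theta>0$: $B-A>0$, $A-C<0$, and $A,B+y^2,A+y^2,C+y^2>0$. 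Hence $g(y)<0$ for all $y\neq0$, so $\Imag\bigl[H'(\theta+iy)\bigr]=-y\,g(y)>0$ for $y>0$ and $<0$ for $y<0$, which gives the claimed monotonicity of $y\mapsto\Real[H(\theta+iy)]$.

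There is no genuine obstacle in this argument; the only point requiring a little care is to identify the vanishing of $g$ at $y=0$ with the defining relation for $\kappa(\theta)$, after which the factorization of $g$ is essentially forced and the sign analysis is immediate. (If one prefers to avoid the explicit substitution of $\kappa(\theta)$, the same conclusion follows by the Cauchy mean value argument used in Lemma \ref{lemma:steepdescentz}, comparing difference quotients of $c\mapsto\frac1{c^2+y^2}$ versus $c\mapsto\frac1{c^2}$; but the direct computation above is shorter.)
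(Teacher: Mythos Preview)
Your proof is correct and follows essentially the same approach as the paper: both substitute the explicit formula for $\kappa(\theta)$ into the expression for $\Imag[H'(\theta+iy)]$ and reduce the resulting inequality by elementary algebraic simplification. Your presentation via the auxiliary function $g(y)$ and the $A,B,C$ notation, together with the observation that $g(0)=0$ encodes $H''(\theta)=0$, is a bit more systematic than the paper's direct manipulation of the inequality, but the underlying computation is the same.
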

\begin{proof}
Since $\frac{\mathrm{d}}{\mathrm{d}y}\Real[H(\theta+iy)] = \Imag[H'(\theta+iy)]$, and using symmetry with respect to the real axis, it is enough to show that for $y>0$, $\Imag[H'(\theta+iy)]>0$. We have
$$\Imag[H'(\theta+iy)] = \frac{y}{(\theta+a)^2+y^2} - \frac{y}{\theta^2+y^2} +\kappa(\theta)\left(\frac{y}{(\theta+a)^2+y^2}-\frac{y}{(\theta+a+b)^2+y^2}\right).$$
Given the expression (\ref{eq:defkappa}) for $\kappa(\theta)$, we have to show that
\begin{multline}
 \left( \frac{1}{\theta^2+y^2} - \frac{1}{(\theta+a)^2+y^2}\right)\left(\frac{1}{(a+\theta)^2}-\frac{1}{(a+b+\theta)^2} \right) <\\
  \left(\frac{1}{(\theta+a)^2+y^2}-\frac{1}{(\theta+a+b)^2+y^2}\right)\left( \frac{1}{\theta^2} -\frac{1}{(a+\theta)^2}\right) .
  \label{eq:inequalitysteepdescentz}
  \end{multline}
Factoring both sides in the inequality (\ref{eq:inequalitysteepdescentz}) and cancelling equal factors, one readily sees that it is equivalent to 
$$ \frac{1}{(\theta^2+y^2)(a+b+\theta)^2} <\frac{1}{ \left((\theta+a+b)^2+y^2\right)\theta^2}, $$
which is always satisfied. 
\end{proof}
Instead of finding a steep-descent path for the $\mathbb{L}^2$ contour   as in Section \ref{sec:RWREasymptotics}, we prove that we can find a contour with suitable properties for asymptotics analysis, following the approach of \cite{borodin2014stochastic}. 
\begin{lemma}
There exists a closed continuous path $\gamma$ in the complex plane, such that 
\begin{itemize}
\item The path $\gamma$ encloses $0$ but not $-a-b$,
\item The path $\gamma$ crosses the point $\theta$ and departs $\theta$ with angles $\phi$ and $-\phi$, for some $\phi\in(\pi/2, 5\pi/6)$,
\item Let $B(\theta, \epsilon)$ the ball of radius $\epsilon$ centred at $\theta$. For any $\epsilon>0$, there exists $\eta>0$ such that  for all $z\in \gamma\setminus B(\theta, \epsilon)$,  $\Real[H(z)]-\Real[H[\theta]]>\eta$.
\end{itemize}
\label{lem:steepdescentHu}
\end{lemma}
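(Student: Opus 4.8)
The plan is to read off $\gamma$ from the geometry of the level set of the harmonic function $u\mapsto\Real[H(u)]$ on $\C\setminus\{0,-a,-a-b\}$. Write $c=\Real[H(\theta)]$ and $\Omega^{\pm}=\{u:\pm(\Real[H(u)]-c)>0\}$. First I would record the global features of $\Real[H]$: it tends to $-\infty$ at the logarithmic singularities $0$ and $-a-b$, to $+\infty$ at $-a$; it equals $\tau(\theta)\Real[u]+O(1/|u|)$ as $|u|\to\infty$ (the coefficients of the three logarithmic terms sum to zero and $\tau(\theta)>0$), so it $\to+\infty$ when $\Real[u]\to+\infty$ and $\to-\infty$ when $\Real[u]\to-\infty$; and it is symmetric under $u\mapsto\bar u$. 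I would also note that the numerator of $H'$ over the common denominator $u(a+u)(a+b+u)$ is a cubic with positive leading coefficient and a double root at $\theta$; a sign analysis near $u=0^{+}$ (using $\Real[H]\to-\infty$ there) forces its third root $x_{*}$ to be real with $x_{*}\le 0$, so that on $(0,\theta)$ one has $H'>0$, hence $(0,\theta)\subset\Omega^{-}$, and $\theta$ is the only critical point of $\Real[H]$ with nonnegative real part.

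Next, the local picture at $\theta$. By Lemma \ref{lem:thirdderivativeFPP}, $H'''(\theta)>0$, so \eqref{eq:taylorexpansionFPP} gives $\epsilon_{0}>0$ such that, for $0<|u-\theta|<\epsilon_{0}$, the sign of $\Real[H(u)]-c$ is that of $\cos(3\arg(u-\theta))$. Thus $\Omega^{+}\cap B(\theta,\epsilon_{0})$ is made of three petals opening in the directions $0$ and $\pm2\pi/3$; for any $\phi\in(\pi/2,5\pi/6)$ we have $\cos(3\phi)>0$, so the segments $\{\theta+te^{\pm i\phi}:0\le t\le\epsilon_{0}\}$ lie in $\overline{\Omega^{+}}$ and meet $\Omega^{-}$ only at $\theta$; these are $\gamma\cap B(\theta,\epsilon_{0})$. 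The restriction $\phi>\pi/2$ is forced by the final requirement: the usable petals are those at $\pm2\pi/3$, which open into the half-plane $\{\Real[u]<\theta\}$ containing $0$, whereas the petal at $0$ opens towards $+\infty$ and cannot enclose $0$.

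The global closing-up is the heart of the matter, and the step I expect to be the real obstacle. Here I would analyse the level set $\mathcal{L}=\{\Real[H]=c\}$: away from $\theta$ it is a smooth curve, $\theta$ is a degree-$6$ vertex, $\mathcal{L}$ cannot end at a singularity, and at infinity it can escape only along the directions $\pm i\infty$. Combining this with the uniqueness of $\theta$ as a relevant critical point and the reflection symmetry, one shows that the two branches of $\mathcal{L}$ leaving $\theta$ at angles $\pm5\pi/6$ — the edges of the bad petal pointing towards $0$ — close up into a single Jordan loop bounding $V$, the connected component of $\Omega^{-}$ containing $0$ (note $\theta\in\partial V$ since $(0,\theta)\subset\Omega^{-}$); that $V$ is bounded; and that $-a-b\notin\overline{V}$, the latter because the real axis between $0$ and $-a-b$ leaves $\Omega^{-}$ by passing through the $\Omega^{+}$-neighbourhood of $-a$. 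Running $\gamma$ outside $B(\theta,\epsilon_{0})$ through $\Omega^{+}$, just on the outer side of $\partial V$, to join $\theta+\epsilon_{0}e^{i\phi}$ to $\theta+\epsilon_{0}e^{-i\phi}$, yields a simple closed curve through $\theta$ in $\overline{\Omega^{+}}$ with $0$ inside and $-a-b$ outside; and since $\gamma\setminus B(\theta,\epsilon)$ is compact and contained in the open set $\Omega^{+}$, the uniform estimate $\Real[H]-c>\eta$ follows. The genuine difficulty is this global bookkeeping of $\mathcal{L}$ — ruling out that the branches at $\pm5\pi/6$ run off to $\pm i\infty$ (which would leave $V$ unbounded) and that the other critical point $x_{*}$ contributes an extra vertex of $\mathcal{L}$ at level $c$ — which must be handled via the explicit rational form of $H$ and elementary inequalities, in the spirit of Lemmas \ref{lem:thirdderivativeFPP} and \ref{lem:steepdescentHz}.
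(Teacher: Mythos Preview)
Your approach is essentially the same as the paper's: both arguments read off the contour $\gamma$ from the topology of the level set $\{\Real[H]=\Real[H(\theta)]\}$, using the behaviour of $\Real[H]$ at the three logarithmic singularities, the asymptotics $\Real[H(z)]\sim\tau(\theta)\Real[z]$ at infinity, and the local cubic structure at $\theta$ coming from Lemma~\ref{lem:thirdderivativeFPP}. The paper's proof is a sketch that appeals to a figure for the global picture, whereas you are more explicit in places --- factoring the numerator of $H'$ to locate the remaining real critical point $x_*$, and spelling out why the usable $\Omega^+$ petals are those at $\pm2\pi/3$ --- and you are honest that the global bookkeeping (boundedness of $V$, exclusion of $-a-b$, ruling out that the $\pm5\pi/6$ branches escape to $\pm i\infty$) is the genuine work. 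That is exactly the step the paper leaves to the reader via the figure; your outline of how to close it (maximum principle plus the sign of $\Real[H]$ near each singularity and the uniqueness of the relevant critical point) is the right one.
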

\begin{proof}
Since $H$ is analytic away from its singularities, $\Real[H]$ is a harmonic function. It turns out that the shape of level lines $\Real[H(z)] = \Real[H(\theta)]$ are constrained by the nature and the positions of the singularities of $H$, and provided $H$ is not too complicated (does not have too many singularities), one can describe these level lines. 

We know that level lines can cross only at singularities or critical points. In our case, three branches of the level line $\Real[H(z)] = \Real[H(\theta)]$ cross at $\theta$ making angles $\pi/6, \pi/2$ and $5\pi/6$. This can be seen from the Taylor expansion (\ref{eq:taylorexpansionFPP}).

The function $H$ has only three singularities of logarithmic type at $0$, $-a$ and $-a+b$. When $z$ goes to infinity, $\Real[H(z)] = \Real[H(\theta)]$ implies $\Re[\tau(\theta) z ]\approx \Real[H(\theta)]$. Hence, there are two branches that goes to infinity in the direction $\pm\infty i+\Real[H(\theta)]/\tau(\theta)$. 
Additionally, one knows by the maximum principle that any closed path formed by portions of level lines must enclose a singularity. Finally, one knows the sign of $\Real[H(z)]$ around each singularity:
\begin{itemize}
\item  $\Real[H(z)]<0$ for $z$ near $0$, 
\item $\Real[H(z)]<0$ for $z$ near $-a-b$, 
\item $\Real[H(z)]>0$ for $z$ near $-a$.
\end{itemize}
This is enough to conclude that the level lines of $\Real[H(z)] = \Real[H(\theta)]$ are necessarily as shown in Figure \ref{fig:contourplot} (modulo a continuous deformation of the lines that does not cross any singularity).
\begin{figure}
\includegraphics[scale=0.4]{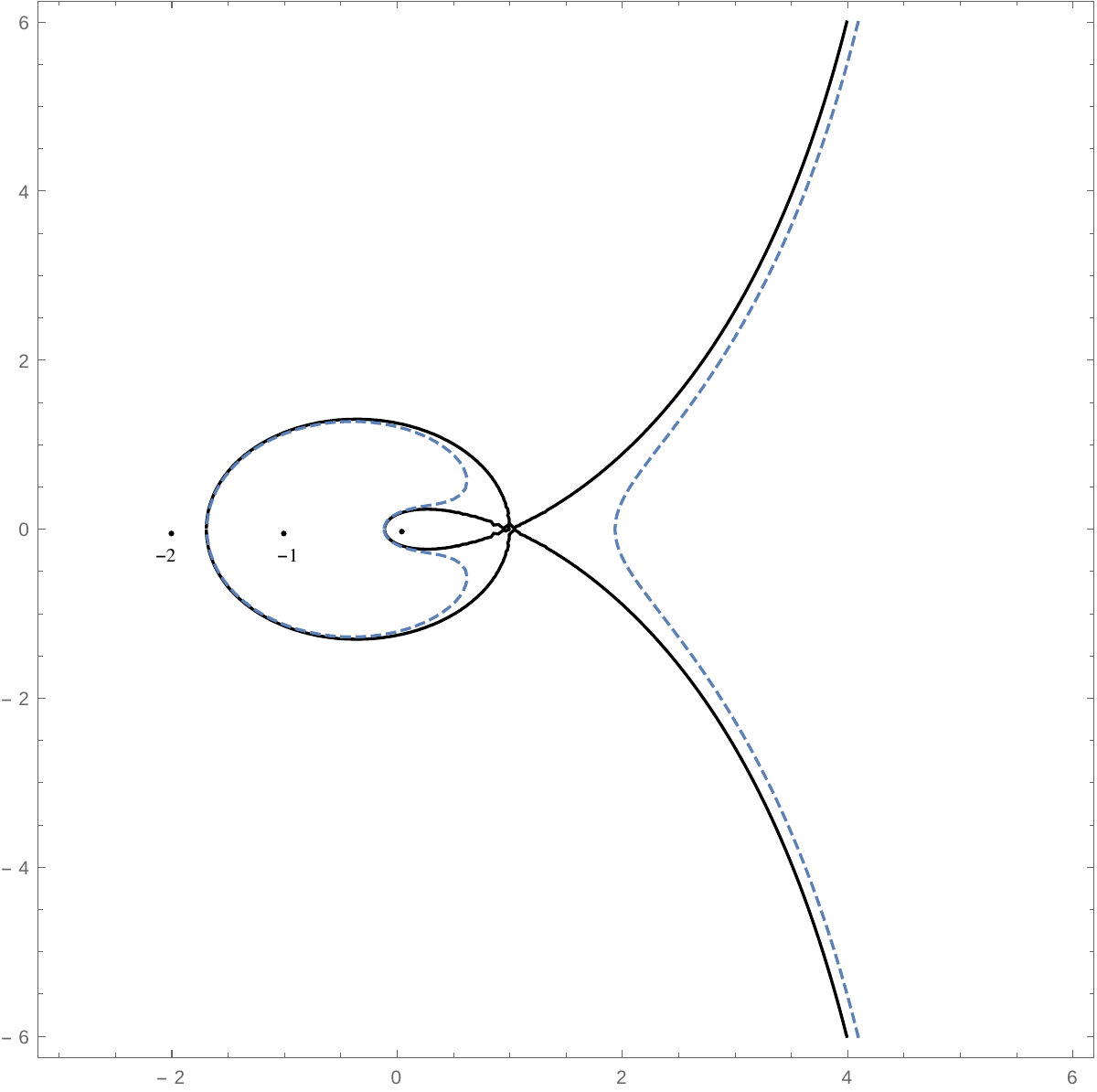}
 \caption{The solid lines are contour lines $\Real[H(z)] = \Real[H(\theta)]$ in the case $\theta=a=b=1$. Dashed lines are contour lines $\Real[H(z)] = \Real[H(\theta)]+2\eta$ with $\eta=0.05$. }
 \label{fig:contourplot}
\end{figure}
It follows that one can find a path $\gamma$ having the required properties. It would depart $\theta$ with angles $\pm \phi$ with $\phi\in (\pi/2, 5\pi/6)$, and stay between the level lines that depart $\theta$ with angles $ \pm\pi/2$ and the level lines that departs $\theta$ with angles $\pm 5\pi/6$ (For instance, one could follow the level lines of $\Real[H(z)] = \Real[H(\theta)]+ 2\eta$ outside of a neighbourhood of $\theta$). 
\end{proof}
We have the analogue of Proposition \ref{prop:localization}. 
\begin{proposition}
\label{prop:FPPlocalization}
Let $B(\theta, \epsilon)$ be the ball of radius $\epsilon$ centred at $\theta$. We denote by $\gamma^{\epsilon}$ (resp. $\mathcal{D}_{\theta}^{\epsilon}$) the part of the contour $\gamma$ (resp. $\mathcal{D}_{\theta}$) inside the ball $B(\theta, \epsilon)$. Then, for any $\epsilon>0$, 
$$ \lim_{t\to\infty} \det(I-K^{\mathrm{FPP}}_r)_{\mathbb{L}^2(\mathcal{C}_{\theta})} = \lim_{t\to\infty} \det(I-K^{\mathrm{FPP}}_{y, \epsilon})_{\mathbb{L}^2(\gamma^{\epsilon})}$$
where $K^{\mathrm{FPP}}_{y, \epsilon}$ is defined by the integral kernel
\begin{equation}
K^{\mathrm{FPP}}_{y, \epsilon}(u,u') = \frac{1}{2i\pi} \int_{\mathcal{D}_{\theta}^{\epsilon}} \frac{\pi}{\sin(\pi (z-u))} \exp\left( t(H(z)-H(u)) -t^{1/3}\rho(\theta)y (z-u)\right)\frac{\mathrm{d}z}{z - u'}.
\label{eq:kerneltruncatedFPP}
\end{equation} 
\end{proposition}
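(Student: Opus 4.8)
The plan is to mirror the proof of Proposition~\ref{prop:localization} almost verbatim, substituting the FPP data for the RWRE data. The only structural difference is that here the inner contour $\gamma$ is not a genuine steep-descent contour but only satisfies the weaker bound from Lemma~\ref{lem:steepdescentHu} ($\Real[H(z)] - \Real[H(\theta)] > \eta$ away from a neighbourhood of $\theta$), which is precisely the set-up engineered in~\cite{borodin2014stochastic, tracy2009asymptotics} to run the argument on a finite contour.

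First I would recall that after the change of variables and the choices $r = \tau(\theta)n + \rho(\theta)n^{1/3}y$, the kernel $K^{\mathrm{FPP}}_r$ in~\eqref{eq:KFPPy} has its $z$-integration on $\mathcal{D}_\theta = \theta + i\R$, and by Lemma~\ref{lem:steepdescentHz} this is a steep-descent contour for $\Real[H]$ while by Lemma~\ref{lem:steepdescentHu} the $\mathbb{L}^2$-contour can be taken to be $\gamma$. The argument then proceeds in two parts. For the truncation of the $z$-contour: for $u \in \gamma$ and $z \in \mathcal{D}_\theta \setminus \mathcal{D}_\theta^\epsilon$, combine $\Real[H(z)] \leqslant \Real[H(\theta)] - C$ (from steep-descent of $\mathcal{D}_\theta$, uniformly for $z$ outside the $\epsilon$-ball — note one must also control the behaviour as $\Imag[z] \to \infty$, where $\Real[H(z)] \to -\infty$ linearly, so a uniform negative bound holds) with $\Real[H(u)] \geqslant \Real[H(\theta)]$ (from Lemma~\ref{lem:steepdescentHu}, since $\gamma$ lies in the region $\Real[H] \geqslant \Real[H(\theta)]$); this gives $\Real[H(z) - H(u)] < -C$, so $\exp(n(H(z)-H(u)) + \rho(\theta)n^{1/3}y(z-u))$ decays. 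The rational prefactor $\frac{z}{u}\frac{1}{(z-u)(z-u')}$ is bounded on the relevant contours and decays like $|z|^{-1}$ as $|\Imag z| \to \infty$ along $\mathcal{D}_\theta$, which together with the exponential decay makes the contribution of $\mathcal{D}_\theta \setminus \mathcal{D}_\theta^\epsilon$ negligible by dominated convergence (here one must first slightly deform $\mathcal{D}_\theta$ if needed so that poles of the sine, if one reintroduces a sine factor, stay on the correct side — but in the form~\eqref{eq:KFPPy} there is no sine, only $\frac{1}{s} = \frac{1}{z-u}$, so $z=u$ should be avoided by keeping $\gamma$ inside $\mathcal{D}_\theta$, which is automatic since $\gamma$ has $\Real = \theta$ only at the single point $\theta$ itself, where the truncated kernels agree).

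For the truncation of the $\mathbb{L}^2$-contour: for $u \in \gamma \setminus \gamma^\epsilon$ and any $z \in \mathcal{D}_\theta$, Lemma~\ref{lem:steepdescentHu} gives $\Real[H(u)] \geqslant \Real[H(\theta)] + \eta$ and steep-descent of $\mathcal{D}_\theta$ gives $\Real[H(z)] \leqslant \Real[H(\theta)]$, so $\Real[H(z) - H(u)] < -\eta$; then in the Fredholm expansion $\det(I + K^{\mathrm{FPP}}_r) = 1 + \sum_{k \geqslant 1}\frac{1}{k!}\int\cdots\int \det(K^{\mathrm{FPP}}_r(u_i,u_j))\,\mathrm{d}u_1\cdots\mathrm{d}u_k$, each $k$-th term splits into the integral over $(\gamma^\epsilon)^k$ plus a remainder over $\gamma^k \setminus (\gamma^\epsilon)^k$; on the remainder at least one variable lies outside $\gamma^\epsilon$, so one extracts a factor $e^{-\eta n}$, and since the whole series is uniformly controlled by Hadamard's bound (the kernel being bounded on the compact contour), dominated convergence on the series lets the remainder vanish as $n \to \infty$. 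Combining the two truncations yields the claimed identity of limits.

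The main obstacle — and the reason this is not completely routine — is verifying uniformity in the steep-descent estimates on the non-compact contour $\mathcal{D}_\theta$: one needs $\Real[H(z)] \leqslant \Real[H(\theta)] - C$ with $C > 0$ \emph{uniform} over all of $\mathcal{D}_\theta \setminus \mathcal{D}_\theta^\epsilon$, which requires knowing that $\Real[H(\theta + iy)]$ not only decreases for $y > 0$ (Lemma~\ref{lem:steepdescentHz}) but in fact tends to $-\infty$, so that there is no escape of mass at infinity; this follows from $H(z) \sim \tau(\theta)z$ as $|z| \to \infty$ with $\tau(\theta) > 0$, giving $\Real[H(\theta + iy)] \to -\infty$ — but it does need to be stated. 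A secondary subtlety is that $\gamma$ is only a near-steep-descent contour, so the bound $\Real[H(u)] \geqslant \Real[H(\theta)]$ on $\gamma$ (with strict inequality $+\eta$ off the $\epsilon$-ball) must be invoked exactly as packaged in Lemma~\ref{lem:steepdescentHu}; everything else is a transcription of the RWRE argument with $h \leadsto H$, $\sigma(\theta) \leadsto \rho(\theta)$, $\mathcal{C}_\theta \leadsto \gamma$, and the sine factor absent.
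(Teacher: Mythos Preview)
Your treatment of the $\mathbb{L}^2$-contour truncation (point (2) in the paper's proof) is correct and matches the paper exactly: Lemma~\ref{lem:steepdescentHu} gives $\Real[H(u)] \geqslant \Real[H(\theta)]+\eta$ on $\gamma\setminus\gamma^\epsilon$, which is all that was really used in Proposition~\ref{prop:localization}.

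However, your handling of the $z$-contour truncation contains a genuine error. You claim that $\Real[H(\theta+iy)]\to -\infty$ as $|y|\to\infty$, deducing this from $H(z)\sim\tau(\theta)z$. But on the vertical line $z=\theta+iy$ one has $\Real[\tau(\theta)z]=\tau(\theta)\theta$, a constant, and the logarithmic terms $\log\big(z/(a+z)\big)$ and $\log\big((a+b+z)/(a+z)\big)$ have real parts tending to $0$. So $\Real[H(\theta+iy)]$ decreases monotonically (Lemma~\ref{lem:steepdescentHz}) to the \emph{finite} limit $\tau(\theta)\theta$, not to $-\infty$. Consequently the modulus of the integrand in~\eqref{eq:KFPPy} decays only like $|z|^{-1}$ along $\mathcal{D}_\theta$, which is not integrable, and your appeal to dominated convergence fails. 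This is precisely the obstruction the paper flags as point~(1): the integral defining $K^{\mathrm{FPP}}_r$ is an improper (oscillatory) integral, and dominated convergence is unavailable.

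The paper's fix is different from what you propose. One factors out $e^{-Cn}$ coming from the uniform bound $\Real[H(z)-H(u)]\leqslant -C$ on $\mathcal{D}_\theta\setminus\mathcal{D}_\theta^\epsilon$, and is left with an oscillatory integral of the type $\int \exp\big(i\rho(\theta)n^{1/3}y\,\Imag[z]\big)\cdot(\text{rational in }z)\,\mathrm{d}z$, which one bounds \emph{uniformly in $n$} (it is a conditionally convergent integral, and in fact tends to zero by Riemann--Lebesgue). The product of $e^{-Cn}$ with this uniformly bounded quantity then vanishes as $n\to\infty$. This is the same device used in the proof of Theorem~\ref{thmProb}; you should invoke it here rather than dominated convergence.
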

\begin{proof}
The proof is similar to the proof of Proposition \ref{prop:localization}. The two main differences are 
\begin{enumerate}
\item The integral defining $K^{\mathrm{FPP}}_y$ in (\ref{eq:KFPPy}) is an improper integral, which forbids to use dominated convergence. 
\item The $\mathbb{L}^2$ contour (i.e. the contour $\gamma$) is not steep-descent. 
\end{enumerate}
The point (2) is not an issue since in the proof of Proposition \ref{prop:localization}, we actually only used the fact that for any $\epsilon>0$ there exists a constants $C'>0$ such that $\Real[h(z)]-\Real[h(\theta)]>C'$ for $z\in\mathcal{C}_{\theta}\setminus \mathcal{C}_{\theta}^{\epsilon}$.  This property is still satisfied by the contour $\gamma$. 

The point (1) is resolved by bounding the integral over $\mathcal{D}_{\theta}\setminus \mathcal{D}_{\theta}^{\epsilon}$ with the same kind of estimates as in the proof of Theorem \ref{thmProb}. More precisely, one writes
\begin{multline}
\bigg\vert \frac{1}{2i\pi} \int_{\theta+i\epsilon}^{\theta+i\infty} \exp\big(n(H(z)- H(u))- \rho(\theta) n^{1/3} y(z-u)\big)\frac{z}{u} \frac{\mathrm{d}z}{(z-u)(z-u')} \bigg\vert <\\  \exp\Big(-C n - n^{1/3}\rho(\theta)y (\theta-u) \big) \bigg\vert   \frac{1}{2i\pi} \int_{\theta+i\epsilon}^{\theta+i\infty} \exp\Big(-i \rho(\theta) n^{1/3} y \Imag[z]\Big)\frac{z}{u} \frac{\mathrm{d}z}{(z-u)(z-u')} \bigg\vert.
\label{eq:estimatedecaykernelFPP}
\end{multline}
The integral in the R.H.S of (\ref{eq:estimatedecaykernelFPP}) is an oscillatory integral that can be bounded uniformly in $n$ (actually it goes to zero by Riemann-Lebesgue's lemma) so that it goes to zero when multiplied by $\exp\big(-C n - n^{1/3}\rho(\theta)y (\theta-u) \big)$. 
\end{proof}
The rest of the proof is similar to Section \ref{sec:RWREasymptotics}. One makes the change of variables 
$$ z=\theta+\tilde{z}n^{-1/3},\  u=\theta+\tilde{u}n^{-1/3}, u'=\theta+\tilde{u}'n^{-1/3}.$$
It is again convenient to deform slightly the contours for $u$ and $u'$ so that the contour for $\tilde{u}$ and $\tilde{u}'$ is $\mathcal{C}^{\epsilon n^{1/3}}$ as in Section \ref{sec:RWREasymptotics} ($\mathcal{C}^L$ is defined in (\ref{eq:defnewcontour})).  
\begin{proposition}
\label{prop:limitkernelFPP}
We have that 
$$ \lim_{t\to\infty} \det(I-K^{\mathrm{FPP}}_{y,\epsilon})_{\mathbb{L}^2(\gamma^{\epsilon})}= \det(I+K_y)_{\mathbb{L}^2(\mathcal{C})},$$
where the contour $\mathcal{C}$ is defined in (\ref{eq:defcontourC}) and $K_y$ is defined by its integral kernel 
$$ K_y(w,w') = \frac{1}{2i\pi} \int_{\infty e^{-i\pi/3}}^{\infty e^{i\pi/3}} \frac{\mathrm{d}z}{(z-w')(w-z)} \frac{e^{z^3/3-yz} }{e^{w^3/3-yw}}$$
and the contour for $z$ does not intersect $\mathcal{C}$. 
\label{prop:limitfredholmFPP}
\end{proposition}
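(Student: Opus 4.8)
The plan is to repeat, almost line by line, the steepest-descent argument used to prove Proposition \ref{prop:limitfredholm} in Section \ref{sec:RWREasymptotics}, the only new features being that the integrand of $K^{\mathrm{FPP}}_y$ in \eqref{eq:KFPPy} decays only quadratically (not exponentially) in the imaginary direction of $z$, and that the $\mathbb{L}^2$-contour $\gamma$ is not steep-descent. The second point is already absorbed into Proposition \ref{prop:FPPlocalization}; what remains is the local analysis near $\theta$. First I would perform the change of variables $z=\theta+\tilde z n^{-1/3}$, $u=\theta+\tilde u n^{-1/3}$, $u'=\theta+\tilde u' n^{-1/3}$, and (after the harmless deformation of the $u,u'$-contour to $\mathcal{C}^{\epsilon n^{1/3}}$ and of the $z$-contour to $\mathcal{D}^{\epsilon n^{1/3}}\subset i\R$ indicated before the statement) write $\det(I+K^{\mathrm{FPP}}_{y,\epsilon})_{\mathbb{L}^2(\gamma^\epsilon)}$ as the Fredholm determinant of the rescaled kernel $K^n_{y,\epsilon}(\tilde u,\tilde u') = n^{-1/3}K^{\mathrm{FPP}}_{y,\epsilon}(\theta+\tilde u n^{-1/3},\theta+\tilde u' n^{-1/3})$, the Jacobian $n^{-1/3}$ being cancelled by the $n^{1/3}$ coming from $\tfrac{\mathrm{d}z}{(z-u)(z-u')}$.

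Next I would establish pointwise convergence of the rescaled integrand. Using the Taylor expansion \eqref{eq:taylorexpansionFPP} together with Lemma \ref{lem:thirdderivativeFPP} (so that $H'''(\theta)=2\rho(\theta)^3>0$ and $\rho(\theta)$ is real), one gets $n(H(z)-H(u))\to \tfrac{\rho(\theta)^3}{3}(\tilde z^3-\tilde u^3)$, $\rho(\theta)n^{1/3}y(z-u)\to \rho(\theta)y(\tilde z-\tilde u)$, $\tfrac zu\to 1$, and $\tfrac{1}{(z-u)(z-u')}\to \tfrac{1}{(\tilde z-\tilde u)(\tilde z-\tilde u')}$ after the rescaling. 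To take the pointwise limit through the $\tilde z$-integral I would use dominated convergence: on $\mathcal{D}^{\epsilon n^{1/3}}\subset i\R$ the cubic phase has modulus bounded (using the Taylor remainder estimate below), while $|z-u|$ and $|z-u'|$ stay bounded away from $0$ since $\mathcal{C}$ and the line $\theta+i\R$ are disjoint, so the integrand is dominated by $C/|\tilde z|^2$, which is integrable on $i\R$ away from $0$. Hence $K^n_{y,\epsilon}(\tilde u,\tilde u')$ converges pointwise to the kernel $\tilde K_y$ with the $z$-integral over $\mathcal{D}^\infty=i\R$.

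Then I would exchange the $n\to\infty$ limit with the Fredholm expansion by a second application of dominated convergence, using the estimate that for $|v-\theta|<\epsilon$ there is $C>0$ with $|nH(v)-\tfrac{\rho(\theta)^3}{3}\tilde v^3|<Cn(v-\theta)^4 = Cn^{-1/3}|\tilde v|^4<C\epsilon|\tilde v|^3$, so that for $\epsilon$ small one can factor out $\exp(-c'\tilde v^3)$ and obtain $|K^n_{y,\epsilon}(\tilde v,\tilde v')|<C''\exp(c'\tilde v^3)$, which decays exponentially along $\mathcal{C}$ in the directions $\infty e^{\pm i\phi}$, $\phi\in(\pi/2,5\pi/6)$. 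Since $\epsilon$ may be taken arbitrarily small by Proposition \ref{prop:FPPlocalization}, Hadamard's bound makes the Fredholm series dominated by an absolutely convergent series of integrals, and dominated convergence yields $\lim_{n}\det(I+K^{\mathrm{FPP}}_{y,\epsilon})_{\mathbb{L}^2(\gamma^\epsilon)}=\det(I+\tilde K_y)_{\mathbb{L}^2(\mathcal{C})}$. Finally, since the integrand of $\tilde K_y$ has quadratic decay on the tails of $i\R$, one deforms the $z$-contour to the wedge from $\infty e^{-i\pi/3}$ to $\infty e^{i\pi/3}$ without crossing $\mathcal{C}$, and a rescaling of $\tilde z,\tilde u,\tilde u'$ by $\rho(\theta)$ removes the dependence on $\rho(\theta)$, identifying $\det(I+\tilde K_y)$ with $\det(I-K_y)$ as claimed. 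The main obstacle is the lack of the $\pi/\sin$-type exponential decay that was available in the RWRE kernel: here all the tameness in the $z$-variable has to come from the quadratic factor $\tfrac{1}{(z-u)(z-u')}$ and from the fact that $z$ runs on the vertical line where the cubic phase has controlled modulus, together with the oscillatory tail bound \eqref{eq:estimatedecaykernelFPP} already used in Proposition \ref{prop:FPPlocalization}; once these are in place the argument is routine.
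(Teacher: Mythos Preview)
Your proposal is correct and follows essentially the same approach as the paper: the paper's proof is literally ``Identical to the proof of Proposition \ref{prop:limitfredholm}'', and you have faithfully reconstructed that argument with the appropriate adaptations (quadratic decay in $\tilde z$ coming from the factor $1/[(z-u)(z-u')]$ rather than from $\pi/\sin(\pi(z-v))$, and the non--steep-descent $\mathbb{L}^2$-contour handled via Proposition \ref{prop:FPPlocalization}). One small imprecision: the uniform bound on $|e^{n(H(z)-H(\theta))}|$ along the vertical line really comes from the steep-descent property of $\mathcal{D}_\theta$ (Lemma \ref{lem:steepdescentHz}) rather than from the Taylor remainder, but the paper's own proof of Proposition \ref{prop:limitfredholm} is equally informal on this point.
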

\begin{proof}
The proof is similar to the proof of Proposition \ref{prop:limitfredholm}. It is actually slightly simpler, as  the contour for the variables $u,u'$ (i.e. the variables of the kernel $K^{\mathrm{FPP}}_{y, \epsilon}(u,u')$) departs $\theta$ with angle $\pm \phi$, where $\phi\in (\pi/2, 5\pi/6)$ does not depend on $\epsilon$. Hence, using a Taylor expansion of the function $H$ to the third order as in \eqref{eq:thirdorderTaylor} suffices to show that
\begin{equation*}
\mathrm{Re}\left[-t (H(u)-H(\theta) \right] <-c  \vert \tilde u \vert^3,
\end{equation*}
for some constant $c>0$, which allows to bound the kernel appropriately.
\end{proof}

\subsection{Limit shape of the percolation cluster for fixed $t$.}
\label{subsec:nonrigorous}
\label{subsec:limitshapefinitetime}
\begin{figure}
\includegraphics[scale=0.8]{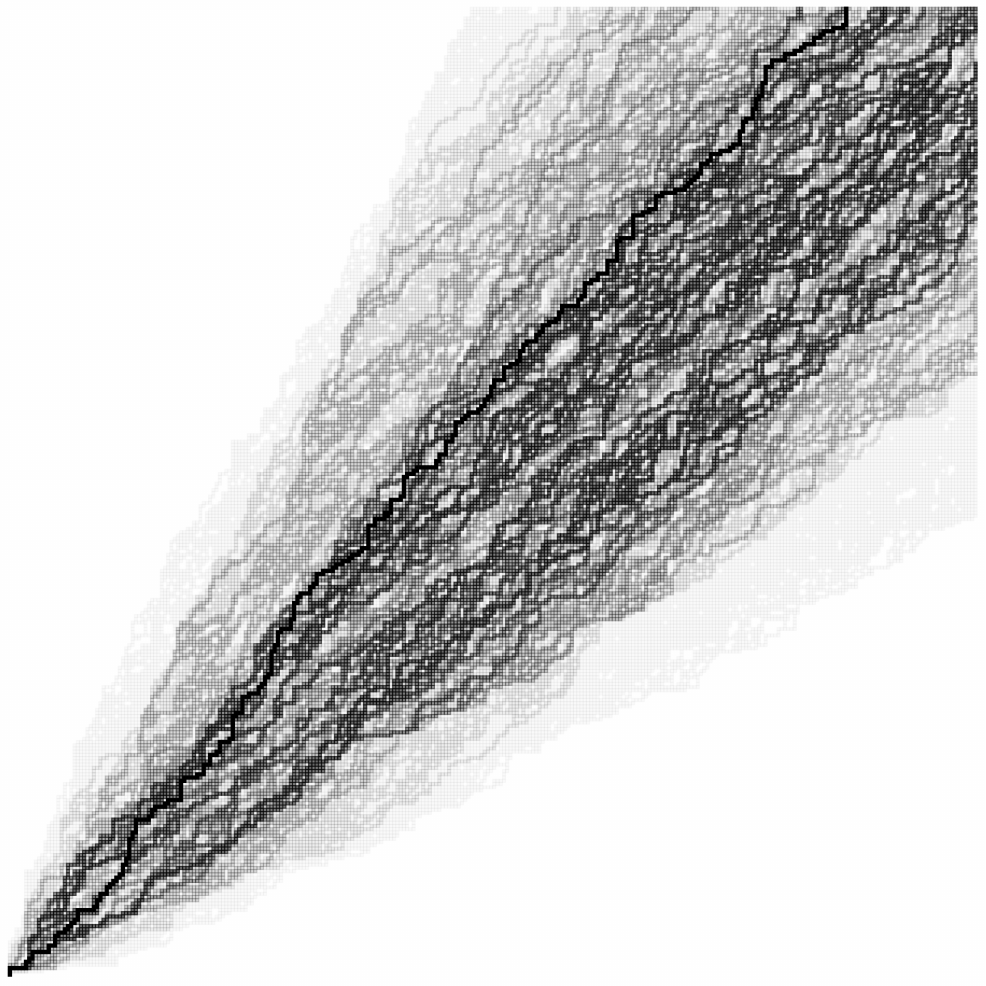}
\caption{Percolation set in the Bernoulli-FPP model at different times for parameters $a=b=1$. The different shades of gray corresponds to times $0$, $0.1$, $0.2$, $0.3$, $0.4$, $0.6$, $1$ and  $4$. Although it seems on the picture that the convex envelope of the percolation cluster at time $t=4$ is asymptotically a cone, this is an effect due to the relatively small size of the grid ($300\times 300$), and it is not true asymptotically: $n=300$ is not enough to discriminate between $c n$ and $c'n^{2/3}$ (see Section \ref{subsec:limitshapefinitetime}).}
\label{fig:simuFPP}
\end{figure}

As $\theta$ goes to infinity, $\kappa(\theta), \tau(\theta)$ and $\rho(\theta)$ are approximated by 
\begin{align*}
\kappa(\theta) = \frac{a}{b} +  \frac{3a(a+b)}{2b} \left( \frac{1}{\theta}\right) + \mathcal{O}\left( \frac{1}{\theta}\right)^2, \\
\tau(\theta) = \frac{1}{2} a(a+b) \left( \frac{1}{\theta}\right)^3+  \mathcal{O}\left( \frac{1}{\theta}\right)^4, \\
\rho(\theta) = \left(\frac{3}{2} a(a+b)\right)^{1/3}\left( \frac{1}{\theta}\right)^{5/3}.
\end{align*}
On the other hand, we have from Theorem \ref{thmTWFPP} the convergence in distribution
$$\frac{ T(n, \kappa(\theta)n) -\tau(\theta) n  }{\rho(\theta)  n^{1/3}}   \Longrightarrow \mathcal{L}_{GUE},$$
where $\mathcal{L}_{GUE}$ is the GUE Tracy-Widom distribution. 

Scaling $\theta$ by $n^{1/3}$ suggests a limit theorem for the shape of the convex envelope of the percolation cluster after a fixed time. Of course, there is a non-rigorous interchange of limits here, and one should use the Fredholm determinant representation in order to make this rigorous (we do not include this here). 

Let us set $\theta=n^{1/3}$. Then  
$$\kappa(\theta)n = \frac{a}{b}n+\frac{3a(a+b)}{2b}n^{2/3} +  \mathcal{O}(n^{1/3})$$
and
$$ \tau(\theta)n = \frac{1}{2} a(a+b) + \mathcal{O}(n^{-1/3}).$$
This suggests that the border of the percolation cluster  at time $\frac{1}{2} a(a+b)$ is asymptotically at a distance  $\frac{3a(a+b)}{2b}n^{2/3}$ from the point $\frac{a}{b}n$ (See Figure \ref{fig:simuFPP}). 
The fact that $\rho(\theta)n^{1/3} = \mathcal{O}(n^{-2/9})$ suggests an anomalous scaling for the fluctuations of the border of the  percolation cluster. We leave this for future consideration.

\bibliographystyle{amsalpha}
\bibliography{betapolymer.bib}

\end{document}